\documentclass [10pt, article, oneside, a4paper]{memoir}
\usepackage[T1]{fontenc}
\usepackage[utf8]{inputenc}
\usepackage{mathtools}
\usepackage{lmodern}
\usepackage{hyphenat}
\usepackage[stretch=10]{microtype}
\usepackage[all]{xy}
\RequirePackage{amsmath,amsfonts,amsthm,mathrsfs,paralist,amssymb,bm,amsthm,graphicx,color,url,mathrsfs,xcolor,tikz}

\usepackage[colorlinks=true,linkcolor=black,citecolor=blue]{hyperref}

\newcommand{\aquatre}{%
\stockaiv\pageaiv%
\settypeblocksize{22cm}{13cm}{*}%
\setlrmargins{4cm}{*}{1}%
\setmarginnotes{0pt}{0pt}{0pt}%
\setulmargins{3.5cm}{*}{1}%
\setheadfoot{3\baselineskip}{3\baselineskip}%
\setheaderspaces{2\baselineskip}{*}{1}%
\checkandfixthelayout%
}
\aquatre

\newcommand{\addpoint}[1]{#1\ ---\ }

\setsecnumdepth{subsubsection}

\setsecnumformat{\csname the#1\endcsname---}
\setsubsubsechook{\setsecnumformat{{\normalfont\sffamily\csname
      the##1\endcsname\ }}}
\setsubsechook{\setsecnumformat{\csname the##1\endcsname\ ---\ }}
\setsechook{\setsecnumformat{\csname the##1\endcsname---}}

\setsecheadstyle{\centering\Large\bfseries\sffamily}
\setsubsecheadstyle{\large\bfseries\sffamily}
\setsubsubsecheadstyle{\bfseries\itshape\addpoint}
\setsubsubsecindent{1em}
\setbeforesubsubsecskip{.5em plus .2em minus -.1em}
\setaftersubsubsecskip{-0em}
\setsubparaheadstyle{\itshape}
\setcounter{tocdepth}{3}

\renewcommand*{\beforepartskip}{\null\vskip 0pt}

\newtheoremstyle{thm}%
     {1.5ex plus .3ex minus .1ex}%
     {1ex plus .3ex minus .1ex}%
     {\itshape}%
     {}%
     {\sffamily}%
     {---}%
     {0em}%
     {$\bullet$\hbox{\ }#1\hbox{\ }#2}%

\theoremstyle{thm}

\newtheorem{definition}{Definition}[section]
\newtheorem{theorem}[definition]{Theorem}

\newtheorem{lemma}[definition]{Lemma}
\newtheorem{proposition}[definition]{Proposition}
\newtheorem{corollary}[definition]{Corollary}

\newtheoremstyle{note}%
     {1ex plus .3ex minus .1ex}%
     {1ex plus .3ex minus .1ex}%
     {}%
     {}%
     {\itshape}%
     {.}%
     {1em}%
     {}%
\theoremstyle{note}

\newtheorem{remark}[definition]{Remark}

\newlength{\remaining}
\newenvironment{intermediate}[1]
	{\par\medskip\noindent\makebox[2em][t]{\hfill#1\hfill}%
          \setlength{\remaining}{\textwidth}\addtolength{\remaining}{-2em}%
          \begin{minipage}[t]{\remaining}\itshape
          }
          {\end{minipage}\par\medskip}

\newcommand{\verticale}{\ar@{--}[d]}

\newcommand{\VJ}[1]{{#1}}

\def\calli#1{\expandafter\def\csname
  #1\endcsname{\mathcal{#1}}}	%
\def\sets#1{\expandafter\def\csname
  bb#1\endcsname{\mathbb{#1}}}	%
\def\rebar#1{\expandafter\def\csname #1bar\endcsname{\overline{\csname
      #1\endcsname}}}		%
\def\gothify#1{\expandafter\def\csname
  #1#1#1\endcsname{\mathfrak{#1}}}	%

\sets{R}	%
\sets{N}	%
\sets{Z}	%
\calli{I}	%
\calli{M}	%
\calli{R}
\calli{K}
\calli{U}
\calli{W}
\calli{E}
\calli{D}
\calli{B}
\calli{V}
\calli{P}
\gothify{F}	%
\gothify{G}	%
\rebar{M}	%

\newcommand{\FF}{\mathcal{F}}
\renewcommand{\SS}{\mathcal{S}}
\newcommand{\SSp}{\mathcal{S}'}

\newcommand{\Ch}{\mathscr{C}}
\newcommand{\bA}{\mathbf{A}}
\newcommand{\bB}{\mathbf{B}}
\newcommand{\bC}{\mathbf{C}}
\newcommand{\bG}{\mathbf{G}}
\newcommand{\calA}{\mathcal{A}}
\newcommand{\calB}{\mathcal{B}}

\newcommand{\Rbar}{\overline{R}}

\newcommand{\up}[1]{\,\uparrow #1}
\newcommand{\down}[1]{\downarrow #1}
\newcommand{\Up}[1]{\,\Uparrow #1}
\newcommand{\seq}[2]{(#1_{#2})_{#2\geq1}}
\newcommand{\pr}{\mathbb{P}}
\newcommand{\esp}{\mathbb{E}}

\newcommand{\norm}[1]{\|#1\|}

\newcommand{\slgb}{\mbox{$\sigma$-al}\-ge\-bra}

\newcommand{\BA}{\partial\bA}
\newcommand{\bAbar}{\overline{\bA}}

\newcommand{\subseteqlub}{\Subset}

\newcommand{\tq}{\;\big|\;}
\newcommand{\tqs}{\;:\;}
\newcommand{\T}{\mathrm{T}}

\newcommand{\ie}{\textsl{i.e.}}
\newcommand{\iid}{\textsl{i.i.d.}}
\newcommand{\AND}{\text{ \normalfont and }}

\newcommand{\rest}[1]{\bigl|_{#1}}

\newcommand{\un}{\mathbf{1}}
\newcommand{\qref}[1]{{\normalfont(P\ref{#1})}}

\newcommand{\height}{\tau}

\newcommand{\unit}{{\bm{e}}}
\newcommand{\lel}{<_\text{\normalfont l}}
\newcommand{\ler}{<_{\text{\normalfont r}}}

\newcommand{\leql}{\leq_\text{\normalfont l}}
\newcommand{\leqr}{\leq_{\text{\normalfont r}}}

\DeclareMathOperator{\bigveel}{\mathchoice%
  {\bigvee{}_{\raisebox{-.8ex}{\scriptsize\!\!\text{\normalfont l}}}}%
  {\bigvee{}_{\raisebox{-.3ex}{\scriptsize\!\!\text{\normalfont l}}}}%
  {\bigvee{}_{\raisebox{-.17ex}{\tiny\!\text{\normalfont l}}}}%
  {\bigvee{}_{\raisebox{-.17ex}{\tiny\!\text{\normalfont l}}}}%
}
\DeclareMathOperator{\bigveer}{\mathchoice%
  {\bigvee{}_{\raisebox{-.8ex}{\scriptsize\!\!\text{\normalfont r}}}}%
  {\bigvee{}_{\raisebox{-.3ex}{\scriptsize\!\!\text{\normalfont r}}}}%
  {\bigvee{}_{\raisebox{-.17ex}{\tiny\!\text{\normalfont r}}}}%
  {\bigvee{}_{\raisebox{-.17ex}{\tiny\!\text{\normalfont r}}}}%
}
\DeclareMathOperator{\bigwedgel}{\mathchoice%
  {\bigwedge{}_{\raisebox{-.8ex}{\scriptsize\!\text{\normalfont l}}}}%
  {\bigwedge{}_{\raisebox{-.3ex}{\scriptsize\!\text{\normalfont l}}}}%
  {\bigwedge{}_{\raisebox{-.17ex}{\tiny\!\text{\normalfont l}}}}%
  {\bigwedge{}_{\raisebox{-.17ex}{\tiny\!\text{\normalfont l}}}}%
}
\newcommand{\veel}{\vee_{\!\text{\normalfont l}}}
\newcommand{\veer}{\vee_{\!\text{\normalfont r}}}
\newcommand{\wedgel}{\wedge_{\text{\normalfont l}}}

\DeclareMathOperator{\Diag}{Diag}

\DeclareMathOperator{\supp}{supp}

\newcommand{\CC}{\mathcal{C}}

\newcommand{\GG}{\mathcal{G}}

\newcommand{\LL}{\mathcal{L}}

\newcommand{\cvlaw}[1]{\xrightarrow[#1]{\mathcal{L}}}
\newcommand{\NN}{\mathcal{N}}

\newcommand{\lev}{\ell}
\newcommand{\rev}{r}
\newcommand{\lpm}[2]{\mu_{#1,#2}^-}

\newcommand{\gbar}{\overline{g}}
\newcommand{\wtilde}{\widetilde{w}}
\newcommand{\rtilde}{\widetilde{r}}
\newcommand{\ytilde}{\widetilde{y}}
\newcommand{\xtilde}{\widetilde{x}}

\newcommand{\thetatilde}{\widetilde{\theta}}

\newcommand{\ii}{\mathbf{i}}

\newcommand{\Mtilde}{\widetilde{M}}
\newcommand{\Ptilde}{\widetilde{P}}
\newcommand{\Ftilde}{\widetilde{F}}
\newcommand{\Htilde}{\widetilde{H}}
\newcommand{\mtilde}{\widetilde{m}}
\newcommand{\elltilde}{\widetilde{\ell}}

\newcommand{\CWG}{{\normalfont\textsf{CWG}}}
\renewcommand{\H}{\mathcal{H}}

\DeclarePairedDelimiter{\abs}{|}{|}

\makeatletter
\newcommand{\@@setuppart}{%
  \if@openright
  \else
  \fi
  \thispagestyle{part}%
  \if@twocolumn
    \onecolumn
    \@tempswatrue
  \else
    \@tempswafalse
  \fi
  \beforepartskip}
\def\@apppage{%
  \@@setuppart
  \addappheadtotoc
  \partmark{\appendixpagename}%
  \memapppageinfo{\appendixpagename}%
  {\centering
   \interlinepenalty \@M
   \normalfont
   \printparttitle{\appendixpagename}\par}%
  \@endpart}
\def\@sapppage{%
  \@@setuppart
  \partmark{\appendixpagename}%
  \memapppagestarinfo{\appendixpagename}%
  {\centering
   \interlinepenalty \@M
   \normalfont
   \printparttitle{\appendixpagename}\par}%
  \@endpart}
\makeatother

\numberwithin{equation}{section}

\begin{document}

\begin{center}
  \huge\bfseries Asymptotic combinatorics\\ of Artin-Tits monoids\\
\Large and of some other monoids
\end{center}

\bigskip
\noindent\hspace{-.075\textwidth}
\begin{minipage}[t]{.25\textwidth}
  \begin{center}
  Samy Abbes    \\[.5em]
\tiny
Institut de  Recherche en Informatique Fondamentale
 (IRIF -- UMR 8243)\\
Universit\'e Paris Diderot\\
75205 Paris Cedex 13, 
 France
  \end{center}
\end{minipage}%
\hspace{.05\textwidth}%
\begin{minipage}[t]{.25\textwidth}
  \begin{center}
Sébastien Gouëzel\\[.5em]
\tiny
Laboratoire de Math\'ematiques Jean Leray (LMJL -- UMR 6629)\\
Université de Nantes, CNRS\\
F-44000 Nantes, France
  \end{center}
\end{minipage}%
\hspace{.05\textwidth}%
\begin{minipage}[t]{.25\textwidth}
  \begin{center}
Vincent Jugé\\[.5em]
\tiny
Laboratoire d'Informatique Gaspard Monge (LIGM -- UMR 8049)\\
Universit\'e Paris-Est, 
UPEM, F77454 Marne-la-Vallée, France
  \end{center}
\end{minipage}%
\hspace{.05\textwidth}%
\begin{minipage}[t]{.25\textwidth}
  \begin{center}
 Jean Mairesse\\[.5em]
\tiny Laborattoire d'Informatique de Paris~6
(LIP6 -- UMR 7606), Sorbonne University, CNRS,
75252 Paris Cedex 05, France
  \end{center}
\end{minipage}%
\par\vspace{.5em}\noindent\hspace{-.075\textwidth}
\begin{minipage}[t]{.25\textwidth}
\centering
\tiny
\texttt{abbes@irif.fr}
\end{minipage}%
\hspace{.05\textwidth}%
\begin{minipage}[t]{.25\textwidth}
\centering\tiny
\texttt{sebastien.gouezel@univ-nantes.fr}
\end{minipage}%
\hspace{.05\textwidth}%
\begin{minipage}[t]{.25\textwidth}
\tiny
\centering\texttt{vincent.juge@u-pem.fr}
\end{minipage}%
\hspace{.05\textwidth}%
\begin{minipage}[t]{.25\textwidth}
\centering
\tiny
\texttt{jean.mairesse@lip6.fr}
\end{minipage}%

\bigskip

\begin{center}
March 2018--January 2019
\end{center}

\bigskip
\begin{abstract}
  We introduce methods to study the combinatorics of the normal form of
  large random elements in Artin-Tits monoids. These methods also apply in
  an axiomatic framework that encompasses other monoids such as dual braid
  monoids.

\bigskip\noindent\footnotesize
\textbf{Keywords:}\quad\itshape Artin-Tits monoids, Artin-Tits monoids of spherical type, Artin-Tits monoids of type FC, braid monoids, dual braid monoids, asymptotics, normal form, M\"obius transform, M\"obius polynomial, uniform distribution, boundary at infinity, law of large numbers, central limit theorem
\end{abstract}

\bigskip

\section{Introduction}
\label{sec:introduction}

Braid monoids are finitely presented monoids with a rich combinatorics,
mainly based on the existence of a normal form for their elements. Consider
the braid monoid $B_n^+$ on $n$ strands: it has $n-1$ \emph{Artin generators}
which represent the elementary twists of two neighbor strands. The
\emph{Artin length} of an element $x\in B_n^+$ is the number $m$ such that $x$
can be written as a product of $m$ Artin generators. The decomposition is not
unique, yet $m$ is well defined. The Garside normal form, also called the
greedy normal form, arises from the following fact: there exists a finite
subset $\SS$ of~$B_n^+$\,, that not only generates~$B_n^+$, but also such that
any $x\in B_n^+$ writes uniquely as a product $x_1\cdot\ldots\cdot x_k$ of
elements of~$\SS$, provided that the sequence $(x_1,\ldots,x_k)$ satisfies a
condition of the form $x_1\to x_2\to\dots\to x_k$, where $x\to y$ is some
binary relation on~$\SS$.

Fix an integer $j$ and pick an element $x$ in $B_n^+$ at random among elements
of Artin length~$m$, with $m$ large. Then consider the $j$ first elements
$(x_1,\ldots, x_j)$ in the normal form of~$x$.  This is a complex random
object: the sequence $(x_1,\dots,x_j)$ has no Markovian structure in general,
and the law of $(x_1,\dots,x_j)$ depends on~$m$. However, some regularity
appears when considering the limit with $m\to\infty$. Indeed, the limit law
of the sequence $(x_1,\dots,x_j)$ is \emph{Markovian}, which already brings
qualitative information on the typical behavior of ``large random elements''
in a braid monoid. First motivated by this kind of question, we have
introduced in a previous work~\cite{abbes17} the notion of uniform measure at
infinity for braid monoids. The marginals of the uniform measure give back
the limit form of the law of $(x_1,\dots,x_j)$ when $m\to\infty$, hence
yielding an appropriate notion of uniform measure for ``infinite braid
elements''. The notion of uniform measure at infinity allowed us to settle
open questions regarding the behavior of the normal form of large braids,
asked by Gebhardt and Tawn in~\cite{gebhardt14}.

The following questions were left open in~\cite{abbes17}: how much of these
techniques extend to more general monoids? Furthermore, on top of the
qualitative information brought by the Markovian properties of the uniform
measure at infinity, can we also obtain asymptotics on some combinatorial
statistics? Here is a typical statistics for which precise asymptotics might
be expected. Beside the Artin length of elements in~$B_n^+$\,, the
\emph{height} of $x\in B_n^+$ is the length $k$ of the normal form
$(x_1,\dots,x_k)$ of~$x$. What is the typical behavior of the ratio $m/k$
when $k$ is the height of a random element in $B_n^+$ of Artin length $m$
large? Assume that this ratio is close to some constant~$\gamma$. Is there a
limit law for the quantity $\sqrt k(m/k-\gamma)$ when $m\to\infty$? Hence, we
were left with the two following natural extensions to carry over:
\begin{inparaenum}[1)]
  \item extension to other monoids than braid monoids; and
  \item asymptotics for combinatorial statistics.
\end{inparaenum}

A natural candidate for the generalization of braid monoids is the class of
Artin-Tits monoids. This class encompasses both braid monoids and trace
monoids---the later are also called heap monoids~\cite{viennot86} and free
partially commutative monoids~\cite{cartier69} in the literature. Contrasting
with braid monoids, a trace monoid is not a lattice for the left divisibility
order; in particular, it has no Garside element. Actually, trace monoids and
braid monoids can be seen as two typical representatives of Artin-Tits
monoids of type~FC, which is a subclass of Artin-Tits monoids already large
enough to provide a good view on Artin-Tits monoids.

In this paper, we develop the notion of uniform measure at infinity for
Artin-Tits monoids. More generally, we consider the class of
\emph{multiplicative measures at infinity} for an Artin-Tits monoid, among
which is the uniform measure at infinity. We first define the \emph{boundary
at infinity} $\BA$ of an Artin-Tits monoid~$\bA$ as the topological space of
``$\bA$-infinite elements''. It has the property that $\bA\cup\BA$ is a
compactification of~$\bA$. Multiplicative measures at infinity are
probability measures on~$\BA$ with a purely algebraic definition. The
combinatorics of the monoid is reflected within these measures. For instance,
the explicit expression for the uniform measure at infinity involves the root
of smallest modulus of the Möbius polynomial of the monoid~$\bA$. The Möbius
polynomial encodes some information on the combinatorics of the monoid. Its
root of smallest modulus is the inverse of the growth rate of the monoid. By
analogy, multiplicative measures at infinity play in an Artin-Tits monoid a
role which corresponds to the role of the standard Bernoulli measures in a
free monoid, as illustrated below:
\begin{gather*}
\xymatrix@R=.5em{\txt<6em>{$\Sigma^*$ : monoid of  $\Sigma$-words}\ar[r]&\txt<9.5em>{$\Sigma^\infty$ : space of infinite $\Sigma$-sequences}\ar[r]&\txt<8em>{$m$: Bernoulli measure on $\Sigma^\infty$}\\
\txt<7em>{$\bA$ : Artin-Tits monoid}\ar[r]&\txt<8em>{$\BA$ : boundary at infinity of~$\bA$}\ar[r]&\txt<8em>{$m$: multiplicative measure on $\BA$}
}
\end{gather*}

The uniform measure at infinity describes the qualitative behavior of a large
random element of the monoid. As for braid monoids, the first $j$ elements in
the normal form of a random element $x\in\bA$ of large size converge in law
toward a Markov chain, for which the initial distribution and the transition
matrix are explicitly described. This Markov chain corresponds to the
marginal of the uniform measure at infinity. Multiplicative measures play an
analogous role when large elements, instead of being picked uniformly at
random, are chosen at random with multiplicative weights associated to the
elementary generators of the monoid.

A second device that we introduce is the conditioned weighted graph (\CWG)
associated with an Artin-Tits monoid. This is essentially a non-negative
matrix encoding the combinatorics of the monoid, together with weights
attributed to the generators of the monoid.  \CWG\ are reminiscent of several
tools and techniques found elsewhere in the literature and can almost be
considered as folklore; we show in particular the relationship with the
classical notion of survival process. Our contribution consists in applying
spectral methods to derive asymptotics for \CWG, including a concentration
result with a convergence in law and a Central Limit Theorem.

We have thus two devices for studying the asymptotic combinatorics of
Artin-Tits monoids. On the one hand, the uniform measure at infinity has a
purely algebraic definition, and naturally encodes some information on the
combinatorics of the monoid. On the other hand, the \CWG\ associated with the
monoid entirely encodes the combinatorics of the monoid and analytical
methods can be used to obtain information on its asymptotic. Our program
consists in using the analytical results for \CWG\ in order to
\begin{inparaenum}[1)]
\item derive asymptotic results for large random elements in the monoid; and
\item derive additional information regarding multiplicative measures at
    infinity.
\end{inparaenum}
In particular, the set of finite probabilistic parameters that entirely
describe multiplicative measures at infinity is shown to be homeomorphic to a
standard simplex, and by this way we prove the existence of the uniform
measure at infinity in all cases, which is a non-trivial task.

The paper brings original contributions in the area of pure combinatorics of
Artin-Tits monoids. In particular, the following results are of interest
\emph{per se}:
\begin{inparaenum}[1)]
\item If an Artin-Tits monoid is irreducible, its graph of proper simple elements
  is strongly connected---this graph contains as vertices all the simple elements of the monoid, excepted the unit element and the Garside element if it exists.
\item We introduce a generalized Möbius transform and we give an explicit
    form of its inverse for general Artin-Tits monoids.
\item We give a simple characterization of monoids of type FC among
    Artin-Tits monoids.
\end{inparaenum}

Finally, we observe that our methods also apply outside the framework of
Artin-Tits monoids. We isolate a working framework where the chain of
arguments that we use can be repeated \emph{mutatis mutandis}. We illustrate
this working framework by giving examples of monoids fitting into it, yet
living outside the class of Artin-Tits monoids. Of particular interest is the class of dual braid monoids and their generalizations for Artin-Tits monoids of spherical type. We show that they fit indeed into our general framework.

\paragraph{Outline.}

In Section~\ref{sec:posit-dual-posit}, we first recall the basic definitions
and some results regarding the combinatorics of Artin-Tits monoids. The
notion of normal form of elements of an Artin-Tits monoid leads us to the
definition of the boundary at infinity of the monoid. We describe the
relationship between measures at infinity and Möbius transform on the monoid.
We particularize then our study to the class of multiplicative measures at
infinity, among which is the uniform measure at infinity. We characterize
multiplicative measures at infinity by a finite number of probabilistic
parameters with suitable normalization conditions, and we state a uniqueness
result for the uniform measure.

In Section~\ref{sec:centr-limit-theor} we introduce a technical device: the
notion of conditioned weighted graph (\CWG). Based on the Perron-Frobenius
theory for primitive matrices, we observe that a natural notion of weighted
measure at infinity can be attached to any \CWG.

In Section~\ref{sec:appl-posit-braids}, we show how the theory of \CWG\
applies in particular to Artin-Tits monoids. The two notions of uniform
measure at infinity introduced above are related to each other. We obtain in
particular a parametrization of all multiplicative measures at infinity for
an Artin-Tits monoid, and the existence of the uniform measure at infinity.

Section~\ref{sec:asympt-conc-ergod} is devoted to the of asymptotic study of
combinatorial statistics, first in the general case of \CWG, and then applied
to the case of Artin-Tits monoids. Using spectral methods, we state a
concentration result and a Central Limit Theorem in the framework of \CWG,
relatively to weak convergences. We state the corresponding convergence
results for the asymptotic combinatorics of Artin-Tits monoids, yielding very
general answers to the questions raised in the first part of this
introduction, for braids.

Finally, in Section~\ref{sec:simil-results-other}, we extract some minimal
properties that were needed to carry our analysis. Provided that these
minimal properties hold indeed, one states the corresponding results for more
general monoids. We also give examples of such monoids outside the class of
Artin-Tits monoids, including the class of dual monoids of Artin-Tits monoids of spherical type.

\section{Probability measures on the boundary of Artin-Tits monoids}
\label{sec:posit-dual-posit}

In this section we review some background on the combinatorics of Artin-Tits
monoids. It turns out that the notion of Garside family is of great interest,
both from a theoretical and from a computational viewpoint. Therefore we
focus on Garside subsets for Artin-Tits monoids and related notions: simple
elements and the associated normal form in particular. We shall see later
that the probabilistic viewpoint also takes great advantage of these notions,
which were originally devised mainly to explore combinatorial aspects of
Artin-Tits monoids.

\subsection{Definitions and examples of Artin-Tits monoids}
\label{sec:definitions-examples}

\subsubsection{Definition of Artin-Tits monoids}
\label{sec:defin-artin-tits}

Let a finite, non empty alphabet $\Sigma$ be equipped with a symmetric
function $\ell : \Sigma \times \Sigma \mapsto \{2,3,4,\ldots\} \cup
\{\infty\}$, \ie, such that $\ell(a,b) = \ell(b,a)$ for all $(a,b) \in \Sigma
\times \Sigma$.  Associate to the pair $(\Sigma,\ell)$ the binary relation
$I$ on~$\Sigma$, and the monoid $\bA = \bA(\Sigma,\ell)$
defined by the following presentation~\cite{brieskorn72}:
\begin{align*}
  \bA&=\bigl\langle\Sigma\ \big|\ ababa\ldots = babab \ldots \text{
       for $(a,b)\in I$}\bigr\rangle^+\,,\\
\text{where }&\text{$ababa\ldots$ and $babab\ldots$ both have length $\ell(a,b)$},\\
\text{and }
I& = \bigl\{(a,b)\in\Sigma\times\Sigma\tq a \neq b \text{ and } \ell(a,b) <
\infty\bigr\}.
\end{align*}
Note that the values of $\ell$ on the
diagonal of $\Sigma\times\Sigma$ are irrelevant. Such a monoid is called an \emph{Artin-Tits monoid}.

\subsubsection{Examples of Artin-Tits monoids}
\label{sec:examples-artin-tits}

The free monoid generated by $\Sigma$ is isomorphic to $\bA(\Sigma,\ell)$
with $\ell(\cdot,\cdot)=\infty$. The free Abelian monoid generated by
$\Sigma$ is isomorphic to $\bA(\Sigma,\ell)$ with $\ell(\cdot,\cdot)=2$. More
generally, considering $\bA(\Sigma,\ell)$ with $\ell$ ranging only
over~$\{2,\infty\}$, but assuming possibly both values, yields the class of
so-called \emph{heap monoids} on~$\Sigma$, also called \emph{trace monoids}
on~$\Sigma$. They are the monoids analogous to the so-called right-angled
Artin groups~\cite{charney07}.

Braid monoids are also specific instances of Artin-Tits monoids. Indeed, for
every integer $n\geq3$, the braid monoid on $n$ strands is the monoid $B_n^+$
generated by $n-1$ elements $\sigma_1,\dots,\sigma_{n-1}$ with the following
relations~\cite{garside1969braid}: the braid relations
$\sigma_i\sigma_j\sigma_i=\sigma_j\sigma_i\sigma_j$ for $1 \leq i \leq n-2$
and $j=i+1$, and the commutativity relations
$\sigma_i\sigma_j=\sigma_j\sigma_i$ for $1 \leq i, j \leq n-1$ and
$\abs{i-j}>1$. Hence $B_n^+$ is isomorphic to $\bA(\Sigma,\ell)$ by choosing
$\Sigma=\{\sigma_1,\dots,\sigma_{n-1}\}$ and $\ell(\sigma_i,\sigma_j)=3$ for
$\abs{i-j}=1$ and $\ell(\sigma_i,\sigma_j)=2$ for $\abs{i-j}>1$.

\subsubsection{Length and orders}
\label{sec:length-orders}

Let $\bA=\bA(\Sigma,\ell)$ be an Artin-Tits monoid. The \emph{length} of
$x\in\bA$, denoted by~$\abs{x}$, is the length of any word in the equivalence
class~$x$, with respect to the congruence defining~$\bA$ (it does not depend
on the choice of the word as the relations do not modify the length).

In particular, elements of $\Sigma$ are characterized as those $x\in\bA$ such
that $\abs{x}=1$. It follows that, when considering an Artin-Tits
monoid~$\bA$, we may refer to the pair $(\Sigma,\ell)$ such that
$\bA=\bA(\Sigma,\ell)$. In particular, elements of $\Sigma$ are called
\emph{generators} of~$\bA$.

The monoid $\bA$ is equipped with the \emph{left} and with the \emph{right
divisibility} relations, denoted respectively by $\leql$ and by~$\leqr$\,,
which are both partial orders on~$\bA$\,, defined by:
\begin{align*}
  x\leql y&\iff\exists z\in\bA\quad y=x\cdot z\,,\\
  x\leqr y&\iff\exists z\in\bA\quad y=z\cdot x\,.
\end{align*}
We also denote by $\lel$ and $\ler$ the associated strict orders.

The results stated in this paragraph and the next one are proved in~\cite[Ch.~IX, Prop.~1.26]{dehornoy2013foundations}. Every Artin-Tits monoid $\bA$ is both left and right cancellative, meaning:
\begin{gather*}
  \forall x,y,z\in\bA\quad (z\cdot x=z\cdot y\implies x=y)\wedge(x\cdot
  z=y\cdot z\implies x=y).
\end{gather*}

The partially ordered set $(\bA,\leql)$ is a lower semilattice, \ie, any finite set has a greatest lower bound. Furthermore, any $\leql$-bounded set has a $\leql$-least upper bound. 

We denote by $\bigwedgel X$ and by $\bigveel X$ the greatest $\leql$-lower bound and the least upper bound of a subset $X\subseteq\bA$, if they exist. We use the standard notations $a\wedgel b=\bigwedgel\{a,b\}$
for $a,b\in\bA$.
We also write $a\veel b=\bigveel\{a,b\}$ when defined. The analogous
notations~$\bigveer$, \emph{etc}, are defined with respect to the right
divisibility relation.

Unlike braid monoids, $(\bA,\leql)$~is not necessarily a lattice. For instance, distinct generators $a$ and $b$ of a free monoid have no common multiple and thus $\{a,b\}$ has no $\leql$-upper bound.

If $a$ and $b$ are two generators of~$\bA$, then $a\veel b$ exists if and only if $\ell(a,b)<\infty$, if and only if $a\veer b$ exists, and then $a\veel b=a\veer b=ababa\ldots$\,, where the rightmost member has length $\ell(a,b)$. More generally, let $S\subseteq\Sigma$ be a set of generators of~$\bA$. Then it follows from \cite[Ch.~IX, Proposition~1.29]{dehornoy2013foundations} that the following conditions are equivalent:
\begin{inparaenum}[(i)]
  \item $S$~is $\leql$-bounded; 
  \item $\bigveel S$~exists;
  \item $S$~is $\leqr$-bounded;
  \item $\bigveer S$ exists.
\end{inparaenum}
If these conditions are fulfilled, the element $\Delta_S=\bigveel S$ has the following property:
\begin{gather}
  \label{eq:28}
\forall x\in\bA\quad x\leql\Delta_S\iff x\leqr\Delta_S\,.
\end{gather}
It implies in particular:
\begin{align}
  \label{eq:23}
\Delta_S&=\bigveer S,&\text{and}&&\forall x\in S\quad x&\leqr\Delta_S\,.
\end{align}

Furthermore, it follows from \cite[Ch.~IX, Cor.~1.12]{dehornoy2013foundations} that the sub-monoid of $\bA$ generated by $S$ is isomorphic to the Artin-Tits monoid $\bA(S,\ell\rest{S\times S})$. In particular:
\begin{gather}
  \label{eq:24}
\Delta_S\in\langle S\rangle,\qquad\text{the sub-monoid of $\bA$ generated by~$S$.}
\end{gather}

If $X$ is a subset of~$\bA$, we say that $X$ is \emph{closed under existing
$\veel$} to mean that, if $a$ and $b$ are elements of $X$ such that $a\veel
b$ exists in~$\bA$, then $a\veel b\in X$.

\subsubsection{Artin-Tits monoids of spherical type}
\label{sec:artin-tits-monoids}

The monoid $\bA=\bA(\Sigma,\ell)$ is said to have \emph{spherical type} if
$\bigveel \Sigma$ exists. In this case, the partially ordered set
$(\bA,\leql)$ is a lattice. The element $\bigveel \Sigma$ is denoted
by~$\Delta$ and is called the \emph{Garside element of\/~$\bA$}.

An equivalent definition,  oftenly found in the literature, is the following. Let $\bC=\bC(\Sigma,\ell)$ be the presented group with $\Sigma$ as set of generators, and with the same relations given in Section~\ref{sec:defin-artin-tits} in the definition of $\bA(\Sigma,\ell)$, together with all the relations $s^2=1$ for $s$ ranging over~$\Sigma$. The group $\bC$ is the Coxeter group~\cite{coxeter1935complete} associated with~$\bA$. Then $\bA$ has spherical type if and only if $\bC$ is finite, see~\cite[Th.~5.6]{brieskorn72}. 

For instance, it is well known that the braid monoid $B_n^+$ defined as in
Section~\ref{sec:examples-artin-tits} has spherical type, with Garside
element given by $\Delta=(\sigma_1\cdot\sigma_2\cdot\ldots\cdot\sigma_{n-1})
\cdot
(\sigma_1\cdot\sigma_2\cdot\ldots\cdot\sigma_{n-2})\cdot\ldots\cdot(\sigma_1\cdot\sigma_2)\cdot\sigma_1$.
A heap monoid is of spherical type if and only if it is a free Abelian
monoid.

\subsubsection{Irreducibility of Artin-Tits monoids and Coxeter graph}
\label{sec:irred-artin-tits}

\begin{definition}
  \label{def:13}
An Artin-Tits monoid $\bA$ is called \emph{irreducible} if it is not
isomorphic to the direct product of two Artin-Tits monoids.
\end{definition}

For instance, braid monoids and free monoids are all irreducible. A free
Abelian monoid is irreducible if and only if it has only one generator. For a
heap monoid $\M=\bA(\Sigma,\ell)$ with $\ell(\cdot,\cdot)\in\{2,\infty\}$,
define $D=\{(a,b)\in\Sigma\times\Sigma\tq a=b\text{ or }\ell(a,b)=\infty\}$.
Then $\M$ is irreducible if and only if the undirected graph $(\Sigma,D)$ is
connected.

More generally, the irreducibility is related to the Coxeter graph of the
monoid, defined as follows.

\begin{definition}
The \emph{Coxeter graph} of an Artin-Tits monoid $\bA(\Sigma,\ell)$ is the
undirected graph $\bG=(\Sigma,E)$, with $E=\{(s,t)\in\Sigma\times\Sigma\tq
s=t\text{
  or }\ell(s,t)\geq3\text{ or }\ell(s,t)=\infty\}$.
\end{definition}

As observed in~\cite[\S~7.1]{brieskorn72}, we have  the following result.

\begin{proposition}
  \label{prop:1}
An Artin-Tits monoid is irreducible if and only if its Coxeter graph is
connected.
\end{proposition}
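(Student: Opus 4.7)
The plan is to relate each of the two conditions to a partition of the generating set~$\Sigma$. The key preliminary observation is that $\Sigma$ is intrinsic to the monoid: by Section~\ref{sec:length-orders}, $\Sigma$ coincides with the set of atoms of~$\bA$, namely the non-identity elements admitting no non-trivial factorization, so any monoid isomorphism between Artin-Tits monoids must send generators to generators.

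For the easy direction, I would assume the Coxeter graph $\bG$ to be disconnected, write $\Sigma=\Sigma_1\sqcup\Sigma_2$ for a partition into non-empty parts with no edge of~$\bG$ crossing it, and note that by the very definition of~$\bG$, every pair $(s,t)\in\Sigma_1\times\Sigma_2$ then satisfies $\ell(s,t)=2$. Each defining relation of $\bA(\Sigma,\ell)$ therefore either involves only letters of~$\Sigma_1$, only letters of~$\Sigma_2$, or is a commutation $st=ts$ with $s\in\Sigma_1$ and $t\in\Sigma_2$. The universal property of the presentation then yields a canonical isomorphism $\bA\cong\bA(\Sigma_1,\ell\rest{\Sigma_1\times\Sigma_1})\times\bA(\Sigma_2,\ell\rest{\Sigma_2\times\Sigma_2})$, showing that $\bA$ is reducible.

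For the converse, I would start from an isomorphism $\varphi\colon\bA\to\bA_1\times\bA_2$ with $\bA_1,\bA_2$ non-trivial Artin-Tits monoids. The atoms of $\bA_1\times\bA_2$ are exactly the pairs $(s,1)$ with $s$ an atom of~$\bA_1$ together with the pairs $(1,t)$ with $t$ an atom of~$\bA_2$; pulling back through~$\varphi$ yields a partition $\Sigma=T_1\sqcup T_2$ into two non-empty parts, with the property that any $s\in T_1$ commutes in~$\bA$ with any $t\in T_2$. To conclude that the Coxeter graph has no edge between $T_1$ and~$T_2$, I would exploit the relation $st=ts$: the element $st$ is a common $\leql$-upper bound of~$\{s,t\}$, so $s\veel t$ exists and, being the least such bound, left-divides~$st$; since $\abs{s\veel t}=\ell(s,t)$ and $\abs{st}=2$, this forces $\ell(s,t)\leq 2$ and hence $\ell(s,t)=2$, so indeed $(s,t)$ is not an edge of~$\bG$.

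The main obstacle, though not a deep one, is the intrinsic identification of $\Sigma$ with the set of atoms, since this is what guarantees that any direct product decomposition of the abstract monoid comes from a genuine partition of the generating set itself; once this is granted, both implications reduce to passing back and forth between the algebraic identity $st=ts$ and the combinatorial condition $\ell(s,t)=2$ via the lattice properties of $(\bA,\leql)$ recalled in Section~\ref{sec:length-orders}.
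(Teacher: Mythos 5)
Your proof is correct. The paper does not actually prove this proposition---it is stated as an observation imported from Brieskorn--Saito (\cite[\S~7.1]{brieskorn72})---so there is no in-paper argument to compare against; yours supplies a self-contained justification. Both directions are sound. For the easy direction, a partition of $\Sigma$ with all cross pairs satisfying $\ell(s,t)=2$ does yield the direct-product decomposition via the universal property of presented monoids: the direct product of two presented monoids is presented by the disjoint union of generators and relations together with the cross commutations. For the converse, your two key observations are exactly the right ones. First, $\Sigma$ is intrinsic, being the set of atoms of $\bA$ (by Section~\ref{sec:length-orders}, the set of length-one elements), so an isomorphism $\bA\cong\bA_1\times\bA_2$ induces a genuine partition $\Sigma=T_1\sqcup T_2$. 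Second, a commutation $st=ts$ with $s\neq t$ forces $\ell(s,t)=2$, since $st$ is a common $\leql$-upper bound of $\{s,t\}$, hence $s\veel t$ exists and left-divides $st$, giving $\ell(s,t)=\abs{s\veel t}\leq\abs{st}=2$. One small step worth spelling out is why the atoms of $\bA_1\times\bA_2$ are precisely the elements $(a,\unit)$ and $(\unit,b)$ with $a,b$ atoms of the respective factors: this relies on the additive length function on the product, which the Artin--Tits structure of both factors supplies. Note also that non-triviality of $\bA_1$ and $\bA_2$ is automatic here, since the paper requires the alphabet $\Sigma$ of an Artin--Tits monoid to be non-empty, so the trivial monoid is excluded from the definition of irreducibility and $T_1,T_2$ are both non-empty as needed.
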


\subsection{Normal sequences and normal form of elements}
\label{sec:simple-braids-gars-2}

We fix an Artin-Tits monoid $\bA=\bA(\Sigma,\ell)$.

\subsubsection{Garside subsets}
\label{sec:garside-subsets}

A subset $\GG$ of $\bA$ is a \emph{Garside subset} of $\bA$ if it contains
$\Sigma$ and if it is closed under existing $\veel$ and downward closed
under~$\leqr$, the latter meaning:
  \begin{align*}
\forall x\in\GG\quad\forall y\in\bA\quad
                   y\leqr x\implies y\in\GG.
  \end{align*}

The following result is proved in~\cite{dehornoy2015garside}.

\begin{proposition}
  \label{pro:FC-0}
  Any Artin-Tits monoid admits a \emph{finite} Garside subset.
\end{proposition}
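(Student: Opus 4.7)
The natural candidate for a finite Garside subset is the set of \emph{simple elements},
$$\GG = \bigcup_{\substack{S \subseteq \Sigma \\ S\ \leql\text{-bounded}}} \{y \in \bA : y \leql \Delta_S\},$$
with the convention $\Delta_\emptyset = 1$. Finiteness is immediate: the union ranges over at most $2^{|\Sigma|}$ subsets, and for each bounded $S$ the set of left divisors of $\Delta_S$ is finite, since any such divisor has length at most $|\Delta_S|$ and the finitely generated monoid $\bA$ has only finitely many elements of each given length. My plan is then to verify the three defining properties of a Garside subset.

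The inclusion $\Sigma \subseteq \GG$ comes by choosing $S = \{s\}$, since $\Delta_{\{s\}} = s$. For downward closure under $\leqr$, I would invoke identity~\eqref{eq:28}, which converts each slice $\{y : y \leql \Delta_S\}$ into $\{y : y \leqr \Delta_S\}$; the latter is $\leqr$-downward closed by transitivity of $\leqr$, hence so is their union $\GG$.

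The substantive step is closure under existing $\veel$. Suppose $y_1, y_2 \in \GG$ with $z = y_1 \veel y_2$ existing in $\bA$, and fix $S_i$ with $y_i \leql \Delta_{S_i}$. The plan is to take $T = S_1 \cup S_2$ and prove $z \leql \Delta_T$. Granting that $T$ is $\leql$-bounded—so that $\Delta_T$ exists—the rest is routine: monotonicity of $S \mapsto \Delta_S$ yields $y_i \leql \Delta_{S_i} \leql \Delta_T$ for $i = 1, 2$, and the least upper bound property of $(\bA, \leql)$ on bounded sets gives $z \leql \Delta_T$, whence $z \in \GG$.

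The main obstacle is therefore to guarantee that $T = S_1 \cup S_2$ is bounded in~$\Sigma$. I would choose $S_i$ minimally, ideally $S_i = \supp(y_i)$ (the set of generators occurring in any word representing $y_i$, well-defined since Artin-Tits relations preserve the multiset of letters), so that one must upgrade boundedness of $\{y_1, y_2\}$ in $\bA$ to boundedness of $\supp(y_1) \cup \supp(y_2)$ in $\Sigma$. I would attack this via the Coxeter-theoretic criterion alluded to in Section~\ref{sec:artin-tits-monoids}: a subset $S \subseteq \Sigma$ is $\leql$-bounded iff the parabolic subgroup it generates in the Coxeter group $\bC$ is finite. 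Coupled with Proposition~1.29 of~\cite{dehornoy2013foundations} on parabolic submonoids, this should enable one to conclude that the support of any common multiple of $y_1$ and $y_2$ already lies in some bounded subset of~$\Sigma$. Carrying out this transfer from ``common multiple exists in $\bA$'' to ``supports are jointly bounded in $\Sigma$'' is where I expect the real work to concentrate.
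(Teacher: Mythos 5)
Your proposed set $\GG$ is not a Garside subset in general, so the approach fails at the closure-under-$\veel$ step. Moreover, this is not a gap that can be filled along the lines you sketch: the boundedness claim you hoped to establish is simply false. Take the monoid $\bA = \langle a,b,c \mid aba=bab,\ bcb=cbc,\ cac=aca\rangle$ of type $\tilde{A}_2$ from Section~\ref{sec:an-example}. Every two-element subset of $\Sigma=\{a,b,c\}$ is $\leql$-bounded (for instance $\bigveel\{a,b\}=aba$), yet $\Sigma$ itself is not, as the associated Coxeter group is infinite. Let $y_1=ab$ and $y_2=ac$; both lie in your $\GG$ since $ab\leql aba$ and $ac\leql aca$. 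By left cancellativity $y_1\veel y_2 = a\,(b\veel c) = a\cdot bcb = abcb$ exists in $\bA$. But $\abs{abcb}=4$, whereas $\abs{\Delta_S}\leq 3$ for every $\leql$-bounded $S\subseteq\Sigma$, so $abcb\notin\GG$: your set is not closed under existing $\veel$. This is unavoidable, because $abcb$ belongs to the smallest Garside subset $\SS$ (see the explicit list and Figure~\ref{fig:atmbiz} in Section~\ref{sec:an-example}), and every Garside subset contains $\SS$. In particular the transfer you singled out as the crux---from ``$y_1\veel y_2$ exists in $\bA$'' to ``$\LL(y_1)\cup\LL(y_2)$ is $\leql$-bounded''---fails, as $T=\{a,b,c\}$ is unbounded here.

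A further caution on terminology: the paper reserves ``simple elements'' (Definition~\ref{def:10}) for the elements of the smallest Garside subset $\SS$, so labelling your $\GG$ the set of simple elements already presumes the identification that breaks down outside type~FC. The paper itself does not prove Proposition~\ref{pro:FC-0} but cites~\cite{dehornoy2015garside}, and then defines $\SS$ as the $(\leqr,\veel)$-closure of $\Sigma$, with finiteness being exactly the imported content. Your $\GG$ is always contained in $\SS$, and the equality $\GG=\SS$ is essentially what singles out monoids of type~FC; for a general Artin-Tits monoid one cannot confine the simple elements beneath a single parabolic element $\Delta_S$, which is precisely what the $\tilde{A}_2$ example is designed to illustrate.
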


The class of Garside subsets of $\bA$ is obviously closed by intersection,
hence $\bA$ admits a \emph{smallest} Garside subset, which we denote
throughout the paper by~$\SS$. The subset $\SS$ is the closure of $\Sigma$
under $\leqr$ and existing~$\veel$\,. Proposition~\ref{pro:FC-0} tells us
that the set $\SS$ thus constructed is finite. By construction,
$\SS$~contains $\Sigma\cup\{\unit\}${, where $\unit$ is the unit element of
the monoid}.

\begin{definition}
  \label{def:10}
The elements of the smallest Garside subset of an Artin-Tits monoid are
called its \emph{simple elements}.
\end{definition}

Assume that $\bA$ is of spherical type. Then, according to~\cite[Ch.~IX, Prop.~1.29]{dehornoy2013foundations}, the set $\SS$ coincides with the set of left divisors of~$\Delta$, which is also the set of right divisors of~$\Delta$. Hence: $\Delta=\bigveel\Sigma=\bigveer\Sigma=\bigveel\SS=\bigveer\SS$, and $\Delta$ is the maximum of the sub-lattice~$(\SS,\leql)$.

\subsubsection{Artin-Tits monoids of type FC}
\label{sec:artin-tits-monoids-1}

If the smallest Garside subset of an Artin-Tits monoid is closed under
\emph{left} divisibility, then the combinatorics of the monoid is a bit more
simple to study.

\begin{definition}
  \label{def:7}
An Artin-Tits monoid\/~$\bA$, with smallest Garside subset~$\SS$, is of
\emph{type FC} if $\SS$ is closed under left divisibility.
\end{definition}

It is proved in~\cite[Th.~2.85]{juge16:_combin} that this definition is
indeed equivalent to the one found in the
literature~\cite{godelle03,dehornoy2013foundations}. In particular, heap
monoids and braid monoids are monoids of type FC.

Not all Artin-Tits monoids are of type FC: see an example in
Section~\ref{sec:an-example} below.

\subsubsection{Normal sequences}
\label{sec:simple-elements}

Most of what is known on the combinatorics of Artin-Tits monoids is based on
the notion of normal sequence.

\begin{definition}
  \label{def:1ppopoqqjhgff}
  Let\/ $\bA$ be an Artin-Tits monoid with smallest Garside
  subset~$\SS$. A sequence $(x_1,\ldots,x_k)$, with $k\geq1$, of
  elements of $\bA$ is \emph{normal} if it satisfies:
  \begin{enumerate}
  \item $x_i\in\SS$ for all $i=1,\dots,k$.
  \item   $x_i = \bigveel\left\{\zeta \in \SS \tq \zeta \leql x_i \cdot
      \ldots \cdot x_k\right\}$ for all $i =1,\ldots,k$.
  \end{enumerate}
\end{definition}

A fundamental property of normal sequences is the following.

\begin{lemma}\label{lem:2-FC}
  Let\/ $\bA$ be an Artin-Tits monoid.
\begin{enumerate}
\item\label{item:9} A sequence $(x_1,\ldots,x_k)$, with $k\geq1$, of
simple elements of\/ $\bA$ is normal if and only if it satisfies:
\begin{gather*}
x_i = \bigveel\left\{\zeta \in \SS \tq \zeta \leql x_i \cdot
    x_{i+1}\right\}\quad\text{for all $i=1,\ldots,k-1$.}
\end{gather*}
\item\label{item:10} A sequence $(x_1,\ldots,x_k)$, with $k\geq1$, of
simple elements of\/ $\bA$ is normal if and only if all sequences
  $(x_i,x_{i+1})$ are normal, for $i\in\{1,\dots,k-1\}$.
\end{enumerate}
\end{lemma}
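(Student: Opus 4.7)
The plan is first to observe that (1) and (2) are in fact the same statement: unfolding Definition~\ref{def:1ppopoqqjhgff} for a two-element sequence $(x_i,x_{i+1})$ produces only the single nontrivial condition
\[
x_i = \bigveel\{\zeta \in \SS \tq \zeta \leql x_i \cdot x_{i+1}\},
\]
since the condition at the last position is always vacuous ($x_{i+1}\in\SS$ and $x_{i+1}\leql x_{i+1}$). So it is enough to prove~(1).

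I would introduce the abbreviation $H(w) := \bigveel\{\zeta \in \SS \tq \zeta \leql w\}$ for $w\in\bA$. This head exists in $\SS$: the set is $\leql$-bounded by~$w$, hence admits a least upper bound in~$\bA$, and that bound lies in~$\SS$ because $\SS$ is closed under existing~$\veel$. With this notation, Definition~\ref{def:1ppopoqqjhgff} reads $x_i = H(x_i\cdots x_k)$ for every~$i$. The easy direction of~(1) is then immediate by monotonicity of $H$: if $x_i = H(x_i\cdots x_k)$, then any simple $\zeta\leql x_i\cdot x_{i+1}$ also divides $x_i\cdots x_k$, hence divides $x_i$.

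For the converse I argue by induction on~$k$, the cases $k\leq2$ being immediate. Assuming the result up to~$k-1$, the inductive hypothesis applied to $(x_2,\ldots,x_k)$ (whose consecutive pairs are all normal) gives $H(x_2\cdots x_k) = x_2$. The remaining task is to prove that every $\zeta\in\SS$ with $\zeta\leql x_1\cdots x_k$ divides $x_1$; by the normality of the pair $(x_1,x_2)$ it suffices to show $\zeta\leql x_1\cdot x_2$. Everything then reduces to the following key lemma: \emph{for every $s\in\SS$ and $w\in\bA$, $H(s\cdot w) \leql s\cdot H(w)$}. Specialising this with $s=x_1$ and $w=x_2\cdots x_k$ yields $\zeta\leql H(x_1\cdots x_k) \leql x_1\cdot H(x_2\cdots x_k) = x_1\cdot x_2$, as required.

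The main obstacle is this head-propagation lemma. To attack it, given $\zeta\in\SS$ with $\zeta\leql s\cdot w$, I would use the lower semilattice property of $(\bA,\leql)$ to set $\eta = \zeta\wedgel s$ and write $\zeta = \eta\cdot\zeta'$, $s = \eta\cdot s'$ with $\zeta'\wedgel s' = \unit$; left cancellation in $\zeta\leql s\,w$ then gives $\zeta'\leql s'\,w$, and it remains to establish $\zeta'\leql s'\cdot H(w)$. This is the standard ``domino'' rule for Garside families, whose proof relies on the exchange property specific to Artin-Tits relations together with the closure of $\SS$ under existing~$\veel$ and under right divisibility, cf.~\cite[Ch.~III]{dehornoy2015garside}. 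The subtlety compared with the spherical-type setting is that $\SS$ need not be closed under left divisibility, so one has to produce the intermediate elements in $\bA$ itself and then check that they land in~$\SS$.
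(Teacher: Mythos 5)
Your plan correctly observes that parts (1) and (2) of the lemma are the same statement once the definition is unfolded for a pair, which is exactly the content of the paper's remark that (2) follows at once from (1). Where the paper simply cites~\cite{dehornoy2015garside} for (1), you reduce it by induction on $k$ to a more atomic fact, the head-propagation inequality $H(s\cdot w)\leql s\cdot H(w)$ for $s\in\SS$, $w\in\bA$; that reduction is correct and makes the logical structure more visible than a blanket citation.

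The weak point is your sketch of the head-propagation inequality itself. Cancelling $\eta=\zeta\wedgel s$ just reproduces the same statement with the added hypothesis $\zeta'\wedgel s'=\unit$, which is not a structural simplification, and the appeal to an ``exchange property specific to Artin-Tits relations'' is a red herring: the inequality is a pure Garside-subset fact. The clean argument runs through $\veel$, not $\wedgel$. Since $\zeta\leql s\cdot w$ and $s\leql s\cdot w$, the join $\zeta\veel s$ exists, lies in $\SS$ by closure under existing $\veel$, and satisfies $\zeta\veel s\leql s\cdot w$. Writing $\zeta\veel s=s\cdot t$, the element $t$ is a right divisor of $\zeta\veel s$, hence $t\in\SS$ since $\SS$ is $\leqr$-downward closed; left cancellation in $s\cdot t\leql s\cdot w$ gives $t\leql w$, so $t\leql H(w)$, and therefore $\zeta\leql\zeta\veel s=s\cdot t\leql s\cdot H(w)$. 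With that in hand your induction closes without any further reference.
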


\begin{proof}
  Point~\ref{item:9} is proved in~\cite{dehornoy2015garside}, and
  point~\ref{item:10} follows at once from point~\ref{item:9}.
\end{proof}

Let $x\to y$ denote the relation $x = \bigveel\left\{\zeta \in \SS\tq \zeta
\leq x \cdot y\right\}$, defined for $(x,y)\in\SS\times\SS$.
Point~\ref{item:10} of Lemma~\ref{lem:2-FC} reduces the study of normality of
sequences to the study of the binary relation $\to$ on~$\SS$.

The unit element $\unit$ of $\bA$ satisfies:
\begin{align}
\label{eq:21}
  \forall x\in\SS\quad x&\to\unit,&\forall x\in\SS\quad \unit\to
                                    x&\iff x=\unit.
\end{align}
Hence $\unit$ can only occur at the end of normal sequences, and $\unit$ is
the only element of $\SS$ satisfying the two properties in~\eqref{eq:21}.

Dually, if $\bA$ is of spherical type, the Garside element $\Delta=\bigveel\Sigma$ satisfies:
\begin{align}
\label{eq:22}
  \forall x\in\SS \quad \Delta&\to x,&\forall x\in\SS \quad
  x\to\Delta&\iff x=\Delta.
\end{align}
Hence $\Delta$ can only occur at the beginning of normal sequences.
Obviously, $\Delta$~is the only simple element satisfying the two properties
in~\eqref{eq:22}.

\subsubsection{Charney graph}
\label{sec:charney-graph}

We define the Charney graph $(\Ch,\to)$ of an Artin-Tits monoid $\bA$ as
follows. If $\bA$ is of spherical type, we put
$\Ch=\SS\setminus\{\Delta,\unit\}$, where $\Delta$ is the Garside element
of~$\bA$. If not, we put $\Ch=\SS\setminus\{\unit\}$. In all cases, the edge
relation $\to$ is the restriction to $\Ch\times\Ch$ of the relation $\to$
defined above on~$\SS\times\SS$.

The relevance of this definition will appear in
Section~\ref{sec:conn-charn-graph-1} below.

\subsubsection{Normal form of elements and height}
\label{sec:normal-form-general}

Let $\bA$ be an Artin-Tits monoid. Then, for every element $x\in\bA$ with
$x\neq\unit$, there exists a unique integer $k\geq1$ and a unique normal
sequence $(x_1,\ldots,x_k)$ of \emph{non unit simple elements} such that
$x=x_1\cdot\ldots\cdot x_k$\,.  This sequence is called the \emph{normal
form} of~$x$. By convention, we decide that $(\unit)$ is the normal form of
the unit element of~$\bA$.

The integer $k$ is called the \emph{height} of~$x$, and we denote it:
\begin{gather*}
  k=\height(x).
\end{gather*}

The following result is standard~\cite[Chap.~III,
Prop.~1.36]{dehornoy2013foundations}. It shows that comparing elements can be
done after `cutting' at the right height.

\begin{lemma}
 \label{lem:1}
Let\/ $\bA$ be an Artin-Tits monoid, and let $\SS$ be the smallest Garside subset of\/~$\bA$. Then:
\begin{enumerate}
\item\label{item:3} If\/ $(x_1,\ldots,x_k)$ is the normal form of and element $x\in\bA$, then
    $x_1\cdot\ldots\cdot x_j= \bigveel\{z \in \SS^j \tq z \leql x\}$ for
    all $j\in\{1,\ldots,k\}$, where $\SS^j=\{u_1\cdot\ldots\cdot u_j\tq
    (u_1,\dots,u_j)\in\SS\times\dots\times\SS\}$.
\item\label{item:4} The height $\height(x)$ of an element $x\in\bA$ is the
    smallest integer $j\geq1$ such that $x\in\SS^j$.
\end{enumerate}
\end{lemma}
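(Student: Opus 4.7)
The plan is to deduce item~\ref{item:4} from item~\ref{item:3}, then to prove item~\ref{item:3} by induction on $j$, using the defining property of normal sequences (clause~2 of Definition~\ref{def:1ppopoqqjhgff}) together with the closure of $\SS$ under existing~$\veel$ and its downward closure under~$\leqr$.

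For the implication (\ref{item:3})$\Rightarrow$(\ref{item:4}): the normal form $x = x_1 \cdots x_k$ with all $x_i \neq \unit$ witnesses $x \in \SS^k$, so the minimum in question is $\leq k$. Conversely, if $x \in \SS^j$ then $x$ itself lies in $\{z \in \SS^j \tq z \leql x\}$, so item~\ref{item:3} combined with $x_1 \cdots x_j \leql x$ gives $x_1 \cdots x_j = x = x_1 \cdots x_k$; left-cancellativity then forces $x_{j+1} \cdots x_k = \unit$, and since every factor is non-unit, $k \leq j$, whence $k = \height(x)$ is the required minimum.

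For item~\ref{item:3}: the element $x_1 \cdots x_j$ itself belongs to $\{z \in \SS^j \tq z \leql x\}$, which handles one direction. For the other, I would induct on $j$, showing that any $u_1 \cdots u_j \in \SS^j$ with $u_1 \cdots u_j \leql x$ satisfies $u_1 \cdots u_j \leql x_1 \cdots x_j$. The base case $j = 1$ is exactly clause~2 of Definition~\ref{def:1ppopoqqjhgff} applied at $i = 1$. For the inductive step, one peels the leading factor: $u_1 \leql x_1$ by the base case, so writing $x_1 = u_1 w_1$ with $w_1 \in \SS$ (by $\leqr$-closure) and left-cancelling yields $u_2 \cdots u_{j+1} \leql w_1 \cdot x_2 \cdots x_k$. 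The key reduction step is that any simple $u$ with $u \leql w_1 \cdot x_2 \cdots x_k$ already satisfies $u \leql w_1 x_2$; indeed, $u \veel w_1$ exists (both elements are $\leql w_1 x_2 \cdots x_k$) and lies in $\SS$ by $\veel$-closure, so writing $u \veel w_1 = w_1 v$ with $v \in \SS$ (by $\leqr$-closure), cancelling $w_1$ yields $v \leql x_2 \cdots x_k$, and the base case applied to the tail normal sequence $(x_2, \ldots, x_k)$ gives $v \leql x_2$, whence $u \leql w_1 v \leql w_1 x_2$.

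The main obstacle is to iterate this reduction across several factors: after the first peeling, the residue $w_1 x_2 / u_2$ which should play the role of ``leading part'' for the next step is generally no longer simple. The cleanest remedy is to establish the reduction in a slightly strengthened form allowing an arbitrary element of $\bA$ in front of the normal suffix, or equivalently to recurse on the normal form of $w_1 \cdot x_2 \cdots x_k$ (whose height is controlled by the standard Garside bound on heights under left-multiplication by a simple element). Once properly propagated through all factors, the argument assembles into $u_1 \cdots u_{j+1} \leql x_1 \cdots x_{j+1}$, closing the induction---this is precisely the content of the cited reference in~\cite{dehornoy2013foundations}.
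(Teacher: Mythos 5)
The paper gives no proof of Lemma~\ref{lem:1}; it simply cites \cite[Chap.~III, Prop.~1.36]{dehornoy2013foundations}, so there is no in-paper argument to compare yours to.

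Your deduction of item~\ref{item:4} from item~\ref{item:3} is correct, and so are the base case and the ``key reduction step'' (for $u,w_1\in\SS$ and $(x_2,\dots,x_k)$ normal, $u\leql w_1 x_2\cdots x_k$ implies $u\leql w_1 x_2$). But the gap you flag is real: that reduction hinges on placing $u\veel w_1$ in $\SS$ via $\veel$-closure, which requires $w_1\in\SS$, and after the first peeling the element playing the role of $w_1$ is no longer simple, so the step does not iterate. Of your two proposed remedies, the first is the right one, but it is not a ``slight strengthening'': proving the reduction for an arbitrary $w\in\bA$ in front of the normal tail needs its own induction, on $\abs{w}$, peeling a single generator at a time. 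Writing $w=\sigma w''$ with $\sigma\in\Sigma\subseteq\SS$, one gets $u\veel\sigma\in\SS$ by $\veel$-closure, hence $u\veel\sigma=\sigma v$ with $v\in\SS$ by $\leqr$-closure; cancelling $\sigma$ yields $v\leql w'' x_1\cdots x_k$, the inductive hypothesis on $\abs{w''}<\abs{w}$ gives $v\leql w'' x_1$, and so $u\leql\sigma v\leql w x_1$. With that lemma, the main induction on $j$ closes cleanly if you apply the inductive hypothesis to $u_1\cdots u_j$ directly, write $x_1\cdots x_j=(u_1\cdots u_j)s$ with $s\in\bA$, and then apply the strengthened reduction to $u_{j+1}\leql s\,x_{j+1}\cdots x_k$, rather than trying to iterate a one-step peeling. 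As written, the proposal identifies every ingredient but omits the induction on $\abs{w}$ that does the actual work, so it remains a sketch with a genuine unfilled gap.
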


The normal form of elements does not behave `nicely' with respect to the
monoid multiplication. For instance, the multiplication of an element
$x\in\bA$, of normal form $(x_1,\dots,x_k)$, by an element $y\in\bA$, yields
in general an element $z$ of normal form $(z_1,\dots,z_{k'})$ with no simple
relation between $x_j$ and~$z_j$.

It is even possible that the multiplication $x\cdot\sigma$ of $x\in\bA$ with
a generator $\sigma\in\Sigma$, satisfies $\height(x\cdot\sigma)<\height(x)$
(see an example in Section~\ref{sec:an-example}). This contrasts with monoids
of type FC, where $\height(x\cdot\sigma)\geq\height(x)$ always holds.

\subsubsection{Conditions for normality of sequences}
\label{sec:conn-charn-graph}

Aiming at studying the connectedness of the Charney graph in the next
section, one needs theoretical tools to construct normal sequences. Such
tools exist in the literature. They include the \emph{letter set}, the
\emph{left set} and the \emph{right set} of an element $x \in \bA$, which are
respectively defined as the following subsets of~$\Sigma$:
\begin{align*}
  \LL(x)& = \{\sigma \in \Sigma\tq \exists y,z \in \bA\quad x = y\cdot \sigma\cdot z\}, \\
  L(x) & = \{\sigma \in \Sigma\tq \sigma \leql x\}, \\
  R(x) & = \{\sigma \in \Sigma \tq \sigma \leqr x\quad \text{or}\quad (\exists \eta \in \mathcal{L}(x)\quad \sigma \neq \eta \text{ and } \ell(\sigma,\eta) = \infty)\}.
\end{align*}
The letter set $\LL(x)$ of $x$ is the set of letters that appear in some word
representing $x$ or, equivalently, in any word representing $x$, as relations
in Artin-Tits monoids use the same letters on both sides of the relation.

\begin{lemma}
\label{lem:3-FC} Let\/ $\bA=\bA(\Sigma,\ell)$ be an Artin-Tits monoid.  The
subsets
\begin{align*}
  \GG &= \{x \in \bA \tq \forall \sigma \in \Sigma\quad \forall y,z \in \bA\quad x \neq y\cdot \sigma^2\cdot z\}, \\
  \I &= \{x \in \bA \tq \forall \sigma, \eta \in
       \LL(x)\quad\sigma\neq\eta\implies \ell(\sigma,\eta)<\infty\},
\end{align*}
are Garside subsets of\/~$\bA$.
\end{lemma}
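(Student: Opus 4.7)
The plan is to verify separately, for each of $\GG$ and $\I$, the three defining axioms of a Garside subset: containment of $\Sigma$, downward closure under $\leqr$, and closure under existing $\veel$. The first two are routine; closure under $\veel$ is the substantive step.

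Containment of $\Sigma$ is immediate in both cases: a generator has length one, so no factorization $\sigma = y\eta^2z$ is possible, giving $\Sigma\subseteq\GG$; and $\LL(\sigma)$ is a singleton, so the clique condition defining $\I$ is vacuous. For downward closure under $\leqr$: if $y\leqr x$, write $x = w\cdot y$; any square factorization of $y$ then propagates to one of $x$, handling $\GG$; and $\LL(y)\subseteq\LL(x)$ since Artin--Tits relations never eliminate a letter, so the clique condition on $\LL(x)$ descends to $\LL(y)$, handling $\I$.

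The key tool for closure under $\veel$ is the isomorphism between the sub-monoid $\langle S\rangle\subseteq\bA$ generated by $S\subseteq\Sigma$ and the Artin--Tits monoid $\bA(S,\ell|_{S\times S})$ recalled in the excerpt, together with the compatibility of existing lubs between $\bA$ and this sub-monoid. Taking $S = \LL(a)\cup\LL(b)$ yields $\LL(a\veel b)\subseteq\LL(a)\cup\LL(b)$ whenever $a\veel b$ exists. For $\I$: given two distinct letters $\sigma,\eta\in\LL(c)$ with $c = a\veel b$, either both come from $\LL(a)$ (or both from $\LL(b)$), in which case the clique condition for $a$ (resp.\ $b$) gives $\ell(\sigma,\eta)<\infty$, or they are split across the two elements. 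In the split case, the existence of the common multiple $c$ forces $\ell(\sigma,\eta)<\infty$: otherwise the sub-Artin--Tits monoid on $\{\sigma,\eta\}$ would be free on these two letters, and the existence of $c$ as a minimal common upper bound of $a$ and $b$ inside $\langle S\rangle$ would be incompatible with the absence of any relation involving both $\sigma$ and $\eta$.

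For $\GG$, the $\veel$-closure uses the Coxeter projection $\pi\colon\bA\to\bC$ together with the classical characterization that $x\in\GG$ holds exactly when the Artin--Tits length $|x|$ equals the Coxeter length of $\pi(x)$: square-free elements are precisely the minimum-length lifts of their Coxeter image. Since $a,b\in\GG$ and $c = a\veel b$ is the minimal common left-multiple of $a$ and $b$ in $\bA$, any length defect in $c$ would allow one to produce a strictly shorter common multiple of $a$ and $b$ via a length-minimizing lift of $\pi(c)$, contradicting the minimality of $c$. The main obstacle in both cases is this $\veel$-closure step; the most delicate ingredient is the split-letter case for $\I$, because letter sets strictly contain left-divisor sets in general, so the clean equivalence between $\leql$-boundedness and existence of lubs recalled in the excerpt does not apply directly to $\LL$ and one must reduce to divisor-level data.
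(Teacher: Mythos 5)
Your proposal takes a genuinely different route from the paper, which handles $\GG$ by citing \cite[Theorem 6.27]{dehornoy2013foundations} and proves closure of $\I$ under existing $\veel$ by a self-contained minimality argument. Unfortunately the key steps of your version have real gaps. For $\I$, the load-bearing claim $\LL(a\veel b)\subseteq\LL(a)\cup\LL(b)$ is precisely the statement that standard parabolic submonoids are closed under existing lubs. The isomorphism $\langle S\rangle\cong\bA(S,\ell\rest{S\times S})$ recalled in the paper describes $\langle S\rangle$ intrinsically; it does \emph{not} say that $a\veel b$ computed in the ambient monoid $\bA$, when it exists, must lie back in $\langle S\rangle$. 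That closure property is a substantive theorem about Artin--Tits monoids in its own right, not something derivable from what the paper has set up, so invoking ``compatibility of existing lubs'' assumes the hard part. Even granting it, your ``split case'' is an assertion rather than an argument: the claim that the existence of $c=a\veel b$ ``would be incompatible with the absence of any relation involving both $\sigma$ and $\eta$'' is exactly what needs to be proved. Since $\sigma,\eta$ are only required to lie in the \emph{letter sets} of $a$ and $b$ --- not their sets of left divisors --- you cannot conclude $\sigma\leql c$ and $\eta\leql c$ and then apply the lattice characterization of common upper bounds of generators. The $\GG$ sketch has an analogous hole: you never explain why a length-minimizing lift of $\pi(c)$ in the Coxeter group would remain a common upper bound of $a$ and $b$ in $\bA$.

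For comparison, the paper's minimality argument sidesteps both difficulties. It takes $z=x\veel y\notin\I$ of minimal length, and among pairs $(x,y)\in\I\times\I$ with $x\veel y=z$, one with $w=x\wedgel y$ maximal; it then picks letters $\sigma,\eta$ with $w\cdot\sigma\leql x$ and $w\cdot\eta\leql y$. Because $w\cdot\sigma$ and $w\cdot\eta$ are both $\leql z$, left cancellativity immediately hands $\sigma$ and $\eta$ a common upper bound, so $\ell(\sigma,\eta)<\infty$ --- with no detour through parabolic lub-closure and no reasoning about arbitrary letters buried in $\LL(x)$ and $\LL(y)$. That access to left divisors of $x$ and $y$ just past their meet, rather than to arbitrary letters in their letter sets, is the structural lever your approach lacks.
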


\begin{proof}
  It is proved in~\cite[Theorem 6.27]{dehornoy2013foundations} that $\GG$
  is a Garside subset.  Since $\I$ is clearly downward closed
  under~$\leqr$ and contains~$\Sigma$, we focus on proving that $\I$
  is closed under~$\veel$. In passing, we also note that $\I$ is
  downward closed under~$\leql$.

  Seeking a contradiction, assume the existence of $x,y\in\I$ such that
  $z=x\veel y$ exists in $\bA$ but $z\notin\I$.
Without loss of generality, we assume that $z$ is such an element
  of minimal length and that the element
  $w = x \wedgel y$ is maximal among all the elements of the set
  $\{u \wedgel v \tq u,v \in \I \text{ and } u \veel v = z\}$.
 Consequently, a contradiction,
  and therefore a proof of the lemma, is obtained by proving
  the following claim:
\begin{intermediate}
    {\normalfont$(\dag)$}%
   There exist elements $x'$ and $y'$ in~$\I$
    such that $x' \veel y' = z$ and $w \lel (x' \wedgel y')$.
  \end{intermediate}

  Since $z \notin \{x,y\}$, observe that neither $x\leql y$ nor
  $y\leql x$ hold, whence $w \lel x$ and $w\lel y$.  Thus we pick
  $\sigma,\eta\in\Sigma$ such that $w\cdot\sigma\leql x$ and
  $w\cdot\eta\leql y$. Then $w\cdot \sigma\leql z$ and
  $w\cdot\eta\leql z$, which implies that $\sigma$ and $\eta$ have a
  common $\leql$-upper bound, and thus $\sigma\veel\eta$ exists by the
  remarks made in Section~\ref{sec:length-orders}, and it is equal to
  $\sigma \eta \sigma \dotsb$.

  Since $\I$ is $\leql$-downward closed, and since $x$ and $y$ belong
  to~$\I$, the elements $w\cdot\sigma$ and $w\cdot\eta$ both belong
  to~$\I$. It follows that $\ell(a,b)<\infty$ for all pairs $(a,b)$
  with $a\neq b$ and $a,b\in\LL(w)\cup\{\sigma,\eta\}$.  Therefore,
  putting $t=w\cdot(\sigma\veel\eta)$, we have that $t\in\I$.
  Since $t$ also writes as $t=(w\cdot\sigma)\veel(w\cdot\eta)$, it
  is clear that $t\leql z$.

  Hence, the element $u = t \veel x$
  exists, and $u \leql z$. If $u = z$, then setting $x' = x$ and $y' = t$
  gives us $w \lel (w \cdot \sigma) \leql (x' \wedgel y')$ and
  $z = x' \veel y'$. If $u \lel z$, then by minimality of $z$ we have
  $u \in \I$, and therefore setting $x' = u$ and $y' = y$ gives us
  $w \lel (w \cdot \eta) \leql (t \wedgel y) \leql( x' \wedgel y')$,
  $z =( x \veel y) \leql (x' \veel y')$ and $(x' \veel y') \leql z$,
  whence $z = x' \veel y'$.
  This completes the proof of the claim and of the lemma.
\end{proof}

We obtain the following \emph{sufficient} criterion for detecting normal
sequences.

\begin{corollary}
\label{cor:4-FC} Let\/ $\bA$ be an Artin-Tits monoid.
\begin{enumerate}
\item\label{item:13} If $x$ and $y$ are two non unit simple elements of\/
    $\bA$ satisfying $L(y)\subseteq R(x)$, then $(x,y)$ is normal.
\item\label{item:14} If $(x_1,\dots,x_k)$, with $k\geq1$, is a sequence of
    non unit simple elements of\/~$\bA$ satisfying $L(x_{i+1})\subseteq
    R(x_i)$ for all $i\in\{1,\dots,k-1\}$, then it is normal.
\end{enumerate}
\end{corollary}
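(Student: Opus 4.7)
My plan is to reduce point~\ref{item:14} to point~\ref{item:13} by invoking Lemma~\ref{lem:2-FC}(\ref{item:10}): if every consecutive pair $(x_i,x_{i+1})$ is normal by the first point, then the whole sequence is. So the real work is point~\ref{item:13}, which I intend to attack by contradiction, leveraging the two Garside subsets $\GG$ and $\I$ provided by Lemma~\ref{lem:3-FC}. The crucial structural fact I will use throughout is that, by minimality of~$\SS$, every simple element lies in $\GG \cap \I$.

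Assume $(x,y)$ is not normal. By Lemma~\ref{lem:2-FC}(\ref{item:9}) there exists $\zeta \in \SS$ with $\zeta \leql x\cdot y$ and $\zeta \not\leql x$. Since $x$ and $\zeta$ share the common $\leql$-upper bound $x\cdot y$, the element $x_1 = x \veel \zeta$ exists in~$\bA$, and it belongs to~$\SS$ because $\SS$ is closed under existing~$\veel$\,. By construction $x \lel x_1 \leql x\cdot y$, so left cancellation produces $u \in \bA$ with $u \neq \unit$ and $u \leql y$ such that $x_1 = x\cdot u$. Pick any generator $\sigma$ with $\sigma \leql u$; then $\sigma \in L(y)$, and the hypothesis $L(y) \subseteq R(x)$ places $\sigma$ in $R(x)$.

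By the definition of $R(x)$, one of two cases occurs. In case (a), $\sigma \leqr x$: writing $x = v\cdot\sigma$ and $u = \sigma\cdot u''$ yields $x_1 = v\cdot\sigma^2\cdot u''$, contradicting $x_1 \in \SS \subseteq \GG$. In case (b), there exists $\eta \in \LL(x)$ with $\eta \neq \sigma$ and $\ell(\sigma,\eta) = \infty$. Since $\LL(x\cdot u) = \LL(x) \cup \LL(u)$ (any word representing the product decomposes into a word for each factor, and the defining congruence preserves letter sets), both $\sigma$ and $\eta$ lie in $\LL(x_1)$, contradicting $x_1 \in \SS \subseteq \I$.

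The main obstacle is essentially pedagogical rather than technical: one must notice that the two disjuncts in the definition of $R(x)$ correspond exactly to the two obstructions forbidden by the Garside subsets $\GG$ (no squared generator) and $\I$ (no pair of letters with infinite $\ell$-value). Once that correspondence is identified, the argument is a short case analysis, with the only routine verification being the monotonicity of the letter set under multiplication.
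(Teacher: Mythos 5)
Your proof is correct and uses essentially the same strategy as the paper: both invoke Lemma~\ref{lem:2-FC}(\ref{item:10}) to reduce to the pairwise case, and both discuss the two disjuncts in the definition of $R(x)$ against the Garside subsets $\GG$ and $\I$ from Lemma~\ref{lem:3-FC}. The only minor variation is that you form $x_1 = x \veel \zeta \in \SS$ and violate $\GG\cap\I$ there directly, whereas the paper shows $x\cdot\sigma\notin\GG\cap\I$ and then propagates this to the offending simple element $z$ via downward closedness under $\leql$.
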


\begin{proof}
  Point~\ref{item:14} follows from point~\ref{item:13}, in
  view of Lemma~\ref{lem:2-FC}, point~\ref{item:10}. To prove
  point~\ref{item:13}, let $x$ and $y$ be as in the statement and let
  $u = \bigveel \left\{z \in \SS \tq z \leql x \cdot y\right\}$; we
  prove that $x=u$.

  Clearly, $x\leql u$. Seeking a contradiction, assume that $u\neq x$.
  Then there exists $z\in\SS$ such that $x \lel z\leql x\cdot y$.
  Let $\sigma\in\Sigma$ be such that
  $x\cdot \sigma\leql z$. Then $x\cdot\sigma\leql x\cdot y$ and
  thus $\sigma\leql y$ since $\bA$ is left cancellative, hence
  $\sigma\in L(y)\subseteq R(x)$.

  Let $\GG$ and $\I$ be the Garside subsets of $\bA$ introduced in
  Lemma~\ref{lem:3-FC}. Discussing the property $\sigma\in R(x)$, one
  has: (1)~if $\sigma\leqr x$ then $x\cdot\sigma\notin\GG$, and (2)~if
  $\ell(\sigma,\eta)=\infty$ for some $\eta\in\LL(x)$ with
  $\eta\neq\sigma$, then $x\cdot\sigma\notin\I$. In both cases, we
  have thus $x\cdot\sigma\notin\GG\cap\I$. Since $\GG$ and $\I$ are
  both closed under~$\leql$, so is $\GG\cap\I$, and thus
  $z\notin\GG\cap\I$ since $x\cdot \sigma\leql z$. But
  $\SS\subseteq\GG\cap\I$ according to Lemma~\ref{lem:3-FC}, which
  contradicts that $z\in\SS$ and completes the proof.
\end{proof}

\subsubsection{Connectedness of the Charney graph}
\label{sec:conn-charn-graph-1}

Recall that we have defined the Charney graph of an Artin-Tits monoid in
Section~\ref{sec:charney-graph}. We aim to prove the following result.

\begin{theorem}
  \label{thr:1qqlknaa}
The Charney graph of an irreducible Artin-Tits monoid is strongly connected.
\end{theorem}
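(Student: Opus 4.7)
Assume $|\Sigma| \geq 2$ (else $\Ch = \emptyset$ and the result is vacuous); in particular $\Sigma \subseteq \Ch$, and I aim to show that any two vertices of $\Ch$ are mutually reachable via the relation $\to$. The plan proceeds in three steps.

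First, every $y \in \Ch$ has a one-step edge to some generator. Since $y \neq \unit$, there is a generator $\sigma \in \Sigma$ with $\sigma \leqr y$, hence $\sigma \in R(y)$; the inclusion $L(\sigma) = \{\sigma\} \subseteq R(y)$ and Corollary~\ref{cor:4-FC} yield $y \to \sigma$. So every vertex points in one step to $\Sigma$.

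Second, I show that any two Coxeter-adjacent generators lie in the same strongly connected component. Let $\sigma, \tau \in \Sigma$ be distinct with $\ell(\sigma,\tau) \geq 3$ or $\ell(\sigma,\tau) = \infty$. If $\ell(\sigma,\tau) = \infty$, the definition of $R$ gives $\tau \in R(\sigma)$ and $\sigma \in R(\tau)$, whence $\sigma \to \tau$ and $\tau \to \sigma$ by Corollary~\ref{cor:4-FC}. If $3 \leq \ell(\sigma,\tau) < \infty$, the element $\sigma\tau$ is a simple right divisor of $\sigma \veel \tau$, and direct computation of the simple left divisors of $\sigma^2\tau$ (which are exactly $\unit$ and $\sigma$) and of $\sigma\tau^2$ (which are exactly $\unit$, $\sigma$, and $\sigma\tau$) shows $\sigma \to \sigma\tau \to \tau$; by symmetry $\tau \to \tau\sigma \to \sigma$. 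Proposition~\ref{prop:1} then ensures, via connectedness of the Coxeter graph, that all generators share a common SCC $\mathcal{C}_0$.

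Third, I plan to show every $y \in \Ch$ is reachable from $\mathcal{C}_0$. The natural first predecessor is $w = \Delta_{L(y)}$, which exists (since $L(y)$ is $\leql$-bounded by $y$), lies in $\SS$, and belongs to $\Ch$: indeed $w \neq \unit$, and in the spherical case $w \neq \Delta$ because $y \in \Ch$ forces $L(y) \subsetneq \Sigma$ (if $L(y) = \Sigma$ then $\Delta \leql y$ and simplicity of $y$ gives $y = \Delta$). The inclusion $L(y) \subseteq R(w)$ and Corollary~\ref{cor:4-FC} yield $w \to y$. When $|L(y)| = 1$, $w$ is a generator and we are done. Otherwise, invoking irreducibility, I select generators $\rho_1,\ldots,\rho_k \in \Sigma$ forming a path in the Coxeter graph from some external generator to $L(y)$, and build a simple element $z = \rho_1\cdots\rho_k \cdot w$ by ``prepending'' this Coxeter-path; one then verifies via the Garside subsets $\GG$ and $\I$ of Lemma~\ref{lem:3-FC} that $z$ is simple, that $L(z) = \{\rho_1\}$ (so $\rho_1 \to z$ by Corollary~\ref{cor:4-FC}), and that the telescoping chain $z \to \rho_2\cdots\rho_k \cdot w \to \cdots \to w \to y$ consists of valid Charney edges, providing a path $\rho_1 \to^\ast y$.

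\emph{Main obstacle.} The hard part is unquestionably this last construction. The naive choice $z = \rho \cdot w$ for a single $\rho \in \Sigma \setminus L(y)$ can fail in several ways: $z$ may not be simple if $\ell(\rho, \eta) = \infty$ for some $\eta \in \LL(w)$, and even when it is simple, $L(z)$ may fail to be a singleton whenever $\rho$ commutes with some $\eta \in L(w)$, so no generator reaches $z$ in one step. One must therefore prepend not a single generator but a carefully chosen Coxeter-path $\rho_1,\ldots,\rho_k$ that breaks all ``unwanted'' commutations, and check at each intermediate step both that the partial product remains in $\SS$ (using the characterization $\SS \subseteq \GG \cap \I$) and that the normality conditions of Corollary~\ref{cor:4-FC} apply. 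The existence of such a Coxeter-path is granted precisely by irreducibility, so the assumption enters the argument in an essential way at this stage.
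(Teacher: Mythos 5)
Steps~1 and~2 of your plan are correct and complete: every $y\in\Ch$ reaches a generator in one step (take any $\sigma\leqr y$, then $L(\sigma)=\{\sigma\}\subseteq R(y)$ gives $y\to\sigma$), and Coxeter-adjacent generators do lie in the same strongly connected component, by the $\ell=\infty$ argument or the explicit $\sigma\to\sigma\tau\to\tau$ chain when $3\leq\ell(\sigma,\tau)<\infty$. With Proposition~\ref{prop:1}, this places all of $\Sigma$ in a single SCC $\mathcal C_0$. The observation $\Delta_{L(y)}\to y$ (via $L(y)\subseteq R(\Delta_{L(y)})$ from~\eqref{eq:23}) is also correct, and $\Delta_{L(y)}\in\Ch$.

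However, Step~3 is not a proof but an acknowledged sketch, and the specific construction you propose does not work as stated. You wish to show $L(\rho_1\cdots\rho_k w)=\{\rho_1\}$ and then run a \emph{telescoping} chain $\rho_1\cdots\rho_k w\to\rho_2\cdots\rho_k w\to\cdots\to w$. But to validate the edge $\rho_1\cdots\rho_k w\to\rho_2\cdots\rho_k w$ via Corollary~\ref{cor:4-FC} you would need $L(\rho_2\cdots\rho_k w)\subseteq R(\rho_1\cdots\rho_k w)$; since $\rho_2$ is a \emph{left} divisor of $\rho_2\cdots\rho_k w$ but not visibly a right divisor of $\rho_1\cdots\rho_k w$, there is no reason for $\rho_2\in R(\rho_1\cdots\rho_k w)$. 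Moreover, there is no mechanism in your sketch to guarantee that $\rho_1\cdots\rho_k w\in\SS$, nor that $L(\rho_1\cdots\rho_k w)$ collapses to a singleton: $\rho_1$ may commute with generators in $L(\rho_2\cdots\rho_k w)$ at every stage, and ``choosing a Coxeter path that breaks commutations'' is precisely the bookkeeping problem, not its solution.

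The paper resolves this differently. Rather than building a single long product and hoping to telescope it, it replaces the \emph{target} $b$ iteratively: Lemma~\ref{lem:5-FC} shows that for $\sigma\notin S$ with $\Delta_S$ defined, the element $\sigma\cdot\Delta_T$ (for the right $T\subseteq S$) is simple, has $R(\sigma\cdot\Delta_T)\supseteq S$ (so it feeds into $b$), and has a left set $Q$ that is strictly smaller in a well-chosen metric. Proposition~\ref{pro:main} then runs a double induction, first on $\bigl(p(S),q(S)\bigr)$ (largest component size, count of such components) to make $S$ Coxeter-independent, then on $d_\bG(a,S)$ to drag $S$ toward $a$. This case split on the structure of $S=L(b)$ is exactly what your sketch omits, and it is what makes the induction go through. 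If you want to keep your ``predecessor-building'' framing, the element you should be constructing at each step is $\sigma\cdot\Delta_T$ with $L$ and $R$ controlled by Lemma~\ref{lem:5-FC}, not $\rho_1\cdots\rho_k\cdot w$, and you need a strictly decreasing potential on the left sets rather than a fixed-length chain.
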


We postpone the proof of this theorem to the end of the section, and prove
two intermediate results first.

\begin{lemma}\label{lem:5-FC}
Let\/ $\bA=\bA(\Sigma,\ell)$ be an Artin-Tits monoid, let $S$ be a subset
of\/ $\Sigma$ such that the element $\Delta_S = \bigveel S$ exists, and let
$\sigma\in \Sigma\setminus S$.
\begin{enumerate}
\item\label{item:1} Let $\LL^\ast(\sigma,S) = \{\eta \in S\tq
    \ell(\sigma,\eta) = 2\}$. Then:
\begin{align*}
L(\sigma\cdot \Delta_S) &= \{\sigma\} \cup \LL^\ast(\sigma,S),
& R(\sigma\cdot \Delta_S) &\supseteq S \cup \{\eta \in \Sigma \tq\ell(\sigma,\eta) = \infty\}.
\end{align*}
\item\label{item:2} If $\ell(\sigma,\eta) < \infty$ for all $\eta \in S$,
    then $\sigma\cdot \Delta_S $ is simple.
\end{enumerate}
\end{lemma}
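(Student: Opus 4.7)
The plan is to prove the two points in sequence, since point 2 reuses techniques introduced for point 1. For the identity $L(\sigma\cdot\Delta_S)=\{\sigma\}\cup\LL^\ast(\sigma,S)$, the inclusion $\supseteq$ is immediate: $\sigma$ left-divides $\sigma\cdot\Delta_S$, and for $\eta\in\LL^\ast(\sigma,S)$ the relation $\ell(\sigma,\eta)=2$ means $\sigma$ and $\eta$ commute, so writing $\Delta_S=\eta\cdot u$ (using $\eta\leql\Delta_S$) yields $\sigma\cdot\Delta_S=\eta\cdot\sigma\cdot u$. For the reverse inclusion, given $\tau\in L(\sigma\cdot\Delta_S)\setminus\{\sigma\}$, both $\tau$ and $\sigma$ share a common $\leql$-upper bound, so $\sigma\veel\tau$ exists and equals $\sigma\tau\sigma\dotsb$ of length $m=\ell(\sigma,\tau)$ by the remarks in Section~\ref{sec:length-orders}. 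I would then split on $m$: when $m=2$, left cancellation gives $\tau\leql\Delta_S$, and since $\Delta_S\in\langle S\rangle$ by~\eqref{eq:24} we get $\tau\in\LL(\Delta_S)\subseteq S$, hence $\tau\in\LL^\ast(\sigma,S)$; when $m\geq 3$, the prefix $\sigma\tau\sigma$ of $\sigma\veel\tau$ left-divides $\sigma\cdot\Delta_S$, so cancellation now yields $\tau\sigma\leql\Delta_S$ and the same letter-set argument forces $\sigma\in\LL(\Delta_S)\subseteq S$, contradicting $\sigma\in\Sigma\setminus S$. The inclusion $R(\sigma\cdot\Delta_S)\supseteq S\cup\{\eta\in\Sigma\tq\ell(\sigma,\eta)=\infty\}$ is easier: any $\eta\in S$ right-divides $\Delta_S$ by~\eqref{eq:23}, hence $\sigma\cdot\Delta_S$; while for $\ell(\sigma,\eta)=\infty$ the second clause in the definition of $R$ applies directly, using $\sigma\in\LL(\sigma\cdot\Delta_S)$.

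For point 2, my strategy is to exhibit $\sigma\cdot\Delta_S$ as a least upper bound of simple elements and then invoke the closure of $\SS$ under existing $\veel$. First I would check that, for each $\eta\in S$, the product $\sigma\cdot\eta$ is simple: the hypothesis $\ell(\sigma,\eta)<\infty$ ensures that $\sigma\veel\eta$ exists and lies in $\SS$, and an inspection of the two alternating representations $\sigma\eta\sigma\dotsb$ and $\eta\sigma\eta\dotsb$ of $\sigma\veel\eta$ shows, treating $\ell(\sigma,\eta)$ even and odd separately, that $\sigma\cdot\eta$ appears as the suffix of at least one of them; consequently $\sigma\cdot\eta\leqr\sigma\veel\eta$, and hence $\sigma\cdot\eta\in\SS$ by downward closure of $\SS$ under $\leqr$.

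The identification $\sigma\cdot\Delta_S=\bigveel\{\sigma\cdot\eta\tq\eta\in S\}$ then follows from a left cancellation argument dual to the one used in point 1: each $\sigma\cdot\eta$ divides $\sigma\cdot\Delta_S$ because $\eta\leql\Delta_S$, and conversely any common upper bound $x$ of $\{\sigma\cdot\eta\}_{\eta\in S}$ satisfies $\sigma\leql x$, so $x=\sigma\cdot x'$, and the relations $\sigma\cdot\eta\leql x$ translate by cancellation into $\eta\leql x'$ for all $\eta\in S$, forcing $\Delta_S\leql x'$ and finally $\sigma\cdot\Delta_S\leql x$. Since $S$ is finite and the full lub $\sigma\cdot\Delta_S$ exists, iterating pairwise closure of $\SS$ under existing $\veel$ yields $\sigma\cdot\Delta_S\in\SS$. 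The main obstacle, in my view, is the case $m\geq 3$ in the reverse inclusion of point 1: extracting the prefix $\sigma\tau\sigma$ and running cancellation are routine, but recognising this as precisely the step that activates the hypothesis $\sigma\notin S$ (via the letter-set invariance $\LL(\Delta_S)\subseteq S$) is the one non-obvious move of the whole argument.
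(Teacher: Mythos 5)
Your proof is correct and follows essentially the same route as the paper's. The only organizational difference is in the $L$-inclusion: the paper first shows $L(\sigma\cdot\Delta_S)\subseteq\{\sigma\}\cup S$ via $L\subseteq\LL$ and $\LL(\Delta_S)=S$, and only then runs the contradiction for $\eta\in S$ with $\ell(\sigma,\eta)>2$; you instead take an arbitrary $\tau\in L\setminus\{\sigma\}$, split on $m=\ell(\sigma,\tau)$, and recover $\tau\in S$ as a by-product of the $m=2$ case while killing $m\geq3$ by the same cancellation-plus-letter-set argument. Both versions rely on the identical key facts (left cancellativity, $\sigma\veel\tau=\sigma\tau\sigma\dotsb$, $\LL(\Delta_S)=S$, and closure of $\SS$ under $\leqr$ and existing $\veel$), so this is a cosmetic reshuffling rather than a different method.
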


\begin{proof}
  \ref{item:1}.\quad The rightmost inclusion follows from the following two observations: the relation   $R(\sigma\cdot \Delta_S) \supseteq \{\eta \in \Sigma \tq
  \ell(\sigma,\eta) = \infty\}$
is immediate from the definitions stated at the beginning of Section~\ref{sec:conn-charn-graph}, and the relation $R(\sigma \cdot\Delta_S) \supseteq S$ derives from the property~\eqref{eq:23}, Section~\ref{sec:length-orders}. 

For the leftmost equality, we first observe that the inclusion $\{\sigma\}\cup\LL^*(\sigma,S)\subseteq L(\sigma\cdot\Delta_S)$ is obvious. For the converse inclusion, the relation $L(\sigma\cdot \Delta_S) \subseteq \LL(\sigma\cdot \Delta_S)$ is immediate, and $\LL(\Delta_S) =
 S$ follows from~\eqref{eq:24}, hence we obtain $L(\sigma\cdot\Delta_S)\subseteq\{\sigma\}\cup S$. Thus, it is enough to prove that $\eta \notin L(\sigma\cdot \Delta_S)$ for every $\eta \in S$ such that $\ell(\sigma,\eta) > 2$.

Suppose, for the sake of contradiction, that
$\eta \leql \sigma\cdot \Delta_S$ for such an element~$\eta$.  Then
$\sigma$ and $\eta$ have the common $\leql$-upper
bound~$\sigma\cdot\Delta_S$. This implies that
$\ell(\sigma,\eta)<\infty$ according to the remarks of
Section~\ref{sec:length-orders}, and that
$\sigma\veel\eta\leql\sigma\cdot\Delta_S$. Furthermore,
$\sigma\cdot \eta\cdot \sigma \leql (\sigma\veel\eta)$ since
$\ell(\sigma,\eta)>2$, whence
$\sigma\cdot\eta\cdot\sigma\leql \sigma\cdot \Delta_S$, and thus
$\eta\cdot \sigma \leql \Delta_S$.  Since $\sigma \notin S$, the
latter relation is impossible.

  \ref{item:2}.\quad Let $\SS$ denote the set of simple elements
  of~$\bA$. Then for all $\eta \in S$, it follows from the relation
  $\ell(\sigma,\eta) < \infty$ that $\sigma \veel \eta \in \SS$, and
  from the relation $ \sigma\cdot \eta\leqr (\sigma \veel \eta) $ that
  $\sigma\cdot \eta \in \SS$.  Since
  $\sigma\cdot \Delta_S = \bigveel\{\sigma\cdot \eta \tq \eta \in
  S\}$, it implies that $\sigma\cdot\Delta_S\in\SS$.
\end{proof}

For the next result, we follow the same lines as in the proof
of~\cite[Proposition 4.9]{bestvina1999non}.

\begin{proposition}\label{pro:main}
  Let\/ $\bA$ be an irreducible Artin-Tits monoid, and let $\SS$ be
  the smallest Garside subset of\/~$\bA$.  Let $a$ and $b$ be
  non unit elements of\/~$\SS$.  If either\/ $\bA$ does not have
  spherical type, or $a = \Delta$, or $b \neq \Delta$, then there
  exists some normal sequence $(x_1,\ldots,x_k)$ such that $a = x_1$
  and $b = x_k$.
\end{proposition}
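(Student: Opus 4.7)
The plan is, after dealing with a trivial case, to reduce the statement to a three-step connectivity argument. If $\bA$ has spherical type and $a = \Delta$, property~\eqref{eq:22} immediately makes $(\Delta, b)$ a normal sequence, so I may assume throughout that both $a$ and $b$ are non-unit and, in the spherical case, distinct from~$\Delta$. My plan is to produce the desired normal sequence as the concatenation of three normal sub-sequences: (i) from $a$ to some generator $\sigma_a$, (ii) from $\sigma_a$ to some generator $\sigma_b$ via the Coxeter graph, and (iii) from $\sigma_b$ to~$b$. Lemma~\ref{lem:2-FC} ensures that normality is preserved under such concatenation as soon as all consecutive pairs are normal.

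Step~(i) is immediate: since $a \neq \unit$, pick any $\sigma_a \in \Sigma$ with $\sigma_a \leqr a$; then $L(\sigma_a) = \{\sigma_a\} \subseteq R(a)$ and $(a, \sigma_a)$ is normal by Corollary~\ref{cor:4-FC}. Step~(ii) uses irreducibility through Proposition~\ref{prop:1}: the Coxeter graph being connected, along a Coxeter-path $\sigma_a = \sigma_0, \ldots, \sigma_m = \sigma_b$, each edge yields a normal sub-sequence, namely $(\sigma_i, \sigma_{i+1})$ when $\ell(\sigma_i, \sigma_{i+1}) = \infty$ (so that $\sigma_{i+1} \in R(\sigma_i)$), and the triple $(\sigma_i, \sigma_i \cdot \sigma_{i+1}, \sigma_{i+1})$ when $3 \leq \ell(\sigma_i, \sigma_{i+1}) < \infty$, constructed via Lemma~\ref{lem:5-FC} applied to $S = \{\sigma_{i+1}\}$ to show that $\sigma_i \cdot \sigma_{i+1}$ is a simple element with $L(\sigma_i \sigma_{i+1}) = \{\sigma_i\}$ and $\sigma_{i+1} \in R(\sigma_i \sigma_{i+1})$.

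Step~(iii) is the technical core, and the plan is to show by strong induction on $|b|$ that every non-unit, non-$\Delta$ simple element $b$ is reachable from some generator by a normal sequence. The base case $|b| = 1$ is trivial. In the inductive step, if $b \neq \Delta_{L(b)}$ then $\Delta_{L(b)}$ is a strictly shorter non-unit, non-$\Delta$ simple element (non-$\Delta$ because $L(b) \subsetneq \Sigma$, since $b \neq \Delta$), and $(\Delta_{L(b)}, b)$ is normal by~\eqref{eq:23} and Corollary~\ref{cor:4-FC}, so the induction hypothesis applies. The residual case $b = \Delta_T$ with $T = L(b) \subsetneq \Sigma$, $|T| \geq 2$, is the main obstacle, because no simple element $x$ with $T \subseteq R(x)$ has $|x| < |\Delta_T|$, so a length-based descent is impossible and one is forced through an intermediate strictly longer than~$b$. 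Following~\cite[Prop.~4.9]{bestvina1999non}, pick $\sigma \in \Sigma \setminus T$ Coxeter-adjacent to~$T$ (available by irreducibility) and apply Lemma~\ref{lem:5-FC}: provided $\ell(\sigma, \cdot)$ is finite on~$T$, the simple element $\sigma \cdot \Delta_T$ satisfies $T \subseteq R(\sigma \cdot \Delta_T)$ so $(\sigma \cdot \Delta_T, \Delta_T)$ is normal, while $L(\sigma \cdot \Delta_T) = \{\sigma\} \cup \{\eta \in T : \ell(\sigma, \eta) = 2\}$ is typically structurally simpler than~$T$, permitting an inner iteration along the same scheme with an updated subset of $\Sigma$. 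The delicate technical step is to verify that this cascading choice of generators and subsets always exists and terminates in the irreducible setting.
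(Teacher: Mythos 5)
Your overall plan is sound and close in spirit to the paper's argument: both begin by reducing $a$ to a generator via $(a,\rho)$ with $\rho\in R(a)$, both exploit irreducibility through connectedness of the Coxeter graph, and both lean on Lemma~\ref{lem:5-FC} to manufacture simple elements of the form $\sigma\cdot\Delta_T$ that sit between $a$ and~$b$ in a normal sequence. Steps~(i) and~(ii) are correct and well justified. In particular, your handling of a Coxeter edge with $3\leq\ell(\sigma_i,\sigma_{i+1})<\infty$ via the triple $(\sigma_i,\,\sigma_i\sigma_{i+1},\,\sigma_{i+1})$ is a clean use of Lemma~\ref{lem:5-FC} with $S=\{\sigma_{i+1}\}$, and in step~(iii)(a) the reduction of $b\neq\Delta_{L(b)}$ to the strictly shorter simple element $\Delta_{L(b)}$, with normality of $(\Delta_{L(b)},b)$ coming from~\eqref{eq:23} and Corollary~\ref{cor:4-FC}, is exactly right.

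The gap is step~(iii)(b), the case $b=\Delta_T$ with $|T|\geq 2$, which is precisely where the paper's proof does its real work and where you stop short. You correctly observe that length-based descent fails here, so the induction on $|b|$ that carries case~(a) cannot drive case~(b), and you correctly propose passing through $\sigma\cdot\Delta_{T'}$ with $T'=\{t\in T\tq\ell(\sigma,t)<\infty\}$. But the assertion that this \emph{cascade} ``always exists and terminates in the irreducible setting'' is not verified, and it is not obvious: the new intermediate element is strictly longer than~$b$, the new set $L(\sigma\cdot\Delta_{T'})=\{\sigma\}\cup\{\eta\in T\tq\ell(\sigma,\eta)=2\}$ need not be a subset of~$T$ nor have smaller cardinality, and there is no evident monotone quantity along the recursion. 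The paper supplies this missing ingredient by switching to a finer well-founded measure: first the lexicographic pair $(p(S),q(S))$, where $p$ is the maximal size of a Coxeter-connected component of $S=L(b)$ and $q$ the number of such components attaining the maximum, which strictly decreases until $S$ becomes Coxeter-independent; then, once $p(S)=1$, a second induction on $d_\bG(a,S)$ with $a$ the chosen starting generator. Both steps rely on carefully choosing the auxiliary generator ($s_{n-1}$ along a geodesic in Case~2, a neighbour $\eta_0$ of a maximal component $S'$ in Case~3) so that the chosen measure provably drops. Without some replacement for this bookkeeping your induction is not grounded, so as written the proof is incomplete. Also note that once you decide to track $d_\bG(a,S)$ you lose the freedom of ``some generator'' in step~(iii): the measure depends on the specific starting vertex, which is why the paper keeps $a$ fixed rather than composing a separate Coxeter-path bridge as your step~(ii) does.
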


\begin{proof}
  Let $\bG$ be the Coxeter graph of~$\bA$ (see
  Section~\ref{sec:irred-artin-tits}), and let $d_\bG(\cdot,\cdot)$
  denote the graph metric in~$\bG$.  For each set
  $S \subseteq \Sigma$, we denote by $c(S)$ the number of connected
  components of $S$ in the graph $\mathbf{G}$, and by $d_\bG(\cdot,S)$
  the sum $\sum_{s \in S} d_\bG(\cdot,s)$.

  First, observe that if $L(b) = \Sigma$, then $b$ is a $\leql$-upper
  bound of~$\Sigma$, hence $\bA$ has spherical type and $\Delta=b$
  since $\Delta=\bigveel\Sigma$. According to our assumptions, this
  only occurs with $a = \Delta$, and then the normal sequence
  $(\Delta)$ has first and last letters $a$ and~$b$. Hence, we assume
  that $L(b)\neq \Sigma$.

  Next, since $a\neq\unit$, the set $R(a)$ is non empty. Hence, let
  $\rho \in R(a)$ be some node of $\mathbf{G}$.  Since $(a,\rho)$ is a
  normal sequence according to Corollary~\ref{cor:4-FC},
  point~\ref{item:13}, we assume without loss of generality that
  $a$ is an element of~$\Sigma$.

  Now, we put $S=L(b)$, and we discuss different cases. We take into
  account that $\bG$ is connected, which follows from the assumption
  that $\bA$ is irreducible \emph{via} Proposition~\ref{prop:1}. We
  also note that $\bigveel S$ exists, since $b$ is a $\leql$-upper
  bound of~$S$.

  \begin{enumerate}
  \item If $d_\bG(a,S) = 0$, then $S = \{a\}$, hence $L(b)\subseteq R(a)$
      and therefore the sequence $(a,b)$ is normal by
      Corollary~\ref{cor:4-FC}, point~\ref{item:13}.

  \item If $c(S) = \abs{S}$ and $d_\bG(a,S) > 0$. Then the set $S$ is
      $\bG$-independent and contains some vertex $\sigma \neq a$.

Let $n=d_\bG(\sigma,a)\geq1$, and let $s_0,s_1,\ldots,s_n$ be a path in
$\mathbf{G}$ such that $s_0 = a$, $s_n = \sigma$.  Since $S$ is
$\bG$-independent, the vertex $s_{n-1}$ does not belong to~$S$. Consider
the sets:
  \begin{align*}
    Q &= \{s_{n-1}\} \cup \{t \in S\tq d_\mathbf{G}(t,s_{n-1}) \geq 2\},
   & T &= \{t \in S \tq \ell(s_{n-1},t) < \infty\}.
  \end{align*}

 We note that $\Delta_T=\bigveel T$ exists since $\bigveel S$ exists.
  Lemma~\ref{lem:5-FC}, point~\ref{item:1}, proves:
  \begin{align*}
   L(s_{n-1}\cdot \Delta_T) &\subseteq Q ,& R(s_{n-1}\cdot \Delta_T) &\supseteq T \cup \{t \in S :
  \ell(s_{n-1},t) = \infty\} = S,
  \end{align*}
  and Lemma~\ref{lem:5-FC}, point~\ref{item:2}, proves that $s_{n-1}\cdot
  \Delta_T \in \SS$.  By construction, the set $Q$ is $\bG$-independent,
  whence $c(Q) = \abs{Q}$. Finally, we observe:
\begin{align*}
d_\bG(a,S) - d_\bG(a,Q) & =  \sum_{t \in S} \mathbf{1}\bigl({d_\mathbf{G}(t,s_{n-1})=1}\bigr)
 d_\mathbf{G}(a,t) - d_\mathbf{G}(a,s_{n-1}) \\
& \geq  d_\mathbf{G}(a,\sigma) - d_\mathbf{G}(a,s_{n-1}) = 1,
\end{align*}
and thus $d_\bG(a,Q)<d_\bG(a,S)$.

Since we have observed that $L(s_{n-1}\cdot\Delta_T)\subseteq Q$, we may
assume as an induction assumption on $d_\bG(a,S)$ the existence of a normal
sequence $(w_1,\ldots,w_j)$ such that $w_1 = a$ and $w_j = s_{n-1}\cdot
\Delta_T$.  Since we have also observed that $R(w_j)\supseteq S=L(b)$, it
follows from Corollary~\ref{cor:4-FC}, point~\ref{item:13}, that $(w_j,b)$
is a normal sequence, and thus the sequence $(w_1,\ldots,w_j,b)$ is also
normal according to Lemma~\ref{lem:2-FC}, point~\ref{item:10}.

\item If $c(S) < \abs{S}$, then, for every subset $T$ of~$\Sigma$, let
    $p(T)$ be the maximal cardinal of a connected component of $T$
    in~$\bG$, and let $q(T)$ be the number of connected components of $T$
    with this maximal cardinal.

Consider some connected component $S'$ of $S$ in $\bG$ of maximal cardinal,
and let $\eta_0\notin S$ be some neighbor of $S'$ in~$\mathbf{G}$. Such a
vertex $\eta_0$ exists since $S\neq\Sigma$. In addition, consider the sets:
\begin{align*}
  Q &= \{\eta_0\} \cup \{s \in S \tq d_\mathbf{G}(\eta_0,s) \geq 2\},
  &  T &= \{t \in S \tq \ell(\eta_0,t) < \infty\}.
\end{align*}

As in case~2 above, we note that $\Delta_T=\bigveel T$ exists since
$\bigveel S$ exists, and we apply Lemma~\ref{lem:5-FC} to obtain:
\begin{align*}
L(\eta_0\cdot\Delta_T) &\subseteq Q,&
R(\eta_0\cdot \Delta_T) &\supseteq S,&\eta_0\cdot
\Delta_T &\in \SS.
\end{align*}

It is obvious that $p(Q)\leq p(S)$.  Moreover, since $\eta_0$ is a neighbor
of~$S'$, $S'\cap Q\varsubsetneq S'$.  Therefore, since $S'$ has been chosen
of maximal cardinal among the connected components of~$S$ on the one hand,
and since $\abs{S'}\geq2$ by the assumption $c(S)<\abs{S}$ on the other
hand, at least one of the inequalities $p(Q)<p(S)$ and $q(Q)<q(S)$ holds.
It implies that $\bigl(p(Q),q(Q)\bigr)<\bigl(p(S),q(S)\bigr)$ holds in the
lexicographical order on~$\bbN\times\bbN$. We may thus assume as an
induction hypothesis (using Case 2 if $p(Q) = 1$) the existence of a normal
sequence $(w_1,\ldots,w_j)$ such that $w_1 = a$ and $w_j = \eta_0\cdot
\Delta_T$. As in case~2 above, we use that $L(b) = S \subseteq R(w_j)$ to
conclude that $(w_j,b)$, and thus $(w_1,\dots,w_j,b)$ are normal sequences.
\end{enumerate}

The proof is complete.
\end{proof}

Theorem~\ref{thr:1qqlknaa} follows at once from Proposition~\ref{pro:main}.

\subsection{Finite measures on the completion
of Artin-Tits monoids} \label{sec:finite-meas-bound}

\subsubsection{Boundary at infinity of an Artin-Tits monoid}
\label{sec:boundary-elements-an}

Let $\bA$ be an Artin-Tits monoid, with smallest Garside subset~$\SS$.
Elements of $\bA\setminus\{\unit\}$ are in bijection with normal sequences
according to the results recalled in Section~\ref{sec:normal-form-general}.
Hence, they identify with finite paths in the graph
$(\SS\setminus\{\unit\},\to)$. It is therefore natural to introduce
\emph{boundary elements} of $\bA$ as infinite paths in the very same graph.

In order to uniformly treat elements and boundary elements of the monoid, we
extend the notion of normal form as follows. If $x$ is an element of~$\bA$,
with height $k\geq1$ and normal form $(x_1,\dots,x_k)$, we put $x_j=\unit$
for all integers $j>k$. The sequence $(x_j)_{j\geq1}$ thus obtained is the
\emph{generalized
  normal form} of~$x$.

We say that an infinite sequence $(x_k)_{k\geq1}$ of elements of $\SS$ is
\emph{normal} if $x_k\to x_{k+1}$ holds for all $k\geq1$. Among infinite
normal sequences, those hitting $\unit$ at least once actually stay in
$\unit$ forever because of~\eqref{eq:21}, and these sequences correspond
bijectively to the usual elements of~$\bA$. And those normal sequences never
hitting $\unit$ correspond to the boundary elements of~$\bA$.

\begin{definition}
  \label{def:1poq}
  The \emph{generalized elements} of an Artin-Tits monoid\/ $\bA$ are
  the infinite normal sequences of simple elements of\/~$\bA$. Their
  set is called the \emph{completion of\/~$\bA$}, and is denoted
  by\/~$\bAbar$.  The \emph{boundary elements of\/~$\bA$} are the
  generalized elements that avoid the unit~$\unit$. Their set is
  called the \emph{boundary at infinity of\/~$\bA$}, or shortly
  \emph{boundary of\/~$\bA$}, and is denoted by\/~$\BA$. Identifying
  elements of\/~$\bA$ with their generalized normal form, we have
  thus: $\bAbar=\bA\cup\BA$.

  Both sets\/ $\BA$ and\/ $\bAbar$ are endowed with their natural
  topologies, as subsets of the countable product
  $\SS\times\SS\times\cdots$, where $\SS$ is the smallest Garside
  subset of\/~$\bA$, and with the associated Borelian \slgb s.

For every element $x$ of\/~$\bA$, of height $k$ and with normal form\/
$(x_1,\dots,x_k)$, the \emph{Garside cylinder} of base $x$ is the open and
closed subset of\/ $\bAbar$
defined by:
\begin{gather*}
\CC_x=\{y=(y_j)_{j\geq1}\in\bAbar\tq y_1=x_1,\dots,y_k=x_k\}.
\end{gather*}
Note that $\CC_{\unit}= \{\unit\}$.

The partial ordering\/ $\leql$ is extended on\/ $\bAbar$ by putting, for
$x=\seq xk$ and $y=\seq yk$:
\begin{gather*}
  x\leql y\iff
(\forall k\geq 1\quad\exists j\geq1\quad x_1\cdot\ldots\cdot x_k \leql
y_1\cdot\ldots\cdot y_j).
\end{gather*}
For every $x\in\bA$, the \emph{visual cylinder} $\up x\subseteq\BA$ and the
\emph{full visual cylinder} $\Up x\subseteq\bAbar$ are:
\begin{align*}
  \up x&=\{y\in\BA\tq x\leql y\},&\Up x&=\{y\in\bAbar\tq x\leql y\}.
\end{align*}
\end{definition}

Both spaces $\bAbar$ and $\BA$ are metrisable and compact, and $\bAbar$ is
the topological closure of~$\bA$. More generally, for any $x\in\bA$, the full
visual cylinder $\Up x$ is the topological closure in $\bAbar$ of
$\{y\in\bA\tq x\leql y\}$.

The collection $\bigl\{\CC_x\tq x\in\bA\bigr\} \cup \{\emptyset\}$ is closed
under intersection and generates the \slgb\ on~$\bAbar$.
Therefore, any finite measure $\nu$ on $\bAbar$
is entirely determined by the family of
values~$\bigl(\nu(\CC_x)\bigr)_{x\in\bA}$\,.

The following lemma provides an alternative description of the relation
$x\leql y$ for $x\in\bA$ and $y\in\bAbar$.

\begin{lemma}
  \label{lem:2}
  Let $x$ be an element of an Artin-Tits monoid\/~$\bA$, and let
  $k=\height(x)$.  Let $y=(y_j)_{j\geq1}$ be a generalized element
  of\/~$\bA$. Then $x\leql y$ in\/ $\bAbar$ if and only if
  $x\leql y_1\cdot\ldots\cdot y_k$ in\/~$\bA$.
  \VJ{In particular, if $y \in \bA$, then the relation $x \leql y$ holds
  in\/~$\bA$ if and only if it holds in\/~$\bAbar$.}
\end{lemma}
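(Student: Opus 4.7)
The plan is to derive both implications from Lemma~\ref{lem:1}, whose crucial content here is that $x\in\SS^k$ (item~\ref{item:4}) and that, for any $u\in\bA$ whose normal form has height at least~$k$, the length-$k$ prefix $u_1\cdot\ldots\cdot u_k$ is the $\leql$-least upper bound of all $\SS^k$-divisors of $u$ (item~\ref{item:3}).

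The backward direction is immediate: assuming $x\leql y_1\cdot\ldots\cdot y_k$ in $\bA$, to verify $x\leql y$ in $\bAbar$ one must, for each $n\geq1$, produce some $j\geq1$ with $x_1\cdot\ldots\cdot x_n\leql y_1\cdot\ldots\cdot y_j$, and the choice $j=k$ works uniformly: since $x_i=\unit$ for $i>k$ in the generalized normal form, $x_1\cdot\ldots\cdot x_n\leql x=x_1\cdot\ldots\cdot x_k$, which is $\leql y_1\cdot\ldots\cdot y_k$ by hypothesis.

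For the forward direction, specialize the definition of $\leql$ on $\bAbar$ to the index $n=k$ to get some $j\geq1$ with $x\leql y_1\cdot\ldots\cdot y_j$. If $j\leq k$, monotonicity suffices. When $j>k$, observe that $(y_1,\ldots,y_j)$ is a finite normal sequence, being a prefix of the infinite normal sequence~$y$, and by~\eqref{eq:21} the $\unit$'s in it can only appear as a trailing block; let $m\leq j$ be the last index with $y_m\neq\unit$ (or $m=0$ if $y_1\cdot\ldots\cdot y_j=\unit$), so that $(y_1,\ldots,y_m)$ is the normal form of $y_1\cdot\ldots\cdot y_j$. If $m\leq k$, then $y_1\cdot\ldots\cdot y_k=y_1\cdot\ldots\cdot y_m=y_1\cdot\ldots\cdot y_j$, and we are done. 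If $m>k$, Lemma~\ref{lem:1}~item~\ref{item:3} applied to the height-$m$ normal form of $y_1\cdot\ldots\cdot y_j$ gives $y_1\cdot\ldots\cdot y_k=\bigveel\{z\in\SS^k\tq z\leql y_1\cdot\ldots\cdot y_j\}$; since $x\in\SS^k$ and $x\leql y_1\cdot\ldots\cdot y_j$, we conclude $x\leql y_1\cdot\ldots\cdot y_k$.

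For the concluding clause about $y\in\bA$, one checks directly that $x\leql y$ in $\bA$ is equivalent to $x\leql y_1\cdot\ldots\cdot y_k$ in $\bA$: if $k\geq\height(y)$, both right-hand sides equal~$y$; if $k<\height(y)$, the implication from $y_1\cdot\ldots\cdot y_k$ to $y$ is immediate from $y_1\cdot\ldots\cdot y_k\leql y$, and the converse is the same truncation argument as above applied to $u=y$. Combined with the main equivalence, this yields the particular statement. The only genuinely delicate point is the case $j>k$ in the forward direction, where one must correctly identify the height of the normal form of~$y_1\cdot\ldots\cdot y_j$ before invoking Lemma~\ref{lem:1}; the trailing-$\unit$ bookkeeping is routine but must be handled carefully.
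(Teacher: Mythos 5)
Your proof is correct and follows the paper's own route: both directions reduce to Lemma~\ref{lem:1}. You merely spell out the trailing-$\unit$ bookkeeping (the case split on whether the true height $m$ of $y_1\cdot\ldots\cdot y_j$ exceeds $k$) that the paper leaves implicit, and in the converse you use the slightly more economical uniform witness $j=k$ where the paper splits on $n\leq k$ versus $n>k$; the substance is identical.
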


\begin{proof}
  Let $x\in\bA$, $k=\height(x)$ and $y\in\bAbar$ with
  $y=(y_j)_{j\geq1}$. Assume that $x\leql y$. Then there is an integer
  $j\geq k$ such that $x\leql y'$, with $y'= y_1\cdot\ldots\cdot
  y_j$. Applying Lemma~\ref{lem:1}, point~\ref{item:3}, we obtain
  that $x\leql y_1\cdot\ldots\cdot y_k$, which is what we wanted to prove. The
  converse implication also follows from Lemma~\ref{lem:1}:
  if $x\leql y_1\cdot\ldots\cdot y_k$ then $x_1\cdot\ldots\cdot x_j \leql
  y_1\cdot\ldots\cdot y_j$ for all $j \leq k$, and
  $x_1\cdot\ldots\cdot x_j = x \leql y_1\cdot\ldots\cdot y_k \leql
  y_1\cdot\ldots\cdot y_j$ for all $j > k$.
\end{proof}

\subsubsection{Relating Garside cylinders and visual cylinders}
\label{sec:relat-gars-cylind}

Visual cylinders are natural from the point of view of the algebraic
structure of the monoid, whereas Garside cylinders have a more operational
presentation as they rely on the normal form of elements. Since both points
of view are interesting, it is important to relate the two kinds of cylinders
to one another, which we do now.

% New part, used to avoid useless notation

Given any two Garside cylinders
$\CC_x$ and $\CC_y$, either $\CC_x \cap \CC_y = \emptyset$
or $\CC_x \subseteq \CC_y$ or $\CC_y \subseteq \CC_x$.
Furthermore, for all $x \in \bA$, the Garside cylinder
$\CC_x$ is contained into the full visual cylinder
$\Up x$.
Consequently, for every open set $\calA \subseteq \bAbar$,
\ie, every union of sets of the form $\Up x$ with $x \in \bA$,
there exists a unique set $\calB$ such that
$\calA$ is the disjoint union of the Garside cylinders
$\CC_x$ for $x \in \calB$.
We say that $\calB$ is a \emph{Garside base}
of $\calA$.

A case of special interest is that of the set
$\calA = \Up x$, where $x \in \bA$.

\begin{definition}
  \label{prop:2qqpjaazlx}
  Let\/~$\bA$ be an Artin-Tits monoid. For every $x\in\bA$, we denote by $\bA[x]$ the Garside base of the set
  $\Up x$, i.e., the unique set such that
  the full cylinder\/ $\Up x$
  has the following decomposition as a disjoint union:
\begin{gather}
\label{eq:1qaz}
  \Up x=\bigcup_{y\in\bA[x]}\CC_y\,.
\end{gather}
\end{definition}

Note that, equivalently, we might define the set $\bA[x]$ as
\begin{gather}
\label{eq:18}
  \bA[x]=\bigl\{y \in \bA \/~\cap \Up x \tq \forall z \in \Up x \quad \CC_y \subseteq \CC_z \implies y = z\bigr\}.
\end{gather}

It follows from Lemma~\ref{lem:1qpasoa} below that
the set $\bA[x]$ is finite,
and therefore that the disjoint union of~\eqref{eq:1qaz} is finite.

\begin{lemma}
  \label{lem:1qpasoa}
  Let\/ $\bA$ be an Artin-Tits monoid.  For every $x\in\bA$ and $y\in\bA[x]$, it holds:
  $\height(y)\leq\height(x)$.
\end{lemma}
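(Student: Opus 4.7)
The plan is to use the alternative description of $\bA[x]$ from~\eqref{eq:18}: an element $y\in\bA$ lies in $\bA[x]$ exactly when $y\in\Up x$ and no $z\in\Up x$ has $\CC_y\subsetneq\CC_z$. I would first record the (elementary) observation that for $y,z\in\bA$, inclusion $\CC_y\subseteq\CC_z$ holds iff the normal form of $z$ is a prefix of the normal form of~$y$; in particular, strict inclusion forces $\height(z)<\height(y)$.

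Then I argue by contradiction. Suppose some $y\in\bA[x]$ satisfies $\height(y)>k$, where $k=\height(x)$. Write the normal form of $y$ as $(y_1,\dots,y_l)$ with $l>k$, and set $y'=y_1\cdot\ldots\cdot y_k$. By Lemma~\ref{lem:2-FC}, point~\ref{item:10}, normality of $(y_1,\dots,y_l)$ is a local property, so the truncation $(y_1,\dots,y_k)$ is itself normal; moreover none of its entries is $\unit$ because, by~\eqref{eq:21}, $\unit$ can only appear at the very end of a normal sequence, which is not the case here since $l>k$. By uniqueness of the normal form, $(y_1,\dots,y_k)$ is therefore the normal form of~$y'$, and in particular $\height(y')=k$.

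Next, applying Lemma~\ref{lem:1}, point~\ref{item:3}, to the element $y$ with index $j=k$ gives $y_1\cdot\ldots\cdot y_k=\bigveel\{z\in\SS^k\tq z\leql y\}$. Since $x\in\SS^k$ and $x\leql y$ (because $y\in\Up x$), we deduce $x\leql y_1\cdot\ldots\cdot y_k=y'$, so $y'\in\Up x$. On the other hand, the normal form of $y'$ is a strict prefix of the normal form of~$y$, whence $\CC_y\subsetneq\CC_{y'}$. This contradicts the maximality condition in~\eqref{eq:18}, and so $\height(y)\leq k=\height(x)$.

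There is no real obstacle here: the proof is mostly an unwinding of the definition of the Garside base together with Lemma~\ref{lem:1}. The only point worth stating carefully is that truncating the normal form of $y$ really does yield the normal form of the truncated product (so that the resulting Garside cylinder is bigger), which is why the local characterization of normality and the remark that $\unit$ sits only at the tail are invoked explicitly.
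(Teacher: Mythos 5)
Your proof is correct and follows essentially the same route as the paper's (which simply invokes Lemma~\ref{lem:2} to get $x\leql y_1\cdot\ldots\cdot y_k$ and then says ``whence the result''); you have just made explicit the step the paper leaves to the reader, namely that $y'=y_1\cdot\ldots\cdot y_k$ lies in $\Up x$ with $\CC_y\subsetneq\CC_{y'}$ if $\height(y)>k$, contradicting the maximality condition in~\eqref{eq:18}.
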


\begin{proof}
From $x\leql y$ with $y=(y_j)_{j\geq1}$, we know from Lemma~\ref{lem:2}
that $x\leq y_1\cdot\ldots\cdot y_k$ with $k=\height(x)$, whence the result.
\end{proof}

% 
% Let us introduce some notations, given an Artin-Tits monoid
% $\bA=\bA(\Sigma,\ell)$. For $x\in\bA$ and $\xi\in\bAbar$
% with $\xi=(y_j)_{j\geq1}$\,, and such that $x\leql\xi$, we let $\beta_x(\xi)$
% denote the element of $\bA$ defined by:
% \begin{align*}
%   \beta_x(\xi)&=y_1\cdot\ldots\cdot y_{T_x(\xi)}\,,&
% \text{with } T_x(\xi)&=\min\{j\geq1\tq x\leql y_1\cdot\ldots\cdot
% y_j\}.
% \end{align*}
% 
% \begin{lemma}
%   \label{lem:1qpasoa}
%   Let\/ $\bA$ be an Artin-Tits monoid.  For every $x\in\bA$ and
%   $\xi\in\up x$, it holds:
%   $\height\bigl(\beta_x(\xi)\bigr)\leq\height(x)$.
% \end{lemma}
% 
% \begin{proof}
% From $x\leql\xi$ with $\xi=(y_j)_{j\geq1}$, we know from Lemma~\ref{lem:2}
% that $x\leq y_1\cdot\ldots\cdot y_k$ with $k=\height(x)$, whence the result.
% \end{proof}
% 
% It follows from Lemma~\ref{lem:1qpasoa} that the set of elements of the form
% $\beta_x(\xi)$ is finite for every $x\in\bA$. Let $\bA[x]$ denote this finite
% set:
% \begin{gather}
% \label{eq:18}
%   \bA[x]=\bigl\{\beta_x(\xi)\tq\xi\in\up x\bigr\}.
% \end{gather}

If $x\in\SS$, then every $y\in\bA[x]$ has height exactly~$1$, and thus
$\bA[x]=\{y\in\SS\tq x\leql y\}$. However, if $x$ is of height at least~$2$,
elements $y\in\bA[x]$ may satisfy the \emph{strict} inequality
$\height(y)<\height(x)$. This is illustrated in Section~\ref{sec:an-example}.
This situation, valid for general Artin-Tits monoids, contrasts with the case
of specific monoids such as heap monoids and braid monoids, or more generally
monoids of type FC; this is investigated in Section~\ref{sec:an-example}.

% 
% \begin{proposition}
%   \label{prop:2qqpjaazlx}
%   Let\/~$\bA$ be an Artin-Tits monoid. For every $x\in\bA$, the full visual
%   cylinder\/ $\Up x$
%   has the following decomposition as a finite and
%   disjoint union:
% \begin{gather}
% \label{eq:1qaz}
%   \Up x=\bigcup_{y\in\bA[x]}\CC_y\,.
% \end{gather}
% \end{proposition}
% 
% \begin{proof}
%   For every $\xi\in\Up x$,
%   one has $\xi\in\CC_y$ for $y=\beta_x(\xi)$,
%   which proves the $\subseteq$ inclusion. Note that for every
%   $y\in\bA[x]$ and $\xi\in\CC_y$, one has $\xi\in\Up x$
%   and
%   $\beta_x(\xi)=y$ by construction. This completes the proof of~\eqref{eq:1qaz}.
%   And it also proves that any two
%   Garside cylinders $\CC_y$ and $\CC_{y'}$ for $y,y'\in\bA[x]$ with
%   $y\neq y'$ are disjoint, completing the proof.
% \end{proof}

In addition, the very definition of $\bA[x]$
has immediate consequences when considering finite
measures on $\bAbar$.

\begin{proposition}
  \label{cor:1aapkla}
  Let $\nu$ be a finite measure on the completion
  of an Artin-Tits
  monoid\/~$\bA$. Then the measures of full visual cylinders and of Garside
  cylinders are related by the following formulas, for $x$ ranging
  over\/~$\bA$:
  \begin{gather}
    \label{eq:1qqapzs}
\nu(\Up x)=\sum_{y\in\bA[x]}\nu(\CC_y).
  \end{gather}
\end{proposition}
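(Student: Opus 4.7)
The proposition is almost a direct consequence of the preceding material, so my plan is essentially to unpack the definitions and invoke additivity of the measure~$\nu$.

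First, I would recall Definition~\ref{prop:2qqpjaazlx}, which states that the full visual cylinder decomposes as the \emph{disjoint} union
\begin{gather*}
  \Up x=\bigcup_{y\in\bA[x]}\CC_y.
\end{gather*}
Here two things are crucial: the union is disjoint (a consequence of the general dichotomy that two Garside cylinders are either disjoint or nested, combined with the minimality condition~\eqref{eq:18} defining $\bA[x]$), and each $\CC_y$ is a measurable (indeed open and closed) subset of $\bAbar$.

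Next, I would invoke Lemma~\ref{lem:1qpasoa} to note that $\bA[x]$ is \emph{finite}: every $y\in\bA[x]$ has height at most $\height(x)$, and there are only finitely many simple elements of $\bA$ (Proposition~\ref{pro:FC-0}), hence only finitely many normal forms of a bounded length.

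Finally, since $\nu$ is a (finite) measure and the union in~\eqref{eq:1qaz} is a \emph{finite disjoint} union of measurable sets, additivity of $\nu$ yields
\begin{gather*}
  \nu(\Up x)=\nu\Bigl(\bigsqcup_{y\in\bA[x]}\CC_y\Bigr)=\sum_{y\in\bA[x]}\nu(\CC_y),
\end{gather*}
which is the desired identity~\eqref{eq:1qqapzs}. There is no real obstacle here: all the combinatorial and topological work was already done in establishing Definition~\ref{prop:2qqpjaazlx} and Lemma~\ref{lem:1qpasoa}, and the content of the proposition is simply that the measure of a finite disjoint union equals the sum of the measures of its parts.
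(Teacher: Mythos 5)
Your proof is correct and follows exactly the intended argument: the paper presents Proposition~\ref{cor:1aapkla} with no explicit proof, treating it as an immediate consequence of the disjoint decomposition~\eqref{eq:1qaz} from Definition~\ref{prop:2qqpjaazlx} together with the finiteness of $\bA[x]$ guaranteed by Lemma~\ref{lem:1qpasoa}, and that is precisely what you unpack. One could even dispense with the appeal to Lemma~\ref{lem:1qpasoa} by using countable additivity, but invoking finiteness as you do is cleaner and matches the paper's framing.
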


Assume given a finite measure $\nu$ for which the values $\nu(\Up x)$
are known---this will hold indeed for a family of probability measures that
we shall construct later in Sections~\ref{sec:mult-meas-bound}
and~\ref{sec:appl-posit-braids}. Let $f,h:\bA\to\bbR$ be the functions
defined by $f(x)=\nu(\Up x)$
and $h(x)=\nu(\CC_x)$. Then, in view of~\eqref{eq:1qqapzs}, expressing the
quantities $h(x)=\nu(\CC_x)$ by means of the values $f(x)=\nu(\Up x)$
amounts to inverting the linear operator $\T^*$ defined by:
\begin{gather}
\label{eq:1plaopa}
  \T^*h(x)=\sum_{y\in\bA[x]}h(y).
\end{gather}

Giving an explicit expression for the inverse of $\T^*$ is the topic of the
next section.

\subsubsection{Graded Möbius transform}
\label{sec:mobi-transf-appl}

Measure-theoretic reasoning provides a hint for guessing the right
transformation. Let $\nu$ be a finite measure defined on the boundary $\BA$
of an Artin-Tits monoid $\bA=\bA(\Sigma,\ell)$. For $x$ an element of~$\bA$,
we put:
\begin{gather}
\label{eq:16}
\begin{aligned}
\E(x)&=\bigl\{u\in\bA\tq \height(x\cdot
u)\leq\height(x)\bigr\}\setminus\{\unit\},\\
\D(x)&=\{\text{$\leql$-minimal elements of $\E(x)$}\}.
\end{aligned}
\end{gather}
The set $\E(x)$ may be empty, in which case $\D(x)=\emptyset$ as well.

We claim that, for every $x\in\bA$,  the following equality of sets holds:
\begin{gather}
\label{eq:1papkama}
  \CC_x=\Up x\setminus\bigcup_{u\in\D(x)}\Up(x\cdot u).
\end{gather}

\begin{proof}[Proof of~\eqref{eq:1papkama}.]
Let $(x_1,\dots,x_k)$ be the normal form of~$x$, and let $y\in\CC_x$. Let $(y_j)_{j\geq1}$ be the extended normal form of~$y$, with $y_j=x_j$ for all $j\in\{1,\dots,k\}$. Hence $x\leql (y_1\cdot\ldots\cdot y_j)$ for all $j\geq k$, and thus $y\in\Up x$ according to the definition of the partial ordering $\leql$ on~$\bAbar$ given in Definition~\ref{def:1poq}. Let $u\in\bA$ be such that $\height(x\cdot u)\leq\height(x)$ and  $y\in\Up(x\cdot u)$. It follows from Lemma~\ref{lem:2} that $x\cdot u\leql y_1\cdot\ldots\cdot y_k$, and since $y_1\cdot\ldots\cdot y_k=x$ it implies that $u=\unit$. Hence $u\notin\D(x)$ and the $\subseteq$ inclusion in~\eqref{eq:1papkama} follows.

Conversely, let $y$ be an element of the right-hand set of~\eqref{eq:1papkama}, and let $(y_j)_{j\geq1}$ be the generalized normal form of~$y$. Let us prove that $y_1\cdot\ldots\cdot y_k=x$, which entails that $y\in\CC_x$\,. For this, we use the characterization given in Lemma~\ref{lem:1}, point~\ref{item:3}:
\begin{gather*}
  y_1\cdot\ldots\cdot y_k=\bigveel U,\qquad\text{with }U=\{z\in S^k\tq z\leql y\}.
\end{gather*}

We have $x\in U$ since $x\leql y$ and $\height(x)=k$ by assumption. Let $z=\bigveel U$ and, seeking a contradiction, assume that $x\neq z$. Then $z=x\cdot u$ with $u\neq\unit$ and $\height(x\cdot u)\leq\height(x)$, thus $u\in\D(x)$. But then $y\in\Up (x\cdot u)$, contradicting the assumption on~$y$.
\end{proof}

We observe that for any elements $u,u'\in\D(x)$, one has that $(x\cdot
u)\veel(x\cdot u')$ exists in $\bA$ if and only if $u\veel u'$ exists
in~$\bA$, if and only if $\Up(x\cdot u)\,\cap\Up(x\cdot u')\neq\emptyset$.
If these conditions are fulfilled, we have:
\begin{gather*}
  \Up(x\cdot u)\,\cap\Up(x\cdot u')=\Up\bigl(x\cdot(u\veel u')\bigr),
\end{gather*}
and the latter generalizes to intersections of the form $\Up(x\cdot
u_1)\,\cap\dots\cap\Up(x\cdot u_k)$ for $u_1,\dots,u_k\in\D(x)$.
Taking the measure of both members in~\eqref{eq:1papkama}, and applying the
inclusion-exclusion principle, which is basically the essence of Möbius
inversion formulas, we obtain thus:
\begin{equation*}
  \nu(\CC_x)=\nu(\Up x)-\sum_{D\subseteqlub\D(x),\;D\neq\emptyset}
  (-1)^{\abs{D}+1}\nu\bigl(\Up\bigl(x\cdot\bigveel D\bigr)\bigr),
\end{equation*}
where $D\subseteqlub \D(x)$ means that $D$ is a
              subset of $\D(x)$ \emph{and} that $\bigveel D$ exists
              in~$\bA$. Observing that $\emptyset\subseteqlub\D(x)$ with $\bigveel \emptyset=\unit$, we
  get:
\begin{align}
\label{eq:1ppplalmka}
  \nu(\CC_x)&=\sum_{D\subseteqlub\D(x)}(-1)^{\abs{D}}\nu\bigl(\Up\bigl(x\cdot\bigveel
              D\bigr)\bigr).
\end{align}

We are thus brought to introduce the following definition.

\begin{definition}
  \label{def:1qqpa}
Let\/ $\bA$ be an Artin-Tits monoid. The\/ \emph{graded Möbius
  transform} of a function $f:\bA\to\bbR$ is the function $\T
f:\bA\to\bbR$, defined as follows for every $x\in\bA$:
\begin{align*}
  \T f(x)&=\sum_{D\subseteqlub\D(x)}(-1)^{\abs{D}}f\bigl(x\cdot\bigveel D\bigr),
\end{align*}
where $\D(\cdot)$ has been defined in\/~\eqref{eq:16} and $D\subseteqlub
\D(x)$ means that $D$ is a subset of $\D(x)$ such that\/ $\bigveel D$ exists
in~$\bA$.

The\/ \emph{inverse graded Möbius transform} of a function $h:\bA\to\bbR$ is
the function $\T^\ast h:\bA\to\bbR$, defined as follows for every $x\in\bA$:
\begin{align*}
  \T^\ast h(x)&=\sum_{y\in\bA[x]}h(y),
\end{align*}
where $\bA[\cdot]$ has been defined in\/~\eqref{eq:18}.
\end{definition}

Section~\ref{sec:an-example} below details the expression of the graded
Möbius transform for some specific examples. Before that, we bring additional
information on the range of summation in the definition of the graded Möbius
transform.

\begin{lemma}
  \label{lem:9}
  In an Artin-Tits monoid\/~$\bA$, for any $x\in\bA$, one has
  $\D(x)=\D(u)$, where $u$ is the \emph{last} simple element in the
  normal form of~$x$.
\end{lemma}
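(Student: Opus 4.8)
The plan is to show that $\D(x)$ depends only on the last simple element $u$ in the normal form of $x$, by unwinding the definition $\D(x) = \{\text{$\leql$-minimal elements of }\E(x)\}$ where $\E(x) = \{v \in \bA \tq \height(x \cdot v) \leq \height(x)\} \setminus \{\unit\}$. Since both $\D(\cdot)$ and $\E(\cdot)$ are defined purely through the height function, it suffices to prove the stronger statement $\E(x) = \E(u)$, and then $\D(x) = \D(u)$ follows automatically by taking $\leql$-minimal elements. Write $(x_1, \ldots, x_k)$ for the normal form of $x$, so $u = x_k$ and $\height(x) = k$ (I may assume $x \neq \unit$, since the case $x = \unit$ is degenerate with $u = \unit$ and $\E(\unit)$ handled directly).

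The heart of the argument is the following claim: for any $v \in \bA$, one has $\height(x \cdot v) \leq k$ if and only if $\height(x_k \cdot v) \leq 1$, i.e.\ $x_k \cdot v \in \SS$. For this I would use Lemma~\ref{lem:1}, point~\ref{item:4}, which characterizes the height as the smallest $j$ with the element lying in $\SS^j$. First I would establish that $\height(x \cdot v) \geq k$ always holds: indeed $x \leql x \cdot v$, and since $\height(x) = k$ we have $x \notin \SS^{k-1}$, hence $x \cdot v \notin \SS^{k-1}$ either (if $x \cdot v = u_1 \cdots u_{k-1}$ were a product of $k-1$ simples, then by Lemma~\ref{lem:1}, point~\ref{item:3} applied to the normal form of $x \cdot v$, the prefix $x$ would be dominated by a product of $\leq k-1$ simples, contradicting $\height(x) = k$). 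So the only question is whether $\height(x \cdot v) = k$ or $> k$, and $\height(x \cdot v) \leq k$ exactly means $x \cdot v \in \SS^k$.

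Now I would translate "$x \cdot v \in \SS^k$" into a statement about $x_k \cdot v$. Since $(x_1, \ldots, x_k)$ is normal, the key observation is that the normal form of $x \cdot v$ begins with $x_1, \ldots, x_{k-1}$ followed by the normal form of $x_k \cdot v$ — more precisely, I would argue that $x \cdot v \in \SS^k$ forces $x_k \cdot v \in \SS$, and conversely $x_k \cdot v \in \SS$ gives $x \cdot v = x_1 \cdots x_{k-1} \cdot (x_k \cdot v) \in \SS^k$. The forward direction is the delicate part: using Lemma~\ref{lem:1}, point~\ref{item:3}, if $x \cdot v = z_1 \cdots z_k$ is the normal form, then $z_1 \cdots z_{k-1} = \bigveel\{z \in \SS^{k-1} \tq z \leql x \cdot v\} \geql \bigveel\{z \in \SS^{k-1} \tq z \leql x\} = x_1 \cdots x_{k-1}$ by Lemma~\ref{lem:1}, point~\ref{item:3} applied to $x$; combined with $z_1 \cdots z_{k-1} \leql x \cdot v$ and left cancellativity, one extracts that the "remainder" $x_k \cdot v$ (after dividing out $x_1 \cdots x_{k-1}$ on the left) is a single simple element $z_k$. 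I expect this cancellation bookkeeping — carefully showing the first $k-1$ normal factors of $x \cdot v$ coincide with those of $x$ — to be the main obstacle, and it is exactly where normality of $(x_1,\ldots,x_k)$ (via $x_i = \bigveel\{\zeta \in \SS \tq \zeta \leql x_i \cdots x_k\}$) is used. Once the claim is proved, $\E(x) = \{v \neq \unit \tq x_k \cdot v \in \SS\} = \E(x_k) = \E(u)$, since $\height(x_k) = 1$ makes the same equivalence read $\height(x_k \cdot v) \leq 1 \iff x_k \cdot v \in \SS$; taking $\leql$-minimal elements yields $\D(x) = \D(u)$, completing the proof.
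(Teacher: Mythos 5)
Your proposal has a genuine gap: the claim that $\height(x\cdot v)\geq k$ always holds is false outside of type-FC monoids, and the stronger statement you reduce to, $\E(x) = \E(u)$, is also false in general. The monotonicity $x\leql y\implies\height(x)\leq\height(y)$ is precisely the property $(\ddag)$ isolated in the proof of Proposition~\ref{prop:5} as special to type~FC, and Section~\ref{sec:an-example} gives a counterexample: in $\bA = \langle a,b,c\mid aba=bab,\ bcb=cbc,\ cac=aca\rangle$ the element $x=abc$ has normal form $(ab,c)$, hence $\height(x)=2$, yet $abcb\in\SS$ so $\height(x\cdot b)=1<2$. Your justification that $x\cdot v\in\SS^{k-1}$ together with $x\leql x\cdot v$ forces $x\in\SS^{k-1}$ tacitly treats $\SS^{k-1}$ as $\leql$-downward closed, which is equivalent to that same type-FC hypothesis. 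The same example refutes $\E(x)=\E(u)$: with $x=abc$, $u=c$, $v=ba$, the normal form of $abcba$ is $(abcb,a)$, so $\height(x\cdot v)=2\leq 2$ and $ba\in\E(x)$; but $c\cdot ba=cba$ has normal form $(cb,a)$, is not simple, so $ba\notin\E(c)$. The ``cancellation bookkeeping'' you flag as the main obstacle---matching the first $k-1$ normal factors of $x\cdot v$ with those of $x$---is therefore not merely delicate but impossible here: the normal form of $abcba$ starts with $abcb\neq ab=x_1$.

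The lemma is genuinely a statement about the antichains $\D$, not about the sets $\E$, and the paper proves it without any appeal to monotonicity of height. For $y\in\SS$ it establishes the chain $x_k\to y\iff x=\bigveel\{z\in\SS^k\tq z\leql x\cdot y\}\iff\E(x)\cap\down y=\emptyset\iff\D(x)\cap\down y=\emptyset$, applies it both to $x$ and to $x_k$ (whose normal form is simply $(x_k)$) to obtain $\D(x)\cap\down y=\emptyset\iff\D(x_k)\cap\down y=\emptyset$ for all $y\in\SS$, and then concludes $\D(x)=\D(x_k)$ from the fact that both are $\leql$-antichains. The crucial point is that these equivalences only compare the relation $\to$ at the last normal factor with the contents of $\D(\cdot)$ tested against simple elements~$y$; they never assert anything about $\E(x)$ as a whole, which is why the argument survives in the non-FC setting where yours breaks down.
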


\begin{proof}
Let $(x_1,\dots,x_k)$ be the normal form of~$x$. We first show the following:
\begin{gather}
  \label{eq:17}
\forall y\in\SS\quad \D(x)\;\cap\down y=\emptyset\iff\D(x_k)\;\cap\down y=\emptyset,
\end{gather}
where $\down y=\{z\in\bA\tq z\leql y\}$. Indeed, for $y\in\SS$, one has
according to Lemma~\ref{lem:1}:
\begin{align*}
  x_k\to y&\iff x=\bigveel\{z\in\SS^k\tq z\leql x\cdot y\}
  \\&
\iff \E(x)\;\cap\down y=\emptyset
\iff\D(x)\;\cap\down y=\emptyset.
\end{align*}

Applying the above equivalence with $x=x_k$ on the one hand, and with $x$ on
the other hand, yields~\eqref{eq:17}.  From~\eqref{eq:17}, and since $\D(x)$
and $\D(x_k)$ are two $\leql$-antichains, we deduce that $\D(x)=\D(x_k)$, as
expected.
\end{proof}

With Definition~\ref{def:1qqpa}, we reformulate~\eqref{eq:1ppplalmka} as
follows.

\begin{proposition}
\label{prop:2}
  Consider a finite measure $\nu$ on the completion
  of an Artin-Tits
  monoid\/~$\bA$. For every $x\in\bA$, let $f(x)=\nu(\Up x)$
  and let
  $h(x)=\nu(\CC_x)$. Then $h$ is the graded Möbius transform of~$f$.

  If $\nu$ and $\nu'$ are two finite measures such that
  $\nu(\Up x)=\nu'(\Up x)$
  for all $x\in\bA$, then $\nu=\nu'$.
\end{proposition}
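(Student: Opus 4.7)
The first assertion is essentially a restatement, in the language of Definition~\ref{def:1qqpa}, of the computation carried out just above. My plan is to quote the set-theoretic identity~\eqref{eq:1papkama}, namely $\CC_x = \Up x \setminus \bigcup_{u\in\D(x)}\Up(x\cdot u)$, then expand the measure of the right-hand side by the inclusion-exclusion principle. The key algebraic fact that makes inclusion-exclusion collapse to a sum over $\leql$-bounded subfamilies is the observation, already made in the excerpt, that a non-trivial intersection $\bigcap_{i} \Up(x\cdot u_i)$ equals $\Up\bigl(x\cdot\bigveel_i u_i\bigr)$ whenever $\bigveel_i u_i$ exists in $\bA$, and is empty otherwise. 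Combined with the convention that $\bigveel\emptyset = \unit$ (so that the $D=\emptyset$ term gives $f(x)=\nu(\Up x)$), this is precisely~\eqref{eq:1ppplalmka}, which by Definition~\ref{def:1qqpa} reads $h(x) = \T f(x)$.

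For the uniqueness statement, the plan is to deduce it from the first part together with the generation property already recorded in Section~\ref{sec:boundary-elements-an}. Assume $\nu(\Up x) = \nu'(\Up x)$ for every $x\in\bA$; setting $f(x) = \nu(\Up x) = \nu'(\Up x)$, the first part yields
\[
\nu(\CC_x) = \T f(x) = \nu'(\CC_x) \quad \text{for every } x\in\bA.
\]
The excerpt already observes that the family $\{\CC_x \tq x\in\bA\}\cup\{\emptyset\}$ is closed under intersection and generates the Borel $\sigma$-algebra on $\bAbar$; hence a finite measure on $\bAbar$ is determined by its values on this family, and $\nu = \nu'$ follows from the standard uniqueness part of Carathéodory's extension theorem (or equivalently, from the monotone class / $\pi$-$\lambda$ argument).

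I do not expect any real obstacle. The only point that deserves care is checking that the inclusion-exclusion step indexes correctly over $D\subseteqlub\D(x)$ rather than over arbitrary subsets of $\D(x)$; this is handled exactly by the remark that $\Up(x\cdot u)\cap \Up(x\cdot u')$ is non-empty if and only if $u\veel u'$ exists in $\bA$, so that terms corresponding to $\leql$-unbounded subsets of $\D(x)$ contribute $\nu(\emptyset)=0$ and may be dropped from the expansion.
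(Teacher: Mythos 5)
Your proposal is correct and follows essentially the same route as the paper: the first part is, just as you say, a direct rephrasing of equation~\eqref{eq:1ppplalmka}, obtained from the set-theoretic identity~\eqref{eq:1papkama} via inclusion-exclusion (with unbounded subfamilies contributing measure zero), and the uniqueness statement reduces to the fact that the Garside cylinders form a $\pi$-system generating the Borel $\sigma$-algebra on $\bAbar$. No gaps.
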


\begin{proof}
  The first part of the statement is a simple rephrasing
  of~\eqref{eq:1ppplalmka}. For the second part, if
  $\nu(\Up x)=\nu'(\Up x)$ for all $x\in\bA$, then
  $\nu(\CC_x)=\nu'(\CC_x)$ for all $x\in\bA$. We have already observed
  that this implies $\nu=\nu'$.
\end{proof}

\VJ{Propositions~\ref{prop:2qqpjaazlx} and~\ref{prop:2} show}
that the transformations $\T$ and $\T^*$ are inverse of each other when
operating on functions $f$ and $h$ of the form $f(x)=\nu(\Up x)$
and $h(x)=\nu(\CC_x)$. We actually have the following more general result, of
a purely combinatorial nature.

\begin{theorem}
  \label{thr:1mobiusyta}
  Let\/ $\bA$ be an Artin-Tits monoid. Then the graded Möbius
  transform\/ $\T$ and the inverse graded Möbius transform\/
  $\T^\ast$ are two endomorphisms inverse of each
  other, defined on the space of functions $\bA\to\bbR$.
\end{theorem}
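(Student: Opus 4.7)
The plan is to bootstrap from the measure-theoretic case---where the identities $\T\T^{\ast}h_\nu=h_\nu$ and $\T^{\ast}\T f_\nu=f_\nu$ follow from Proposition~\ref{prop:2} together with Proposition~\ref{cor:1aapkla} applied to a finite measure $\nu$ on~$\bAbar$---to arbitrary real-valued functions on~$\bA$, via a finite-dimensional reduction.

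First, I would specialize the measure-theoretic identity to Dirac masses $\nu=\delta_\beta$ at points $\beta\in\bA\subseteq\bAbar$, obtaining, for every $\beta\in\bA$,
\begin{align*}
\T^{\ast}\T f_\beta=f_\beta\quad\text{and}\quad \T\T^{\ast} h_\beta=h_\beta,
\end{align*}
where $f_\beta(x)=\un[x\leql\beta]$ and $h_\beta(x)=\un[\beta\in\CC_x]$.

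Second, I would establish a locality property: for every $n\geq1$, the finite set $\bA_n=\{x\in\bA\tq\height(x)\leq n\}$ is stable under both operators, meaning that for $x\in\bA_n$ the values $\T f(x)$ and $\T^{\ast}f(x)$ depend only on $f|_{\bA_n}$. For $\T^{\ast}$ this is Lemma~\ref{lem:1qpasoa}; for $\T$ it is built into the very definition of~$\E(x)$, since $\height(x\cdot\bigveel D)\leq\height(x)$ whenever $D\subseteqlub\D(x)$ (this last point uses that $\SS^k$ is closed under existing~$\veel$, which itself follows from Lemma~\ref{lem:1}). Hence $\T$ and $\T^{\ast}$ induce linear endomorphisms $\T_n$ and $\T_n^{\ast}$ of the finite-dimensional space $\bbR^{\bA_n}$.

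Third, a basis argument in $\bbR^{\bA_n}$ closes the loop: the family $\bigl\{f_\beta|_{\bA_n}\tq\beta\in\bA_n\bigr\}$ is linearly independent, because picking a $\leql$-maximal $\beta_0$ in the support of any vanishing combination $\sum_\beta c_\beta f_\beta=0$ and evaluating at $x=\beta_0\in\bA_n$ forces $c_{\beta_0}=0$, and one iterates. Since this family has cardinality $\dim\bbR^{\bA_n}$ it is a basis, and by linearity together with the first step we deduce $\T_n^{\ast}\T_n=\Id$ on $\bbR^{\bA_n}$; in finite dimension a left inverse is a right inverse, so $\T_n\T_n^{\ast}=\Id$ as well. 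To conclude for arbitrary $f\in\bbR^\bA$ and $x_0\in\bA$, choose $n=\height(x_0)$ and apply locality: $\T^{\ast}\T f(x_0)=\T_n^{\ast}\T_n(f|_{\bA_n})(x_0)=f(x_0)$, and symmetrically for $\T\T^{\ast}$. The main obstacle is the second step (verifying that $\bA_n$ is preserved, which hinges on the closure of $\SS^k$ under existing~$\veel$); the basis argument is then a routine exercise on the finite divisibility poset.
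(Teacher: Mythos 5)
Your proof is correct, and it takes a genuinely different route from the paper's. The paper establishes $\T(\T^*h)=h$ directly, by proving a combinatorial sign-cancellation identity (Lemma~\ref{lem:1mobiusdoisjq}: $\sum_{D\in\B(x,y)}(-1)^{\abs{D}}=\un(x=y)$) and exchanging the order of summation, and then proves injectivity of $\T$ by a minimality argument (pick $x$ maximal in $\SS^k$ for the smallest $k$ on which $f$ is non-zero, and observe that the non-empty terms in $\T f(x)$ land either in $\SS^{k-1}$ or strictly above $x$ in $\SS^k$, hence vanish). You instead treat the measure-theoretic consequences from Propositions~\ref{cor:1aapkla} and~\ref{prop:2} as a black box, specialize to Dirac masses $\delta_\beta$ with $\beta\in\bA$---which is legitimate, since both propositions are stated for finite measures on the completion $\bAbar$, not merely on the boundary---and close with a basis argument in the finite-dimensional spaces $\bbR^{\SS^n}$. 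Your locality claim for $\T$ is correct and is the only point that really needs checking: for $D\subseteqlub\D(x)$ with $\height(x)=k$, each $x\cdot u$ with $u\in D$ lies in $\SS^k$, the element $x\cdot\bigveel D$ is their common least upper bound, and $\SS^k$ is closed under bounded least upper bounds by Lemma~\ref{lem:1}, so indeed $\height(x\cdot\bigveel D)\leq k$; this is, incidentally, the very same closure fact hiding inside the paper's injectivity step. What your route buys is that it avoids redoing the Möbius sign-cancellation by hand and makes both inverse identities drop out from a single basis argument together with a finite-dimensionality trick; what it costs is having to spell out the stage-by-stage reduction to the matrices $\T_n$ and $\T_n^*$, which the paper sidesteps.
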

We first prove the following lemma.

\begin{lemma}
  \label{lem:1mobiusdoisjq}
  Let\/ $\bA$ be an Artin-Tits monoid, let $x,y\in\bA$, and let:
\begin{gather*}
  \B(x,y)=\bigl\{D\subseteqlub\D(x)\tq y\in\bA\bigl[x\cdot\bigveel D\bigr]\bigr\}.
\end{gather*}
Then:
\begin{gather}
\label{eq:1}
  \sum_{D\in\B(x,y)}(-1)^{\abs{D}}=\un(x=y).
\end{gather}
\end{lemma}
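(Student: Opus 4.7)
The approach is to lift the claimed combinatorial identity to a pointwise identity of indicator functions on $\bAbar$, and then to evaluate it at a well-chosen family of generalized elements.

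Starting from the set identity~\eqref{eq:1papkama}, $\CC_x = \Up x \setminus \bigcup_{u \in \D(x)} \Up(x \cdot u)$, together with the compatibility $\Up(x \cdot u) \cap \Up(x \cdot u') = \Up(x \cdot (u \veel u'))$ (empty whenever $u \veel u'$ is undefined), inclusion-exclusion applied pointwise to indicator functions yields
\begin{align*}
\mathbf{1}_{\CC_x} \;=\; \sum_{D \subseteqlub \D(x)} (-1)^{|D|} \, \mathbf{1}_{\Up(x \cdot \bigveel D)}.
\end{align*}
Substituting in each summand the disjoint-union decomposition $\mathbf{1}_{\Up z} = \sum_{w \in \bA[z]} \mathbf{1}_{\CC_w}$ of Definition~\ref{prop:2qqpjaazlx}, and swapping the two finite sums, produces the key pointwise identity
\begin{align*}
\mathbf{1}_{\CC_x} \;=\; \sum_{y \in \bA} c_{x,y}\, \mathbf{1}_{\CC_y}, \qquad c_{x,y} \;:=\; \sum_{D \in \B(x,y)} (-1)^{|D|},
\end{align*}
in which only finitely many $c_{x,y}$ are nonzero.

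To extract the coefficients, for every $z \in \bA$ with normal form $(z_1, \ldots, z_k)$ I introduce the generalized element $\tilde z := (z_1, \ldots, z_k, \unit, \unit, \ldots) \in \bAbar$, together with its set of \emph{normal prefixes} $\Pi(z) := \{z_1 \cdots z_j \tq 0 \leq j \leq k\} \subseteq \bA$. Directly from the definition of Garside cylinders, $\tilde z \in \CC_y$ if and only if $y \in \Pi(z)$, so evaluating the pointwise identity above at $\omega = \tilde z$ gives, for every $z \in \bA$,
\begin{align*}
\un(x \in \Pi(z)) \;=\; \sum_{y \in \Pi(z)} c_{x,y}.
\end{align*}

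A straightforward induction on $\height(y)$ then gives $c_{x,y} = \un(x = y)$. The base case $y = \unit$ is immediate since $\Pi(\unit) = \{\unit\}$. For $y \neq \unit$ of height $k \geq 1$, taking $z = y$ one has $\Pi(y) = \Pi(y_1 \cdots y_{k-1}) \sqcup \{y\}$ because $y$ has strictly larger height than any of its proper normal prefixes; applying the induction hypothesis to each $w \in \Pi(y_1 \cdots y_{k-1})$ and subtracting from the displayed identity isolates $c_{x,y} = \un(x = y)$. The only technical check to carry out along the way is the finiteness of the sets involved: $\D(x)$ is finite because left cancellation sends $\E(x)$ injectively into the finite set of elements of height at most $\height(x)$, and each $\bA[z]$ is finite by Lemma~\ref{lem:1qpasoa}. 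The main conceptual point, and the only mildly subtle one, is recognizing that the indicator-level identity can be tested on the canonical generalized elements $\tilde z$ and then Möbius-inverted along the finite chain $\Pi(z)$ to recover all the $c_{x,y}$.
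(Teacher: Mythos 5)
Your proof takes a genuinely different route from the paper's. The paper stays purely combinatorial: it introduces the auxiliary sets $K(x,y)=\{z\in\bA\tq x\cdot z\leql y,\ \height(x\cdot z)\leq\height(x)\}$ and $\K(x,y)=\{D\subseteqlub\D(x)\tq\bigveel D\in K(x,y)\}$, shows that $\K(x,y)$ is a Boolean lattice (trivial exactly when $y\in\CC_x$) so its alternating sum is $\un(y\in\CC_x)$, then writes $\B(x,y)=\K(x,y)\setminus\K(x,\ytilde)$ and telescopes. Your argument instead lifts the set identity~\eqref{eq:1papkama} and the decomposition~\eqref{eq:1qaz} to a single pointwise identity of indicator functions $\un_{\CC_x}=\sum_y c_{x,y}\un_{\CC_y}$ on $\bAbar$, and then recovers the coefficients by testing against the Dirac points $\tilde z$ and Möbius-inverting along the chain of normal prefixes. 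This is a clean linear-algebra/duality viewpoint: you are essentially showing $\T\T^*=\Id$ by checking it on the separating family of point masses $\delta_{\tilde z}$, rather than by analyzing the Boolean structure of $\K(x,y)$. Both routes are of comparable length; yours trades the lattice argument for a bit of care with the topology and the unit element.

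There is one concrete slip that you should fix. Your set $\Pi(z)=\{z_1\cdots z_j\tq 0\leq j\leq k\}$ includes the empty product $\unit$, and you claim $\tilde z\in\CC_y$ iff $y\in\Pi(z)$. This fails at $y=\unit$ when $z\neq\unit$: by Definition~\ref{def:1poq}, $\CC_\unit=\{\unit\}$, whereas $\tilde z=(z_1,\dots,z_k,\unit,\unit,\dots)$ has $z_1\neq\unit$, so $\tilde z\notin\CC_\unit$ even though $\unit\in\Pi(z)$. The correct statement is $\tilde z\in\CC_y\iff y\in\{z_1\cdots z_j\tq 1\leq j\leq k\}$. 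The fix is to start the index at $j=1$; the rest of the argument then goes through verbatim, with $z=\unit$ giving $c_{x,\unit}=\un(x=\unit)$ directly (since $\Pi(\unit)=\{\unit\}$), and the inductive step for $y\neq\unit$ of height $k$ decomposing $\Pi(y)=\Pi(y_1\cdots y_{k-1})\sqcup\{y\}$ — where the disjointness should be justified by the length function rather than by the height function (the height of $\unit$ equals that of any generator, so heights are not strictly increasing along the prefix chain; lengths are). As it happens the identity you wrote down is still numerically true, because $c_{x,\unit}=\un(x=\unit)$ exactly compensates for the spurious extra term, but the derivation as stated is not correct, and the corrected version is no longer than what you wrote.
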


\begin{proof}
For $x,y\in\bA$, we put:
\begin{align*}
  K(x,y)&=\bigl\{z\in\bA\tq x\cdot z\leql y\text{ and }\height(x\cdot
          z)\leq\height(x)\bigr\},\\
\K(x,y)&=\bigl\{D\subseteqlub\D(x)\tq\bigveel D\in K(x,y)\bigr\}.
\end{align*}
Then we claim:
\begin{gather}
  \label{eq:2}
\sum_{D\in\K(x,y)}(-1)^{\abs{D}}=
\begin{cases}
  1,&\text{if } \VJ{y \in \CC_x},\\
0,&\text{otherwise.}
\end{cases}
\end{gather}

Let us first prove~\eqref{eq:2}. If $x\leql y$ does not hold, then
$K(x,y)=\K(x,y)=\emptyset$ and~\eqref{eq:2} is true. Hence we assume that
$x\leql y$. Let $(y_1,\dots,y_k)$ be the normal form of~$y$. We put
$y'=y_1\cdot\ldots\cdot y_j$, where $j=\min\{i\geq 1\tq x\leql
y_1\cdot\ldots\cdot y_i\}$, and thus $x\leql y'$ holds, with equality if and
only if
\VJ{$y \in \CC_x$}. We observe that $K(x,y)$ is a lattice: this follows from
Lemma~\ref{lem:1}; and thus $\K(x,y)$ is a Boolean lattice.

We discuss two cases.  If $x=y'$, then $K(x,y)=\{\unit\}$ and
$\K(x,y)=\{\emptyset\}$, and thus~\eqref{eq:2} is true.  However, if $x\neq
y'$, then Lemma~\ref{lem:2} proves that $\height(y') \leq \height(x)$. Hence,
the set $K(x,y')$ contains a minimal non unit element~$z$. This element
satisfies $\{z\}\in\K(x,y)$, which is thus a non trivial Boolean lattice.
Hence~\eqref{eq:2} is true in all cases.

We now come to the proof of~\eqref{eq:1}. The case where $x=y$ or $\neg (x
\leql y)$ holds is easily settled, hence we assume below that $x \lel y$. In
particular, observe that $y \neq \unit$. Let $(y_1,\dots,y_k)$ be the normal
form of~$y$, and let $\ytilde=y_1\cdot\ldots\cdot y_{k-1}$\,. Then we have
$\K(x,\ytilde)\subseteq\K(x,y)$ and $\B(x,y)=\K(x,y)\setminus\K(x,\ytilde)$.
Therefore, using~\eqref{eq:2}, we have:
\begin{align}
\label{eq:19}
  \sum_{D\in\B(x,y)}(-1)^{\abs{D}}&=A(x,y)-A(x,\ytilde),\\
\notag
&\text{with }
A(u,v)=
\begin{cases}
  1,&\text{if } {v \in \CC_u},\\
0,&\text{otherwise.}
\end{cases}
\end{align}

If
$\ytilde$ belongs to the Garside cylinder~$\CC_x$, so does~$y$. Conversely,
since $x \neq y$, if
$y \in \CC_x$ then the normal form of $x$ is a strict prefix of the normal
form of~$y$, and therefore $\ytilde \in \CC_x$. Together with~\eqref{eq:19}
this completes the proof of~\eqref{eq:1}.
\end{proof}

\begin{proof}[Proof of Theorem~\ref{thr:1mobiusyta}.]
  Let $h:\bA\to\bbR$ be given. We compute, exchanging the order of
  summation:
\begin{gather*}
  \T\bigl(\T^*
  (h)\bigr)(x)=\sum_{y\in\bA}h(y)\Bigl(\sum_{D\in\B(x,y)}(-1)^{\abs{D}}\Bigr)=h(x),
\end{gather*}
where we have put $\B(x,y)=\bigl\{D\subseteqlub\D(x)\tq y\in\bA[x\cdot\bigvee
D]\bigr\}$, and where the last equality follows from
Lemma~\ref{lem:1mobiusdoisjq}.

Let $E$ denote the vector space of functions $\bA\to\bbR$. Then $\T$ is an
endomorphism of~$E$, and it remains to prove that $\T$ is injective. Let $f$
be a non zero function $f \in E$, let $k$ be the smallest integer such that
$f$ is non zero on~$\SS^k$, and let $x$ be a maximal element of $\SS^k$ such
that $f(x) \neq 0$. For all non empty sets $D \subseteqlub \D(x)$, we have $x
\lel x \cdot \bigveel D$, and $x \cdot \bigveel D \in \SS^\ell$ for some
$\ell \leq k$, hence $f(x \cdot \bigveel D) = 0$. It follows that $\T(f)(x) =
f(x) \neq 0$, which completes the proof.
\end{proof}

\subsubsection{Examples and particular cases}
\label{sec:an-example}

In this section, we first develop the expression of the graded Möbius
transform for monoids of type FC, pointing out the simplification that arises
in this case. By contrast, we introduce then the example of an Artin-Tits
monoid which is not of type FC. Finally, we give the expression of the graded
Möbius transform and its inverse evaluated on the simple elements of a
general Artin-Tits monoid.

Let us first examine the case of Artin-Tits monoids of type FC.

\begin{proposition}
  \label{prop:5}
Let\/ $\bA$ be an Artin-Tits monoid of type FC. Then, for every $x\in\bA$,
the sets\/ $\bA[x]$ and $\D(x)$ have the following expressions:
\begin{align*}
  \bA[x]&=\{y\in\bA\tq x\leql y\AND\height(y)\leq\height(x)\}
  \\& =\{y\in\bA\tq x\leql y\AND\height(y)=\height(x)\} \\
\D(x)&=\{\sigma\in\Sigma\tq\height(x\cdot\sigma)=\height(x)\}.
\end{align*}
\end{proposition}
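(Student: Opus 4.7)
The crucial ingredient is the characteristic property of type FC monoids recalled at the end of Section~\ref{sec:normal-form-general}: for every $x\in\bA$ and every $\sigma\in\Sigma$ one has $\height(x\cdot\sigma)\geq\height(x)$. A straightforward induction on the length of $u$ extends this to $\height(x\cdot u)\geq\height(x)$ for every $u\in\bA$; equivalently, $x\leql y$ always implies $\height(x)\leq\height(y)$. This single monotonicity principle will drive both computations, and in particular makes the equivalence of the two right-hand expressions for $\bA[x]$ immediate.

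For the formula for $\D(x)$, I first use the above monotonicity to rewrite $\E(x)=\{u\neq\unit\tq\height(x\cdot u)=\height(x)\}$. The inclusion $\supseteq$ of the claimed equality is then trivial, since generators are $\leql$-minimal among non-unit elements (a length argument), so any $\sigma\in\Sigma$ lying in~$\E(x)$ automatically lies in~$\D(x)$. For the reverse inclusion I pick $u\in\D(x)$ and any generator $\sigma\leql u$, which exists since $u\neq\unit$: the inequality $x\cdot\sigma\leql x\cdot u$ yields $\height(x\cdot\sigma)\leq\height(x\cdot u)=\height(x)$, which together with the FC monotonicity places $\sigma$ in~$\E(x)$, and minimality of~$u$ forces $u=\sigma\in\Sigma$.

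For the formula for $\bA[x]$, set $k=\height(x)$. The inclusion $\bA[x]\subseteq\{y\tq x\leql y\AND\height(y)\leq k\}$ follows at once from $\bA[x]\subseteq\Up x$ (which is immediate from Definition~\ref{prop:2qqpjaazlx}) and Lemma~\ref{lem:1qpasoa}. The reverse inclusion I plan to establish through the characterization~\eqref{eq:18}: fix $y$ with $x\leql y$ and $\height(y)=k$ (the FC monotonicity makes the two descriptions of the right-hand side coincide), take $z\in\Up x$ with $\CC_y\subseteq\CC_z$, and aim at $y=z$. Applying the cylinder inclusion to the generalized normal form of~$y$, namely $(y_1,\ldots,y_k,\unit,\unit,\ldots)$, and using that the non-unit factors of the normal form of~$z$ are exactly its first $\height(z)$ entries, forces $\height(z)=:m\leq k$ and $z=y_1\cdot\ldots\cdot y_m$. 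Then $z\in\Up x$ together with Lemma~\ref{lem:2} gives $x\leql z_1\cdot\ldots\cdot z_k=z$, so the FC monotonicity yields $k\leq m$, hence $m=k$ and $z=y$.

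The argument is essentially mechanical once the FC monotonicity is in hand. The only delicate point is the bookkeeping around the padding-by-units convention in the definition of Garside cylinders, which is what enforces the inequality $m\leq k$ in the last step and makes the implication $\CC_y\subseteq\CC_z\Rightarrow z\leql y$ work cleanly.
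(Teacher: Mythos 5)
Your proof is correct and takes essentially the same route as the paper's: both hinge on the type-FC monotonicity of height with respect to left divisibility (which the paper cites as claim~$(\ddag)$ from the literature, and which you rederive by induction from the one-step inequality $\height(x\cdot\sigma)\geq\height(x)$ recalled in Section~\ref{sec:normal-form-general}), then read off the description of $\bA[x]$ using Lemma~\ref{lem:1qpasoa} and the maximality characterization, and finally obtain the description of $\D(x)$ by extracting a generator below a minimal element of $\E(x)$. The only cosmetic difference is that you invoke the set-theoretic characterization~\eqref{eq:18} directly, whereas the paper argues via the equivalent prefix condition $\neg(x\leql y_1\cdots y_j)$ for $j<\height(y)$.
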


\begin{proof}
Let $x,y \in \bA$ such that
$x \leql y$ and $\height(y) \leq \height(x)$. Let $k = \height(y)$, and let
$(y_1,\ldots,y_k)$ be the normal form of $y$.
The following statement is proved in ~\cite[Prop.~2.95]{juge16:_combin}:
  \begin{intermediate}
    {\normalfont$(\ddag)$}
    Let $\bA$ be an Artin-Tits monoid of type FC.
    For all $a,b \in \bA$ such that $a \leql b$, we have $\height(a) \leq \height(b)$.
  \end{intermediate}
% It follows from $(\ddag)$ that $\height(x) = \height(y) = k$ and that $T_x(y)
% = k$,
% \ie, that $x \leql y_1 \cdot \ldots \cdot y_k$ and that $\neg (x \leql y_1
% \cdot \ldots \cdot y_j)$ for all $j \leq k-1$. It follows that $\beta_x(y) =
% y_1 \cdot \ldots \cdot y_k = y \in \bA[x]$. This proves the inclusion $\bA[x]
% \supseteq \{y\in\bA\tq x\leql y\AND\height(y)\leq\height(x)\}$, whereas the
% converse inclusion is immediate.
It follows from $(\ddag)$ that $\height(x) = \height(y) = k$, \ie, that $x \leql y_1 \cdot \ldots \cdot y_k$ and that $\neg (x \leql y_1
\cdot \ldots \cdot y_j)$ for all $j \leq k-1$. This proves the inclusion $\bA[x]
\supseteq \{y\in\bA\tq x\leql y\AND\height(y)\leq\height(x)\}$, whereas the
converse inclusion follows from Lemma~\ref{lem:1qpasoa}. Then, the claim $(\ddag)$ also proves the
inclusion $\{y\in\bA\tq x\leql y\AND\height(y)\leq\height(x)\} \subseteq
\{y\in\bA\tq x\leql y\AND\height(y) =\height(x)\}$, and the converse
inclusion is immediate.

Finally, assume that $\D(x)$ is non empty, and let $z$ be an element of
$\D(x)$. Since $z\neq\unit$, we can write  $z = \sigma \cdot z'$ for some
$\sigma \in \Sigma$ and $z' \in \bA$. Since $x \leql x \cdot \sigma \leql x
\cdot z$ and $k = \height(x) = \height(x \cdot z)$, it follows from $(\ddag)$
that $k = \height(x \cdot \sigma)$, \ie, that $x \cdot \sigma \in \bA[x]$. By
minimality of $z$, we must have $z = \sigma$. It follows that $\D(x)
\subseteq \{\sigma \in \Sigma \tq x \cdot \sigma \in \bA[x]\}$, whereas the
converse inclusion is immediate. Recalling that $\bA[x]=\{y\in\bA\tq x\leql
y\AND\height(y)=\height(x)\}$, the proof is complete.
\end{proof}

Recalling that $\D(x)=\D(u)$ according to Lemma~\ref{lem:9}, where $u$ is the
last element in the normal form of~$x$, it follows that the graded Möbius
transform, in a monoid of type FC, has the following form:
\begin{gather*}
  \T f(x)=\sum_{D\subseteqlub\Sigma\tqs u\cdot(\bigveel
    D)\in\SS}(-1)^{\abs{D}}f\bigl(x\cdot \bigveel D\bigr).
\end{gather*}

Compared to monoids of type FC, general Artin-Tits monoids have the pathology
that the smallest Garside subset $\SS$ is not necessarily closed by left
divisibility. It entails that the strict inequality
$\height(x\cdot\sigma)<\height(x)$ may occur. This is illustrated in the
following example, where we point out two consequences of this fact.

Let $\bA$ be the Artin-Tits monoid defined below, known as the Artin-Tits groups of type affine~$\tilde{A}_2$---see, for instance,~\,\cite{digne06}. It admits the set $\SS$
described below as smallest Garside subset:
\begin{align*}
\bA &= \langle a,b,c \;\big|\; aba=bab,\ bcb=cbc,\ cac=aca \rangle  \\
\SS&= \{\unit,a,b,c,ab,ac,ba,bc,ca,cb,aba,bcb,cac,abcb,bcac,caba\}.
\end{align*}

Hence, the normal forms of $x=abc$ and $y=abcb$ are, respectively, $(ab,c)$
and $(abcb)$, which shows that $\height (x\cdot b)=1<\height(x)=2$. The Hasse
diagram of $(\SS,\leql)$ is depicted on Figure~\ref{fig:atmbiz}.

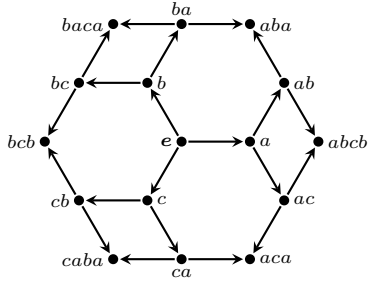
\begin{figure}
\centering
\begin{tikzpicture}[scale=0.9]
\node[anchor=east] at (0,0) {\scriptsize $\unit$};
\node[anchor=west] at (1,0) {\scriptsize $a$};
\node[anchor=west] at (-0.5,0.866) {\scriptsize $b$};
\node[anchor=west] at (-0.5,-0.866) {\scriptsize $c$};
\node[anchor=east] at (-1.5,0.866) {\scriptsize $bc$};
\node[anchor=east] at (-1.5,-0.866) {\scriptsize $cb$};
\node[anchor=west] at (1.5,0.866) {\scriptsize $ab$};
\node[anchor=west] at (1.5,-0.866) {\scriptsize $ac$};
\node[anchor=west] at (2,0) {\scriptsize $abcb$};
\node[anchor=east] at (-2,0) {\scriptsize $bcb$};
\node[anchor=east] at (-1,1.732) {\scriptsize $baca$};
\node[anchor=south] at (0,1.732) {\scriptsize $ba$};
\node[anchor=west] at (1,1.732) {\scriptsize $aba$};
\node[anchor=east] at (-1,-1.732) {\scriptsize $caba$};
\node[anchor=north] at (0,-1.732) {\scriptsize $ca$};
\node[anchor=west] at (1,-1.732) {\scriptsize $aca$};

\draw[draw=black,fill=black,very thick] (0,0) circle (0.05);
\draw[draw=black,fill=black,very thick] (1,0) circle (0.05);
\draw[draw=black,fill=black,very thick] (-0.5,0.866) circle (0.05);
\draw[draw=black,fill=black,very thick] (-0.5,-0.866) circle (0.05);
\draw[draw=black,fill=black,very thick] (-1.5,-0.866) circle (0.05);
\draw[draw=black,fill=black,very thick] (-1.5,0.866) circle (0.05);
\draw[draw=black,fill=black,very thick] (1.5,0.866) circle (0.05);
\draw[draw=black,fill=black,very thick] (1.5,-0.866) circle (0.05);
\draw[draw=black,fill=black,very thick] (2,0) circle (0.05);
\draw[draw=black,fill=black,very thick] (-2,0) circle (0.05);
\draw[draw=black,fill=black,very thick] (-1,1.732) circle (0.05);
\draw[draw=black,fill=black,very thick] (0,1.732) circle (0.05);
\draw[draw=black,fill=black,very thick] (1,1.732) circle (0.05);
\draw[draw=black,fill=black,very thick] (-1,-1.732) circle (0.05);
\draw[draw=black,fill=black,very thick] (0,-1.732) circle (0.05);
\draw[draw=black,fill=black,very thick] (1,-1.732) circle (0.05);

\draw[->,draw=black,thick,>=stealth] (0.1,0) -- (0.9,0);
\draw[->,draw=black,thick,>=stealth] (-0.05,0.0866) -- (-0.45,0.7794);
\draw[->,draw=black,thick,>=stealth] (1.05,0.0866) -- (1.45,0.7794);
\draw[->,draw=black,thick,>=stealth] (1.55,0.7794) -- (1.95,0.0866);
\draw[->,draw=black,thick,>=stealth] (1.45,0.9526) -- (1.05,1.6454);
\draw[->,draw=black,thick,>=stealth] (-0.45,0.9526) -- (-0.05,1.6454);
\draw[->,draw=black,thick,>=stealth] (0.1,1.732) -- (0.9,1.732);
\draw[->,draw=black,thick,>=stealth] (-0.1,1.732) -- (-0.9,1.732);
\draw[->,draw=black,thick,>=stealth] (-0.6,0.866) -- (-1.4,0.866);
\draw[->,draw=black,thick,>=stealth] (-1.55,0.7794) -- (-1.95,0.0866);
\draw[->,draw=black,thick,>=stealth] (-1.45,0.9526) -- (-1.05,1.6454);
\draw[->,draw=black,thick,>=stealth] (-0.05,-0.0866) -- (-0.45,-0.7794);
\draw[->,draw=black,thick,>=stealth] (1.05,-0.0866) -- (1.45,-0.7794);
\draw[->,draw=black,thick,>=stealth] (1.55,-0.7794) -- (1.95,-0.0866);
\draw[->,draw=black,thick,>=stealth] (1.45,-0.9526) -- (1.05,-1.6454);
\draw[->,draw=black,thick,>=stealth] (-0.45,-0.9526) -- (-0.05,-1.6454);
\draw[->,draw=black,thick,>=stealth] (0.1,-1.732) -- (0.9,-1.732);
\draw[->,draw=black,thick,>=stealth] (-0.1,-1.732) -- (-0.9,-1.732);
\draw[->,draw=black,thick,>=stealth] (-0.6,-0.866) -- (-1.4,-0.866);
\draw[->,draw=black,thick,>=stealth] (-1.55,-0.7794) -- (-1.95,-0.0866);
\draw[->,draw=black,thick,>=stealth] (-1.45,-0.9526) -- (-1.05,-1.6454);
\end{tikzpicture}
\caption{Hasse diagram of $(\SS,\leql)$ for the Artin-Tits monoid
  $\bA=\langle a,b,c\ |\ aba=bab,\ bcb=cbc,\ cac=aca\rangle$. In this example, $\SS$~is not closed by left divisibility since, for instance,
  $caba\in\SS$ whereas $cab\notin\SS$.}
\label{fig:atmbiz}
\end{figure}

As a first consequence of $\SS$ not being closed by left divisibility, the
Garside cylinders $\CC_y$ may not be disjoint, for $x\in\bA$ fixed and for
$y$ ranging over $\{z\in\bA\tq x\leql z\AND \height(z)\leq\height(x)\}$.
Indeed, consider again $x=abc$, of height~$2$, and $y=abcb$ and $y'=abcba$,
of height $1$ and $2$ respectively, the latter since the normal form of $y'$
is $(abcb,a)$. Then both $y$ and $y'$ belong to the set described above; yet,
$\CC_{y}\subseteq\CC_{y'}$ and thus $\CC_y\cap\CC_{y'}\neq\emptyset$.
Whereas, combining Proposition~\ref{prop:2qqpjaazlx} and
Proposition~\ref{prop:5}, one sees that this situation cannot happen in
monoids of type FC.

As a second consequence of $\SS$ not being closed by left divisibility, the
sets $\D(x)$ are not in general subsets of~$\Sigma$, but of $\SS$ itself.
Indeed, one has for instance: $\D(ab)=\{a,cb\}$, contrary to the second
statement of Proposition~\ref{prop:5} for monoids of type FC.

We note however that $\D(\unit)\subseteq\Sigma$ in any Artin-Tits monoid.
Therefore, evaluated at~$\unit$, the graded Möbius transform has the
following expression:
\begin{gather}
  \label{eq:3}
\T f(\unit)=\sum_{D\subseteqlub\Sigma}(-1)^{\abs{D}}f\bigl(\bigveel D\bigr).
\end{gather}

Consider in particular the case of a function $f$ of the form
$f(x)=p^{\abs{x}}$, for some real number~$p$. Then:
\begin{gather}
  \label{eq:5}
\T f(\unit)=\sum_{D\subseteqlub\Sigma}(-1)^{\abs{D}}p^{\abs{\bigveel D}}.
\end{gather}
This is a polynomial expression in~$p$. We shall see in
Section~\ref{sec:mobius-polynomial} that this polynomial corresponds to the
Möbius polynomial of the monoid, which is a simplified version of the Möbius
function in the sense of Rota~\cite{rota64} associated with the partial
order.

Furthermore, if $x$ is simple in a general Artin-Tits monoid, then $\bA[x]$
has the following expression: $\bA[x]=\{y\in\SS\tq x\leql y\}$. Therefore,
restricted to~$\SS$, the inverse graded Möbius transform takes the following
form:
\begin{gather}
  \label{eq:4}
\text{for $x\in\SS$}\qquad\T^\ast h(x)=\sum_{y\in\SS\tqs x\leql y}h(y).
\end{gather}

\subsection{Multiplicative probability measures on the boundary}
\label{sec:mult-meas-bound}

In this section, we introduce a particular class of measures defined on the
boundary of Artin-Tits monoids, the class of \emph{multiplicative measures}.
Measures defined on the boundary of an Artin-Tits monoid $\bA$ can be seen as
those measures $\nu$ on the completion $\bAbar$ with support in $\BA$, \ie,
such that $\nu(x) = 0$ for all $x \in \bA$. In
Section~\ref{sec:char-mult-meas}, we entirely characterize multiplicative
measures through a finite family of probabilistic parameters. Each
multiplicative measure induces a natural finite Markov chain. Finally, we
investigate in Section~\ref{sec:exist-uniq-unif} the particular case of the
uniform measure.

\subsubsection{Multiplicative measures and valuations}
\label{sec:mult-meas-valu}

Multiplicative measures on the boundary of Artin-Tits monoids are probability
measures that generalize the classical Bernoulli measures on infinite
sequences of letters.

\begin{definition}
  \label{def:2}
Let\/ $\bA$ be an Artin-Tits monoid. A finite measure $\nu$ on the boundary
$\BA$ is \emph{multiplicative} if it satisfies the two following properties:
\begin{align*}
\forall x\in\bA\quad\nu(\up x)&>0\\
\forall x\in\bA\quad\forall y\in\bA\quad\nu(\up(x\cdot y))&=\nu(\up
                                                            x)\cdot\nu(\up y).
\end{align*}

A \emph{valuation on\/ $\bA$} is any function $f:\bA\to(0,+\infty)$
satisfying $f(x\cdot y)=f(x)\cdot f(y)$ for all $x,y\in\bA$.

If $\nu$ is a multiplicative measure on~$\BA$, then the \emph{valuation
associated with~$\nu$} is the function $f:\bA\to(0,+\infty)$ defined by
$f(x)=\nu(\up x)$ for all $x\in\bA$.
\end{definition}

\begin{remark}
  \begin{enumerate}
  \item Assuming that multiplicative measures exist, which is not obvious
      to prove, any such measure satisfies $\nu(\BA)=\nu(\up\unit)=1$,
      hence is a probability measure.
  \item If $\bA$ is a free monoid, multiplicative measures correspond to
      the usual Bernoulli measures characterizing \iid\ sequences. If $\bA$
      is a heap monoid or a braid monoid, multiplicative measures have been
      introduced respectively in~\cite{abbes15a} and in~\cite{abbes17}.
  \end{enumerate}
\end{remark}

Let us put aside a trivial multiplicative measure which is found in
Artin-Tits monoids of spherical type.

\begin{definition}
  \label{def:11}
  Let\/ $\bA$ be an Artin-Tits monoid of spherical type. Let
  $\Delta_\infty$ denote the boundary element
  $\Delta_\infty=(\Delta,\Delta,\dots)$, where $\Delta$ is the Garside
  element of\/~$\bA$.  Then the constant valuation $f(x)=1$ on\/ $\bA$
  corresponds to the multiplicative measure
  $\nu=\delta_{\Delta_\infty}$\,, which we call\/
  \emph{degenerate}. Any other multiplicative measure on the boundary
  of an Artin-Tits monoid, either of spherical type or not, is
  \emph{non degenerate}.
\end{definition}

By Proposition~\ref{prop:2}, a multiplicative measure is entirely determined
by its associated valuation. We note that valuations always exist: it
suffices to consider for instance $f$ to be constant on the set $\Sigma$ of
generators of~$\bA$, say equal to~$r$. Then $f$ extends uniquely to a
valuation on~$\bA$, given by $f(x)=r^{\abs{x}}$ for all $x\in\bA$. Since we
will give a special attention to this kind of valuation and associated
multiplicative measures whenever they exist, we introduce the following
definition.

\begin{definition}
  \label{def:6}
  A valuation $f:\bA\to(0,+\infty)$ on an Artin-Tits monoid\/
  $\bA=\bA(\Sigma,\ell)$ is \emph{uniform} if it is constant
  on\/~$\Sigma$, or equivalently if it is of the form $f(x)=p^{\abs{x}}$
  for some $p\in(0,+\infty)$. A multiplicative measure is
  \emph{uniform} if it is non degenerate and if its associated
  valuation is uniform.
\end{definition}

\subsubsection{Characterization and realization of
  multiplicative measures}
\label{sec:char-mult-meas}

Our aim is to characterize the valuations that correspond to multiplicative
measures. To this end, we introduce the following definition.

\begin{definition}
\label{def:12}
  Let $f:\bA\to(0,+\infty)$ be a valuation on an Artin-Tits
  monoid\/~$\bA$, and let $h=\T f$ be the graded Möbius transform
  of~$f$. We say that $f$ is a \emph{Möbius valuation} whenever:
\begin{align*}
  h(\unit)&=0,&\text{and}&&\forall x\in\SS\setminus\{\unit\}\quad h(x)&>0,
\end{align*}
where $\SS$ denotes as usual the smallest Garside subset of\/~$\bA$.
\end{definition}

This definition is motivated by the following result, which shows in
particular that being Möbius is a necessary and sufficient condition for a
valuation to be associated with some non degenerate multiplicative measure.
It also details the nature of the probabilistic process associated with the
boundary elements.

\begin{theorem}
  \label{thr:1}
  Let\/ $\bA$ be an irreducible Artin-Tits monoid, and let $\nu$ be a
  non degenerate multiplicative measure on\/~$\BA$. Then the
  valuation $f(\cdot)=\nu(\up\cdot)$ is a Möbius valuation.

Conversely, if $f$ is a Möbius valuation on~$\bA$, then there exists a non
degenerate multiplicative measure on~$\BA$, necessarily unique, say~$\nu$,
such that $f(\cdot)=\nu(\up\cdot)$. Hence, non degenerate multiplicative
measures on $\BA$ correspond bijectively to Möbius valuations on~$\bA$.

Furthermore, let $(f,\nu)$ be such a corresponding pair. Let $h$ be the
graded Möbius transform of~$f$. For every integer $n\geq1$, let
$X_n:\BA\to\SS\setminus\{\unit\}$ denote the $n^\text{th}$~canonical
projection of boundary elements, which maps a  boundary element
$\xi=(y_j)_{j\geq1}$ to the simple element~$y_n$. Then, under the probability
measure~$\nu$, the sequence of random variables $(X_n)_{n\geq1}$ is a
homogeneous Markov chain with values in the finite set\/
$\SS\setminus\{\unit\}$, with initial distribution and with transition matrix
$P$ given by:
  \begin{align*}
    \forall x\in\SS\setminus\{\unit\}&\quad\nu(X_1=x)=h(x),\\
\forall x,y\in\SS\setminus\{\unit\}&\quad P_{x,y}=\un(x\to y)f(x)\frac{h(y)}{h(x)}\,.
  \end{align*}

  This Markov chain is ergodic if\/ $\bA$ is not of spherical type;
  and has $\SS\setminus\{\unit,\Delta\}$ as unique ergodic component
  if\/ $\bA$ is of spherical type.
\end{theorem}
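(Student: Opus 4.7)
The plan is to link non-degenerate multiplicative measures on $\BA$ with Möbius valuations on $\bA$, describe the law of the coordinate process, and analyze its ergodicity. I would combine the Möbius inversion of Theorem~\ref{thr:1mobiusyta}, Lemma~\ref{lem:9} (locality of $\D(\cdot)$), strong connectedness of the Charney graph (Theorem~\ref{thr:1qqlknaa}), and one combinatorial lemma on joins inside $\SS$.

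The cornerstone of both directions is the identity $h(xy)=f(x)\,h(y)$ for any valuation $f$, $h=\T f$, and any normal pair $(x,y)$ of simple elements with $y\neq\unit$. Indeed, Lemma~\ref{lem:9} gives $\D(xy)=\D(y)$, and multiplicativity of $f$ lets me factor $f(x)$ out of the sum defining $\T f(xy)$. Iterating yields $h(x_1\cdots x_n)=f(x_1\cdots x_{n-1})\,h(x_n)$ along every normal sequence of non-unit simples; this is the bridge between the multiplicative structure of $f$ and the additive structure of $\nu$. For the necessity direction, given a non-degenerate multiplicative $\nu$ and $f=\nu(\up\cdot)$, Proposition~\ref{prop:2} gives $h(x)=\nu(\CC_x)$, so $h(\unit)=0$ automatically. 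Partitioning $\CC_x\cap\BA$ according to the second block of the normal form and using the above identity yields $h(x)=f(x)\sum_{y\in\SS\setminus\{\unit\},\,x\to y}h(y)$; hence $Z=\{x\in\SS\setminus\{\unit\}:h(x)=0\}$ is forward-closed under $\to$. Strong connectedness of the Charney graph (Theorem~\ref{thr:1qqlknaa}) then forces $Z\cap\Ch=\emptyset$: otherwise $Z\supseteq\Ch$, which would contradict $\sum_{x\in\SS\setminus\{\unit\}}h(x)=f(\unit)=1$ in the non-spherical case and force $\nu=\delta_{\Delta_\infty}$ in the spherical case. Positivity at $\Delta$ in the spherical case is automatic: $\bA[\Delta]=\{\Delta\}$ gives $h(\Delta)=f(\Delta)>0$.

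Conversely, given a Möbius valuation $f$, the crux is to verify that $P_{x,y}=\un(x\to y)\,f(x)\,h(y)/h(x)$ is stochastic on $\SS\setminus\{\unit\}$, equivalently
\[
\sum_{y\in\SS\setminus\{\unit\},\,x\to y}h(y)=h(x)/f(x).
\]
From the proof of Lemma~\ref{lem:9}, $x\to y$ is equivalent to $\D(x)\cap\{z:z\leql y\}=\emptyset$, and I would expand this indicator by inclusion-exclusion. The combinatorial heart is the claim that \emph{for $x$ simple and $D\subseteqlub\D(x)$, one has $\bigveel D\in\SS$}: by cancellation $x\cdot\bigveel D=\bigveel\{xu:u\in D\}$, which is simple since each $xu$ is simple and $\SS$ is closed under existing $\veel$; then $\bigveel D\leqr x\cdot\bigveel D\in\SS$ and the $\leqr$-downward closure of $\SS$ give $\bigveel D\in\SS$. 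This permits invoking~\eqref{eq:4} at $z=\bigveel D$, and multiplicativity of $f$ identifies both sides of the target identity with $\sum_{D\subseteqlub\D(x)}(-1)^{|D|}f(\bigveel D)$. Once $P$ is stochastic, I define $\nu$ as the law on $\BA$ of the Markov chain with initial law $h$ (a probability because $\sum_{x\in\SS\setminus\{\unit\}}h(x)=\T^\ast h(\unit)-h(\unit)=f(\unit)=1$) and transition $P$; the preliminary identity and the Markov property yield $\nu(\CC_z)=h(z)$ for every $z\in\bA$, and summing over the Garside base $\bA[x]$ (Proposition~\ref{prop:2qqpjaazlx}) gives $\nu(\up x)=\T^\ast h(x)=f(x)$. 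Uniqueness is Proposition~\ref{prop:2}.

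For the ergodicity, $P_{x,y}>0$ iff $x\to y$ (since $f,h>0$), so irreducibility of the chain on $\Ch$ is precisely Theorem~\ref{thr:1qqlknaa}; aperiodicity follows because every generator $\sigma\in\Sigma\cap\Ch$ supports a self-loop $\sigma\to\sigma$ (as $\bigveel\{z\in\SS:z\leql\sigma\cdot\sigma\}=\sigma$). In the non-spherical case $\Ch=\SS\setminus\{\unit\}$ and the chain is ergodic. In the spherical case, \eqref{eq:22} shows $\Delta$ is reached only from itself and $P_{\Delta,\Delta}=f(\Delta)<1$ (otherwise $h(\Delta)=f(\Delta)=1$ would leave no mass for $y\neq\Delta$, contradicting Möbius positivity); thus $\{\Delta\}$ is transient and $\Ch$ is the unique ergodic component. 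The hardest step is the stochasticity identity above, hinging on the combinatorial observation that joins of sub-antichains of $\D(x)$ stay inside the Garside set $\SS$; without this, the inclusion-exclusion cannot be closed through $\T^\ast$ and~\eqref{eq:4}.
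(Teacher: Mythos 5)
Your argument is correct and uses the same key ingredients as the paper (Lemma~\ref{lem:9}, the factorization $h(x\cdot y)=f(x)h(y)$ along normal pairs, Möbius inversion, strong connectedness of the Charney graph), but you streamline the necessity direction in a way worth noting. The paper derives positivity of $h$ on $\Ch$ by packaging everything into a spectral lemma (Lemma~\ref{lem:3}): it forms the matrix $B_{x,y}=\un(x\to y)f(y)$, observes that $\gbar$ is a non-negative fixed point, and invokes the Perron--Frobenius dichotomy for primitive matrices. You replace this with the elementary observation that the zero set $Z=\{x\in\SS\setminus\{\unit\}\tq h(x)=0\}$ is forward-closed under $\to$ (this is immediate from $h(x)=f(x)\sum_{y:x\to y}h(y)$ once $f>0$ and $h\geq 0$), so strong connectedness of $\Ch$ forces $Z\cap\Ch$ to be empty or all of~$\Ch$, and the normalization $\sum_{y\in\SS\setminus\{\unit\}}h(y)=f(\unit)=1$ together with the degeneracy analysis rules out the latter. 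This buys you a shorter, purely combinatorial proof of the necessity direction that does not invoke any spectral theory; the trade-off is that the paper's Lemma~\ref{lem:3} also establishes that $B$ has spectral radius~$1$ with $\gbar$ as Perron eigenvector, facts reused later in Theorems~\ref{thr:2} and~\ref{thr:7}, so the spectral packaging is motivated by downstream needs rather than by this theorem alone.

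For the converse direction, your stochasticity computation is a direct re-derivation of Lemma~\ref{lem:4}; you are right that the inclusion-exclusion hinges on $\bigveel D\in\SS$ for $D\subseteqlub\D(x)$ (the paper states this without elaboration inside the proof of Lemma~\ref{lem:4}), and your justification --- $x\cdot\bigveel D=\bigveel\{x\cdot u : u\in D\}\in\SS$ by closure under existing joins, hence $\bigveel D\in\SS$ by $\leqr$-downward closure --- is correct, though one can also note more directly that $\D(x)\subseteq\SS$ (each $u\in\D(x)$ is a right divisor of the simple element $x\cdot u$) and then apply closure of $\SS$ under existing joins directly to $D\subseteq\SS$. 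The rest (initial law summing to $1$ via $\T^*h(\unit)=f(\unit)$, identification $\nu(\CC_z)=h(z)$ by telescoping, $\nu(\up x)=\T^*h(x)=f(x)$, uniqueness by Proposition~\ref{prop:2}, and the ergodicity discussion including $f(\Delta)<1$) matches the paper's proof. One small caveat inherited from the paper itself: the statement implicitly presumes $|\Sigma|\geq 2$, since $\bA=\bbN$ is irreducible, spherical, and admits $f\equiv 1$ as a Möbius valuation under Definition~\ref{def:12}, yet the only boundary measure is the degenerate one; this is not a gap in your argument specifically.
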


The %
following lemmas are central in the proof of Theorem~\ref{thr:1}, and also
for subsequent results.

\begin{lemma}
\label{lem:3.5} Let\/ $\bA$ be an Artin-Tits monoid, of smallest Garside
subset\/~$\SS$. Let $f:\bA\to(0,+\infty)$ be a valuation, and let $h=\T f$ be
the graded Möbius transform of~$f$. For an element $x \in \bA$, let $u \in
\SS$ be the last element in the normal form of $x$, and let $\xtilde \in \bA$
be such that $x = \xtilde \cdot u$. It holds that:
\begin{gather*}
 h(x) = f(\xtilde) h(u).
\end{gather*}
\end{lemma}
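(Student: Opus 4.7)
The plan is to use the definition of the graded Möbius transform together with Lemma~\ref{lem:9} and the multiplicativity of the valuation~$f$; the proof is essentially a direct computation.

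First, I would dispose of the trivial case $x = \unit$: since $f$ takes values in $(0,+\infty)$ and satisfies $f(\unit) = f(\unit \cdot \unit) = f(\unit)^2$, one gets $f(\unit) = 1$, so the identity $h(\unit) = f(\unit) h(\unit)$ holds with $\xtilde = u = \unit$. From now on, I may assume $x \neq \unit$, so the normal form of $x$ has the form $(x_1,\ldots,x_k)$ with $k \geq 1$ and $u = x_k$, $\xtilde = x_1 \cdot \ldots \cdot x_{k-1}$ (with $\xtilde = \unit$ if $k=1$).

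Next, I would invoke Lemma~\ref{lem:9}, which gives the key equality $\D(x) = \D(u)$. Expanding the definition of the graded Möbius transform,
\begin{align*}
h(x) = \T f(x) = \sum_{D \subseteqlub \D(x)} (-1)^{\abs{D}} f\bigl(x \cdot \bigveel D\bigr)
= \sum_{D \subseteqlub \D(u)} (-1)^{\abs{D}} f\bigl(\xtilde \cdot u \cdot \bigveel D\bigr).
\end{align*}
Since $f$ is a valuation, $f(\xtilde \cdot u \cdot \bigveel D) = f(\xtilde) \cdot f(u \cdot \bigveel D)$. Factoring $f(\xtilde)$ out of the sum yields
\begin{align*}
h(x) = f(\xtilde) \sum_{D \subseteqlub \D(u)} (-1)^{\abs{D}} f\bigl(u \cdot \bigveel D\bigr) = f(\xtilde) \cdot \T f(u) = f(\xtilde) \, h(u),
\end{align*}
which is the desired identity.

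There is no real obstacle here, provided Lemma~\ref{lem:9} is available: the only point to double-check is that the indexing set of the summation really does only depend on $u$ (which is exactly the content of Lemma~\ref{lem:9}), and that the valuation property allows pulling the factor $f(\xtilde)$ out uniformly—both of which are immediate. The argument does not require any additional hypothesis on $\bA$ beyond being an Artin-Tits monoid.
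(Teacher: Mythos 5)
Your proof is correct and takes essentially the same route as the paper's: invoke Lemma~\ref{lem:9} to get $\D(x)=\D(u)$, then use the valuation property $f(\xtilde\cdot u\cdot\bigveel D)=f(\xtilde)f(u\cdot\bigveel D)$ to pull $f(\xtilde)$ out of the sum. The only addition is your explicit treatment of the case $x=\unit$, which the paper leaves implicit; this is harmless and does not change the argument.
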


\begin{proof}
 Lemma~\ref{lem:9} proves that $\D(x) = \D(u)$,
 and since $f$ is a valuation it follows that
 \begin{align*}
  h(x) & = \sum_{D \subseteqlub \D(x)} (-1)^{\abs{D}} f\bigl(x \cdot \bigveel D \bigl) \\
  & = \sum_{D \subseteqlub \D(u)} (-1)^{\abs{D}} f(\xtilde) f\bigl(u \cdot \bigveel D \bigl)
  = f(\xtilde) h(u).
 \qedhere
 \end{align*}
\end{proof}

\begin{lemma}
\label{lem:4} Let\/ $\bA$ be an Artin-Tits monoid, of smallest Garside
subset\/~$\SS$. Let $f:\bA\to(0,+\infty)$ be a valuation, and let $h=\T f$ be
the graded Möbius transform of~$f$. Let also $g:\bA\to\bbR$ be the function
defined by:
  \begin{gather*}
    g(x)=\sum_{y\in\SS\tqs u\to y}h(y),
  \end{gather*}
where $u\in\SS$ is the last element in the normal form of~$x$. Then
$h(x)=f(x)\cdot g(x)$ holds for all $x\in\bA$.
\end{lemma}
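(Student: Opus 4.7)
The plan is to reduce to simple elements via Lemma~\ref{lem:3.5} and then prove $h(u) = f(u)g(u)$ at every $u\in\SS$ by means of an inclusion--exclusion on the relation $\to$, identifying the resulting sum with $\T f(u)$ via the inverse Möbius formula of Theorem~\ref{thr:1mobiusyta}.

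First, if $x = \xtilde \cdot u$ with $u$ the last simple of the normal form of~$x$, Lemma~\ref{lem:3.5} gives $h(x) = f(\xtilde)h(u)$, the valuation property gives $f(x) = f(\xtilde)f(u)$, and $g(x) = g(u)$ by the very definition of~$g$. So the identity $h(x) = f(x)g(x)$ reduces to $h(u) = f(u)g(u)$ for $u\in\SS$.

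For $u\in\SS$, I would use the characterization already established in the proof of Lemma~\ref{lem:9}: for $y\in\SS$, $u\to y$ if and only if $\D(u)\cap\down y = \emptyset$, where $\down y = \{z\in\bA\tq z\leql y\}$. Expanding the indicator
\begin{gather*}
  \un(u\to y) = \prod_{w\in\D(u)}\bigl(1 - \un(w\leql y)\bigr),
\end{gather*}
and using that $D\subseteq\down y$ holds if and only if $\bigveel D$ exists and $\bigveel D\leql y$ (by the semilattice property of $(\bA,\leql)$), I get after interchanging the sums
\begin{gather*}
  g(u) = \sum_{D\subseteqlub\D(u)}(-1)^{\abs D}\sum_{y\in\SS\tqs \bigveel D\leql y}h(y).
\end{gather*}

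The key structural observation is then that $\D(u)\subseteq\SS$: for each $w\in\D(u)$ we have $w\leqr u\cdot w\in\SS$, so $w\in\SS$ by $\leqr$-downward closure of the Garside subset~$\SS$. Combined with closure of~$\SS$ under existing~$\veel$, this forces $\bigveel D\in\SS$ for every $D\subseteqlub\D(u)$. At a simple element $s$ we have $\bA[s] = \{y\in\SS\tq s\leql y\}$, hence by Theorem~\ref{thr:1mobiusyta} the inner sum equals $\T^\ast h(\bigveel D) = f(\bigveel D)$. Multiplying by $f(u)$ and distributing through the valuation then gives
\begin{gather*}
  f(u)g(u) = \sum_{D\subseteqlub\D(u)}(-1)^{\abs D}f\bigl(u\cdot \bigveel D\bigr) = \T f(u) = h(u).
\end{gather*}

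The main obstacle is not the algebraic manipulation itself but the preliminary observation that $\D(u)\subseteq\SS$, which is valid in every Artin-Tits monoid even though $\SS$ need not be closed under left divisibility---as illustrated by the non-FC example of Section~\ref{sec:an-example}. Without this observation, $\bigveel D$ could fail to be simple and the inner sum above could not be collapsed via the inverse Möbius transform.
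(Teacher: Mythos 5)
Your proof is correct and follows essentially the same route as the paper: reduce to $u\in\SS$ via Lemma~\ref{lem:3.5}, expand $\un(u\to y)$ by inclusion--exclusion over $\D(u)$ using the equivalence $u\to y\iff\D(u)\cap\down y=\emptyset$, interchange sums, and collapse the inner sum to $f\bigl(\bigveel D\bigr)$ via the identity $\T^\ast h=f$ restricted to~$\SS$ (the paper packages the same manipulation in two auxiliary functions $F,G$ on~$\P(\SS)$). You additionally make explicit a point the paper leaves implicit, namely that $\D(u)\subseteq\SS$ for $u\in\SS$ (so $\bigveel D\in\SS$ and the cylinder-count formula~\eqref{eq:4} applies), which is a welcome bit of care.
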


\begin{proof}
  Let $\P(\SS)$ denote the powerset of~$\SS$, and let
  $F,G:\P(\SS)\to\bbR$ be the two functions defined by:
  \begin{align*}
    F(U)&=\sum_{D\subseteqlub U}(-1)^{\abs{D}}f\bigl(\bigveel D\bigr),&
G(U)&=\sum_{y\in\SS}\un(U\cap \down y=\emptyset) h(y),
  \end{align*}
where $\down y =\{z\in\bA\tq z\leql y\}$.

We first prove the equality $F(U)=G(U)$ for all $U\in\P(\SS)$. For any
$x\in\SS$, we have $f(x)=\T^* h(x)$ according to
Theorem~\ref{thr:1mobiusyta}. According to~\eqref{eq:4}, this writes as
follows:
\begin{gather*}
  f(x)=\sum_{y\in\SS\tqs x\leql\, y}h(y).
\end{gather*}
This works in particular for $x=\bigveel D$ for any $D\subseteqlub\SS$, since
then $x\in\SS$, yielding:
\begin{align}
\notag
  F(U)&=\sum_{D\subseteqlub U}(-1)^{\abs{D}}\,\Bigl(\,\sum_{y\in\SS}\un\bigl(\bigveel
        D\leql y\bigr)h(y)\Bigr)\\
\label{eq:9}
      &=\sum_{y\in\SS}\;\Bigl(\,
\sum_{D\subseteqlub U}\un\bigl(\bigveel D\leql y\bigr)(-1)^{\abs{D}}
        \Bigr)\cdot h(y)\\
  &=\sum_{y\in\SS}\;\Bigl(\,\sum_{D\subseteq(U\cap\down y)}(-1)^{\abs{D}}\Bigr)\cdot h(y)
        =G(U).
\end{align}

Next, we observe that for every $u,y\in\SS$, one has $u\to y$ if and only if
$\D(u)\cap\down y=\emptyset$, which implies:
\begin{align}
\label{eq:6}
  g(u)=G\bigl(\D(u)\bigr).
\end{align}
And, using that $f$ is a valuation, we have:
\begin{align}
\notag
  h(u)&=\sum_{D\subseteqlub \D(u)}(-1)^{\abs{D}}f\bigl(u\cdot\bigveel
        D\bigr)\\
\label{eq:7}
      &=f(u)\cdot\Bigl(\sum_{D\subseteqlub\D(u)}(-1)^{\abs{D}}f\bigl(\bigveel
        D\bigr)\Bigr)=f(u)\cdot F\bigl(\D(u)\bigr).
\end{align}
Putting together~\eqref{eq:7}, \eqref{eq:6} and~\eqref{eq:9} with $U=\D(u)$,
we obtain $h(u)=f(u)\cdot g(u)$.

So far, we have proved the statement of the lemma for $x=u\in\SS$. For an
element $x\in\bA$, let $u\in\SS$ be the last element in the normal form
of~$x$, and let $\xtilde\in\bA$ be such that $x=\xtilde\cdot u$.
Lemma~\ref{lem:3.5} states that $h(x)=f(\xtilde)\cdot h(u)$. We also have
$g(x)=g(u)$, and thus by the first part of the proof: $h(x)=f(\xtilde)\cdot
f(u)\cdot g(u)=f(x)\cdot g(x)$, which completes the proof.
\end{proof}

\begin{lemma}
  \label{lem:3}
  Let $f:\bA\to(0,+\infty)$ be a valuation defined on an irreducible
  Artin-Tits monoid\/~$\bA$ with at least two generators. Let $h=\T f$
  be the graded Möbius transform of~$f$, that we assume to satisfy
  $h(\unit)=0$.

  We consider the Charney graph $(\Ch,\to)$, and the non-negative
  square matrix $B=(B_{x,y})_{(x,y)\in\Ch\times\Ch}$ defined by
  $B_{x,y}=\un(x\to y)f(y)$.  Let also $g:\bA\to\bbR$ be the
  function defined as in Lemma~{\normalfont\ref{lem:4}}.  Then:
\begin{enumerate}
\item\label{item:5} The matrix $B$ is primitive, and the column vector
    $\gbar=(g(x))_{x\in\Ch}$ satisfies $B\cdot \gbar=\gbar$.
\item\label{item:6} Furthermore, if $f(\cdot)=\nu(\up\cdot)$ for some
    multiplicative measure~$\nu$, then the assumption $h(\unit)=0$ is
    necessarily satisfied, and one and only one of the following two
    propositions is true:
  \begin{enumerate}
  \item\label{item:7} $h$ and $g$ are identically zero on~$\Ch$. In this
      case, $\bA$~is necessarily of spherical type, and $\nu$ is the
      degenerate measure~$\delta_{\Delta_\infty}$\,.
  \item\label{item:8} $h$ and $g$ are positive on~$\Ch$, $B$~has spectral
      radius~$1$, $\gbar$~is the Perron eigenvector of~$B$ and $\nu$ is
      non degenerate.
  \end{enumerate}
\end{enumerate}
\end{lemma}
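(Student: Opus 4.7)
The plan is to split the proof along the two numbered parts of the statement. For claim~\ref{item:5}, the strategy is to combine the strong connectivity of the Charney graph, provided by Theorem~\ref{thr:1qqlknaa}, with an aperiodicity statement: the existence of a self-loop $\sigma \to \sigma$ at every generator $\sigma \in \Sigma$. To verify this self-loop, I would observe that $\sigma^2 \notin \SS$, since $\SS \subseteq \GG$ by minimality of $\SS$ and Lemma~\ref{lem:3-FC}, so that the simple divisors of $\sigma \cdot \sigma$ are exactly $\unit$ and $\sigma$, whence $\sigma = \bigveel\{\zeta \in \SS \tq \zeta \leql \sigma^2\}$. Since $\abs{\Sigma} \geq 2$ implies $\abs{\Delta} \geq 2$ in the spherical case, the inclusion $\Sigma \subseteq \Ch$ holds, and these self-loops settle aperiodicity, hence primitivity of~$B$. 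For the eigenvector identity $B \gbar = \gbar$, I would compute $(B\gbar)(x) = \sum_{y \in \Ch,\, x \to y} h(y)$ using Lemma~\ref{lem:4}, then compare with $g(x) = \sum_{y \in \SS,\, x \to y} h(y)$: the missing term at $y = \unit$ vanishes by the hypothesis $h(\unit) = 0$, and in the spherical case the missing term at $y = \Delta$ vanishes because~\eqref{eq:22} forces $x \not\to \Delta$ whenever $x \in \Ch$.

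For claim~\ref{item:6}, the vanishing $h(\unit) = \nu(\CC_\unit) = \nu(\{\unit\}) = 0$ is automatic because $\unit \notin \BA$. Since each $h(y) = \nu(\CC_y)$ is non-negative, the vector $\gbar$ lies in the non-negative orthant. Invoking Perron-Frobenius theory for primitive non-negative matrices, any non-negative eigenvector is either the zero vector or a strictly positive eigenvector whose eigenvalue coincides with the spectral radius. Applied to $B \gbar = \gbar$, this yields a dichotomy: either $\gbar \equiv 0$ on $\Ch$, or $\gbar > 0$ everywhere on $\Ch$ with $B$ of spectral radius $1$ and $\gbar$ as Perron eigenvector. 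Lemma~\ref{lem:4} transfers the same dichotomy to $h$ since $f > 0$.

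It remains to match these two cases with the two alternatives of the statement. If $\gbar \equiv 0$ and $\bA$ is not of spherical type, then $\BA$ is the disjoint union of the cylinders $\CC_y$ for $y \in \Ch$, so $1 = \nu(\BA) = \sum_{y \in \Ch} h(y) = 0$, a contradiction; hence $\bA$ must be of spherical type. Then $h(\Delta) = \nu(\CC_\Delta) = 1$, and since $\up{\Delta} = \CC_\Delta$ we obtain $f(\Delta) = 1$. Multiplicativity yields $\nu(\up{\Delta^n}) = f(\Delta)^n = 1$ for every $n \geq 1$, and continuity of $\nu$ along the nested decrease of $\up{\Delta^n}$ toward $\{\Delta_\infty\}$ forces $\nu = \delta_{\Delta_\infty}$, the degenerate measure. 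In the complementary case $\gbar > 0$, the measure $\nu$ cannot equal $\delta_{\Delta_\infty}$, for otherwise $g$ would already vanish on $\Ch$; hence $\nu$ is non-degenerate. The main obstacle is this last implication: extracting $\nu = \delta_{\Delta_\infty}$ from the vanishing of $g$ on $\Ch$ requires tracking how the multiplicativity of $f$ interacts with the nested visual cylinders $\up{\Delta^n}$ shrinking to the single point $\Delta_\infty$.
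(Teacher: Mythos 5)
Your proof is correct and follows the same overall skeleton as the paper's (strong connectivity of the Charney graph plus self-loops at generators for primitivity; Lemma~\ref{lem:4} to establish $B\gbar=\gbar$; Perron-Frobenius to split into the two alternatives), but it departs from the paper on two specific steps, and both departures are sound. First, for $h(\unit)=0$ you observe directly that $h(\unit)=\nu(\CC_\unit)=\nu(\{\unit\})=0$ because $\nu$ is supported on~$\BA$, whereas the paper reaches the same conclusion indirectly, by comparing $\sum_{y\in\SS\setminus\{\unit\}}h(y)=1$ (total probability for~$X_1$) with $\sum_{y\in\SS}h(y)=\nu(\BA)=1$ (inverse graded Möbius transform at~$\unit$); your version is shorter and equally rigorous. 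Second, in the degenerate branch you pass from $h(\Delta)=f(\Delta)=1$ to $\nu(\up\Delta^n)=1$ by multiplicativity and conclude by downward continuity of measures along $\up\Delta^n\downarrow\{\Delta_\infty\}$, while the paper instead uses Lemma~\ref{lem:3.5} to show every Garside cylinder $\CC_{x_1\cdots x_k}$ with some $x_i\neq\Delta$ has $\nu$-measure zero. Your route stays closer to the multiplicative structure and to visual cylinders; the paper's stays inside the Garside-cylinder calculus it has already set up. Both reach $\nu=\delta_{\Delta_\infty}$ correctly — note for completeness that $\bigcap_n\up\Delta^n=\{\Delta_\infty\}$ follows from the length count $\abs{y_1\cdots y_n}\leq n\abs\Delta=\abs{\Delta^n}$ forcing $y_1=\cdots=y_n=\Delta$ whenever $\Delta^n\leql y$, which you should spell out if you keep this variant. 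Your extra verification that $\sigma\to\sigma$ (via $\sigma^2\notin\SS$ because $\SS\subseteq\GG$) and that $\Sigma\subseteq\Ch$ (via $\abs\Delta\geq2$) is more detail than the paper gives but is perfectly valid.
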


\begin{proof}
  The graph $(\Ch,\to)$ is non empty
  since $\bA$ is assumed to have at least two generators.
  The function $f$ is positive on $\Ch$ by definition of valuations;
  the graph $(\Ch,\to)$ is strongly connected according to
  Theorem~\ref{thr:1qqlknaa}, and has loops since every element
  $\sigma\in\Sigma$ belongs to $\Ch$ and satisfies $\sigma\to \sigma$.
  It follows
  that $B$ is indeed primitive. Finally, to see that $\gbar$ is a
  fixed point of~$B$, we compute:
  \begin{align*}
    (B\cdot\gbar)_x&=\sum_{y\in\Ch\tqs x\to y}f(y)g(y)=\sum_{y\in\Ch\tqs x\to y}h(y),
  \end{align*}
  the latter equality by Lemma~\ref{lem:4}. Now the set $\Ch$ differs
  from $\SS$ by at most two elements: either~$\unit$, if $\bA$ is not
  of spherical type, or $\unit$ and $\Delta$ if $\bA$ is of spherical
  type. Since $h(\unit)=0$ by assumption, and since $x\to\Delta$ does
  not hold for any $x\in\Ch$, the above equality writes
  $(B\cdot\gbar)_x=\sum_{y\in\SS\tqs x\to y}h(y)=g(x)$, which completes
  the proof of point~\ref{item:5}.

For point~\ref{item:6}, assume that $f(\cdot)=\nu(\up\cdot)$ for some
multiplicative measure $\nu$ on~$\BA$. Write $\xi=(X_1,X_2,\dots)$ for a
generic element $\xi\in\BA$. Then, by Proposition~\ref{prop:2} applied to
elements of~$\SS$, one has $h(x)=\nu(X_1=x)$ for all $x\in\SS$. It implies on
the one hand that $h$ is non-negative on~$\SS$. On the second hand, since
$X_1$ takes its values in $\SS\setminus\{\unit\}$ only,  the total
probability law yields:
\begin{gather}
\label{eq:10}
  \sum_{y\in\SS\setminus\{\unit\}}h(y)=1.
\end{gather}
Applying formula~\eqref{eq:4} to $x=\unit$ yields:
\begin{gather}
\label{eq:11}
  \sum_{y\in \SS}h(y)=\nu(\up \unit)=\nu(\BA)=1.
\end{gather}
Comparing~\eqref{eq:10} and~\eqref{eq:11} yields $h(\unit)=0$.

Since $h$ is non-negative, the vector $\gbar$ is also non-negative. Since $B$
is primitive, it follows from the Perron-Frobenius Theorem for primitive
matrices~\cite{seneta81} that $\gbar$ is either identically zero or positive.

Assume that $\gbar=0$. Then Lemma~\ref{lem:4} implies that $h=0$ on~$\Ch$. It
follows from~\eqref{eq:10} that $\Ch\setminus\{\unit\}\neq\emptyset$, and
thus that $\bA$ is of spherical type and that $h(\Delta)=1$, where $\Delta$
is the Garside element of~$\bA$. For any element $x\in\bA$, if $u\in\SS$ is
the last element of the normal form of~$x$, and if $\xtilde\in\bA$ is such
that $x=\xtilde\cdot u$, Lemma~\ref{lem:3.5} states that
$h(x)=f(\xtilde)h(u)$. Hence, for all normal sequences $(x_1,\dots,x_k)$, if
$x_i \neq \Delta$ for some $i \leq k$, then by Proposition~\ref{prop:2} we
have:
\begin{gather*}
  \nu(X_1=x_1,\dots,X_k=x_k) \leq \nu(X_1=x_1,\dots,X_i=x_i) = h(x_1\cdot\ldots\cdot x_i) = 0,
\end{gather*}
from which follows $\nu\bigl(\xi=(\Delta,\Delta,\dots)\bigr)= \nu\bigl(\BA) =
1$, or in other words, $\nu=\delta_{\Delta_\infty}$, as claimed in
point~\ref{item:7}.

Assume now that $\gbar\neq0$, and thus that $\gbar$ is positive. Since
$\gbar$ is a fixed point of~$B$, it follows from the Perron-Frobenius Theorem
for primitive matrices that $\gbar$ is a Perron eigenvector of~$B$, and thus
$B$ is of spectral radius~$1$. From $\gbar>0$ and from Lemma~\ref{lem:4}, we
deduce that $h>0$ on~$\Ch$, and this implies that $\nu$ is non degenerate as
claimed in point~\ref{item:8}.
\end{proof}

\begin{proof}[Proof of Theorem~\ref{thr:1}.]
We first prove that, if $\nu$ is a non degenerate multiplicative measure
on~$\bA$, then $f:\bA\to(0,\infty)$ defined by $f(\cdot)=\nu(\up\cdot)$ is
Möbius. Let $h=\T f$. It follows from
  Lemma~\ref{lem:3} that $h(\unit)=0$, and since $\nu$ is non
  degenerate, point~\ref{item:8} shows that $h>0$ on~$\Ch$. To obtain
  that $f$ is a Möbius valuation, it remains only to prove, in case
where $\bA$ is of spherical type, that $h(\Delta)>0$ for $\Delta$ the
  Garside element of~$\bA$. But, since $\Delta$ satisfies
  $\Delta\to x$ for all $x\in\SS$, one has $\D(\Delta)=\emptyset$ and
  thus $h(\Delta)=f(\Delta)>0$. This proves the first statement of
  Theorem~\ref{thr:1}.

Conversely, let $f$ be a Möbius valuation on~$\bA$. Define a non-negative
matrix $Q=(Q_{x,y})_{(x,y)\in
  (\SS\setminus\{\unit\})\times(\SS\setminus\{\unit\})}$ by
\begin{gather*}
  Q_{x,y}=\un(x\to y)f(x)\frac{h(y)}{h(x)}.
\end{gather*}
Since $h(\unit)=0$, Lemma~\ref{lem:4} shows that $Q$ is stochastic.
Furthermore, the non-negative vector $(h(x))_{x\in\SS\setminus\{\unit\}}$
satisfies, using that $h(\unit)=0$ and Theorem~\ref{thr:1mobiusyta}:
\begin{align*}
  \sum_{x\in\SS\setminus\{\unit\}}h(x)&=\sum_{x\in\SS}h(x)=\T^\ast h(\unit)=f(\unit)=1.
\end{align*}
It is thus a probability vector.

Consider the canonical probability space $(\Omega,\FFF,\pr)$ associated to
the Markov chain $(X_n)_{n\geq1}$ with values in $\SS\setminus\{\unit\}$,
with initial distribution $(h(x))_{x\in\SS\setminus\{\unit\}}$ and with
transition matrix~$Q$. Let also $\pi:\Omega\to\BA$ be the canonical mapping,
defined with $\pr$-probability~$1$, and let $\nu=\pi_\ast\pr$, the image
probability measure on~$\BA$. Then we claim that $\nu(\up \cdot)=f(\cdot)$.

Indeed, by Theorem~\ref{thr:1mobiusyta} and Proposition~\ref{prop:2}, it is
enough to prove that, for every integer $n\geq1$, the law of
$(X_1,\dots,X_n)$ satisfies:
$\pr(X_1=x_1,\ldots,X_n=x_n)=h(x_1\cdot\ldots\cdot x_n)$ for every sequence
$(x_1,\dots,x_n)$. If the sequence $(x_1,\dots,x_n)$ is not normal, then both
members vanish. And if the sequence is normal, then the consecutive
cancellations yield:
\begin{align*}
  \pr(X_1=x_1,\ldots,X_n=x_n)=f(x_1\cdot\ldots\cdot x_{n-1})\cdot
  h(x_n)=h(x_1\cdot\ldots\cdot x_n),
\end{align*}
the latter equality using Lemma~\ref{lem:3.5}. This proves that
$\nu(\up\cdot)=f(\cdot)$, as expected.

This also proves that, for any non degenerate multiplicative measure $\nu$
on~$\BA$, the sequence $(X_n)_{n\geq1}$ defined in the statement of
Theorem~\ref{thr:1} is indeed a Markov chain with the specified transition
matrix and initial distribution.

If $\bA$ is of spherical type, then $f(\Delta) = \nu(\up \Delta) < 1$.
Indeed, otherwise one would have $\nu(\up \Delta^n) = \nu(\up \Delta)^n = 1$,
and therefore $\nu = \delta_{\Delta^\infty}$, contradicting its non
degeneracy. The ergodicity statements derive at once from the fact that the
Charney graph is irreducible on the one hand, and that $\Delta$ is initial
with $Q_{\Delta, \Delta} = f(\Delta) < 1$ if $\bA$ is of spherical type, on
the other hand. The proof is complete.
\end{proof}

\subsubsection{Uniqueness of the uniform measure for
  Artin-Tits monoids}
\label{sec:exist-uniq-unif}

Although we have not yet proved the existence of non degenerate uniform
measures on the boundary of Artin-Tits monoids---which is deferred to
Section~\ref{sec:param-mult-meas}---, we are ready to prove the following
uniqueness result.

\begin{theorem}
  \label{thr:2}
  Let\/ $\bA$ be an irreducible Artin-Tits monoid. Then there exists at
  most one uniform measure on\/~$\BA$.
\end{theorem}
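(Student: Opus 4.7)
The plan is to reduce, via Theorem~\ref{thr:1}, the uniqueness of uniform measures to the uniqueness of a positive real parameter, and then to exploit the strict monotonicity of the spectral radius of a parametrized primitive non-negative matrix.

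By Definition~\ref{def:6}, a uniform measure is non degenerate with valuation of the form $f_p(x) = p^{\abs{x}}$ for some $p \in (0,+\infty)$. Theorem~\ref{thr:1} implies that a non degenerate multiplicative measure is entirely determined by its valuation, and hence by the parameter~$p$. It therefore suffices to prove that at most one $p \in (0,+\infty)$ can arise as the parameter of a uniform measure. If $\abs{\Sigma} = 1$, then $\bA \cong \bbN$, $\BA$ reduces to the singleton $\{\Delta_\infty\}$, every multiplicative measure on $\BA$ is degenerate, and the statement is vacuous; thus assume $\abs{\Sigma} \geq 2$.

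Let $\nu$ be a uniform measure with parameter~$p$, and consider the matrix $B(p) = (\un(x\to y)\,p^{\abs{y}})_{x,y\in\Ch}$ on the Charney graph. Since $\nu(\CC_\unit) = 0$ (because $\unit \notin \BA$), the graded Möbius transform of $f_p$ satisfies $h(\unit) = 0$ by Proposition~\ref{prop:2}, so Lemma~\ref{lem:3} applies to $f_p$. As $\nu$ is non degenerate, its positive dichotomy alternative holds: the matrix $B(p)$ is primitive, has spectral radius~$1$, and admits $(g(x))_{x\in\Ch}$ as positive Perron eigenvector. The zero pattern of $B(p)$, prescribed by the Charney graph, does not depend on~$p$.

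The map $p \mapsto B(p)$ is continuous and strictly increasing on each of its non-zero entries, since $\abs{y} \geq 1$ for every $y \in \Ch$. By the Perron-Frobenius theorem for primitive non-negative matrices~\cite{seneta81}, the spectral radius $\rho(B(p))$ is therefore a continuous and strictly increasing function of $p \in (0,+\infty)$. Consequently, the equation $\rho(B(p)) = 1$ admits at most one solution. Two uniform measures with parameters $p_1$ and $p_2$ must both satisfy this equation, forcing $p_1 = p_2$, and Theorem~\ref{thr:1} then yields the equality of the two measures. The main ingredient is the strict monotonicity of the Perron eigenvalue under strict domination of a non-zero entry in a primitive matrix, a classical consequence of Perron-Frobenius.
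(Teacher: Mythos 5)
Your proof is correct and follows essentially the same route as the paper's: both invoke Lemma~\ref{lem:3} to produce the primitive Charney-graph matrix $B(p)$ of spectral radius~$1$, then apply Perron--Frobenius monotonicity of the spectral radius to force the parameter $p$ to be unique. The paper phrases the last step as ``$B_1\leq B_2$ with equal spectral radii forces $B_1=B_2$'' rather than as strict monotonicity of $p\mapsto\rho(B(p))$, but these are the same Perron--Frobenius fact; your explicit dispatch of the one-generator case is a harmless addition the paper leaves implicit.
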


Recall that, by Definition~\ref{def:6}, uniform measures are non degenerate.

\begin{proof}[Proof of theorem~\ref{thr:2}.]
  Let $\nu$ and $\nu'$ be two uniform measures, say
  associated to the two uniform valuations $f_1( x)=p_1^{\abs{x}}$ and
  $f_2( x)=p_2^{\abs{x}}$. Without loss of generality, we may assume that
  $p_1\leq p_2$.

  Consider the primitive matrices $B_1$ and $B_2$ constructed as
  in Lemma~\ref{lem:3}, associated to $f_1$ and to $f_2$
  respectively. Then $B_1\leq B_2$ since $p_1\leq p_2$, and both
  matrices have spectral radius~$1$. It follows from the
  Perron-Frobenius Theorem~\cite{seneta81} that $B_1=B_2$, and thus $p_1=p_2$ and
  $f_1=f_2$. Then Proposition~\ref{prop:2} implies that $\nu_1=\nu_2$.
\end{proof}

\section{Conditioned weighted graphs}
\label{sec:centr-limit-theor}

\subsection{General framework}
\label{sec:general-framework}

\subsubsection{Non negative matrices}
\label{sec:non-negative-matrices}

Although we already appealed to the concept of non-negative matrix and to the
Perron-Frobenius theory, we recall now some standard definitions from this
theory, see for instance~\cite{seneta81}.  A real square matrix $M$ is
\emph{non-negative}, denoted $M\geq0$, if all its entries are non-negative,
and \emph{positive}, denoted $M>0$, if all its entries are positive. The same
definitions apply to vectors. If $M\geq0$, it is \emph{primitive} if $M^K>0$
for some integer power $K>0$, and then $M^k>0$ for all $k\geq K$. The matrix
$M$ is \emph{irreducible} if for every pair $(i,j)$ of indices, there exists
an integer $k>0$ such that~$M^k_{i,j}>0$.

We interpret non-negative matrices as labeled oriented graphs, which we
simply call \emph{graphs} for brevity; vertices are represented by the
indices of the matrix, and there is an edge from $x$ to~$x'$\,, labeled by
the entry~$M_{x,x'}$\,, whenever $M_{x,x'}>0$. A \emph{path} in the graph is
any non empty sequence $(x_0,\dots,x_k)$ of indices such that
$M_{x_i,x_{i+1}}>0$ for all $i=0,\dots,k-1$. The non-negative integer $k$ is
the \emph{length} of the path. The path is a \emph{circuit} if, in addition,
$x_k=x_0$.

In this representation, irreducible matrices correspond to strongly connected
graphs and primitive matrices correspond to strongly connected graphs such
that circuits have $1$ as greatest common divisor of their lengths.

The \emph{spectral radius}, denoted~$\rho(M)$, of a non-negative matrix $M$
is defined as the largest modulus of its complex eigenvalues. In the seek of
completeness, we establish the following elementary lemma, which is probably
found elsewhere as a textbook exercise.

\begin{lemma}
  \label{lem:pokqpkqa}
  Let $M$ be a non-negative square matrix of size $N>1$. Assume that
  $M$ has a unique eigenvalue of maximal modulus, which is simple. Let
  $\lambda$ be this eigenvalue. Then:
  \begin{enumerate}
  \item\label{item:1pjas} $\lambda$ is real and positive, and thus
      $\lambda=\rho(M)$.
  \item\label{item:2asakln} There exists a pair $(\ell,r)$ of non-negative
      vectors, such that $\ell$ is a left (row) $\lambda$-eigenvector and
      $r$ is a right (column) $\lambda$-eigenvector of~$M$, and such that
      $\ell\cdot r=1$. Any other such pair is of the form $(t\ell,t^{-1}r)$
      for some $t>0$.
  \item\label{item:3asaspp} With $(\ell,r)$ a pair of non-negative vectors
      as above, the matrix $M$ has the following decomposition:
\begin{align*}
  M&=\lambda\Pi+Q,\quad\text{with\/ $\Pi=r\cdot\ell$},
\end{align*}
and where $Q$ is a matrix satisfying\/ $\Pi\cdot Q=Q\cdot\Pi=0$ and
$\rho(Q)<\lambda$. It entails the following convergence:
\begin{gather}
\label{eq:2qpoijak}
  \lim_{k\to\infty}\Bigl(\frac1\lambda M\Bigr)^k=\Pi.
\end{gather}
  \end{enumerate}
\end{lemma}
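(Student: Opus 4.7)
The plan is to derive the three assertions from two classical ingredients: the general Perron--Frobenius theorem for non-negative matrices, and the spectral decomposition associated with an isolated simple eigenvalue. The structural uniqueness assumption does most of the work, forcing the dominant eigenvalue to be $\rho(M)$ and the other eigenvalues to sit strictly inside the corresponding disk.

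For point~\ref{item:1pjas}, the general Perron--Frobenius theorem guarantees that $\rho(M)$ is itself an eigenvalue of $M$, a priori possibly $0$. Since $\lambda$ is assumed to be the unique eigenvalue of maximal modulus, one necessarily has $\lambda=\rho(M)$, so $\lambda$ is real and non-negative. The case $\lambda=0$ is excluded by the size assumption $N>1$: indeed, if $\rho(M)=0$, then every eigenvalue has modulus $0$ and hence equals $0$, which would give $\lambda=0$ an algebraic multiplicity equal to $N>1$, contradicting its simplicity.

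For point~\ref{item:2asakln}, the Perron--Frobenius theorem further provides non-negative right and left eigenvectors $r_{0}$ and $\ell_{0}$ associated with $\lambda=\rho(M)$. Because $\lambda$ is simple and isolated in the spectrum, the corresponding spectral projector $\Pi$ is of rank one, and therefore factors as $\Pi=r_{0}\cdot\ell'$ for some row vector $\ell'$. The identity $M\Pi=\lambda\Pi$ shows that $\ell'$ is a left $\lambda$-eigenvector, hence proportional to $\ell_{0}$ by simplicity. The relation $\Pi^{2}=\Pi$ then forces $\ell_{0}\cdot r_{0}\neq 0$, so rescaling $\ell_{0}$ by a positive constant yields a non-negative pair $(\ell,r)$ satisfying $\ell\cdot r=1$ and $\Pi=r\cdot\ell$. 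The uniqueness statement $(\ell,r)\mapsto(t\ell,t^{-1}r)$ is a direct consequence of the one-dimensionality of each eigenspace combined with the normalization $\ell\cdot r=1$.

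For point~\ref{item:3asaspp}, set $Q=M-\lambda\Pi$. Since $\Pi$ is a spectral projector it commutes with $M$ and satisfies $M\Pi=\Pi M=\lambda\Pi$, which immediately gives $\Pi Q=Q\Pi=0$. On the $\Pi$-invariant complement of the $\lambda$-eigenspace, $Q$ coincides with $M$, and on the eigenspace itself $Q$ vanishes; the spectrum of $Q$ is therefore $\{0\}$ together with the remaining eigenvalues of $M$, all of modulus strictly less than $\lambda$ by hypothesis. Hence $\rho(Q)<\lambda$. Finally, orthogonality $\Pi Q=0$ together with $\Pi^{2}=\Pi$ yields by induction the clean identity $(M/\lambda)^{k}=\Pi+(Q/\lambda)^{k}$ for every $k\geq1$, and since $\rho(Q/\lambda)<1$ one has $\|(Q/\lambda)^{k}\|\to 0$ by Gelfand's formula, whence the convergence~\eqref{eq:2qpoijak}.

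The only delicate technical point is the compatibility between the non-negative eigenvectors provided by Perron--Frobenius and the rank-one spectral projector: establishing $\ell_{0}\cdot r_{0}\neq 0$ requires the algebraic simplicity of $\lambda$, not merely its geometric simplicity, and this is the crux of the second assertion; once it is secured, everything else is straightforward manipulation of the spectral decomposition.
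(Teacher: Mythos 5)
Your proof is correct, but it takes a genuinely different route from the paper's. The paper gives a self-contained argument that \emph{constructs} the non-negative eigenvectors from scratch: after normalizing so $|\lambda|=1$, it uses the spectral decomposition to deduce that the sequence $(M^k)_k$ is bounded, and then feeds the componentwise modulus $|x|$ of any right $\lambda$-eigenvector into the dynamics, observing $M|x|\geq|x|$ so that $M^k|x|$ increases coordinatewise to a non-negative fixed point $r$; this simultaneously delivers the non-negative eigenvector and the reality/positivity of $\lambda$ (since $1$ is shown to be an eigenvalue). You instead cite, as a black box, the weak Perron--Frobenius theorem for general non-negative matrices --- namely that $\rho(M)$ is always an eigenvalue admitting non-negative left and right eigenvectors --- and then do the spectral-projector bookkeeping. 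Both arguments are sound; your version is shorter and cleaner, whereas the paper's is deliberately from first principles (the authors remark that the lemma is ``probably found elsewhere as a textbook exercise'' and prove it for completeness, so avoiding the general Perron--Frobenius statement is the point of their presentation). One cosmetic remark: your crucial step showing $\ell_0\cdot r_0\neq 0$ via $\Pi^2=\Pi$ is the same mechanism the paper uses, just phrased through $\ell=\ell\Pi=(\ell\cdot r)\ell'$ rather than through $\Pi=r_0\ell'$ directly; it is worth stressing, as you do, that this is where algebraic (not merely geometric) simplicity of $\lambda$ enters, since it is what makes the spectral projector rank one.
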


\begin{proof}
  Observe first that $\lambda\neq0$, otherwise $M$ could not have
  other eigenvalues than $\lambda=0$, contradicting that $\lambda$ is
  simple. Hence, considering $(1/\abs{\lambda})M$, which is still
  non-negative, instead of~$M$, we assume without loss of generality
  that $\abs{\lambda}=1$. Secondly, the spectral decomposition of $M$
  entails that $M$ writes as $M=\lambda\Pi+Q$, where $\Pi$ is the
  matrix of a projector of rank~$1$, $Q$~is a matrix with all
  eigenvalues less than $1$ in modulus, and $\Pi\cdot
  Q=Q\cdot\Pi=0$.
  Consequently, the powers $M^k$ form a bounded sequence of matrices.

Now, let $x$ be a right $\lambda$-eigenvector of~$M$, and let $\abs{x}$
denote the vector with $\abs{x}_i=\abs{x_i}$ for all $i\in\{1,\dots,N\}$.
Then $\sum_j M_{i,j}x_j = \lambda x_i$ yields $\abs{x}_i \leq
\sum_jM_{i,j}\abs{x_j}$, or in other words: $M\cdot\abs{x} \geq \abs{x}$.
Since $M$ is non-negative, it follows that $M^{k+1}\cdot\abs{x} \geq
M^k\cdot\abs{x}$, so that each coordinate of $M^k\cdot\abs{x}$ is a
non-decreasing sequence of reals. Since $(M^k)_{k>0}$ is bounded, it follows
that $(M^k\cdot\abs{x})_{k>0}$ converges toward a non-negative vector~$r$,
satisfying $r\geq \abs{x}$ and $M\cdot r=r$. Since $\abs{x}\neq0$, in
particular $r\neq0$ and thus $1$ is an eigenvalue of~$M$, which implies that
$\lambda=1$.

We have already found that $r$ is a non-negative fixed point of $M$ for its
right action on vectors. The same reasoning applied to the transpose of~$M$
(or to the left action of $M$ on vectors) yields the existence of a non
negative left fixed point, say~$\ell$, of~$M$.

The decomposition seen at the beginning of the proof now writes as $M=\Pi+Q$
with $\rho(Q)<1$ and $\Pi\cdot Q=Q\cdot\Pi=0$. It implies at once
$M^k=\Pi+Q^k$ for all integers $k>0$ and thus $\lim_{k\to\infty}M^k=\Pi$.

To obtain the existence of the pair $(\ell,r)$ with the normalization
condition $\ell\cdot r=1$, it suffices to prove that $\ell\cdot r>0$. For
this, we observe first that $r$ and $\ell$ are respectively right and left
fixed point of~$\Pi$, since $\Pi$ is associated to the
$\lambda$-characteristic subspace of~$M$. Being a rank~$1$ projector with $r$
as right fixed point, $\Pi$~writes as $\Pi=r\cdot \ell'$ for some non zero
row vector~$\ell'$. Hence $\ell=\ell\cdot\Pi=(\ell\cdot r)\ell'$. This
implies that $\ell\cdot r\neq0$, which was to be proved.

Assuming now that the normalization condition $\ell\cdot r=1$ holds, we
obtain $\ell'=\ell$ and thus $\Pi=r\cdot\ell$, completing the proof.
\end{proof}

\subsubsection{Conditioned weighted graphs}
\label{sec:cond-weight-graphs}

The central object of study of this section is the following.

\begin{definition}[\CWG]
  \label{def:3}
  A \emph{conditioned weighted graph (\CWG)} is given by a triple\/ $\GG=(M,w^-,w^+)$,
  where $M$ is a non-negative matrix of size $N\times N$ with $N>1$,
  and
  \begin{align*}
    w^-&:\{1,\ldots,N\}\to\bbR^+\,,&
    w^+&:\{1,\ldots,N\}\to\bbR^+\,,
  \end{align*}
  are two real-valued and non-negative functions, respectively
  called\/ \emph{initial} and\/ \emph{final}. We identify $w^-$ with
  the corresponding row vector of size~$N$, and we identify $w^+$ with
  the corresponding column vector of size~$N$. Furthermore, we assume
  that the triple $(M,w^-,w^+)$ satisfies the following conditions.
  \begin{enumerate}
  \item $M$ has a unique eigenvalue of maximal modulus, which is simple.
      Let $\lambda$ be this eigenvalue, which is real and positive
      according to Lemma~\ref{lem:pokqpkqa}.
  \item Let\/ $(\ell,r)$ be a pair of nonzero non-negative left and right
      $\lambda$-eigenvectors of~$M$. We assume that $w^-\cdot r>0$ and
      $\ell\cdot w^+>0$ both hold.
  \end{enumerate}
\end{definition}

\subsubsection{Two particular cases}
\label{sec:two-particular-cases}

For the study of Artin-Tits monoids, we shall be interested in triples
$(M,w^-,w^+)$ falling into one of the two following cases.

\begin{description}
  \item[Case A.] $M$ is primitive, and the functions $w^-$ and $w^+$ are
      non identically zero.
  \item[Case B.] For some integer $0< K< N$ and for some non-negative
      matrices~$A$, $T$ and $\Mtilde$ of sizes $K\times K$, $K\times(N-K)$,
      and $(N-K)\times(N-K)$ respectively, $M$~has the following form:
\begin{gather}
    \label{eq:67}
M=
\begin{pmatrix}
A&T\\0&\Mtilde
\end{pmatrix}
\end{gather}
where $\Mtilde$ is primitive, and the spectral radii $\rho(A)$ and
$\rho(\Mtilde)$ satisfy $\rho(A)<\rho(\Mtilde)$. The functions $w^-$ and
$w^+$ are assumed to be non identically zero on the $N-K$ last indices.
\end{description}

\begin{proposition}
  \label{prop:pokpala}
  In either case A or B described above, the triple $(M,w^-,w^+)$ is a
  \CWG.
\end{proposition}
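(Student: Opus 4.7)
The task is to verify the two defining conditions of Definition~\ref{def:3}: (i) that $M$ admits a unique eigenvalue $\lambda$ of maximal modulus, which is simple, and (ii) that any pair $(\ell,r)$ of nonzero non-negative left/right $\lambda$-eigenvectors satisfies $w^-\cdot r>0$ and $\ell\cdot w^+>0$. Case~A follows essentially directly from the Perron--Frobenius theorem for primitive matrices, while Case~B requires a short analysis of the block-triangular structure.

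\textbf{Case A.} The plan is to apply Perron--Frobenius to the primitive matrix~$M$: it yields a simple eigenvalue $\lambda=\rho(M)>0$ of maximal modulus, strictly larger than the modulus of any other eigenvalue, and any pair of nonzero non-negative left/right $\lambda$-eigenvectors $(\ell,r)$ consists of strictly positive vectors. Since $w^-,w^+\geq 0$ are non identically zero, both $w^-\cdot r$ and $\ell\cdot w^+$ are strictly positive sums of non-negative terms with at least one positive contribution, so both conditions of Definition~\ref{def:3} hold.

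\textbf{Case B.} Starting from the block decomposition~\eqref{eq:67}, I would argue that the spectrum of $M$ is the union (with multiplicities) of the spectra of $A$ and $\Mtilde$. By assumption $\rho(A)<\rho(\Mtilde)$, so the eigenvalue of maximal modulus is $\lambda=\rho(\Mtilde)$; because $\Mtilde$ is primitive, Perron--Frobenius gives that $\lambda$ is simple and strictly dominant within the spectrum of~$\Mtilde$, and since no eigenvalue of $A$ equals~$\lambda$ or exceeds it in modulus, $\lambda$ remains the unique simple eigenvalue of maximal modulus for~$M$. Let $(\tilde\ell,\tilde r)$ be left and right Perron eigenvectors of $\Mtilde$, both strictly positive. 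Writing an eigenvector of $M$ in block form $(r_1,r_2)$, I would solve $M r=\lambda r$, obtaining $r_2=\tilde r$ (up to scalar) and $r_1=(\lambda I-A)^{-1}T\tilde r$; the resolvent $(\lambda I-A)^{-1}=\sum_{k\geq 0}\lambda^{-k-1}A^k$ is non-negative because $\rho(A)<\lambda$, so $r_1\geq 0$, and the right eigenvector $r=(r_1,\tilde r)$ is non-negative with its last $N-K$ coordinates strictly positive. A symmetric argument on $\ell M=\lambda\ell$ forces the first block of $\ell$ to vanish (since $\lambda I-A$ is invertible acting on the left) and the second block to be $\tilde\ell>0$; thus $\ell=(0,\tilde\ell)$ with its last $N-K$ coordinates strictly positive. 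The support hypothesis on $w^\pm$ now immediately gives $w^-\cdot r>0$ and $\ell\cdot w^+>0$.

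The only mildly delicate point is the eigenvector construction in Case~B, and in particular the non-negativity of $(\lambda I-A)^{-1}$, which rests on the Neumann series expansion being valid precisely because $\rho(A)<\lambda$. Everything else is a direct invocation of Lemma~\ref{lem:pokqpkqa} and the standard Perron--Frobenius theorem, so the proof itself will be short once these ingredients are assembled.
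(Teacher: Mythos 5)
Your proof is correct and follows essentially the same approach as the paper: Case~A is a direct appeal to Perron--Frobenius, and Case~B constructs the explicit eigenvectors $\ell=(0,\elltilde)$ and $r=\bigl((\lambda I-A)^{-1}T\rtilde,\;\rtilde\bigr)$ from the block-triangular structure. The only small difference is that you establish non-negativity of $(\lambda I-A)^{-1}$ directly via the Neumann series $\sum_{k\geq 0}\lambda^{-k-1}A^k$, whereas the paper instead deduces non-negativity of the constructed eigenvectors by invoking Lemma~\ref{lem:pokqpkqa} and the simplicity of~$\lambda$; your version is arguably a touch more self-contained.
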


\begin{proof}
  In case~A, this is a direct application of the Perron-Frobenius
  Theorem for primitive matrices. In case~B, let $M$ be as
  in~\eqref{eq:67}. Let $\lambda$ be the Perron eigenvalue
  of~$\Mtilde$, \ie, according to the
  Perron-Frobenius Theorem for primitive matrices, the simple,
  unique eigenvalue of $\Mtilde$ of largest modulus. Then $\lambda$ is
  the unique eigenvalue of $M$ of maximal modulus since
  $\rho(A)<\rho(\Mtilde)$, and it is simple as an eigenvalue of~$M$.

  It remains only to prove the existence of a pair $(\ell,r)$ of
  $\lambda$-eigenvectors of $M$ such that $w^-\cdot r>0$ and
  $\ell\cdot w^+>0$. For this, let $(\elltilde,\rtilde)$ be a pair of
  positive left and right $\lambda$-eigenvectors of~$\Mtilde$, and
  consider the vectors $\ell$ and $r$ defined by:
\begin{align*}
  \ell&=
             \begin{pmatrix}
               0&\elltilde\;
             \end{pmatrix}
&r&=
     \begin{pmatrix}
       (\lambda I-A)^{-1}\cdot T\cdot\rtilde\\\rtilde
     \end{pmatrix}
\end{align*}

The hypothesis $\rho(A)<\rho(\Mtilde)$ implies that $\lambda I-A$ is
invertible, hence $r$ is well defined, and $\ell$ and $r$ are left and right
$\lambda$-eigenvectors of~$M$, which, due to Lemma~\ref{lem:pokqpkqa}, have
non-negative entries. Since $w^-$ and $w^+$ are assumed to be non identically
zero on their last $(N-K)$ coordinates, and since $\elltilde$ and $\rtilde$
are positive, they satisfy $w^-\cdot r>0$ and $\ell\cdot w^+>0$.
\end{proof}

\subsection{Weak convergence of weighted distributions}
\label{sec:conv-unif-meas}

Let $\GG=(M,w^-,w^+)$ be a \CWG. Given a path $x=(x_0,\dots,x_{k})$ associated
to~$M$, we define its \emph{weight} $w(x)$ as the following non-negative
real:
\begin{gather*}
  w(x)=w^-(x_0)\cdot M_{x_0,x_1}\cdot\ldots\cdot
  M_{x_{k-1},x_k}\cdot w^+(x_{k})\,.
\end{gather*}

From now on, our study of conditioned weight graphs focuses on paths and on
probability distributions over sets of paths. As a first elementary result,
we show that paths of positive weight and of length $k$ exist for all $k$
large enough.

\begin{lemma}
  \label{lem:pojqwopjiq}
  Let $\GG=(M,w^-,w^+)$ be a conditioned weighted graph. Then there
  exists an integer $K$ such that, for each $k\geq K$, the set of
  paths of length $k$ and with positive weight is non empty.
\end{lemma}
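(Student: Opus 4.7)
The plan is to exploit the convergence $(M/\lambda)^k \to \Pi = r\cdot\ell$ from Lemma~\ref{lem:pokqpkqa}, point~\ref{item:3asaspp}, together with the positivity conditions $w^-\cdot r>0$ and $\ell\cdot w^+>0$ built into the definition of a \CWG.

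First, I would observe that the sum of weights of all paths of a given length $k$ admits a clean matrix expression. Indeed, expanding the product, one immediately gets
\begin{gather*}
  \sum_{x=(x_0,\dots,x_k)} w(x) \;=\; \sum_{x_0,x_k} w^-(x_0)\,(M^k)_{x_0,x_k}\,w^+(x_k) \;=\; w^-\cdot M^k\cdot w^+,
\end{gather*}
where the sum ranges over all sequences of indices of length $k+1$. Since every term in this sum is non-negative, the existence of at least one path of length $k$ with positive weight is equivalent to the strict positivity of $w^-\cdot M^k\cdot w^+$.

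Next, I would apply Lemma~\ref{lem:pokqpkqa}, point~\ref{item:3asaspp}: with $\lambda=\rho(M)>0$ and $\Pi=r\cdot\ell$, one has $(1/\lambda)^k M^k \to \Pi$. Multiplying on the left by $w^-$ and on the right by $w^+$ yields
\begin{gather*}
  \lim_{k\to\infty}\frac{w^-\cdot M^k\cdot w^+}{\lambda^k}
  \;=\; w^-\cdot\Pi\cdot w^+
  \;=\; (w^-\cdot r)\,(\ell\cdot w^+).
\end{gather*}
By the second condition in Definition~\ref{def:3}, both factors $w^-\cdot r$ and $\ell\cdot w^+$ are strictly positive, so the limit is strictly positive.

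Consequently, there exists an integer $K$ such that $w^-\cdot M^k\cdot w^+>0$ for all $k\geq K$, and by the first observation this implies that at least one path of length $k$ has positive weight. No step is really an obstacle here: the argument is a one-line consequence of the spectral normalisation and of the positivity assumptions on $w^\pm$ relative to the eigenvectors of~$M$.
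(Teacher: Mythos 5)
Your proof is correct and follows the same route as the paper: express the total weight of length-$k$ paths as $w^-\cdot M^k\cdot w^+$, invoke Lemma~\ref{lem:pokqpkqa} to get the asymptotics $\lambda^{-k}\,w^-\cdot M^k\cdot w^+\to(w^-\cdot r)(\ell\cdot w^+)>0$, and conclude. The only (harmless) cosmetic difference is that you sum over all index sequences rather than just paths; since non-paths contribute zero weight, the two sums agree.
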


\begin{proof}
  Let $Z_k$ be the sum of the weights of all paths of
  length~$k$. Then, identifying the functions $w^-$ and $w^+$ with the
  corresponding row and column vectors, one has:
  $Z_k=w^-\cdot M^{k}\cdot w^+$. Let $(\ell,r)$ be a pair of left and
  right $\lambda$-eigenvectors of~$M$ and satisfying $\ell\cdot r=1$,
  where $\lambda$ is the eigenvalue of maximal modulus of~$M$. Then, putting $\Pi=r\cdot\ell$, and
  according to Lemma~\ref{lem:pokqpkqa}:
  \begin{align*}
    \frac1{\lambda^{k}}Z_k&=w^-\cdot\frac1{\lambda^{k}}M^{k}\cdot
                          w^+\xrightarrow[k\to\infty]{}
w^-\cdot\Pi\cdot w^+=(w^-\cdot r)(\ell\cdot w^+)>0.
  \end{align*}
It follows that $Z_k>0$ for all $k$ large enough, and in particular the set
of paths with positive weight is non empty.
\end{proof}

\begin{definition}
  \label{def:4}
  Let $\GG=(M,w^-,w^+)$ be a conditioned weighted graph, and for each
  integer $k\geq1$, let $G_k$ denote the set of paths of length $k$
  in~$\GG$. The \emph{weighted distribution} on $G_k$ is the
  probability distribution $\mu_k$ on $G_k$ defined by:
\begin{align*}
\forall z\in G_k\quad  \mu_k(z)&=\frac{w(z)}{Z_k}\,,&\text{with }
Z_k&=\sum_{z\in G_k}w(z)\,,
\end{align*}
which is well defined at least for $k$ large enough according to
Lemma~{\normalfont\ref{lem:pojqwopjiq}}.

For each integer $j\geq1$, and for $k$ large enough, we denote by $\lpm kj$
the joint law of the first $j+1$ elements $(x_0,\ldots,x_{j})$ of a path
$(x_0,\ldots,x_{k})$ of length $k$ distributed according to~$\mu_k$\,. We
call $\lpm kj$ the \emph{left $j$-window distribution
  with respect to~$\mu_k$\,}.
\end{definition}

Let $\GG=(M,w^-,w^+)$ be a conditioned weighted graph, and for each integer
$k\geq0$, let $\Omega_k$ denote the set of paths of length \emph{at
most~$k$}. Let also $\Omega$ be the set of finite or infinite paths, with its
canonical topology (for which it is a compact space). The set of finite paths
$\bigcup_k \Omega_k$ is dense in~$\Omega$.

In general, the collection $(\mu_k)_{k\geq0}$ \emph{is
  not a projective system of probability measures}, since the measure
induced by $\mu_{k+1}$ on the set $G_k$ of paths of length $k$ does not
coincide with~$\mu_k$. Hence, the projective limit of~$(\mu_k)_{k\geq0}$ is
not defined in general.

Yet, weak limits of measures are an adequate tool to replace projective
limits in this case. Indeed, each $G_k$ is naturally embedded into~$\Omega$.
Through this embedding, the distribution $\mu_k$ identifies with a discrete
probability measure, still denoted by~$\mu_k$\,, on the space $\Omega$
equipped with its Borel \slgb.

\begin{theorem}
  \label{thr:4}
  Let $\GG=(M,w^-,w^+)$ be a conditioned weighted graph, and consider
  as in Definition~{\normalfont\ref{def:3}} the eigenvalue~$\lambda$ of maximal
  modulus together with the pair $(\lev,\rev)$ of associated
  eigenvectors.

The sequence $(\mu_k)_{k\geq0}$  of weighted distributions converges weakly
toward a probability measure $\mu$ on~$\Omega$, which is concentrated on the
set\/ $\Xi\subseteq\Omega$ of infinite paths.

For each integer $k\geq0$, let $X_k:\Xi\to\{1,\ldots,N\}$ denote the
$k^\text{th}$ natural projection. Then, under~$\mu$, $(X_k)_{k\geq0}$~is a
Markov chain. Its initial distribution, denoted~$h$, and its transition
matrix, denoted~$P=(P_{i,j})$ where $P_{i,j}$ is the probability to jump from
state $i$ to state~$j$, are given by:
\begin{align*}
  h(i)&=\frac{w^-(i)\rev(i)}{w^-\cdot\rev}\,,&
P_{i,j}&=\lambda^{-1}M_{i,j}\frac{\rev(j)}{\rev(i)}\,,\quad\text{if $r(i)\neq0$},
\end{align*}
independently of the choice of\/~$\rev$. If $r(i)=0$, the line
$P_{i,\bullet}$ is defined as an arbitrary probability vector. The chain can
only reach the set of states $i$ such that $r(i) \neq 0$.

Restricted to the set of reachable states, the chain has a unique stationary
measure, say~$\pi$, given by:
\begin{gather*}
  \forall i\in\{1,\dots,N\}\quad \pi(i)=\lev(i)\rev(i).
\end{gather*}

In Case~A introduced in Section~\ref{sec:two-particular-cases}, the chain
$(X_k)_{k\geq1}$ is ergodic. In Case~B, the chain has a unique ergodic
component, namely the $N-K$ last indices $\{K+1,\dots,N\}$, and hence the
states in $\{1,\dots,K\}$ are all transient.
\end{theorem}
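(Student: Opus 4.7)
The plan is to exploit the spectral decomposition $M = \lambda\Pi + Q$ with $\Pi = r\cdot\ell$ and $\rho(Q) < \lambda$ given by Lemma~\ref{lem:pokqpkqa}, whose key consequence is the uniform asymptotic $M^n / \lambda^n \to r\cdot\ell$. Every statement of the theorem will flow from precise asymptotics of cylinder probabilities under~$\mu_k$.

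First, I would fix a finite sequence $(x_0,\dots,x_j)$ and write, for $k \geq j$,
\[
\mu_k([x_0,\dots,x_j]) \;=\; \frac{w^-(x_0)\,M_{x_0,x_1}\cdots M_{x_{j-1},x_j}\,(M^{k-j}w^+)(x_j)}{w^-\cdot M^k\cdot w^+}.
\]
Dividing numerator and denominator by $\lambda^k$ and applying Lemma~\ref{lem:pokqpkqa} yields
\[
\mu_k([x_0,\dots,x_j]) \;\xrightarrow[k\to\infty]{}\; \frac{w^-(x_0)\,M_{x_0,x_1}\cdots M_{x_{j-1},x_j}\,r(x_j)\,(\ell\cdot w^+)}{\lambda^{j}\,(w^-\cdot r)(\ell\cdot w^+)} \;=\; h(x_0)\prod_{i=0}^{j-1}P_{x_i,x_{i+1}},
\]
the last identity obtained by inserting and cancelling factors $r(x_1),\dots,r(x_{j-1})$. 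The \CWG\ hypotheses $w^-\cdot r>0$ and $\ell\cdot w^+>0$ keep the denominator nonzero asymptotically, in particular guaranteeing that $Z_k>0$ for all large~$k$ in agreement with Lemma~\ref{lem:pojqwopjiq}.

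Second, these limit marginals are exactly the finite-dimensional distributions of a Markov chain with initial law $h$ and transition~$P$, so Kolmogorov's extension theorem produces a unique probability measure $\mu$ on $\{1,\dots,N\}^{\bbN}$ realising them; the values of $P_{i,\bullet}$ when $r(i)=0$ are immaterial since $h(i)=0$ and $P_{j,i}=0$ whenever $r(i)=0$, so such states are never reached. Cylinders form a countable clopen basis of the Borel structure of $\Omega$, and $\mu_k\to\mu$ on every cylinder, so the portmanteau characterisation gives weak convergence $\mu_k \to \mu$ on~$\Omega$. Moreover, $\mu_k$ is supported on paths of length exactly~$k$, so any closed set of paths of bounded length has vanishing $\mu_k$-mass in the limit, and $\mu$ is concentrated on the set $\Xi$ of infinite paths.

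Third, the remaining algebraic claims follow directly. Stochasticity of $P$ on $\{i : r(i)>0\}$ comes from $Mr = \lambda r$; the invariance $\pi P = \pi$ of $\pi(i) = \ell(i)r(i)$ follows from $\ell M = \lambda\ell$ via $(\pi P)(j) = \lambda^{-1}r(j)(\ell M)(j) = \ell(j)r(j)$, with $\sum_i\pi(i) = \ell\cdot r = 1$. For ergodicity, in Case~A primitivity of $M$ forces $r>0$ and $P$ inherits the support pattern of $M$, hence is primitive. In Case~B the block-triangular form of $M$ transfers to $P$: the lower-right block is the conjugate $\lambda^{-1}D_{\rtilde}^{-1}\Mtilde D_{\rtilde}$ and is primitive with the same support as $\Mtilde$, forming the unique ergodic class; the upper-left block is conjugate to $\lambda^{-1}A$ restricted, of spectral radius $\rho(A)/\lambda < 1$, so paths spend finite time in $\{1,\dots,K\}$ almost surely, making these states transient. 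The only technical subtlety is upgrading pointwise convergence on cylinders to weak convergence on the non-discrete space~$\Omega$ and dealing cleanly with indices $i$ where $r(i)=0$; all the analytic heavy lifting is already packaged in Lemma~\ref{lem:pokqpkqa}.
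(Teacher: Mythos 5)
Your argument is correct and follows essentially the same route as the paper: the spectral decomposition $M^k/\lambda^k \to r\cdot\ell$ from Lemma~\ref{lem:pokqpkqa} gives cylinder asymptotics, clopen cylinders on the compact metric space $\Omega$ give weak convergence (the paper cites Billingsley Th.~25.8 for this step, which is the $\pi$-system/compactness fact you are invoking with the word ``portmanteau''), and the Markov, stationarity, and ergodicity claims are verified algebraically. The only cosmetic difference is that you first build $\mu$ via Kolmogorov extension and then prove $\mu_k\Rightarrow\mu$, whereas the paper lets Billingsley's theorem produce $\mu$ directly from the convergent cylinder masses; both are fine.
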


In view of the above result, we introduce the following definition.

\begin{definition}
  \label{def:5}
  The probability measure $\mu$ on the space of infinite sequences
  characterized in Theorem~{\normalfont\ref{thr:4}} is called the
  \emph{limit weighted measure} of the conditioned weighted
  graph\/~$\GG$.
\end{definition}

\begin{proof}[Proof of Theorem~\ref{thr:4}.]
  For $x$ a finite path of length~$j$, denote by $\CC_x$ the
  elementary cylinder of base~$x$, \ie, the set of finite or
  infinite words that start with $x$. For all integers $k$ such that
  $k\geq j$ and such that $\mu_k$ is well defined, one has:
\begin{gather}
  \label{eq:34}
  \mu_k(\CC_x)=\frac1{Z_k}\sum_{z\in G_k\tqs\theta_{j}(z)=x}w(z)\,,
\end{gather}
where $\theta_j$ is the truncation map that only keeps the first $j$ steps of
a path.

Let $\wtilde$ be the real-valued function defined on finite paths by:
\begin{gather*}
  \wtilde(x_0,\ldots,x_{j})=w^-(x_0) M_{x_0,x_1}\dotsm M_{x_{j-1},x_{j}}\,.
\end{gather*}

Then both terms of the quotient in~\eqref{eq:34} can be written through
powers of the matrix~$M$:
\begin{align*}
  \mu_k(\CC_x)&=\frac1{w^-\cdot M^{k}\cdot w^+} \wtilde(x)
\un_{x_{j}}\cdot M^{k-j}\cdot w^+\,.
\end{align*}
where $\un_{x_{j}}$ denotes the row vector filled with~$0$s, except for the
entry $x_{j}$ where it has a~$1$.

According to Lemma~\ref{lem:pokqpkqa}, the following asymptotics holds for
the powers of~$M$:
\begin{gather*}
  M^k=\lambda^k(\rev\cdot\lev)\bigl(1+o(1)\bigr),\quad k\to\infty.
\end{gather*}
Therefore $(\mu_k(\CC_x))_{k\geq0}$ is convergent, with limit given by:
\begin{gather}
\label{eq:33}  \lim_{k\to\infty}\mu_k(\CC_x)=\lambda^{-j}\wtilde
(x)\frac{(\un_{x_{j}}\cdot\rev)(\lev\cdot w^+)}{(w^-\cdot\rev)(\lev\cdot w^+)}
=\lambda^{-j}\wtilde(x)\frac{\rev(x_{j})}{w^-\cdot\rev}\,.
\end{gather}

Elementary cylinders, together with the empty set, are stable under finite
intersections and generate the Borel \slgb\ on~$\Omega$. Elementary cylinders
are clopen sets and thus of empty topological boundary. And finally,
$\Omega$~is a compact metric space. According
to~\cite[Th.~25.8]{billingsley95}, this is enough to deduce the weak
convergence of $(\mu_k)_{k\geq0}$ toward a probability measure $\mu$ on
$\Omega$ such that $\mu(\CC_x)$ coincides with the value of the limit
in~\eqref{eq:33}.

It is clear that the support of $\mu$ only contains infinite paths since, for
every finite path $z=(x_0,\ldots,x_j)$, one has $\mu_k(\{z\})=0$ for $k$
large enough. It follows that $\mu\bigl(\{z\}\bigr)=0$, and thus $\mu(\Xi)=1$
since finite paths are
 countably many.

The vector $h$ and the matrix $P$ defined in the statement are indeed
respectively a probability vector and a stochastic matrix on
$\{1,\ldots,N\}$. The Markov chain with initial law $h$ and transition
matrix~$P$ gives to the cylinder $\CC_x$ the following probability:
\begin{multline*}
  h(x_0) P_{x_0,x_1} P_{x_1,x_2} \dotsm P_{x_{j-1}, x_{j}}=\\
 \frac{w^-(x_0) r(x_0)}{w^-\cdot r} \cdot \lambda^{-1} M_{x_0,x_1} \frac{r(x_1)}{r(x_0)}
     \cdot  \lambda^{-1} M_{x_1,x_2} \frac{r(x_2)}{r(x_1)} \dotsm
     \lambda^{-1} M_{x_{j-1},x_{j}} \frac{r(x_{j})}{r(x_{j-1})}\\
  = \frac{w^-(x_0) r(x_{j})}{w^-\cdot r} \lambda^{-j} M_{x_0,x_1} \dotsm M_{x_{j-1}, x_{j}}
  = \mu(\CC_x),
\end{multline*}
by~\eqref{eq:33}. This shows that this Markov chain has the same joint
marginals as $(X_k)_{k\geq0}$ under~$\mu$, or equivalently, that
$(X_k)_{k\geq0}$ under $\mu$ is the Markov chain with initial law $h$ and
transition matrix~$P$.

By the normalization condition $\lev\cdot\rev=1$, the vector $\pi$ is indeed
a probability distribution, which is readily seen to be left invariant
for~$P$.  Furthermore, left invariant vectors $\theta$ for $P$ and left
$\lambda$-eigenvectors $\theta'$ for~$M$, with support within the set of
reachable states, correspond to each others by $\theta'(i)=\theta(i)/r(i)$.
Since $M$ has a unique left $\lambda$-eigenvector~$\ell$, the unique ergodic
component of the chain corresponds to the support of~$\ell$. In case~A,
$\ell>0$ hence the chain is ergodic. In Case~B, the unique ergodic component
corresponds to the last $N-K$ states.
\end{proof}

\begin{corollary}
\label{cor:3} We keep the same notations as in
Theorem~{\normalfont\ref{thr:4}}. Let $j\geq1$ be an integer. Then, with
respect to~$\mu_k$\,, as $k\to\infty$, the left $j$-window distributions
$\lpm kj$ converge toward the joint law of\/ $(X_0,\ldots,X_{j})$ under the
uniform distribution at infinity~$\mu$.
\end{corollary}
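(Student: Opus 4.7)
The plan is to reduce the corollary to the convergence results already obtained in the proof of Theorem~\ref{thr:4}. By definition of $\lpm kj$ (Definition~\ref{def:4}), for every $(x_0,\ldots,x_j)\in\{1,\dots,N\}^{j+1}$ and every $k\geq j$ for which $\mu_k$ is defined, one has
\begin{gather*}
\lpm kj\bigl(\{(x_0,\ldots,x_j)\}\bigr)=\mu_k(\CC_x),
\end{gather*}
where $\CC_x$ is the elementary cylinder with base $x=(x_0,\ldots,x_j)$ used throughout the proof of Theorem~\ref{thr:4}. In other words, $\lpm kj$ is the pushforward of $\mu_k$ by the truncation map $\theta_j:\Omega\to\{1,\dots,N\}^{j+1}$ keeping only the first $j+1$ coordinates.

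First I would invoke the key intermediate computation from the proof of Theorem~\ref{thr:4}, namely the limit~\eqref{eq:33}, which gives $\mu_k(\CC_x)\to\mu(\CC_x)$ as $k\to\infty$ for every finite path~$x$ of length~$j$. Next, I would identify $\mu(\CC_x)$ with the joint marginal of the Markov chain: by construction of the Markov chain $(X_k)_{k\geq0}$ under $\mu$, Theorem~\ref{thr:4} already shows that
\begin{gather*}
\mu(\CC_x)=h(x_0)\,P_{x_0,x_1}\cdots P_{x_{j-1},x_j}=\mu\bigl(X_0=x_0,\dots,X_j=x_j\bigr).
\end{gather*}

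Combining these two observations, for every $x\in\{1,\dots,N\}^{j+1}$,
\begin{gather*}
\lpm kj\bigl(\{x\}\bigr)=\mu_k(\CC_x)\xrightarrow[k\to\infty]{}\mu(\CC_x)=\mu\bigl(X_0=x_0,\dots,X_j=x_j\bigr).
\end{gather*}
Since $\{1,\dots,N\}^{j+1}$ is finite, pointwise convergence of probabilities on atoms is equivalent to weak convergence, so $\lpm kj$ converges to the joint law of $(X_0,\ldots,X_j)$ under~$\mu$, which is the conclusion sought.

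There is no serious obstacle here: the whole content is that the marginals of the weak limit $\mu$ from Theorem~\ref{thr:4} coincide with the limits of the corresponding marginals of the $\mu_k$'s. The only minor care needed is to ensure that $\mu_k$ is defined for $k$ large enough, which is guaranteed by Lemma~\ref{lem:pojqwopjiq}, and to note that $\theta_j$ takes its values in a finite (hence discrete) space, so no measure-theoretic subtleties arise.
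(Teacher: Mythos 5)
Your proof is correct and takes essentially the same route as the paper: the paper's proof simply observes that the corollary is a rephrasing of the weak convergence established in Theorem~\ref{thr:4}, and your argument spells out exactly why (the left $j$-window distribution equals $\mu_k$ restricted to cylinders of length $j$, convergence on cylinders follows from~\eqref{eq:33}, and finiteness of $\{1,\dots,N\}^{j+1}$ gives convergence in law).
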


\begin{proof}
  This is a rephrasing of the weak convergence stated in
  Theorem~\ref{thr:4}.
\end{proof}

\subsection{Related notions found in the literature}
\label{sec:relat-with-other}

The notion of conditioned weighted graph is often found in the literature
under disguised forms.  For instance, the transition matrix $P$ of the Markov
chain introduced in Theorem~\ref{thr:4} corresponds to the transformation of
an incidence matrix first introduced by
Parry~\cite{parry64,lind95,kitchens97} in its construction of a stationary
Markov chain reaching the maximum entropy.

This matrix $P$ also has the very same form as the transition matrix of the
\emph{survival process} of a discrete time, finite states absorbing Markov
chain~\cite{darroch65,collet13}. Actually, discrete time, finite states
absorbing Markov chains can be interpreted as a particular case of
conditioned weighted graph, as we briefly explain now.

A finite absorbing Markov chain $(Y_i)_{i\geq0}$ is a Markov chain on
$\{0,\ldots,N\}$ such that $P_{0,0}=1$, and such that $0$ can be reached in a
finite number of states from any other state. Usually, it is assumed that all
states in $\{1,\ldots,N\}$ are strongly connected, which we assume too. For
each $x\in\{1,\ldots,N\}$, we consider the conditioned weighted graph
$\GG_x=(M,w_x,w^+)$ defined as follows: $M$~is the restriction of $P$ to the
entries in $\{1,\ldots,N\}$; $w^+$~is the constant vector with entries~$1$;
and $w_x$~is the indicator function of~$x$.

Assume that $(Y_i)_{i\geq0}$ starts from~$1$. Let $T$ be the first hitting
time of $0$ of the chain $(Y_i)_{i\geq0}$\,. Then it is clear that the
marginal law of $(Y_0,\ldots,Y_j)$ conditioned on $\{T>k\}$ corresponds to
our left $j$-window distribution for the conditioned weighted
graph~$\GG_1$\,. The survival process, if it exists, is a process
$(X_i)_{i\geq0}$ such that:
\begin{gather}
  \label{eq:57}
  \pr(X_0=x_0,\ldots,X_j=x_j)=\lim_{k\to\infty}\pr(Y_0=x_0,\ldots,Y_j=x_j|T>k)\,.
\end{gather}

We recover thus the existence of the survival process, and its form as a
Markov chain, through Theorem~\ref{thr:4} (or Corollary~\ref{cor:3}); this is
established for instance in~\cite[Sections~3.1 and~3.2]{collet13} for
continuous-time Markov chains.

\section{Application to Artin-Tits monoids}
\label{sec:appl-posit-braids}

We apply the notion of Conditioned Weighted Graphs (\CWG) introduced in
Section~\ref{sec:centr-limit-theor} to the counting of elements of Artin-Tits
monoids, maybe with a multiplicative positive weight. The limit of the
associated weighted measures is found to be concentrated on the boundary of
the monoid and to be multiplicative. This yields another representation of
multiplicative measures, introduced in Section~\ref{sec:posit-dual-posit}, as
weak limits of finite probability distributions, and provides a proof of
existence for multiplicative measures. It also yields a parametrization of
multiplicative measures.

\subsection{\CWG\ associated to an irreducible Artin-Tits monoid}
\label{sec:cwg-associated-an}

\subsubsection{Uniform case}
\label{sec:uniform-case}

\begin{definition}
  \label{def:8}
  Let\ $\bA$ be an irreducible Artin-Tits monoid, and let $\SS$ be the
  smallest Garside subset of\/~$\bA$. Let
  $J=\{(x,i)\tq x\in\SS\setminus\{\unit\}\text{ and }1\leq i\leq \abs{x}\}$
  and $N=\#J$. The \CWG\ associated with\/ $\bA$ is the triple
  $(M,w^-,w^+)$, where $M$ is the non-negative square matrix of size
  $N\times N$ indexed by~$J$, and $w^-$ and $w^+$ are defined by:
  \begin{gather*}
    M_{(x,i),(y,j)}=  \begin{cases}
    1,&\text{if $x=y$ and $j=i+1$}\\
1,&\text{if $x\to y$ and $i=\abs{x}$ and $j=1$}\\
0,&\text{otherwise}
  \end{cases}\\
\begin{aligned}
w^-(x,i)&=\un(i=1),&w^+(x,i)&=\un(i=\abs{x}).
  \end{aligned}
\end{gather*}
\end{definition}

The motivation behind this definition is that elements of $\bA$ correspond
bijectively to paths in the graph associated with the matrix~$M$, or more
precisely with the triple $(M,w^-,w^+)$. Indeed, let $x$ be an element
of~$\bA$, with $x\neq\unit$, and let $(x_1,\dots,x_j)$ be the normal form
of~$x$. We associate to $x$ the sequence $\xtilde$ defined by:
\begin{gather}
  \label{eq:20}
\xtilde=\bigl((x_1,1),\dots,(x_1,\abs{x_1}),(x_2,1),\dots,(x_2,\abs{x_2}),
\dots,(x_j,1),\dots,(x_j,\abs{x_j})\bigr).
\end{gather}
Then $\xtilde$ is indeed a path in the graph corresponding to the triple
$(M,w^-,w^+)$, of length $\abs{x_1}+\dots+\abs{x_j}-1=\abs{x}-1$.

This correspondence is a bijection between elements of $\bA$ of length~$k>0$
and paths in $(M,w^-,w^+)$ of length~$k-1$, whence:
\begin{equation}
\label{eq:card_sphere}
  \#\{x\in\bA\tq\abs{x}=k\} = w^- \cdot M^{k-1}\cdot w^+.
\end{equation}

\begin{proposition}
\label{prop:8} Let\/ $\bA$ be an irreducible Artin-Tits monoid with at least
two generators. Then the triple $(M,w^-,w^+)$ introduced in
Definition~\ref{def:8} is indeed a \CWG, corresponding either to Case~A of
Section~\ref{sec:two-particular-cases} if\/ $\bA$ is not of spherical type,
or to case~B if\/ $\bA$ is of spherical type.
\end{proposition}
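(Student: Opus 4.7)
The proof reduces to understanding the directed graph encoded by~$M$: each simple element $x \in \SS\setminus\{\unit\}$ contributes an internal chain $(x,1)\to(x,2)\to\cdots\to(x,|x|)$, and from the terminal vertex $(x,|x|)$ there is an arrow to $(y,1)$ whenever $x\to y$ holds in~$\SS$ (with $y\neq\unit$). For any generator $\sigma\in\Sigma$, since $|\sigma|=1$ and $\sigma\to\sigma$, the single vertex $(\sigma,1)$ carries a self-loop, and both $w^-(\sigma,1)$ and $w^+(\sigma,1)$ equal~$1$. This takes care at once of the non-degeneracy of $w^-$ and $w^+$ in both cases, provided one observes that in Case~B every generator lies in the Charney component $\SS\setminus\{\unit,\Delta\}$ indexing the last $N-K$ vertices (this holds because $|\Sigma|\geq 2$ forces $|\Delta|\geq 2$).

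In the non-spherical case, $\Ch = \SS\setminus\{\unit\}$ and the whole of $J$ is involved. I would establish strong connectivity of the unfolded graph by routing any $(x,i) \leadsto (x',i')$ as follows: travel along the internal chain of $x$ to $(x,|x|)$, use strong connectivity of the Charney graph (Theorem~\ref{thr:1qqlknaa}) to get to $(x',1)$, and travel along the internal chain of $x'$. Combined with the length-one cycle provided by a self-loop at any $(\sigma,1)$, this gives primitivity of~$M$, so Case~A of Section~\ref{sec:two-particular-cases} is verified.

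In the spherical case, I would partition $J = J_1 \sqcup J_2$ with $J_1 = \{(\Delta,i):1\leq i\leq |\Delta|\}$ (of cardinality $K=|\Delta|$) and $J_2$ its complement, ordering~$J$ so that $J_1$ comes first. By~\eqref{eq:22}, the relation $y\to\Delta$ can only hold for $y=\Delta$, so no arrow of the graph enters $J_1$ from $J_2$, yielding the upper block-triangular structure $M=\bigl(\begin{smallmatrix}A&T\\0&\Mtilde\end{smallmatrix}\bigr)$ required by Case~B. On $J_1$, the only arrows are the chain edges together with the loop-closing edge $(\Delta,|\Delta|)\to(\Delta,1)$ coming from $\Delta\to\Delta$, so $A$ is a cyclic permutation matrix of size $K$, and consequently $\rho(A)=1$. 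On $J_2$, the same reasoning as in Case~A—applied to the Charney graph on $\SS\setminus\{\unit,\Delta\}$, which is still strongly connected by Theorem~\ref{thr:1qqlknaa} and still contains every generator—shows that $\Mtilde$ is primitive.

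The main obstacle is then the strict inequality $\rho(A)<\rho(\Mtilde)$. Since $\rho(A)=1$ it suffices to rule out $\rho(\Mtilde)=1$, which I would do by a direct eigenvector argument. Primitivity of $\Mtilde$ together with Perron-Frobenius provides, under that assumption, a positive right eigenvector~$r$ with $\Mtilde r = r$. The eigenvalue equations along each internal chain force $r_{(x,i)}$ to be independent of~$i$, say equal to $s(x)>0$, and at the terminal vertex they reduce to $\sum_{y\in\Ch,\,x\to y} s(y) = s(x)$ for every $x\in\Ch$. Evaluated at a generator $\sigma$, the sum already contains $s(\sigma)$ from the self-loop $\sigma\to\sigma$; and because the Charney graph is strongly connected with at least two vertices, the first step of a path from $\sigma$ to any other vertex produces some $z\neq\sigma$ in $\Ch$ with $\sigma\to z$, contributing an extra positive term $s(z)$ to the sum. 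This forces $s(\sigma)\geq s(\sigma)+s(z)$, contradicting $s>0$. Hence $\rho(\Mtilde)>1=\rho(A)$, and all the hypotheses of Case~B are met.
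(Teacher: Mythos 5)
Your proof is correct and follows the same overall plan as the paper's: the same unfolded-graph analysis with self-loops at generators for Case A, the same block partition $J=J_1\sqcup J_2$ with $J_1$ indexed by the $\Delta$-chain for Case B, the same observation that $A$ is a cyclic permutation matrix so $\rho(A)=1$, and the same primitivity argument for $\Mtilde$. The one place you genuinely diverge is in establishing $\rho(A)<\rho(\Mtilde)$, i.e.\ $\rho(\Mtilde)>1$. The paper notes that irreducibility and $|\Sigma|\geq 2$ give a vertex of the Charney graph with out-degree at least two, asserts that $\Mtilde$ therefore strictly dominates a permutation matrix, and invokes Perron--Frobenius. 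You instead suppose $\rho(\Mtilde)=1$, take a positive Perron right eigenvector, show it is constant $s(x)$ along each internal chain and then must satisfy $\sum_{y\in\Ch,\,x\to y}s(y)=s(x)$ at terminal vertices, and obtain a contradiction at any generator $\sigma$ from the self-loop $\sigma\to\sigma$ together with strong connectivity (which produces some $z\neq\sigma$ with $\sigma\to z$). Your argument is slightly more self-contained, since it sidesteps the need to actually exhibit a permutation matrix below $\Mtilde$ (a claim the paper leaves implicit); the paper's version is shorter. Both are valid, and your verification of the non-degeneracy of $w^-$ and $w^+$ via $\Sigma\subseteq\SS\setminus\{\unit,\Delta\}$ in the spherical case is accurate.
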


\begin{proof}
  Assume that $\bA$ is not of spherical type. Then the graph
  $(\SS\setminus\{\unit\},\to)$ is strongly connected according to
  Theorem~\ref{thr:1qqlknaa}, which yields that the graph associated
  with $M$ is strongly connected. Since $x\to x$ holds for every
  $x\in\Sigma$, the diagonal element $M\bigl((x,1),(x,1)\bigr)$ is~$1$, hence $M$ is primitive. The conditions on $w^-$ and $w^+$ are
  trivially satisfied, and thus $(M,w^-,w^+)$ is of type~A.

If $\bA$ is of spherical type, let $J_\Delta=\{(\Delta,i)\tq 1\leq
i\leq\abs{\Delta}\}$. Then the matrix $M$ has the following form:
\begin{align*}
  M&=
     \begin{pmatrix}
A&T\\0&\Mtilde
     \end{pmatrix}
&
\text{with\quad}A&=
  \begin{pmatrix}
    0&1&0&&\cdots&0\\
&0&1&0&\cdots&0\\
\vdots&&&&&\vdots\\
&&&\cdots&0&1\\
1&0&&&\cdots&0
  \end{pmatrix}
\end{align*}
where $A$ has size $\#J_\Delta\times\#J_\Delta$\,, and $\Mtilde$ is
irreducible according to Theorem~\ref{thr:1qqlknaa} and by the same reasoning
as above. The matrix $T$ is filled with~$0$s, except for its last line where
it has a $1$ at each column indexed by $(x,1)$ for
$x\in\SS\setminus\{\unit,\Delta\}$.

Since $A^{\#J_\Delta}=I$, all the eigenvalues of $A$ have modulus~$1$ and
thus the spectral radius of $A$ is~$1$. On the other hand, since $\bA$ is
assumed to have at least two generators and to be irreducible, there exist
elements $x,y,z \in \SS\setminus\{\unit,\Delta\}$ such that $y \neq z$, $x
\to y$ and $x \to z$. Hence, $\Mtilde$~is greater than a permutation matrix;
being primitive, $\Mtilde$~has a spectral radius greater than $1$ by the
Perron-Frobenius Theorem. Finally, the conditions on $w^-$ and $w^+$ are
trivially satisfied, hence $(M,w^-,w^+)$ falls into case~B.
\end{proof}

\begin{proposition}
  \label{prop:7}
  Let\/ $\bA$ be an irreducible Artin-Tits monoid with at least two
  generators. For each integer $k\geq0$, let
  $\lambda_k=\#\{x\in\bA\tq\abs{x}=k\}$. Then there exist two real
  constants $C>0$ and $p_0\in(0,1)$, that depend on\/~$\bA$, such
  that:
  \begin{gather}
    \label{eq:13}
    \lambda_k\sim_{k\to\infty} Cp_0^{-k}.
  \end{gather}
  The real $p_0$ is the inverse of the Perron eigenvalue of
  the \CWG\ associated to\/~$\bA$.
\end{proposition}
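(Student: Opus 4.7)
The plan is to feed equation~\eqref{eq:card_sphere}, i.e., $\lambda_k=w^-\cdot M^{k-1}\cdot w^+$, into the spectral asymptotics of Lemma~\ref{lem:pokqpkqa} applied to the \CWG\ $(M,w^-,w^+)$ of Definition~\ref{def:8}.

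By Proposition~\ref{prop:8}, $(M,w^-,w^+)$ is indeed a \CWG, of either Case~A or Case~B from Section~\ref{sec:two-particular-cases}. Let $\lambda=\rho(M)>0$ denote its dominant eigenvalue, provided by Lemma~\ref{lem:pokqpkqa}, together with a pair $(\ell,r)$ of non-negative left and right $\lambda$-eigenvectors normalized by $\ell\cdot r=1$. By the very definition of a \CWG, both $w^-\cdot r>0$ and $\ell\cdot w^+>0$ hold, and the convergence~\eqref{eq:2qpoijak} then yields
\begin{gather*}
\lambda^{-(k-1)}\lambda_k=w^-\cdot\bigl(\lambda^{-(k-1)}M^{k-1}\bigr)\cdot w^+\xrightarrow[k\to\infty]{}(w^-\cdot r)(\ell\cdot w^+)>0.
\end{gather*}
This gives $\lambda_k\sim C\lambda^{k}$ with $C=\lambda^{-1}(w^-\cdot r)(\ell\cdot w^+)>0$, and setting $p_0=\lambda^{-1}$ produces the desired equivalent $\lambda_k\sim Cp_0^{-k}$.

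The main non-routine step, and the only real obstacle, will be the strict inequality $\lambda>1$, which is what ensures $p_0\in(0,1)$. In Case~B (spherical type) this is essentially already present in the proof of Proposition~\ref{prop:8}: one exhibits elements $x,y,z\in\SS\setminus\{\unit,\Delta\}$ with $y\neq z$, $x\to y$ and $x\to z$, so that $\Mtilde$ strictly dominates a permutation matrix, whence $\rho(\Mtilde)>1$, and $\rho(M)=\rho(\Mtilde)$. In Case~A (non-spherical type) the same idea applies directly to~$M$: every generator $\sigma\in\Sigma$ satisfies $\sigma\to\sigma$ because by Lemma~\ref{lem:3-FC} the set $\SS$ is square-free, so $\bigveel\{\zeta\in\SS\tq\zeta\leql\sigma\cdot\sigma\}=\sigma$. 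As $\#\Sigma\geq 2$, the Charney graph---strongly connected by Theorem~\ref{thr:1qqlknaa}---has at least two vertices and at least two self-loops, which rules out the only strongly connected finite graph with all out-degrees equal to~$1$, namely a single cycle; hence some vertex must have out-degree~$\geq 2$. Consequently $M$ is a primitive $0/1$ matrix strictly dominating a permutation matrix, and the standard Perron-Frobenius comparison theorem then yields $\rho(M)>1$, completing the plan.
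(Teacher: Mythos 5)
Your derivation of the equivalent $\lambda_k\sim C p_0^{-k}$ is exactly the paper's argument: use~\eqref{eq:card_sphere} to write $\lambda_k=w^-\cdot M^{k-1}\cdot w^+$ and then apply the spectral decomposition from Lemma~\ref{lem:pokqpkqa} to the \CWG\ given by Proposition~\ref{prop:8}. You go further than the paper's proof of Proposition~\ref{prop:7}, which does not explicitly justify $p_0\in(0,1)$: you try to verify $\rho(M)>1$. The square-freeness observation (elements of $\SS$ lie in the Garside subset $\GG$ of Lemma~\ref{lem:3-FC}, so $\sigma\to\sigma$), and the deduction that a strongly connected Charney graph with at least two vertices and a self-loop must contain a vertex of out-degree at least two, are both correct and useful.

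However, the step ``consequently $M$ is a primitive $0/1$ matrix strictly dominating a permutation matrix'' is a genuine gap. Knowing that $M$ is primitive and that some row has two nonzero entries does not by itself guarantee that $M\geq P$ for a permutation matrix $P$: a strongly connected digraph need not admit a vertex-disjoint cycle cover (consider the digraph with edges $1\to 2$, $2\to 1$, $1\to 3$, $3\to 1$). In the Charney graph this amounts to finding a permutation $\pi$ of $\SS\setminus\{\unit\}$ with $x\to\pi(x)$ for every $x$, and nothing in what you (or, in Case~B, the paper's own proof of Proposition~\ref{prop:8}) established supplies such a $\pi$. The conclusion $\rho(M)>1$ is nonetheless correct and can be reached without the cycle-cover claim: since $M$ is an irreducible $0/1$ matrix with every row sum $\geq 1$ and one row sum $\geq 2$, one has $M\cdot\un\geq\un$ with strict inequality in at least one coordinate; pairing this with the positive left Perron eigenvector $\lev$ of $M$ gives $\rho(M)(\lev\cdot\un)=\lev\cdot(M\cdot\un)>\lev\cdot\un$, hence $\rho(M)>1$. (Alternatively, one may note that $\lambda_k$ is unbounded---for two generators $a,b$, the elements $a^ib^{k-i}$, $0\leq i\leq k$, are pairwise distinct since the number of occurrences of $a$ is invariant under the defining relations---which is incompatible with $\lambda\leq1$ in view of the limit you computed.) Replacing the ``dominating a permutation matrix'' sentence by either of these observations closes the gap.
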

\begin{proof}
By~\eqref{eq:card_sphere}, we have $\lambda_k = w^- M^{k-1} w^+$. Hence,
putting $p_0=\lambda^{-1}$, where $\lambda$ is the Perron eigenvalue of~$M$,
we obtain the expected form according to Lemma~\ref{lem:pokqpkqa},
point~\ref{item:3asaspp}.
\end{proof}

\subsubsection{Möbius polynomial}
\label{sec:mobius-polynomial}

There is a nice combinatorial interpretation of the real $p_0$ introduced in
Proposition~\ref{prop:7}. It is much similar to the case of other monoids
such as braid monoids or trace monoids; see~\cite{abbes17} or~\cite{abbes15a}
for more details.

The \emph{Möbius polynomial} of an Artin-Tits monoid $\bA=\bA(\Sigma,\ell)$
is the polynomial $\mu_\bA\in\bbZ[T]$ defined by~:
\begin{gather*}
  \mu_\bA=\sum_{D\subseteqlub\Sigma}(-1)^{\abs{D}}T^{\abs{\bigveel D}},
\end{gather*}
where the notation $D\subseteqlub\Sigma$ has been introduced in
Definition~\ref{def:1qqpa}. Note that this is nothing but the polynomial
expression found for $h(\unit)$ in Section~\ref{sec:an-example}, where
$h(\cdot)$ was the graded Möbius transform of the uniform valuation
$f(x)=p^{\abs{x}}$ on~$\bA$.

Let also the \emph{growth series} $G\in\bbZ[[T]]$ be the formal series
defined by~:
\begin{gather}
\label{eq:8}
  G=\sum_{x\in\bA}T^{\abs{x}}=\sum_{k\geq0}\lambda_kT^k.
\end{gather}

Then $G$ is a rational series, inverse of the Möbius polynomial:
$G(T)=1/\mu_\bA(T)$; see a proof for a slightly more general result below in
Section~\ref{sec:multiplicative-case}. Since $G$ has non-negative terms, its
radius of convergence is one of its singularities by Pringsheim's
Theorem~\cite{flajolet09}. Since $G$ is rational with coefficients of the
form~\eqref{eq:13}, provided that $\bA$ is irreducible with at least two
generators, this singularity is necessarily of order~$1$, and there is no
other singularity of $G$ with the same modulus.

These facts reformulate as follows: \emph{If\/ $\bA$ is an irreducible
Artin-Tits monoid with at least two generators, the Möbius polynomial of\/
$\bA$ has a unique root of smallest modulus. This root is simple, real, lies
in~$(0,1)$, and coincides with the real $p_0$ introduced in
Proposition~~{\normalfont\ref{prop:7}}.}

\subsubsection{Multiplicative case}
\label{sec:multiplicative-case}

More generally, assume given a multiplicative and positive weight on the
elements of an Artin-Tits monoid~$\bA$, hence what we called a valuation
$\omega:\bA\to(0,+\infty)$. We associate to the pair $(\bA,\omega)$ the
following square matrix~$M$ with the same indices as in the uniform case
(Definition~\ref{def:8}), and the initial and final vectors $w^-$ and $w^+$
given by:
  \begin{gather*}
    M_{(x,i),(y,j)}=  \begin{cases}
    1,&\text{if $x=y$ and $j=i+1$}\\
\omega(y),&\text{if $x\to y$ and $i=\abs{x}$ and $j=1$}\\
0,&\text{otherwise}
  \end{cases}\\
\begin{aligned}
w^-(x,i)&=\un(i=1)\omega(x)\qquad
&w^+(x,i)&=\un(i=\abs{x})
  \end{aligned}
\end{gather*}

Then we claim that $(M,w^-,w^+)$ is a \CWG.

Indeed, if $\bA$ is not of spherical type, then the same arguments used in
the proof of Proposition~\ref{prop:8} show that $(M,w^-,w^+)$ is a \CWG\
since $M$ is primitive. Assume now that $\bA$ is of spherical type. If
$\omega$ is constant on $\Sigma$, so that $\omega (x)= p^{\abs{x}}$ for all
$x\in\bA$ and for some positive real number~$p$, then the same arguments used
in the proof of Proposition~\ref{prop:8} show that $(M,w^-,w^+)$ is a \CWG.
If, however, $\omega$~is non constant on~$\Sigma$, introduce again
$J_\Delta=\{(\Delta,i)\tq i\leq\abs{\Delta}\}$, $A$~the restriction of $M$ to
$J_\Delta\times J_\Delta$ and $\Mtilde$ the restriction of $M$ to
$(J\setminus J_\Delta)\times(J\setminus J_\Delta)$. Let $x \in \Sigma$ be
such that $\omega(x) = \max \omega(\Sigma)$. Since $\omega$ is a valuation,
and since $\Delta$ is divisible by some element $y \in \Sigma$ such that
$\omega(y) < \omega(x)$, it comes at once that $\omega(\Delta) <
\omega(x)^{\abs{\Delta}}$. Due to the loop $x \to x$, it follows that
$\rho(\Mtilde) \geq \omega(x) > \omega(\Delta)^{1/\abs{\Delta}} = \rho(A)$,
which proves that $(M,w^-,w^+)$ is a \CWG~in this case too.

Let $G_\omega$ be the generating series:
\begin{gather*}
  G_\omega=\sum_{x\in\bA}\omega(x)T^{\abs{x}}=\sum_{k\geq0}Z_\omega(k)T^k,\qquad
\text{with }Z_\omega(k)=\sum_{x\in\bA\tqs\abs{x}=k}\omega(x).
\end{gather*}
Then the coefficients $Z_\omega(k)$ have the following expression, for all
$k>0$:
\begin{gather}
\label{eq:12}
  Z_\omega(k)=w^-\cdot M^{k-1}\cdot w^+.
\end{gather}

The uniform case seen in Section~\ref{sec:uniform-case} corresponds to the
constant valuation $\omega(x)=1$, in which case $G_\omega$ is the growth
series~\eqref{eq:8} of the monoid. We note that $G_\omega=1/\mu_\omega$,
hence is rational, where $\mu_\omega\in\bbZ[T]$ is the following polynomial:
\begin{gather*}
\mu_\omega=\sum_{D\subseteqlub\SS}(-1)^{\abs{D}}\omega\bigl(\bigveel
D\bigr)T^{\abs{\bigveel D}}\,.
\end{gather*}

To prove the equality $G_\omega=1/\mu_\omega$, one has $\mu_\omega
G_\omega=\sum_{x\in\bA}a_xT^{\abs{x}}$, where $a_x$ is computed by:
\begin{multline*}
  a_x=\sum_{\substack{
D\subseteqlub\SS,\ y\in\bA\tqs\\\!\bigl(\bigveel D\bigr)\cdot y= x}}
(-1)^{\abs{D}}\omega\bigl(\bigveel
       D\bigr)\omega(y)
       \\
  =\omega(x)\Bigl(\sum_{D\subseteqlub
       \SS\tqs\bigveel D\leql x}(-1)^{\abs D}\Bigr)
  =\un(x=\unit).
\end{multline*}

\subsection{Parametrization of multiplicative measures}
\label{sec:param-mult-meas}

Assume given a valuation $\omega:\bA\to(0,+\infty)$ defined on an irreducible
Artin-Tits monoid with at least two generators. For each integer $k\geq1$,
let $\bA_k=\{x\in\bA\tq\abs{x}=k\}$ and let $m_{\omega,k}$~be the probability
distribution on $\bA_k$ proportional to~$\omega$:
\begin{gather*}
m_{\omega,k}(x)=\frac{\omega(x)}{Z_\omega(k)}\qquad\text{for $x\in\bA_k$\,.}
\end{gather*}

Then the finite probability space $(\bA_k,m_{\omega,k})$ is isomorphic to the
finite probability space of all paths of length $k-1$ in the \CWG\
$(M,w^-,w^+)$ equipped with the associated probability distribution from
Definition~\ref{def:4} (Section~\ref{sec:conv-unif-meas}). Furthermore, the
space of infinite paths in the \CWG\ $(M,w^-,w^+)$ is homeomorphic to the
boundary~$\BA$. By Theorem~\ref{thr:4}, we deduce that the sequence
$(m_{\omega,k})_{k\geq0}$ converges weakly toward a probability measure
$m_{\omega,\infty}$ on~$\BA$. The following result gives the form of this
limit measure.

\begin{theorem}
  \label{thr:3}
  Let $\omega:\bA\to(0,+\infty)$ be a valuation defined on an
  irreducible Artin-Tits monoid with at least two generators. Then the
  weak limit $m_{\omega,\infty}$ of the sequence of finite probability
  distributions $(m_{\omega,k})_{k\geq0}$ is a multiplicative measure
  on~$\BA$. Its associated valuation is given as follows, for any
  $x\in\bA$:
  \begin{gather*}
    m_{\omega,\infty}(\up x)=\lambda^{-\abs{x}}\omega(x),
  \end{gather*}
  where $\lambda$ is the Perron eigenvalue of the \CWG\ associated
  to~$\omega$.
\end{theorem}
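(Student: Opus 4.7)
The plan is to identify the limit measure $m_{\omega,\infty}$ by computing its mass on every full visual cylinder $\Up x$, and then to deduce multiplicativity directly from the explicit formula obtained. The starting point is that the CWG $(M,w^-,w^+)$ attached to $\omega$ (constructed in Section~\ref{sec:multiplicative-case}) satisfies the hypotheses of Theorem~\ref{thr:4}, so the sequence $(m_{\omega,k})$ converges weakly, through the path bijection, to a probability measure $m_{\omega,\infty}$ concentrated on infinite paths, that is, on~$\BA$.

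First I would compute the finite-level masses. Fix $x\in\bA$ and let $k\geq\abs{x}$. By left cancellativity the map $z\mapsto x\cdot z$ is a bijection between $\bA_{k-\abs{x}}$ and $\{y\in\bA_k\tq x\leql y\}$, and the valuation property gives $\omega(x\cdot z)=\omega(x)\omega(z)$. Therefore
\begin{gather*}
  m_{\omega,k}(\Up x)
  =\frac{\sum_{y\in\bA_k,\,x\leql y}\omega(y)}{Z_\omega(k)}
  =\omega(x)\,\frac{Z_\omega(k-\abs{x})}{Z_\omega(k)}.
\end{gather*}
Next I would pass to the limit. By~\eqref{eq:12}, $Z_\omega(k)=w^-\cdot M^{k-1}\cdot w^+$, and Lemma~\ref{lem:pokqpkqa} yields $\lambda^{-k}M^k\to r\cdot\ell$, hence $Z_\omega(k)\sim (w^-\cdot r)(\ell\cdot w^+)\lambda^{k-1}$ with a positive constant (positivity is built into the CWG axioms of Definition~\ref{def:3}). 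Consequently
\begin{gather*}
  \frac{Z_\omega(k-\abs{x})}{Z_\omega(k)}\xrightarrow[k\to\infty]{}\lambda^{-\abs{x}},
\end{gather*}
so $m_{\omega,k}(\Up x)\to\lambda^{-\abs{x}}\omega(x)$.

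To turn this limit into a statement about $m_{\omega,\infty}$, I would invoke Proposition~\ref{prop:2qqpjaazlx}: $\Up x$ is the finite disjoint union of the Garside cylinders $\CC_y$ for $y\in\bA[x]$, hence $\Up x$ is clopen in~$\bAbar$. Since weak convergence yields convergence of measures on clopen sets, $m_{\omega,\infty}(\Up x)=\lim_k m_{\omega,k}(\Up x)=\lambda^{-\abs{x}}\omega(x)$. Because $m_{\omega,\infty}$ is concentrated on~$\BA$ and $\Up x\cap\BA=\up x$, we conclude $m_{\omega,\infty}(\up x)=\lambda^{-\abs{x}}\omega(x)$.

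Finally, multiplicativity drops out for free: the formula gives $m_{\omega,\infty}(\up x)>0$ for every $x$ (since $\omega>0$ and $\lambda>0$), and for all $x,y\in\bA$, using $\abs{x\cdot y}=\abs{x}+\abs{y}$ and $\omega(x\cdot y)=\omega(x)\omega(y)$,
\begin{gather*}
  m_{\omega,\infty}(\up(x\cdot y))=\lambda^{-\abs{x}-\abs{y}}\omega(x)\omega(y)=m_{\omega,\infty}(\up x)\cdot m_{\omega,\infty}(\up y),
\end{gather*}
so $m_{\omega,\infty}$ is a multiplicative measure with the announced associated valuation. There is no real obstacle here; the only points requiring care are (i)~invoking the CWG status of $(M,w^-,w^+)$ so that $Z_\omega(k)$ has the clean $\lambda^{k-1}$ asymptotics, and (ii)~justifying the passage $m_{\omega,k}(\Up x)\to m_{\omega,\infty}(\Up x)$ through the clopen character of $\Up x$ established via Proposition~\ref{prop:2qqpjaazlx}.
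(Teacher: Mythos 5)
Your proposal is correct and follows essentially the same route as the paper's own proof: compute $m_{\omega,k}(\Up x)=\omega(x)\,Z_\omega(k-\abs{x})/Z_\omega(k)$ via left cancellativity, pass to the limit using the CWG spectral asymptotics of Lemma~\ref{lem:pokqpkqa} applied to $Z_\omega(k)=w^-\cdot M^{k-1}\cdot w^+$, transfer the limit to $m_{\omega,\infty}$ through the clopen character of $\Up x$, and read off multiplicativity from the resulting formula. The only differences are cosmetic---you spell out a few steps (the bijection $z\mapsto x\cdot z$, the explicit invocation of Proposition~\ref{prop:2qqpjaazlx} for clopenness, the verification that the limit is a valuation) that the paper leaves implicit.
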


\begin{proof}
  Recall that $\bAbar=\bA\cup\BA$ denotes the completion of~$\bA$, and
  $\Up x$ denotes the full visual cylinder with base~$x$ (see
  Definition~\ref{def:1poq}). Since the support of $m_{\omega,\infty}$
  is a subset of~$\BA$, one has
  $m_{\omega,\infty}(\up x)=m_{\omega,\infty}(\Up x)$. Since $\Up x$
  is both open and closed in~$\bAbar$, its topological boundary is
  empty, and therefore by~\cite[Th.~25.8]{billingsley95}:
\begin{gather*}
  m_{\omega,\infty}(\Up x)=\lim_{k\to\infty}m_{\omega,k}(\Up x).
\end{gather*}
Next, using that $\bA$ is left cancellative, we compute for $k\geq\abs{x}$:
\begin{align*}
  m_{\omega,k}(\Up x)&=\frac1{Z_\omega(k)}\Bigl(\sum_{y\in\bA_k\tqs x\leql y}\omega(y)\Bigr)
=\omega(x)\frac{Z_\omega(k-\abs{x})}{Z_\omega(k)}\,.
\end{align*}

Given the expression~\eqref{eq:12} for $Z_\omega(\cdot)$ on the one hand, and
the asymptotics from Lemma~\ref{lem:pokqpkqa} for the powers of~$M$ on the
other hand, we deduce $m_{\omega,\infty}(\up x)=\lambda^{-\abs{x}}\omega(x)$.
This is indeed a valuation, hence $m_{\omega,\infty}$ is a multiplicative
measure.
\end{proof}

\begin{corollary}
  \label{cor:4}
  Let\/ $\bA$ be an irreducible Artin-Tits monoid with at least two
  generators. Then there exists a unique non degenerate uniform
  measure $\nu$ on\/~$\BA$. It is characterized by
  $\nu(\up x)=p_0^{\abs{x}}$ for all $x\in\bA$, where $p_0$ is the unique
  root of smallest modulus of the Möbius polynomial of\/~$\bA$.
\end{corollary}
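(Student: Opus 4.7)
The plan is to assemble the corollary directly from the three main results already proved: Theorem~\ref{thr:3} (which produces multiplicative measures as weak limits of weighted uniform distributions), the combinatorial identification of the Perron eigenvalue with the root of the Möbius polynomial from Section~\ref{sec:mobius-polynomial}, and Theorem~\ref{thr:2} (uniqueness of the uniform measure). No new machinery needs to be developed; the task is really just to check that everything fits together and to verify non degeneracy.

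For existence, I would apply Theorem~\ref{thr:3} to the constant valuation $\omega(x)=1$ on~$\bA$. The theorem immediately yields a multiplicative measure $\nu=m_{\omega,\infty}$ on~$\BA$ whose associated valuation is $\nu(\up x)=\lambda^{-\abs{x}}$, where $\lambda$ is the Perron eigenvalue of the \CWG\ attached to~$\bA$ in Definition~\ref{def:8}. Setting $p_0=\lambda^{-1}$, the valuation $f(x)=p_0^{\abs{x}}$ is constant on~$\Sigma$, so by Definition~\ref{def:6} the measure $\nu$ is uniform provided it is non degenerate. Non degeneracy follows because $p_0<1$ by Proposition~\ref{prop:7}, whereas the degenerate measure $\delta_{\Delta_\infty}$ (in the spherical case) corresponds to the constant valuation $f\equiv1$; the two valuations differ on any generator, so the measures differ.

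The identification of $p_0$ with the unique root of smallest modulus of the Möbius polynomial $\mu_\bA$ is exactly the content of the discussion at the end of Section~\ref{sec:mobius-polynomial}: the growth series $G=1/\mu_\bA$ has non-negative coefficients satisfying $\lambda_k\sim Cp_0^{-k}$ by Proposition~\ref{prop:7}, and Pringsheim's theorem together with rationality forces the smallest-modulus singularity of~$G$ to be simple, real, and to lie in~$(0,1)$, hence to coincide with~$p_0$.

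Finally, uniqueness of the uniform measure is exactly Theorem~\ref{thr:2}, whose hypothesis (that $\bA$ be irreducible) is part of the assumptions of the corollary. No real obstacle is expected: the only thing to double-check is that the case where $\bA$ has spherical type (Case~B of Section~\ref{sec:two-particular-cases}) does not spoil the argument. This is handled by Proposition~\ref{prop:8}, which confirms that the \CWG\ associated to $\bA$ is well defined in both cases, and by the explicit verification in the proof of that proposition that $\rho(\Mtilde)>\rho(A)$, so that $\lambda=\rho(M)$ is indeed the relevant Perron eigenvalue and $p_0=\lambda^{-1}$ is uniquely determined.
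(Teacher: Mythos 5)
Your proposal is correct and follows essentially the same route as the paper: apply Theorem~\ref{thr:3} to the constant valuation $\omega\equiv1$, identify $p_0=\lambda^{-1}$ with the smallest root of the Möbius polynomial via Section~\ref{sec:mobius-polynomial}, and invoke Theorem~\ref{thr:2} for uniqueness. Your explicit non-degeneracy check (using $p_0<1$ from Proposition~\ref{prop:7}) is a small but welcome addition that the paper's proof leaves implicit.
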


\begin{proof}
  The uniqueness of the non degenerate uniform measure has already
  been proved in Theorem~\ref{thr:2}. For the existence, let
  $\omega(x)=1$ be the constant uniform valuation on~$\bA$, and let
  $\nu=m_{\omega,\infty}$. Then, according to Theorem~\ref{thr:3}, we
  have $\nu(\up x)=\lambda^{-\abs{x}}$ for all $x\in\bA$ and for
  $\lambda$ the Perron eigenvalue of the \CWG\ associated
  with~$\omega$. We have seen in Section~\ref{sec:mobius-polynomial}
  that $\lambda=p_0^{-1}$, whence the result.
\end{proof}

As illustrated by the above corollary, Theorem~\ref{thr:3} provides a mean
for proving the existence of multiplicative measures. We shall see that all
multiplicative measures can be obtained as weak limits of such finite
`multiplicative distributions'. This yields in Theorem~\ref{thr:7} below a
parametrization of all multiplicative measures on the boundary.

From the operational point of view however, expressing a measure on the
boundary $\BA$ as a weak limit of finite distributions on $\bA$ does not
provide a realization result similar to Theorem~\ref{thr:1}. It is therefore
not much useful for simulation purposes for instance. Nevertheless, it yields
a way to obtain asymptotic information on these finite `multiplicative
distributions', which are of interest \emph{per se}. The latter aspect will
be developed in Section~\ref{sec:asymptotics}.

Let us first investigate the structure of valuations on an Artin-Tits monoid:
this task does not present any difficulty, and therefore the proof of the
following result is omitted.

\begin{proposition}

  \label{prop:4}
  Let\/ $\bA=\bA(\Sigma,\ell)$ be an Artin-Tits monoid. Let\/ $\Rbar$
  be the reflexive and transitive closure of the symmetric relation
  $R\subseteq\Sigma\times\Sigma$ defined by:
  \begin{gather*}
    R=\bigl\{(x,y)\in\Sigma\times\Sigma\tq x\neq
    y \text{ and } \ell(x,y)<\infty \text{ and } \ell(x,y)=1\!\!\!\mod 2\bigr\},
  \end{gather*}
and let $\R$ be the set of equivalence classes of~$\Rbar$.

Then, for any valuation $f:\bA\to(0,+\infty)$, and for any equivalence class
$r\in\R$, the value $f(a)$ is constant for $a$ ranging over~$r$. Conversely,
if $x_r\in(0,+\infty)$ is arbitrarily fixed for every $r\in\R$, then there
exists a unique valuation $f:\bA\to(0,+\infty)$ such that $f(a)=x_{r(a)}$ for
every $a\in\Sigma$, where $r(a)$ is the equivalence class of~$a$.
\end{proposition}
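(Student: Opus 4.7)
The plan is to view any valuation $f:\bA\to(0,+\infty)$ as a monoid homomorphism into the multiplicative abelian group $\bigl((0,+\infty),\cdot\bigr)$, and to exploit the universal property of the presentation $\bA=\bA(\Sigma,\ell)$: such a homomorphism is the same data as a map $g:\Sigma\to(0,+\infty)$ whose multiplicative extension to the free monoid $\Sigma^{\ast}$ respects every defining braid relation $ababa\cdots=babab\cdots$ of length $\ell(a,b)$.

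The key computation I would carry out is to determine, for each pair $(a,b)\in\Sigma\times\Sigma$ with $a\neq b$ and $n=\ell(a,b)<\infty$, exactly what constraint this relation imposes on $g$. Since the target group is commutative, the image of each side is determined only by the multiset of its factors. If $n$ is even, both sides contain $n/2$ copies of $a$ and $n/2$ copies of $b$, so the relation is preserved tautologically and yields no constraint. If $n$ is odd, the left side contains $(n+1)/2$ copies of $a$ and $(n-1)/2$ copies of $b$, while the right side has the swapped counts; after cancellation in $(0,+\infty)$, the relation collapses to the single equality $g(a)=g(b)$.

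From this I would conclude that the complete set of constraints on $g$ is exactly $\{g(a)=g(b)\tq (a,b)\in R\}$. Because these are equalities between values in an abelian group, and because $\Rbar$ is defined as the reflexive and transitive closure of the already symmetric relation~$R$, these constraints are equivalent to requiring that $g$ be constant on every equivalence class of~$\Rbar$. Conversely, any such class-constant $g$ extends uniquely, through the universal property of the presentation, to a valuation on~$\bA$, which gives the claimed bijection with families $(x_r)_{r\in\R}$.

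No real obstacle is expected here; the only mildly delicate point is the parity case analysis for $\ell(a,b)$, together with the observation that it is precisely the commutativity of $(0,+\infty)$ that makes the even case become vacuous and reduces the odd case to a single generator equality.
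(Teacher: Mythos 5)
Your proof is correct, and it supplies exactly the details that the paper omits (the paper explicitly states that this proposition ``does not present any difficulty'' and leaves the proof to the reader). Viewing a valuation as a monoid homomorphism into the abelian group $\bigl((0,+\infty),\cdot\bigr)$, invoking the universal property of the presentation $\bA(\Sigma,\ell)$, and noting that the abelianization of the braid relation $ababa\cdots = babab\cdots$ of length $n=\ell(a,b)$ is vacuous when $n$ is even and collapses to $g(a)=g(b)$ when $n$ is odd, is precisely the intended argument; the letter-count check ($(n+1)/2$ versus $(n-1)/2$ in the odd case) is accurate, and passing to the equivalence classes of $\Rbar$ then gives both the constancy on classes and the bijection with families $(x_r)_{r\in\R}$.
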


Valuations on $\bA$ are thus in bijection with the product set
$\FF=(0,+\infty)^K$, where $K$ is the number of equivalence classes
of~$\Rbar$. Let $\M$ denote the subset of $\FF$ corresponding to parameters
of Möbius valuations, hence those associated with a non degenerate
multiplicative measure. Theorem~\ref{thr:7} below shows that $\M$ has a
familiar topological structure.

We first introduce the following notation. If $\omega:\bA\to(0,\infty)$ is a
valuation, and if $\kappa$ is a positive real number, then $\kappa\omega$
denotes the valuation on $\bA$ defined by
$(\kappa\omega)(x)=\kappa^{\abs{x}}\omega(x)$ for all $x\in\bA$. The
half-line $(0,\infty)\omega$ is the set of valuations of the form
$\kappa\omega$ for $\kappa$ ranging over~$(0,\infty)$.

\begin{theorem}
  \label{thr:7}
  Let\/ $\bA$ be an irreducible Artin-Tits monoid with at least two
  generators. Let $K$ be the number of equivalence classes of the
  relation $\Rbar$ from Proposition~\ref{prop:4}.  Then the set
  $\M \subseteq (0,+\infty)^K$ of parameters of non degenerate
  multiplicative measures defined on\/ $\BA$ intersects any half-line
  $(0,+\infty) \omega$ in exactly one point. This gives a
  homeomorphism between $\M$ and the open simplex
  $\mathbb{P}\bigl( (0,+\infty)^K\bigr)$ of dimension~$K-1$ (if $K=1$,
  the open simplex of dimension $0$ reduces to a singleton).
\end{theorem}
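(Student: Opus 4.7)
The plan is to establish three claims: every half-line $(0,+\infty)\omega$ meets~$\M$; it does so in at most one point; and the induced bijection with $\mathbb{P}\bigl((0,+\infty)^K\bigr)$ is bicontinuous.

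For \emph{existence} on a given half-line, given $\omega \in (0,+\infty)^K$ I invoke Theorem~\ref{thr:3}: the weak limit $m_{\omega,\infty}$ is a multiplicative measure on~$\BA$ with associated valuation $\omega'(x) = \lambda(\omega)^{-\abs{x}}\omega(x)$, where $\lambda(\omega)$ is the Perron eigenvalue of the \CWG\ associated to~$\omega$. Since $\omega'$ lies on $(0,+\infty)\omega$, it suffices to confirm that $m_{\omega,\infty}$ is non-degenerate. This is automatic when $\bA$ is not of spherical type; in the spherical case, the strict inequality $\rho(\Mtilde) > \rho(A) = \omega(\Delta)^{1/\abs{\Delta}}$ already established in Section~\ref{sec:multiplicative-case} yields $m_{\omega,\infty}(\up\Delta) = \lambda(\omega)^{-\abs{\Delta}}\omega(\Delta) < 1$, so that $m_{\omega,\infty} \neq \delta_{\Delta_\infty}$.

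For \emph{uniqueness}, I would follow closely the argument in the proof of Theorem~\ref{thr:2}. Suppose $\kappa_1\omega$ and $\kappa_2\omega$ are both Möbius valuations with $\kappa_1 \leq \kappa_2$. By Lemma~\ref{lem:3}, each gives rise to a primitive matrix $B_i$ on the Charney graph with $(B_i)_{x,y} = \un(x \to y)\kappa_i^{\abs{y}}\omega(y)$ of spectral radius~$1$. Since $\abs{y} \geq 1$ for every $y \in \Ch$, we have $B_1 \leq B_2$ entry-wise; the Perron-Frobenius Theorem forces $B_1 = B_2$, and since $\Ch$ is non-empty (as $\bA$ has at least two generators), we conclude $\kappa_1 = \kappa_2$.

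For the \emph{homeomorphism}, the projection $\phi: \M \to \mathbb{P}\bigl((0,+\infty)^K\bigr)$ is continuous and bijective by the above. Its inverse is induced by the map $s: \omega \mapsto \lambda(\omega)^{-1}\omega$: by uniqueness on each half-line, $s$ is constant on half-lines and descends to a section $\bar{s}$ of~$\phi$, continuous provided $s$ itself is continuous. The main obstacle is therefore the continuity of $\omega \mapsto \lambda(\omega)$: this holds because $\lambda(\omega)$ is the unique simple eigenvalue of maximal modulus of $M(\omega)$ (Cases~A and~B of Section~\ref{sec:two-particular-cases}), and simple isolated eigenvalues depend analytically on the matrix entries; the zero-pattern of $M(\omega)$ is independent of~$\omega$, so the matrix remains in the same case throughout $(0,+\infty)^K$.
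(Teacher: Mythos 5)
Your proof is correct and follows essentially the same approach as the paper: existence via Theorem~\ref{thr:3}, uniqueness on each half-line via a Perron--Frobenius comparison of the Charney-graph matrices (which is precisely the content of the paper's Lemma~\ref{lem:7}, re-derived inline here), and continuity via the regularity of the dominant simple eigenvalue. Your explicit check that $m_{\omega,\infty}(\up\Delta) < 1$ in the spherical case makes the non-degeneracy of the constructed measure clearer than the paper's brief citation of Theorem~\ref{thr:3}.
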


We first need the following lemma, which generalizes the uniqueness result of
uniform measures proved in Theorem~\ref{thr:2} with the same technique of
proof.

\begin{lemma}
  \label{lem:7}
  Let\/ $\bA$ be an irreducible Artin-Tits monoid with at least two
  generators. Let $\nu$ and $\nu'$ be two multiplicative non
  degenerate measures on~$\BA$, with associated valuations $f$
  and~$f'$. Assume that there exists a constant $\kappa>0$ such that
  $f'=\kappa f$. Then $f=f'$.
\end{lemma}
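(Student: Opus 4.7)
The plan is to follow the Perron--Frobenius argument used in the proof of Theorem~\ref{thr:2}, but applied to a general multiplicative constant rather than to two uniform valuations. Without loss of generality, assume $\kappa \geq 1$ (otherwise exchange the roles of $f$ and $f'$).

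First, apply Lemma~\ref{lem:3} to both $f$ and $f'$: each gives rise to a non-negative square matrix on the Charney graph $(\Ch, \to)$, namely
\begin{align*}
B_{x,y} &= \un(x\to y)\,f(y), & B'_{x,y} &= \un(x\to y)\,f'(y) = \un(x\to y)\,\kappa^{\abs{y}}\,f(y).
\end{align*}
Because $\bA$ is irreducible with at least two generators, Theorem~\ref{thr:1qqlknaa} ensures that $(\Ch,\to)$ is strongly connected and carries loops (every $\sigma\in\Sigma\subseteq\Ch$ satisfies $\sigma\to\sigma$), so both $B$ and $B'$ are primitive. Since $\nu$ and $\nu'$ are non degenerate, Lemma~\ref{lem:3}, point~\ref{item:8}, applies to both matrices and shows that $\rho(B)=\rho(B')=1$.

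Next, from $\kappa\geq 1$ we obtain the entrywise inequality $B \leq B'$. By the strict monotonicity of the Perron eigenvalue for irreducible non-negative matrices (see e.g.~\cite{seneta81}), if $B\leq B'$ with $B'$ irreducible and $B\neq B'$, then $\rho(B)<\rho(B')$. Since both spectral radii equal $1$, we must have $B=B'$, i.e.\ $\kappa^{\abs{y}} = 1$ for every $y\in\Ch$. Taking any $y\in\Sigma\subseteq\Ch$ (of length $1$) forces $\kappa = 1$, and therefore $f = f'$ on $\bA$, as desired.

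The only subtle point is the reliance on the strict Perron--Frobenius inequality, but this is classical and requires only irreducibility of $B'$, which is guaranteed by Theorem~\ref{thr:1qqlknaa}. Once that is invoked, the conclusion is immediate.
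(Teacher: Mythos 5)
Your argument is correct and is essentially the proof given in the paper: both reduce WLOG to a one-sided entrywise comparison of the Charney-graph matrices $B$ and $B'$, invoke Lemma~\ref{lem:3} (point~\ref{item:8}) to see that both are primitive with spectral radius~$1$, and apply the strict monotonicity of the Perron eigenvalue to conclude $B=B'$. The only differences are cosmetic: you normalize to $\kappa\geq1$ where the paper uses $\kappa\leq1$, and you spell out the final step ($\kappa^{\abs{y}}=1$ for some $y\in\Sigma$, hence $\kappa=1$) that the paper leaves implicit.
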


\begin{proof}
Without loss of generality, we assume that $\kappa\leq1$. Let $\SS$ be the
  smallest Garside subset of~$\bA$. Let $B$ and $B'$ be the square
  non-negative matrices, indexed by
  $(\SS\setminus\{\unit,\Delta\})\times(\SS\setminus\{\unit,\Delta\})$,
  with $\Delta$ to be ignored if $\bA$ is not of spherical type, and
  defined by:
  \begin{align*}
    B_{x,x'}&=\un(x\to x')f(x'),&B'_{x,x'}&=\un(x\to x')f'(x').
  \end{align*}
  Then $B'\leq B$.  According to Lemma~\ref{lem:3},
  point~\ref{item:8}, both matrices are primitive of the same spectral
  radius~$1$. According to Perron-Frobenius
  Theorem~\cite{seneta81}, it implies $B=B'$ and thus $f=f'$.
\end{proof}

\begin{proof}[Proof of Theorem~\ref{thr:7}.]
  Let $K$ be defined as in the statement, and let
  $\FF=(0,+\infty)^K$. We identify the set of valuations on $\bA$ with
  the product set~$\FF$, which is justified by
  Proposition~\ref{prop:4}. Let $\omega$ be a valuation on~$\bA$, and
  let $(M,w^-,w^+)$ be the \CWG\ associated to~$\omega$. Let also
  $\lambda$ be the Perron eigenvalue of~$M$, and let
  $f=\lambda^{-1}\omega$. The valuation $f$
  corresponds to a non degenerate multiplicative measure according to
  Theorem~\ref{thr:3}. This association defines thus a mapping
  $\Phi:\FF\to\M$.

  We first prove that $\Phi(r)=r$ for every $r\in\M$.
  If $r\in\M$, then both $r$ and $\Phi(r)$ correspond to non
  degenerate multiplicative measures, and they are related for some
  constant $\kappa>0$ by $\Phi(r)=\kappa r$. According to
  Lemma~\ref{lem:7}, it implies that $\Phi(r)=r$, as claimed. In
  particular, we deduce that $\Phi$ is onto. By the same lemma, we
  also observe that $\Phi$ is constant on all half-lines $(0,\infty)\omega$, for any $\omega\in\FF$.

  Let $\R$ be the set of equivalence classes of~$\Rbar$, and let $S$
  be the open simplex of dimension $K-1$ defined by:
  \begin{gather*}
    S=\bigl\{r=(r_a)_{a\in\R}\in\FF\tq\sum_{a\in\R}r_a=1\bigr\},
  \end{gather*}
  and let $\Psi=\Phi\rest S$. Since $\Phi$ is onto, and constant on
  the half-lines of~$\FF$, it follows at once that $\Psi$ is onto. If
  $\Psi(f)=\Psi(f')$ for $f$ and $f'$ in~$S$, then $f$ and $f'$ are
  related by a relation of the form $f'=\kappa f$ for some $\kappa>0$,
  and thus obviously $f=f'$. Hence $\Psi$ is one-to-one and thus
  bijective.

  The mapping which associates to a family $r\in S$ the corresponding
  matrix $M$ is obviously continuous, as well as the one mapping a
  primitive matrix to its Perron eigenvalue (see~\cite[Fact 10 of
  Section~9.2]{rothblum07}). Hence $\Psi$ is continuous. And
  $\Psi^{-1}$ is continuous since, for $f\in\M$, the unique
  $\omega\in S$ such that $\Psi(\omega)=f$ is given by:
  \begin{gather*}
    \forall a\in\R\quad \omega(a)=\frac{f(a)}{\sum_{b\in\R}f(b)}.
  \end{gather*}
  Hence $\Psi$ is a homeomorphism.
\end{proof}

\section{Asymptotics: concentration of ergodic means\\ and Central Limit
  Theorem}
\label{sec:asympt-conc-ergod}

\subsection{Case of Conditioned Weighted Graphs}
\label{sec:case-cond-weight}

Consider a conditioned weighted graph $\GG=(M,w^-,w^+)$, with $M$ of
order~$N$, and a cost function $f:\{1,\ldots,N\}\to\bbR$. The \emph{ergodic
sums} $S_kf(z)$ and \emph{ergodic means} $R_kf(z)$ of $f$ along a path
$z=(x_0,\ldots,x_k)$ of length $k$ are defined by:
\begin{align}
\label{eq:35}
S_kf(z)&=\sum_{i=0}^kf(x_i)\,,& R_k
f(z)&=\frac1{k+1}\sum_{i=0}^kf(x_i)\,.
\end{align}

For each integer $k\geq0$, let $G_k$ denote the set of paths in $\GG$ of
length~$k$, equipped with the associated weighted distribution as in
Definition~\ref{def:4}.  The function $R_kf:G_k\to\bbR$ can be seen as a
random variable on the discrete probability space $(G_k,\mu_k)$. Since this
collection of random variables are not defined on the same probability
spaces, the only way we have to compare them is to consider their laws and
their convergence in law.

A weak variant of the Law of large numbers, adapted to the framework of
conditioned weighted graphs, is the following concentration result---to be
refined in a Central Limit Theorem next.

\begin{theorem}
  \label{thr:5}
  Let\/ $\GG=(M,w^-,w^+)$ be a conditioned weighted graph, with $M$ of
  order~$N$. Let $\pi$ be the stationary measure on\/ $\{1,\ldots,N\}$
  associated with the limit weighted measure of\/~$\GG$.

  Then, for every function $f:\{1,\ldots,N\}\to\bbR$, the sequence of
  ergodic means $(R_kf)_{k\geq0}$, with $R_kf$ defined on $(G_k,\mu_k)$
  by\/~\eqref{eq:35}, converges in distribution toward the Dirac
  measure~$\delta_{\gamma_f}$\,, where $\gamma_f$ is the constant defined by:
\begin{gather}
\label{eq:37}
  \gamma_f=\sum_{j=1}^N\pi(j)f(j)\,.
\end{gather}
\end{theorem}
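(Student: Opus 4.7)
The plan is to establish the claimed convergence in law by a second-moment argument. Since convergence in distribution toward a Dirac measure is equivalent to convergence in probability, it suffices by Chebyshev's inequality to prove the two estimates
\begin{align*}
\esp_{\mu_k}[R_k f] &= \gamma_f + o(1), & \mathrm{Var}_{\mu_k}(R_k f) &= o(1), \qquad k\to\infty.
\end{align*}
Both quantities will be controlled via the spectral decomposition $M=\lambda\Pi+Q$ from Lemma~\ref{lem:pokqpkqa}, where $\Pi=r\cdot\ell$ (with the normalization $\ell\cdot r=1$) and $\rho(Q)<\lambda$. The key quantitative consequence is that $M^n=\lambda^n\Pi+Q^n$ with $\|Q^n\|\leq C\rho_0^n$ for some $\rho_0\in(0,\lambda)$, so ratios such as $M^n/\lambda^n$ converge to $\Pi$ at a geometric rate.

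First I would compute the one-point marginal expectation. Writing $Z_k=w^-\cdot M^k\cdot w^+$ and $e_j$ for the $j$th basis vector, one has
\begin{gather*}
\esp_{\mu_k}[f(X_i)] = \frac{1}{Z_k}\sum_{j=1}^N f(j)\,(w^-\cdot M^i)_j\,(M^{k-i}\cdot w^+)_j .
\end{gather*}
Plugging in the decomposition, the asymptotics $w^-\cdot M^i \sim \lambda^i(w^-\cdot r)\,\ell$ and $M^{k-i}\cdot w^+ \sim \lambda^{k-i} r(\ell\cdot w^+)$ and $Z_k\sim\lambda^k(w^-\cdot r)(\ell\cdot w^+)$ yield
\begin{gather*}
\esp_{\mu_k}[f(X_i)] = \sum_{j=1}^N f(j)\,\ell(j)r(j) + O\bigl((\rho_0/\lambda)^{\min(i,k-i)}\bigr) = \gamma_f + O\bigl((\rho_0/\lambda)^{\min(i,k-i)}\bigr),
\end{gather*}
uniformly in $i$, provided the error term is interpreted as a uniformly bounded quantity when $\min(i,k-i)$ is small. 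Summing over $i=0,\ldots,k$ and dividing by $k+1$ gives $\esp_{\mu_k}[R_k f]=\gamma_f+O(1/k)$, since the geometric tails contribute only $O(1)$ total to $\esp_{\mu_k}[S_k f]$.

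The second moment is handled analogously. For $i\leq i'$,
\begin{gather*}
\esp_{\mu_k}[f(X_i)f(X_{i'})] = \frac{1}{Z_k}\sum_{j,j'} f(j)f(j')(w^-\cdot M^i)_j\,(M^{i'-i})_{j,j'}\,(M^{k-i'}\cdot w^+)_{j'}.
\end{gather*}
Applying the decomposition three times, each factor contributes its leading term plus a geometric error, and $\Pi\cdot A \cdot \Pi = (\ell A r)\,\Pi$ collapses the middle factor so that the leading term is $\gamma_f^2$, with an error $O\bigl((\rho_0/\lambda)^{\min(i,i'-i,k-i')}\bigr)$ and uniform boundedness elsewhere. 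Summing over pairs $(i,i')$ with $i\leq i'$, the double sum of error terms is $O(k)$ (each of the three geometric factors is summable, so only one index is free to contribute a factor $k$), whence
\begin{gather*}
\esp_{\mu_k}[(S_kf)^2] = (k+1)^2\gamma_f^2 + O(k), \qquad \mathrm{Var}_{\mu_k}(S_kf) = O(k).
\end{gather*}
Dividing by $(k+1)^2$ gives $\mathrm{Var}_{\mu_k}(R_k f)=O(1/k)\to0$, and Chebyshev's inequality finishes the proof.

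The main obstacle is bookkeeping: keeping the geometric error terms uniform in $i$ (and $(i,i')$) while separately handling the boundary regimes where $\min(i,k-i)$ is small and the spectral approximation is weak. In Case~B of Section~\ref{sec:two-particular-cases}, one must also verify that the formula $\gamma_f=\sum_j\ell(j)r(j)f(j)$ genuinely corresponds to the stationary measure $\pi$ of the \emph{reachable} component of the limit Markov chain, which is guaranteed by the fact that $\ell$ is supported exactly on indices $\{K+1,\dots,N\}$, matching the ergodic component identified in Theorem~\ref{thr:4}.
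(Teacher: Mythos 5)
Your proof is correct, but it takes a genuinely different route from the paper. The paper proves Theorem~\ref{thr:5} by characteristic functions: it writes $\esp_{\mu_k}(e^{\ii t R_k f})$ in terms of the perturbed matrix $M_f(u)=D_f(u)M$ with $u=\ii t/(k+1)$, invokes analytic perturbation theory for the simple dominant eigenvalue $\lambda(u)$ (citing~\cite{hennion_herve}), and computes $\lambda'(0)=\lambda\gamma_f$ by differentiating the eigenvalue equation, so that $(\lambda(u)/\lambda)^k\to e^{\ii t\gamma_f}$. You instead prove convergence in probability by a direct second-moment argument: you use the non-perturbed spectral decomposition $M=\lambda\Pi+Q$ from Lemma~\ref{lem:pokqpkqa} to estimate the one- and two-point marginals of $\mu_k$ uniformly in the time indices, show $\esp_{\mu_k}[R_k f]=\gamma_f+O(1/k)$ and $\mathrm{Var}_{\mu_k}(R_k f)=O(1/k)$, and conclude by Chebyshev. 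Your bookkeeping of the error terms is sound: $q^{\min(i,k-i)}$ with $q=\rho(Q)/\lambda<1$ is summable in $i$, and $q^{\min(i,i'-i,k-i')}\leq q^i+q^{i'-i}+q^{k-i'}$ sums to $O(k)$ over pairs, exactly as you claim; and since $\pi(j)=\ell(j)r(j)$ by Theorem~\ref{thr:4}, the limiting constant is the correct $\gamma_f$ in both Case~A and Case~B. The trade-off between the two arguments: yours is more elementary, avoiding analytic eigenvalue perturbation entirely and yielding a quantitative $O(1/k)$ variance bound, but the paper's perturbative machinery is designed to be reused at second order for the CLT (Theorem~\ref{thr:6}), which your moment method would not deliver without substantially more work (higher moments, or a Lindeberg-type argument).
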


\begin{proof}
  Using the characteristic functions, it is enough to show the
  following convergence, for every real~$t$\,:
  \begin{gather}
    \label{eq:36}
\lim_{k\to\infty}\esp_{\mu_k}(e^{\ii t R_kf})=e^{\ii t\gamma_f}\,,
  \end{gather}
where $\esp_{\mu_k}(\cdot)$ denotes the expectation with respect
to~$\mu_k$\,, and $\gamma_f$ is the constant defined in~\eqref{eq:37}.

We express the above expectation in matrix terms, as follows. For each
complex number~$u$, let $M_f(u)$ be the complex-valued matrix defined by:
\begin{align*}
  M_f(u)&=D_f(u)M\,,&D_f(u)=\Diag(e^{uf})\,,
\end{align*}
where the last matrix is the diagonal matrix with entry $e^{u f(i)}$ at
position $(i,i)$.

Let also $v_f(u)$ be the column vector defined by $v_f(u) = D_f(u) w^+$.
Then:
\begin{align*}
  \esp_{\mu_k}(e^{\ii t R_kf})&=\frac1{Z_k}w^-\bigl(M_f(u)\bigr)^k
  v_f(u)\,,&\text{with }u&=\frac {\ii t}{k+1}\,,
\end{align*}
where $Z_k=w^-\cdot M^k\cdot w^+$ is the normalization factor.

For small values of~$u$, $D_f(u)$ is an analytic perturbation of the
identity. By assumption, $M$~has a unique eigenvalue of maximal modulus,
say~$\lambda$, and it is simple; the same spectral picture persists for
$M_f(u)$ for small values of~$u$. Hence, according to~\cite[Theorem
III.8]{hennion_herve}, denoting by $\lambda(u)$ the eigenvalue of highest
modulus of~$M_f(u)$, by $\lev(u)$ and by $\rev(u)$ the unique left and right
associated eigenvectors normalized by the conditions $\lev\cdot\rev(u)=1$ and
$\lev(u)\cdot\rev(u)=1$, all these quantities are analytic in $u$ around
zero, and for $u$ small enough:
\begin{align*}
  M_f(u)&=\lambda(u)\Pi(u)+Q(u)\,,&
\text{with\quad}\Pi(u)&=\rev(u)\cdot\lev(u),
\end{align*}
where the spectrum of $Q(u)$ is included in a fixed disk of
radius~$<\lambda$, and $\Pi(u)\cdot Q(u)=Q(u)\cdot\Pi(u)=0$. Raising to the
power $k$ yields:
\begin{gather*}
  \bigl(M_f(u)\bigr)^k=\lambda(u)^k \Pi(u)+Q(u)^k\,,\quad \norm{Q(u)^k}= O\bigl((\lambda\varepsilon)^k\bigr)
\end{gather*}
for any spectral norm $\norm{\cdot}$ and for some $0<\varepsilon<1$.

Put $\lev=\lev(0)$ and $\rev=\rev(0)$, consistently with our previous
notation for the pair $(\lev,\rev)$.  For $t$ fixed and for $k$ large enough,
$u=\frac t{k+1}$ eventually reaches the region of validity of the above
estimate. Hence, recalling that $Z_k = w^-\cdot M^k\cdot w^+= w^-\cdot
\bigl(M_f(0)\bigr)^k\cdot w^+$,
\begin{align*}
  \esp_{\mu_k}(e^{\ii t R_kf})&=_{k\to\infty}\Biggl(\frac{\lambda(u)}{\lambda}\Biggr)^k
                                \frac{w^-\cdot\Pi(u)\cdot v_f(u)}{w^-\cdot \Pi(0)\cdot v_f(0)} + o(1)\,.
\end{align*}
As the last fraction on the right tends to $1$ when $u\to 0$, this gives
\begin{align*}
  \esp_{\mu_k}(e^{\ii t R_kf})&=_{k\to\infty}\Biggl(\frac{\lambda(u)}{\lambda}\Biggr)^k(1+o(1)) + o(1)\,.
\end{align*}

Passing to the limit as $k\to\infty$ above yields, using the development of
$\lambda(\cdot)$ around zero at order~$1$:
\begin{gather}
\label{eq:38}
  \lim_{k\to\infty}\esp_{\mu_k}(e^{\ii tR_kf})
  =\lim_{k\to\infty}\Biggl(\frac{\lambda\bigl(\frac{\ii t}{k+1}\bigr)}{\lambda}\Biggr)^k
  =e^{\ii \frac{\lambda'(0)}{\lambda}t}\,.
\end{gather}

It remains to evaluate $\lambda'(0)$. For this, we first differentiate the
equality $\ell \cdot r(u)=1$ at $0$ and obtain $\ell\cdot r'(0)=0$. We also
differentiate the equality $M_f(u)\cdot\rev(u)=\lambda(u)\rev(u)$ at $0$ and
obtain:
\begin{gather*}
  M_f'(0)\cdot\rev+M\cdot \rev'(0)=\lambda'(0)\rev+\lambda\rev'(0)\,.
\end{gather*}
We multiply both members of the above equality by $\lev$ on the left to
derive:
\begin{gather*}
  \lev\cdot M_f'(0)\cdot \rev=\lambda'(0)\,,\qquad\text{since
    $\ell\cdot r'(0)=0$, $\ell\cdot M=\lambda\ell$ and
    $\ell\cdot r=1$.}
\end{gather*}
By the definition $M_f(u)=D_f(u)\cdot M$, we have $M'_f(0)\cdot \rev
=\Diag(f)\cdot  M\cdot r = \lambda \Diag(f)\cdot r$, and thus:
\begin{gather*}
  \lambda'(0)= \lambda \lev\cdot \Diag\bigl(f\bigr)\cdot \rev=\lambda\sum_{j=1}^Nf(j)\lev(j)\rev(j)
  = \lambda\gamma_f\,.
\end{gather*}

Returning to~\eqref{eq:38}, we deduce the validity of~\eqref{eq:36}, which
completes the proof.
\end{proof}

Extending the analysis to the next order yields a Central Limit Theorem.

\begin{theorem}
  \label{thr:6}
  Let $\GG=(M,w^-,w^+)$ be a conditioned weighted graph, with $M$ of
  order~$N$. Let $\pi$ be the stationary measure on\/ $\{1,\ldots,N\}$
  associated with the uniform measure at infinity of\/~$\GG$. Let
  $f:\{1,\ldots,N\}\to\bbR$ be a function.

  Then there exists a non-negative $\sigma^2$ such that the following
  convergence in law with respect to $\mu_k$ toward a Normal law holds:
  \begin{gather*}
\frac1{\sqrt {k+1}}\bigl( S_kf-(k+1)\gamma_f\bigr)    \cvlaw{k\to\infty}\NN(0,\sigma^2)\,.
  \end{gather*}

  Moreover, $\sigma^2=0$ if and only if there exists a function
  $g:\{1,\ldots,N\}\to\bbR$ such that $f(i)=\gamma_f+g(i)-g(j)$
  whenever $M_{i,j}>0$ and $i, j \in \supp \pi$\,.
\end{theorem}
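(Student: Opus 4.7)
The plan is to push the Nagaev--Guivarc'h spectral method of Theorem~\ref{thr:5} one order further in Taylor expansion. Set $\tilde f = f - \gamma_f$; by Lévy's continuity theorem, the CLT reduces to proving that
\[
  \lim_{k\to\infty}\esp_{\mu_k}\bigl[\exp\bigl(\ii t\, S_k\tilde f/\sqrt{k+1}\bigr)\bigr] = \exp(-\sigma^2 t^2/2)
\]
for some $\sigma^2\geq 0$ and every $t\in\bbR$. Following the proof of Theorem~\ref{thr:5}, write the left-hand side as $Z_k^{-1}\,w^-\cdot M_{\tilde f}(u)^k\cdot v_{\tilde f}(u)$ with $u=\ii t/\sqrt{k+1}$, and apply analytic perturbation theory to obtain $M_{\tilde f}(u)=\tilde\lambda(u)\Pi(u)+Q(u)$ for $u$ near~$0$. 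Since the computation of Theorem~\ref{thr:5} yields $\tilde\lambda'(0)/\lambda=\gamma_{\tilde f}=0$, the Taylor expansion reads
\[
  \tilde\lambda(u)/\lambda = 1 + \tfrac12 c u^2 + O(u^3), \qquad c := \tilde\lambda''(0)/\lambda.
\]
Raising to the $k$-th power with $u=\ii t/\sqrt{k+1}$ then yields $(\tilde\lambda(u)/\lambda)^k\to e^{-c t^2/2}$, while the projector and remainder factors tend to $1$, producing the desired limit with $\sigma^2=c$.

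To compute $\sigma^2$ explicitly and check its non-negativity, I would differentiate $M_{\tilde f}(u)\rev(u)=\tilde\lambda(u)\rev(u)$ twice at $u=0$, exploiting $\lev\cdot\rev(u)=1$ and $\lev\cdot M=\lambda\lev$, together with the resolvent of $M$ on $\ker\Pi(0)$. This produces the classical Markov chain asymptotic variance
\[
  \sigma^2 = \esp_\pi[\tilde f^{\,2}] + 2\sum_{n\geq 1}\esp_\pi\bigl[\tilde f\cdot(\tilde f\circ T^n)\bigr],
\]
where $T$ is the shift and the series converges absolutely thanks to the spectral gap of the transition matrix $P$ of Theorem~\ref{thr:4} on its ergodic component; non-negativity is then automatic (alternatively, it follows from the limit being a positive-definite characteristic function).

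The main obstacle is the characterization of the degenerate case $\sigma^2=0$. The direct implication is easy: if $\tilde f(i)=g(i)-g(j)$ whenever $M_{ij}>0$ and $i,j\in\supp\pi$, then along any admissible infinite path $(X_i)_{i\geq 0}$ the sum $S_k\tilde f$ telescopes to $g(X_0)-g(X_k)+O(1)$, which is uniformly bounded, so $S_k\tilde f/\sqrt{k+1}\to 0$ in law and $\sigma^2=0$. For the converse, restrict to the ergodic component $E=\supp\pi$, on which $P$ is primitive and has spectral gap. Since $\int \tilde f\,d\pi=0$, the Poisson equation $(I-P)g=\tilde f$ is solvable on $E$, and the Gordin decomposition
\[
  S_k\tilde f = g(X_0)-g(X_k) + \sum_{i=0}^{k-1}\bigl(g(X_{i+1})-(Pg)(X_i)\bigr)
\]
writes $S_k\tilde f$ as a coboundary plus a sum of martingale differences with conditional variance $\sum_j P_{ij}\bigl(g(j)-(Pg)(i)\bigr)^2$ at state~$i$. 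Integrating against $\pi$ identifies $\sigma^2$ with $\esp_\pi$ of this conditional variance. Hence $\sigma^2=0$ forces $g(j)=(Pg)(i)$ for every pair $(i,j)$ with $P_{ij}>0$ and $i\in E$, and combined with $\tilde f=g-Pg$ this gives $\tilde f(i)=g(i)-g(j)$ for all such pairs. Finally, by the definition of $P$ in Theorem~\ref{thr:4}, the conditions $P_{ij}>0$ and $M_{ij}>0$ coincide for $i\in\supp\pi$ (equivalently, $\rev(i)>0$), so the pointwise coboundary relation holds in exactly the stated form.
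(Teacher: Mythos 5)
Your CLT argument follows the same Nagaev--Guivarc'h spectral perturbation as the paper: write the characteristic function as a matrix product, Taylor-expand the dominant eigenvalue $\lambda(u)$ to second order around $0$, and let the eigenprojection/remainder factors wash out. That part is essentially identical (up to the usual factor-of-two bookkeeping around $\lambda''(0)$ versus $\sigma^2$, which you handle consistently and the paper actually states with a slip).

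Where you genuinely diverge is the characterization of $\sigma^2=0$. The paper stays inside the perturbative framework: after a block-triangular reduction to the case $\rev(0)>0$ everywhere, it conjugates $M_f(u)$ by $\Diag(e^{-u\,\rev'(0)/\rev(0)})$ so as to kill the first-order eigenvector variation, and obtains $\lambda''(0)$ \emph{directly} as a nonnegative sum $\sum_{i,j}\lev(i)\bigl(f(i)-g(i)+g(j)\bigr)^2 M_{i,j}\rev(j)$ with $g=\rev'(0)/\rev(0)$, from which the coboundary condition drops out termwise. You instead solve the Poisson equation $(I-P)g=\tilde f$ on $\supp\pi$, pass through the Gordin martingale decomposition, and deduce the coboundary from vanishing conditional variance. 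Both are classical and correct, and your route has the merit of avoiding the block-triangular reduction, since the Poisson equation is posed only on $\supp\pi$ where $\rev>0$ automatically. The trade-off is that your route leans on one identification that the paper makes explicit and you only gesture at: you must know that the $\sigma^2$ appearing in the limit law under the \emph{conditioned} measures $\mu_k$ coincides with the asymptotic variance of the stationary Markov chain (equivalently, that $\lambda''(0)/\lambda$ evaluates to the Green--Kubo formula, which then equals the Gordin conditional-variance formula). This is a standard computation, but it is a real step. Likewise, in your easy direction you should spell out, as the paper does, why boundedness of $S_k\tilde f$ holds only $\mu$-a.s.\ and must be transferred to $\mu_k$ via weak convergence and a clopen approximation; the path under $\mu_k$ need not lie in $\supp\pi$.
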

\begin{proof}
  We keep the same notations introduced in the proof of
  Theorem~\ref{thr:5}. Replacing $f$ by $f-\gamma_f$ if necessary, we
  assume without loss of generality that $\gamma_f=0$.  We evaluate
  the characteristic function of $(S_kf)/\sqrt {k+1}$\,:
\begin{align}
\label{eq:39}
    \esp_{\mu_k}\bigl(e^{\ii \frac t{\sqrt
    {k+1}}S_kf}\bigr)&=_{k\to\infty}\Biggl(\frac{\lambda\bigl(\frac{\ii t}{\sqrt{k+1}}\bigr)}{\lambda} \Biggr)^k(1+o(1)) + o(1)\, .
\end{align}

Since $\gamma_f=0$, it follows from the computations performed previously that
$\lambda'(0)=0$, and thus, for $\kappa=\lambda''(0)/(2\lambda)$, around zero:
\begin{gather*}
\lambda(u)=\lambda (1+\kappa u^2)+o(u^2)\,.
\end{gather*}

Passing to the limit in~\eqref{eq:39} yields:
\begin{gather*}
  \lim_{k\to\infty} \esp_{\mu_k}\bigl(e^{\ii \frac t{\sqrt {k+1}}S_kf}\bigr)
  =e^{-\kappa t^2}\,.
\end{gather*}

Since the left member in the above equation is uniformly bounded in $t$ and
in~$k$, it entails that $\kappa=\sigma^2\geq0$, and this proves the
convergence in law of $(S_kf)/\sqrt{k+1}$ toward~$\NN(0,\sigma^2)$.

To prove the non degeneracy criterion, we need to compute the second
derivative of $\lambda$. First, we claim that we can assume that $r(0)(i)>0$
for all $i$. Indeed, in the general case, we can partition $\{1,\ldots,N\}$
into the set where $r(0)>0$ and the set where $r(0)=0$. This gives rise to a
block-triangular decomposition of $M$, with diagonal blocks $A$ (on $r(0)>0$)
and $B$ (on $r(0)=0$). The spectrum of $M$ is the union of the spectra of $A$
and $B$, hence $A$ has $\lambda$ as a unique eigenvalue of maximal modulus,
while $B$ has strictly smaller spectral radius. When one perturbs $M$ into
$M(u)$, the block-triangular decomposition survives. By continuity of the
spectrum, the dominating eigenvalue $\lambda(u)$ of $M(u)$ is also the
dominating eigenvalue of $A(u)$, which is of the same type except that $r(0)$
is everywhere positive for $A$.

From now on, we assume that $r(0)(i)>0$ for all $i$. To compute the second
derivative of $\lambda$, it is more convenient to reduce by conjugation to a
situation where $\rev'(0)=0$, as follows. The vector $\widetilde \rev(u) =
\Diag(e^{-u \rev'(0)/\rev(0)})\cdot \rev(u)$ is equal to $\rev + O(u^2)$, and
it is an eigenvector of the matrix
\begin{equation*}
  \widetilde M_f(u) = \Diag\bigl(e^{-u \rev'(0)/\rev(0)}\bigr)\cdot  M_f(u)\cdot  \Diag\bigl(e^{u \rev'(0)/\rev(0)}\bigr)\,,
\end{equation*}
for the eigenvalue $\lambda(u)$. Differentiating twice the equality
$\lambda(u) \widetilde \rev(u) = \widetilde M_f(u)\cdot  \widetilde \rev(u)$
yields
\begin{equation*}
  \lambda''(0) \rev + 2 \lambda'(0) \widetilde \rev'(0) + \lambda \widetilde \rev''(0) =
  \widetilde M_f''(0)\cdot  \rev + 2 \widetilde M_f'(0)\cdot  \widetilde \rev'(0) + \widetilde M_f(0)\cdot  \widetilde \rev''(0)
  \,.
\end{equation*}
By construction, $\widetilde \rev'(0)=0$. Multiplying this equation on the
left by~$\lev$, the terms $\lambda \widetilde \rev''(0)$ and $\widetilde
M_f(0)\cdot  \widetilde \rev''(0)$ cancel each other as $\ell\cdot
\widetilde M_f(0) = \ell\cdot  M = \lambda\ell$. Thus,
\begin{equation*}
  \lambda''(0) = \ell\cdot  \widetilde M_f''(0)\cdot  \rev.
\end{equation*}
Moreover:
\begin{gather*}
  \bigl(\widetilde M_f(u)\bigr)_{i,j} = M_{i,j} \cdot \exp u\Bigl(f(i)-\frac{\rev'(0)(i)}{\rev(0)(i)} +
  \frac{\rev'(0)(j)}{\rev(0)(j)}\Bigr)\,.
\end{gather*}

Finally, writing $g(i) = \rev'(0)(i)/\rev(0)(i)$, we get the formula
\begin{equation*}
  \lambda''(0) = \sum_{i,j} \lev(i) \bigl( f(i) - g(i) + g(j) \bigr)^2 M_{i,j} \rev(j).
\end{equation*}
Note that $g$ is real-valued, as the matrix $D_f(u)$ for small real $u$ is
real-valued and has a dominating real-valued eigenvector.

If follows from this expression that, if the variance vanishes, then $f(i) =
g(i)-g(j)$ whenever $M_{i,j}>0$ and $\lev(i)>0$ and $\rev(j)>0$ (these last
two conditions are satisfied if $i$ and $j$ belong to the support of $\pi$).

Conversely, assume that there exists $g$ such that $f(i) = g(i)-g(j)$
whenever $i,j \in \supp \pi$ and $M_{i,j}>0$. Then, along any path
$x=(x_0,\dotsc, x_k)$ in the graph with nonzero weight and in the support of
$\pi$, the quantity $S_k f = g(x_0) - g(x_k) + f(x_k)$ is uniformly bounded.
For the limit weighted measure $\mu$, almost every path enters the support of
$\pi$ by ergodicity. Hence, $S_k f$~remains bounded along almost every path.
In particular, for any $\epsilon>0$, there is a clopen set $K_\epsilon$ of
$\mu$-measure $>1-\epsilon$ on which $S_k f$ is bounded for all $k$ by a
constant~$C(\epsilon)$. As $\mu_k$ converges weakly to $\mu$ by
Theorem~\ref{thr:4}, it follows that $\mu_k(K_\epsilon)>1-\epsilon$ for large
enough~$k$. Hence, $S_k f$ is also bounded by $C(\epsilon)$ with
$\mu_k$-probability $1-\epsilon$. This shows that $S_k f / \sqrt{k+1}$
converges in distribution with respect to $\mu_k$ towards the Dirac mass
at~$0$.
\end{proof}

\subsection{Asymptotics for Artin-Tits monoids}
\label{sec:asymptotics}

Let an Artin-Tits monoid~$\bA$, that we assume to be irreducible and with at
least two generators, be equipped with a Möbius valuation~$\omega$. Let
$(M,w^-,w^+)$ be the associated \CWG.  We have already observed that, for
every integer $k\geq0$, the finite probability distribution $m_{\omega,k}$ on
$\bA_k=\{x\in\bA\tq\abs{x}=k\}$ which is proportional to $\omega$ corresponds
to the weighted distribution on the set of paths of length $k-1$ in the \CWG.
Let $m_{\omega,\infty}$ be the weak limit on $\BA$
of~$(m_{\omega,k})_{k\geq0}$\,.

Let $\Xi$ denote the space of infinite paths in the \CWG, equipped with the
limit weighted measure~$\mtilde$. Then the natural correspondence between
$\BA$ and $\Xi$ makes the two probability measures $m_{\omega,\infty}$ and
$\mtilde$ image of each other.

By Theorems~\ref{thr:1} and~\ref{thr:4}, both measures correspond to finite
homogeneous Markov chains, but on two different finite sets of states: the
set $\SS\setminus\{\unit\}$ for~$m_{\omega,\infty}$ and the set
$J=\bigl\{(x,i)\in (\SS\setminus\{\unit\})\times\bbN\tq 1\leq
i\leq\abs{x}\bigr\}$ for~$\mtilde$. We wish to relate the stationary measures
of these chains, that is to say, the finite probability distributions which
are left invariant with respect to the transition matrices.

\begin{lemma}
  \label{lem:8}
Let $f:\bA\to(0,+\infty)$ be the  valuation corresponding to a non degenerate multiplicative measure $m$ on the boundary at infinity of an Artin-Tits monoid\/~$\bA$. Let $P$ be the transition matrix of the Markov chain on $\SS\setminus\{\unit\}$ associated to~$m$, and let $\theta$ be the unique probability vector on~$\SS\setminus\{\unit\}$ left invariant for~$P$.

Let $(M,w^-,w^+)$ be the\/ \CWG\ associated with~$f$ as described in Section~{\normalfont\ref{sec:multiplicative-case}}, and let $\Ptilde$ be the transition matrix on $J=\bigl\{(x,i)\in (\SS\setminus\{\unit\})\times\bbN\tq 1\leq
i\leq\abs{x}\bigr\}$ of the Markov chain corresponding to the limit weighted measure of the\/ \CWG\ (see Definition\/~{\normalfont\ref{def:5}}).

Then the probability vector  $\thetatilde$ on $J$ defined by:
\begin{align*}
\thetatilde(x,i)&=\frac1\kappa\theta(x),&\text{with\quad}
\kappa=\sum_{x\in\SS\setminus\{\unit\}}\abs{x}\theta(x),
\end{align*}
is left invariant for~$\Ptilde$.
\end{lemma}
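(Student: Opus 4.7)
The plan is to identify the Perron eigenvalue $\lambda$ of the \CWG\ as~$1$, use this to derive an explicit form for $\Ptilde$ via Theorem~\ref{thr:4}, and then check $\thetatilde\cdot\Ptilde=\thetatilde$ by a case split on the second coordinate.

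\textbf{Step 1: identification $\lambda=1$.} Theorem~\ref{thr:3} asserts that the weak limit of the weighted distributions on the \CWG\ is a multiplicative measure $m'$ on~$\BA$, with valuation $x\mapsto\lambda^{-\abs{x}}f(x)$. This is a positive scalar rescaling of~$f$ in the sense $(sf)(x)=s^{\abs{x}}f(x)$ introduced preceding Theorem~\ref{thr:7}, with $s=\lambda^{-1}$. A quick check shows that $m'$ is non degenerate: in the spherical case, degeneracy of $m'$ would force $f$ to be uniform with parameter~$\lambda$, which in turn contradicts Corollary~\ref{cor:4} combined with a direct evaluation showing that the \CWG\ attached to $f(x)=p_0^{\abs{x}}$ has Perron eigenvalue~$1$, not~$p_0$. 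Lemma~\ref{lem:7} applied to the two non degenerate multiplicative measures $m$ and $m'$, whose valuations lie on the same half-line, yields $f=sf$, hence $s=1$ and $\lambda=1$.

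\textbf{Step 2: eigenvector structure and form of~$\Ptilde$.} With $\lambda=1$, the block shape of $M$ forces $r(y,j)=r(y,1)$ for $1\le j\le\abs{y}$, and the eigenvector equation evaluated at~$(y,\abs{y})$ collapses to $r(y,1)=\sum_{z\tqs y\to z}f(z)\,r(z,1)$. Thus $r(\cdot,1)$ is a right Perron eigenvector of the primitive matrix $B_{y,z}=\un(y\to z)f(z)$ of Lemma~\ref{lem:3}, so $r(y,1)=c\,g(y)$ for some $c>0$ and for $g$ the function of Lemma~\ref{lem:4}. Inserting this into the formula $\Ptilde_{(x,i),(y,j)}=M_{(x,i),(y,j)}\,r(y,j)/r(x,i)$ from Theorem~\ref{thr:4} gives $\Ptilde_{(y,j-1),(y,j)}=1$ for $2\le j\le\abs{y}$, and $\Ptilde_{(x,\abs{x}),(y,1)}=f(y)g(y)/g(x)$ whenever $x\to y$. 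The identity $h=f\cdot g$ on $\SS$ from Lemma~\ref{lem:4} plugged into Theorem~\ref{thr:1} gives $P_{x,y}=\un(x\to y)f(y)g(y)/g(x)$, so the inter-block transitions of $\Ptilde$ coincide with the entries of~$P$.

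\textbf{Step 3: verification of invariance.} At $(y,j)\in J$ with $j\ge2$, only $(y,j-1)$ contributes to $(\thetatilde\cdot\Ptilde)(y,j)$, yielding $\thetatilde(y,j-1)\cdot1=\theta(y)/\kappa=\thetatilde(y,j)$. At $(y,1)$, only the states $(x,\abs{x})$ with $x\to y$ contribute, and
$$(\thetatilde\cdot\Ptilde)(y,1)=\sum_{x\tqs x\to y}\frac{\theta(x)}{\kappa}\,P_{x,y}=\frac{\theta(y)}{\kappa}=\thetatilde(y,1),$$
by the $P$-invariance of~$\theta$. The main obstacle is Step~1, the identification $\lambda=1$; once this is in hand, Steps~2 and~3 reduce to a short verification.
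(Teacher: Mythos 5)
Your proof is correct, but it takes a genuinely different route from the paper's, which is more direct. The paper does not first establish $\lambda=1$ as a separate step: starting from $\theta P=\theta$ and the identity $h=fg$ on $\SS$ (Lemma~\ref{lem:4}), it rewrites stationarity as $f(x)\sum_{y\tqs y\to x}\theta(y)/g(y)=\theta(x)/g(x)$, and then verifies by hand that the vectors $u(x,i)=\theta(x)/g(x)$ and $v(x,i)=g(x)$ on $J$ are, respectively, left and right fixed points of $M$. Since $v>0$, this yields $\lambda=1$ automatically, and the conclusion drops out of Theorem~\ref{thr:4}'s formula $\pi(i)\propto\ell(i)r(i)$ because $u(x,i)v(x,i)=\theta(x)$, without ever writing down $\Ptilde$ entrywise. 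Your argument inverts this order: you first establish $\lambda=1$ by reading off the valuation of the weak limit via Theorem~\ref{thr:3} and invoking the rigidity Lemma~\ref{lem:7}, then recover $r(\cdot,1)$ up to scaling as the Perron eigenvector $g$ of the Charney matrix $B$ from Lemma~\ref{lem:3}, then compute $\Ptilde$ explicitly and check $\thetatilde\Ptilde=\thetatilde$ by a case split. Both routes are sound; the paper's is shorter and more self-contained, needing only Lemma~\ref{lem:4} and basic Perron--Frobenius, while yours makes the role of $\lambda$ conceptually transparent at the cost of importing Theorems~\ref{thr:3} and~\ref{thr:7} and Corollary~\ref{cor:4}. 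One simplification worth noting: your roundabout non-degeneracy check for $m'$ in Step~1 can be replaced by a one-liner --- Theorem~\ref{thr:4} places the unique ergodic component of the limit chain inside $J\setminus J_\Delta$ when $\bA$ is of spherical type, so the limit weighted measure gives zero mass to the constant path $(\Delta,\Delta,\dots)$ and hence cannot be $\delta_{\Delta_\infty}$. Also, in Steps~2 and~3 you should explicitly handle the indices $(\Delta,i)$ (the Perron identification $r(\cdot,1)\propto g$ is \emph{a priori} only on $\Ch$, not on $\SS\setminus\{\unit\}$); this causes no real trouble because $\theta(\Delta)=0$ kills those terms, but it deserves a sentence.
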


\begin{proof}
We put $\SSp=\SS\setminus\{\unit\}$ to shorten the notations. Let $h$ be the Möbius transform of~$f$, and let $g:\SSp\to\bbR$ be the normalization vector defined on $\SSp$ by:
\begin{gather*}
  \forall y\in\SSp\qquad g(x)=\sum_{y\in\SSp\tqs x\to y}h(y).
\end{gather*}

It follows from Theorem~\ref{thr:1} that $f$ is a Möbius valuation, hence $h(\unit)=0$. Therefore $g$ coincides on $\SSp$ with the function $g$ defined in Lemma~\ref{lem:4}, and thus $h(x)=f(x)g(x)$ holds in particular for all $x\in\SSp$. Furthermore, since $m$ is non degenerate, $g>0$ according to Lemma~\ref{lem:3}.

Writing down the equation $\theta P=\theta$ yields, for every $x\in\SSp$, and using the expression for $P$ given by Theorem~\ref{thr:1}:
\begin{align*}
  \theta(x)&=\sum_{y\in\SSp}\theta(y)P_{y,x}=\sum_{y\in\SSp\tqs y\to x}f(y)\frac{\theta(y)}{h(y)}h(x)\,.
\end{align*}
Using the identities $h(x)=f(x)g(x)$ and $f(y)/h(y)=1/g(y)$, we obtain:
\begin{gather}
\label{eq:26}
f(x)\cdot\Biggl(\;  \sum_{y\in\SSp\tqs y\to x}\frac{\theta(y)}{g(y)}\Biggr)=\frac{\theta(x)}{g(x)}\,.
\end{gather}

Now we claim:
\begin{intermediate}{$(\dag)$}
   The  vectors $u$ and $v$ defined, for $(x,i)\in J$, by $u_{(x,i)}=\theta(x)/g(x)$ and $v_{(x,i)}=g(x)$, are respectively left and right invariant for~$M$, hence satisfy $u\cdot M=u$ and $M\cdot v=v$.
\end{intermediate}
The claim follows from the following computations, referring to the definition for the matrix $M$ given in Section~\ref{sec:multiplicative-case}:
\begin{align*}
  (u\cdot M)_{(x,1)}&=\sum_{y\in\SSp\tqs y\to x}\frac{\theta(y)}{g(y)}f(x)=u_{(x,1)}\,,
\end{align*}
where the last equality derives from~\eqref{eq:26}. And for $i>1$:
\begin{align*}
  (u\cdot M)_{(x,i)}&=u_{(x,i-1)}=u_{(x,i)}\,.
\end{align*}
This proves that $u$ is left invariant for~$M$. To prove the right invariance of~$v$, we compute as follows:
\begin{align*}
\text{for $i=|x|$:}&\hspace{-6pt}&(M\cdot v)_{(x,\abs x)}&=\sum_{y\in\SSp\tqs x\to y\hspace{-1.5pt}}f(y)g(y)=\sum_{y\in\SSp\tqs x\to y\hspace{-1.5pt}}h(y)=g(x)=v_{(x,\abs x)}\\
\text{for $i<|x|$:}&\hspace{-6pt}&(M\cdot v)_{(x,i)}&=g(x)=v_{(x,i)}
\end{align*}
This proves the claim~$(\dag)$.

Thus, the stationary distribution of $\Ptilde$ is proportional to the vector $u(x,i)v(x,i)=\theta(x)$ and is a probability vector; whence the result.
\end{proof}

\begin{definition}
  \label{def:1}
Let\/ $\bA$ be an irreducible Artin-Tits monoid with at least two generators,
equipped with a valuation $\omega:\bA\to(0,+\infty)$. The \emph{speedup} of
$\omega$ is the quantity:
\begin{gather*}
  \kappa=\sum_{x\in\SS\setminus\{\unit\}}\abs{x}\theta(x),
\end{gather*}
where $\SS$ is the smallest Garside subset of\/~$\bA$, and $\theta$ is the
stationary distribution of the Markov chain on $\SS\setminus\{\unit\}$
associated with the multiplicative measure~$m_{\omega,\infty}$ on\/~$\BA$.
\end{definition}

We note that $\theta(\Delta)=0$ if $\bA$ is of spherical type.  We now come
to the study of asymptotics for combinatorial statistics defined on
Artin-Tits monoids.

\begin{definition}
  \label{def:9}
  Let\/ $\bA$ be an Artin-Tits monoid. A function $F:\bA\to\bbR$ is
  said to be:
\begin{enumerate}
\item \emph{Additive} if $F(x\cdot y)=F(x)+F(y)$ for all $x,y\in\bA$.
\item \emph{Normal-additive} if $F(x_1\cdot\ldots\cdot
    x_n)=F(x_1)+\ldots+F(x_n)$ whenever $(x_1,\dots,x_n)$ is a normal
    sequence of\/~$\bA$.
\end{enumerate}
\end{definition}

Additive functions are normal additive, but the converse needs not to be
true. For instance, the height function is normal additive without being
additive in general.

\begin{theorem}
  \label{thr:8}
Let\/ $\bA$ be an irreducible Artin-Tits monoid with at least two generators. Let $\omega:\bA\to(0,+\infty)$ be a valuation, and let $F:\bA\to\bbR$ be a normal-additive function.

If $k>0$ is an integer, we let $x$ denote a random element in $\bA_k$
distributed according to the finite distribution~$m_{\omega,k}$\,. Let
$\theta$ be the stationary distribution on $\SS\setminus\{\unit\}$ of the
Markov chain associated with the weak limit
$m_{\omega,\infty}=\lim_{k\to\infty} m_{\omega,k}$\,.

Then the following convergence in distribution holds:
\begin{align*}
  \frac{F(x)}{\abs{x}}&\cvlaw{k\to\infty}\delta_\gamma\,,&\text{with\quad}
\gamma&=\frac1\kappa\sum_{x\in\SS\setminus\{\unit,\Delta\}}\theta(x)F(x),
\end{align*}
where $\kappa$ is the speedup of~$\omega$, and where $\Delta$ is to be
ignored if\/ $\bA$ is not of spherical type.

Assume furthermore that:
\begin{inparaenum}[(1)]
  \item $\bA$~has at least three generators in case that it is of spherical type, and
  \item $F$~is not proportional on~$\SS\setminus\{\unit,\Delta\}$ to
the length function (with $\Delta$ to be ignored if\/ $\bA$ is
not of spherical type).
\end{inparaenum}
Then there exists a constant $s^2>0$ such that the
following convergence in distribution toward a Normal law holds:
\begin{align*}
  \frac1{\sqrt k}\bigl( F(x)-k\gamma\bigr)&\cvlaw{k\to\infty}\NN(0,s^2).
\end{align*}
\end{theorem}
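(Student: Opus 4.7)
The plan is to reduce both convergences to applications of Theorems~\ref{thr:5} and~\ref{thr:6} for the \CWG\ $(M,w^-,w^+)$ attached to~$\omega$ in Section~\ref{sec:multiplicative-case}. The bridge is the cost function $\phi$ defined on the vertex set $J=\{(y,i)\tq y\in\SS\setminus\{\unit\},\,1\leq i\leq\abs y\}$ by $\phi(y,i)=F(y)\un(i=\abs y)$. Via the bijection~\eqref{eq:20} between $\bA_k=\{x\in\bA\tq\abs x=k\}$ and the paths of length $k-1$ in this \CWG, every $x\in\bA_k$ with normal form $(x_1,\ldots,x_n)$ corresponds to a path $\tilde x$ of $k$ vertices along which, by normal-additivity of~$F$, the ergodic sum equals $S_{k-1}\phi(\tilde x)=F(x_1)+\dotsb+F(x_n)=F(x)$. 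Under $m_{\omega,k}$, the random path $\tilde x$ follows exactly the weighted distribution $\mu_{k-1}$ of Definition~\ref{def:4}.

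Theorem~\ref{thr:5} applied to $\phi$ then yields $F(x)/\abs x=R_{k-1}\phi\cvlaw{k\to\infty}\delta_{\gamma_\phi}$ with $\gamma_\phi=\sum_{(y,i)\in J}\thetatilde(y,i)\phi(y,i)$, where $\thetatilde$ is the stationary distribution of the Markov chain attached to the limit weighted measure of the \CWG. By Lemma~\ref{lem:8}, $\thetatilde(y,i)=\theta(y)/\kappa$; since $\phi$ vanishes off the indices $i=\abs y$, this gives
\begin{gather*}
\gamma_\phi=\frac{1}{\kappa}\sum_{y\in\SS\setminus\{\unit\}}\theta(y)F(y)=\frac{1}{\kappa}\sum_{y\in\SS\setminus\{\unit,\Delta\}}\theta(y)F(y),
\end{gather*}
the last equality using $\theta(\Delta)=0$ in the spherical case (Theorem~\ref{thr:1}). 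This matches the claimed constant~$\gamma$, whence the first convergence.

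For the Central Limit Theorem, Theorem~\ref{thr:6} yields $(F(x)-k\gamma)/\sqrt k\cvlaw{k\to\infty}\NN(0,s^2)$ for some $s^2\geq0$, so the remaining work is to prove $s^2>0$ under hypotheses~(1)+(2). Assume for contradiction $s^2=0$. By the non-degeneracy criterion of Theorem~\ref{thr:6}, there exists $g:J\to\bbR$ with $\phi(i)-\gamma=g(i)-g(j)$ for every $M_{i,j}>0$ with $i,j$ in the support of~$\thetatilde$; in the spherical case this excludes only the vertices $(\Delta,i)$. Telescoping along the intra-$y$ edges $(y,i)\to(y,i+1)$ gives $g(y,i)=G(y)+(i-1)\gamma$ with $G(y):=g(y,1)$, and the edges $(y,\abs y)\to(y',1)$ then yield
\begin{gather}
\label{eq:planc}
F(y)=\abs y\,\gamma+G(y)-G(y')\quad\text{for every edge }y\to y'\text{ of }(\Ch,\to)\text{ restricted to }V,
\end{gather}
with $V:=\SS\setminus\{\unit,\Delta\}$ (and $\Delta$ ignored in the non-spherical case).

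The independence of the right-hand side of~\eqref{eq:planc} from $y'$ forces $G$ to be constant on the set of successors in~$V$ of every vertex $y\in V$; applied to generators~$\sigma$ (which carry self-loops $\sigma\to\sigma$), this yields $F(\sigma)=\gamma$ and $G(\sigma)=G(y')$ for every $y'\in V$ with $\sigma\to y'$. The main obstacle—the delicate combinatorial step—is to combine these local equalities with the strong connectivity of the Charney graph on~$V$ (Theorem~\ref{thr:1qqlknaa}) and hypothesis~(1) to deduce that $G$ is globally constant on~$V$; \eqref{eq:planc} then reduces to $F(y)=\gamma\abs y$ on~$V$, contradicting hypothesis~(2). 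Equivalently, \eqref{eq:planc} says that the vertex function $h(y)=F(y)-\gamma\abs y$, viewed as an edge function depending only on its source, has vanishing sum around every cycle of $(\Ch,\to)$ restricted to~$V$; the key combinatorial input is that, under hypothesis~(1) in the spherical case, this cycle subspace is rich enough to force $h\equiv0$. Lemma~\ref{lem:5-FC} and Corollary~\ref{cor:4-FC} applied to sub-alphabets of size two or three provide the auxiliary simple elements needed to build the required cycles; dihedral two-generator spherical monoids witness by contrast that two generators do not suffice, since there one can explicitly construct normal-additive $F$ not proportional to length for which $s^2=0$, which is precisely why hypothesis~(1) is necessary.
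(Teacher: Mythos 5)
Your reduction of the concentration statement to Theorem~\ref{thr:5} via the lifted cost function $\phi$ on the vertex set $J$ of the \CWG, and the computation of $\gamma_\phi$ using Lemma~\ref{lem:8} together with $\theta(\Delta)=0$, is correct and is exactly the argument the paper uses (with the minor cosmetic difference that the paper first normalizes $\omega$ to a Möbius valuation before invoking Lemma~\ref{lem:8}). The reduction of the CLT to the non-degeneracy criterion of Theorem~\ref{thr:6}, the derivation of your equation~\eqref{eq:planc}, the observation that $G$ must be constant on the set of successors of each vertex, and the consequence $F(\sigma)=\gamma$ for generators $\sigma$ with self-loops, are all correct and again parallel the paper.

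The problem is that you stop exactly where the work is. You write that ``the main obstacle---the delicate combinatorial step---is to combine these local equalities with the strong connectivity of the Charney graph \ldots{} to deduce that $G$ is globally constant on $V$,'' and then offer only a gesture: ``Lemma~\ref{lem:5-FC} and Corollary~\ref{cor:4-FC} applied to sub-alphabets of size two or three provide the auxiliary simple elements needed to build the required cycles.'' That sentence names the tools but does not construct the argument. Note also that strong connectivity of $(\Ch,\to)$ is the wrong invariant for this step: constancy of $G$ propagates along the symmetric relation ``$x$ and $y$ have a common $\to$-predecessor in $V$,'' not along directed $\to$-paths, so Theorem~\ref{thr:1qqlknaa} alone cannot close the argument. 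The paper resolves this with a concrete combinatorial claim: one can find a set $I\subseteq\SS\setminus\{\unit,\Delta\}$ such that the successor sets $D(x)=\{y\in\SS\setminus\{\unit,\Delta\}\tq x\to y\}$, for $x\in I$, cover all of $\SS\setminus\{\unit,\Delta\}$ and pairwise intersect; the equivalence classes of the co-predecessor relation then collapse to one. In the non-spherical case $I$ can be a single maximal element of $(\SS,\leql)$, and in the spherical case with at least three generators $I=\{\Delta_{\Sigma\setminus\{a\}}\tq a\in\Sigma\}$ works, the arrows being verified via Corollary~\ref{cor:4-FC}. Your hint points at the right objects (the $\Delta_A$ come from Lemma~\ref{lem:5-FC}), but the actual verification---which is the only non-routine part of the proof---is missing.
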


\begin{proof}
We assume without loss of generality that $\omega$ is a M\"obius valuation. For otherwise, we normalize it by putting $\omega'=\kappa\omega$, for the unique positive real $\kappa$ such that, according to Theorem~\ref{thr:7}, the resulting valuation $\omega'$ is a Möbius valuation. Then the weighted distributions associated with $\omega'$ are the same as the weighted distributions associated with~$\omega$, and the convergences that we shall establish for $\omega'$ correspond to the convergences for~$\omega$.

Let $(M,w^-,w^+)$ be the \CWG\ associated to~$\omega$. We express $F(x)$ for
$x\in\bA_k$ as an ergodic sum in order to apply Theorem~\ref{thr:5}. For
this, let $\Ftilde:J\to\bbR$, with
$J=\{(x,i)\in(\SS\setminus\{\unit\})\times\bbN\tq 1\leq i\leq \abs{x}\}$, be
defined by:
\begin{gather*}
  \Ftilde(x,i)=
  \begin{cases}
    0,&\text{if $i<\abs{x}$}\\
F(x)&\text{if $i=\abs{x}$}
  \end{cases}
\end{gather*}
Then, for $x\in\bA_k$, if $\xtilde$ is the corresponding path in the \CWG\ as
in~\eqref{eq:20}, one has $F(x)=S_{k-1}\Ftilde(\xtilde)$, where
$S_{k-1}\Ftilde$ denotes the ergodic sums associated to~$\Ftilde$, and thus
$F(x)/\abs{x}=R_{k-1}\Ftilde(\xtilde)$ where $R_{k-1}\Ftilde$ denotes the
ergodic means associated to~$\Ftilde$. Let $\thetatilde$ be the stationary
distribution given by Theorem~\ref{thr:4} applied to the \CWG\ $(M,w^-,w^+)$.
Then Theorem~\ref{thr:5} entails the convergence in distribution:
\begin{align*}
  \frac{F(x)}{\abs{x}}&\cvlaw{k\to\infty}\delta_\gamma\,,&\text{with\quad}\gamma&=
\sum_{(x,i)\in J}\thetatilde(x,i)\Ftilde(x,i)=\frac1\kappa\sum_{x\in\SS\setminus\{\unit\}}\theta(x)F(x),
\end{align*}
where the last equality comes from Lemma~\ref{lem:8}. If
$\bA$ is of spherical type, then $\theta(\Delta)=0$ hence the sum ranges over
$\SS\setminus\{\unit,\Delta\}$.

We now aim at applying the Central Limit Theorem~\ref{thr:6}.  For this, let
$H(x)=F(x)-\gamma\abs{x}$ and let $\Htilde$ be defined with respect to $H$ in
the same way than $\Ftilde$ was defined with respect to~$F$. We prove that
$\Htilde$ satisfies the non degeneration criterion stated in
Theorem~\ref{thr:6}. For this, since $\gamma_H=0$, assume that there exists a
function $g(\cdot,\cdot)$ on $J$ such that $\Htilde(x,i) = g(x,i)-g(y,j)$
whenever $M_{(x,i),(y,j)}>0$ and $(x,i), (y,j)$ belong to the support of the
invariant measure of the Markov chain, \ie, $x, y \in
\SS\setminus\{\unit,\Delta\}$ (with $\Delta$ appearing only if $\bA$ is of
spherical type).

For any $x\in\SS\setminus\{\unit,\Delta\}$, and for $i<\abs{x}$, one has $\Htilde(x,i)=0=g(x,i)-g(x,i+1)$, hence the value of $g(x,i)$ is independent of $i\in\{1,\dots,\abs{x}\}$. And for all $y$ such that $x\to y$ holds, $\Htilde(x,\abs{x})=g(x,\abs{x})-g(y,1)$. Therefore the value of $g(y,1)$ is independent of~$y$, provided that $x\to y$ holds. 

To prove that $g$ is globally constant, it is thus enough to show that the transitive closure on $\SS\setminus\{\unit,\Delta\}$ of the relation relating $x$ and $y$ if there exists $z$ such that $z\to x$ and $z\to y$, is $\SS\setminus\{\unit,\Delta\}$. In turn, this derives from the following claim:
\begin{intermediate}{$(\dag)$}
  For every $x\in\SS\setminus\{\unit,\Delta\}$ (with $\Delta$ appearing only if $\bA$ is of spherical type), let $D(x)=\bigl\{y\in\SS\setminus\{\unit,\Delta\}\tq x\to y\bigr\}$. Then there exists a subset $I$ of $\SS\setminus\{\unit,\Delta\}$ such that:
  \begin{gather}
    \label{eq:25}
\forall x,y\in I\quad x\neq y\implies D(x)\cap D(y)\neq\emptyset,\\
\label{eq:27}
\forall y\in\SS\setminus\{\unit,\Delta\}\quad\exists x\in I\quad y\in D(x).
  \end{gather}
\end{intermediate}

We prove~$(\dag)$. If $\bA$ is not of spherical type, any singleton $I=\{x\}$ is suitable if $x$ is maximal in the finite poset $(\SS,\leql)$. Indeed, \eqref{eq:25}~is trivially true, and for~\eqref{eq:27}, by maximality of~$x$, any $y\in\SS\setminus\{\unit\}$ satisfies $x=\bigveel\{\zeta\in\SS\tq\zeta\leql x\cdot y\}$ and thus $y\in D(x)$ by Lemma~\ref{lem:2-FC}.

For $\bA$ of spherical type, we put $I=\bigl\{\Delta_A\tq\exists a\in\Sigma\quad A=\Sigma\setminus\{a\}\bigr\}$, where $\Delta_A=\bigveel A$. Let $A=\Sigma\setminus\{a\}$ and $B=\Sigma\setminus\{b\}$ with $a\neq b$. Then there exists $c\notin\{ a,b\}$ since $|\Sigma|>2$, and then $L(c)=\{c\}\subseteq R(\Delta_A), R(\Delta_B)$, hence $c\in D(\Delta_A)\cap D(\Delta_B)$ using Corollary~\ref{cor:4-FC}. This proves~\eqref{eq:25}. To prove~\eqref{eq:27}, consider any $x\in\SS\setminus\{\unit,\Delta\}$. Then $L(x)\neq\Sigma$ hence $L(x)\subseteq R(\Delta_A)$ for any $A=\Sigma\setminus\{a\}$ such that $a\notin L(x)$, and thus $x\in D(\Delta_A)$.

Hence we have proved the claim~$(\dag)$, and thus that $g$ is globally constant. It entails that $\Htilde$ is itself constant, equal to zero, and that $F$ is proportional on $\SS\setminus\{\unit, \Delta\}$ to the length function, contradicting our assumption. Consequently, Theorem~\ref{thr:6} applies and we derive the stated convergence.
\end{proof}

\begin{remark}
Let $\NN(0,0)$ denote the Dirac measure on $\bbR$ concentrated on~$0$. We remark that, in the cases excluded in the last paragraph of Theorem~\ref{thr:8}, the same convergence holds with $s^2=0$, hence toward~$\NN(0,0)$.

For functions $F$ proportional to the length function, the quantity $F(x)-k\gamma$ identically vanishes, hence the convergence is trivial. The statement also excludes the case of an Artin-Tits monoid $\bA$ of spherical type and with two generators only. Since $\bA$ is assumed to be irreducible, it is the case of $\bA=\bA\bigl(\{a,b\},\ell\bigr)$ with $3\leq\ell(a,b)<\infty$. Such a monoid is easily investigated. Putting $n=\ell(a,b)$, one has  $\SS=\{\unit,x_1,\dots,x_{n-1},y_1,\dots,y_{n-1},\Delta\}$ with $x_i=abab\cdots$, $y_i=baba\cdots$ and $\abs{x_i}=\abs{y_i}=i$, and $\Delta=x_n=y_n$\,. Furthermore, putting $X=\{x_j\tq1\leq j<n\}$ and $Y=\{y_j\tq1\leq j<n\}$, one has:
\begin{align*}
\text{for $2i+1<n$:}&& D(x_{2i+1})&=X,&D(y_{2i+1})&=Y,\\
\text{for $2i<n$:}&&D(x_{2i})&=Y,&D(y_{2i})&=X.
\end{align*}
Consider a M\"obius valuation~$\omega$, and let $G$ be the normal-additive function defined by $G(x_{2i})=1$, $G(y_{2i})=-1$, $G(x_{2i+1})=G(y_{2i+1})=0$. Observe that $\omega(x_{2i})=\omega(y_{2i})$, hence $\gamma=0$ for symmetry reasons. For any $x\in\bA$, with normal form $x=\Delta^{i_x}\cdot s_1\cdot\ldots\cdot s_m$\,, one has $G(x)=i_x G(\Delta)+\varepsilon_m$ with $\varepsilon_m\in\{-1,0,1\}$, where $i_x$ is the number of $\Delta$s in the normal form of~$x$. Hence, for any $a>0$:
\begin{align*}
  \omega_k\Bigl(\Bigl|\frac{G(x)}{\sqrt k}\Bigr|\geq a\Bigr)\leq\omega_k\Bigl(i_x\geq\frac{a\sqrt k-1}{1+|G(\Delta)|}\Bigr).
\end{align*}
Since $i_x$ converges in law toward a geometric law, it follows that the right-hand member above, and thus the left-hand member, converges toward~$0$, and so $G(x)/\sqrt k$ converges in law toward~$\NN(0,0)$, as expected.

An inspection of the proof of Theorem~\ref{thr:8} shows that the functions for which the non degeneracy criterion for the convergence applies, are exactly those in the two-dimensional vector space generated by the length function and~$G$. Hence, for normal-additive functions outside this vector space, the convergence stated in Theorem~\ref{thr:8} applies with $s^2>0$.
\end{remark}

Let us apply Theorem~\ref{thr:8} to obtain information on the following statistics: the ratio height over length of large elements in an irreducible Artin-Tits monoid.

\begin{corollary}
  \label{cor:2}
  Let\/ $\bA$ be an Artin-Tits monoid. We assume that $\bA$ is irreducible, has at least two generators and is not a free monoid. Let $\kappa$ be the speedup associated to the constant valuation $\omega(x)=1$ on~$\bA$.

Then, for all reals $a<b$ distinct from~$\kappa^{-1}$, one has:
\begin{align*}
  \lim_{k\to\infty}\frac{\#\{x\in \bA_k\tq
  a<\height(x)/k<b\}}{\#\bA_k}&=\un(a<\kappa^{-1}<b),\\
\intertext{and for $a$ and $b$ distinct from~$\kappa$:}
\lim_{k\to\infty}\frac{\#\{x\in\bA_k\tq
a<k/\height(x)<b\}}{\#\bA_k}&=\un(a<\kappa<b).
\end{align*}

It entails the convergence of the following expectations, where\/ $\esp_k$
denotes the expectation with respect to the uniform distribution
on\/~$\bA_k$:
\begin{align*}
  \lim_{k\to\infty}\frac1k\esp_k\height(\cdot)&=\kappa^{-1},&
\lim_{k\to\infty}k\esp_k\Bigl(\frac1{\height(\cdot)}\Bigr)&=\kappa.
\end{align*}

Furthermore, there is a positive constant $s^2$ such that, for any two reals
$a<b$, one has:
\begin{align}
\label{eq:14}
&\lim_{k\to\infty}
  \frac{\#\bigl\{x\in\bA_k\tq a\sqrt k<\height(x)-k\kappa^{-1}<b\sqrt
  k\bigr\}}{\#\bA_k}=\frac1{\sqrt{2\pi
                      s^2}}\int_a^b\exp\Bigl(-\frac{t^2}{2s^2}\Bigr)\,dt,\\
\label{eq:15}&\lim_{k\to\infty}\frac
{\#\bigl\{
x\in\bA_k\tq
a<\sqrt k\bigl(k/\height(x)-\kappa)<b
\bigr\}}
{\#\bA_k}
=\frac1{\sqrt{2\pi s^2\kappa^{-4}}}\int_a^b\exp\Bigl(
-\frac{t^2}{2s^2\kappa^{-4}}
\Bigr)\,dt.
\end{align}
\end{corollary}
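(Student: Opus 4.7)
The strategy is to apply Theorem~\ref{thr:8} with the uniform valuation $\omega\equiv 1$ and the normal-additive function $F=\tau$. First I would verify that $\tau$ is normal-additive in the sense of Definition~\ref{def:9}: for any normal sequence $(x_1,\dots,x_n)$ of non-unit simple elements one has $\tau(x_1\cdot\ldots\cdot x_n)=n=\sum_i\tau(x_i)$, since each non-unit simple element has height~$1$. Thus the restriction of $F$ to $\SS\setminus\{\unit,\Delta\}$ is identically equal to~$1$ (with $\Delta$ ignored if $\bA$ is not of spherical type). Since $\theta$ is a probability distribution supported on $\SS\setminus\{\unit\}$ with $\theta(\Delta)=0$ by Theorem~\ref{thr:1}, the constant $\gamma$ of Theorem~\ref{thr:8} simplifies to $\gamma=\kappa^{-1}\sum_{x\in\SS\setminus\{\unit,\Delta\}}\theta(x)=\kappa^{-1}$.

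Theorem~\ref{thr:8} then yields $\tau(x)/|x|\cvlaw{k\to\infty}\delta_{\kappa^{-1}}$ under the uniform distributions $m_{1,k}$ on~$\bA_k$. The first limit of the statement follows by the portmanteau theorem applied to the open interval $(a,b)$, whose boundary avoids the atom~$\kappa^{-1}$ precisely when $a\neq \kappa^{-1}$ and $b\neq \kappa^{-1}$. The reciprocal convergence for $k/\tau(x)$ is obtained by the continuous mapping theorem using that $y\mapsto 1/y$ is continuous on $[1,+\infty)$ and that $\tau\geq 1$ on $\bA_k$ for $k\geq 1$. For the convergence of expectations, note that $\tau(x)/k\in[0,1]$ is uniformly bounded, while, since $\SS$ is finite, $M=\max_{s\in\SS}|s|$ is finite and satisfies $|x|\leq M\tau(x)$ for every $x\in\bA$, so $k/\tau(x)\leq M$ is also uniformly bounded; dominated convergence then upgrades the convergences in law to convergence in~$L^1$.

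For the CLT~\eqref{eq:14}, I would invoke the second part of Theorem~\ref{thr:8} and check the non-degeneracy hypothesis: the function $\tau\equiv 1$ on $\SS\setminus\{\unit,\Delta\}$ is not proportional to the length function~$|\cdot|$ there. This is precisely where the assumption that $\bA$ is not free is used: in a non-free Artin-Tits monoid, $\SS$ contains at least one simple element of length $\geq 2$, so $|\cdot|$ is non-constant on $\SS\setminus\{\unit,\Delta\}$ while $\tau$ is constant. The two-generator spherical case, excluded from Theorem~\ref{thr:8}, is handled by the remark following it: one needs to verify that $\tau\equiv 1$ does not lie in the two-dimensional vector space spanned by $|\cdot|$ and the function~$G$ of that remark. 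This is an elementary check on the explicit list $\SS\setminus\{\unit,\Delta\}=\{x_1,\dots,x_{n-1},y_1,\dots,y_{n-1}\}$: a linear combination $a|\cdot|+bG$ forced to equal~$1$ at $x_1$ and $x_2$ gives $a=1$, $b=-1$, which fails at $x_3$ when $n\geq 4$, and fails at $y_2$ when $n=3$. This yields~\eqref{eq:14} with some $s^2>0$. Finally~\eqref{eq:15} is deduced from~\eqref{eq:14} by the delta method applied to $g(y)=1/y$ at $y=\kappa^{-1}$, using $g'(\kappa^{-1})=-\kappa^2$ to transfer the Normal limit.

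The main obstacle is the two-generator spherical case, which lies outside the scope of Theorem~\ref{thr:8} and forces us to invoke the companion remark together with the explicit computation above; everything else is a direct application of Theorem~\ref{thr:8} and standard convergence tools.
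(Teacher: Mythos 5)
Your proof is correct and follows essentially the same route as the paper's: apply Theorem~\ref{thr:8} to the height function under the uniform valuation, verifying non-degeneracy via the observation that height is not proportional to the length function on $\SS\setminus\{\unit,\Delta\}$ (because $\bA$ is not free) and, in the two-generator spherical case, is not in the span of the length function and the function~$G$ from the Remark. You add useful details that the paper glosses over: the explicit verification of normal-additivity, the uniform bounds $\tau(x)/k\leq 1$ and $k/\tau(x)\leq\max_{s\in\SS}\abs{s}$ justifying the expectation limits by dominated convergence, and the explicit linear-algebra check in the two-generator spherical case. One small point worth recording: applying the delta method with $g(y)=1/y$ and $g'(\kappa^{-1})=-\kappa^2$, as you correctly set up, produces a limiting variance $s^2\kappa^4$ in~\eqref{eq:15}; the paper's proof writes this as $s^2\gamma^4$ with $\gamma=\kappa^{-1}$, but the Delta method in fact gives $s^2\cdot[g'(\gamma)]^2=s^2/\gamma^4=s^2\kappa^4$, so the exponent in~\eqref{eq:15} (and in the last display of the paper's proof) should be $\kappa^4$ rather than $\kappa^{-4}$.
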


\begin{proof}
  We apply Theorem~\ref{thr:8} with the height function
  $F(x)=\height(x)$, which is normal-additive. It is not
  proportional on $\SS\setminus\{\unit,\Delta\}$ to the length
  function since $\bA$ is not a free monoid. If $\bA$ is of spherical type with only two generators, the function $F$ does not belong to the vector space generated by the length function and the function $G$ described in the above Remark. Hence, Theorem~\ref{thr:8} applies.

The ratios $\height(x)/\abs{x}$ converge in distribution toward $\delta_\gamma$ with:
  \begin{gather*}
    \gamma=\frac1\kappa\sum_{x\in\SS\setminus\{\unit\}}\theta(x)=\frac1\kappa\,.
  \end{gather*}

  Since it is a constant, it entails the convergence in distribution
  of the inverse ratios $\abs{x}/\height(x)$ toward the constant
  $\gamma^{-1}=\kappa$. The two first points derive at once, as well
  as the convergence of the expectations.

The convergence~\eqref{eq:14} is the reformulation of the convergence in
distribution $\sqrt
k\bigl(\height(x)/\abs{x}-\kappa^{-1}\bigr)\cvlaw{}\NN(0,s^2)$. It is then
well know how to derive the following convergence:
\begin{gather*}
  \sqrt
  k\Bigl(\frac{\abs{x}}{\height(x)}-\kappa\Bigr)\cvlaw{k\to\infty}\NN(0,s^2\gamma^4),
\end{gather*}
based on the Delta method, see~\cite[p.~359]{billingsley95}. The
convergence~\eqref{eq:15} follows.
\end{proof}

\section{Similar results for other monoids}
\label{sec:simil-results-other}

So far, our results only focused on Artin-Tits monoids and their Garside
normal forms, with the help of conditioned weighted graphs. However, the
arguments developed above may apply to other monoids as well, provided that
their combinatorial structure is similar to that of Artin-Tits monoids.

In Section~\ref{sec:more-general-working}, we list the different properties
on which our arguments rely, and then we state the corresponding results. In
Section~\ref{sec:bound-mult-lmeas}, we underline that the main objects that
we construct---the boundary at infinity and the class of multiplicative
probability measures on the boundary---are intrinsically attached to the
monoid, although the construction uses non intrinsic objects such as a
particular Garside subset. Finally,  we identify in
Section~\ref{sec:exampl-outs-artin} several examples of monoids fitting into
this more general framework, and not falling into the class of Artin-Tits
monoids.

\subsection{A more general working framework}
\label{sec:more-general-working}

The properties that must be satisfied by the monoid $\bA$ to follow the
sequence of arguments that we developed are the following:
\begin{enumerate}[(P1)]
 \item\label{item:15} There exists a \emph{length} function on the
     monoid~$\bA$, \ie, a function $\abs{\cdot} : \bA \to \bbZ_{\geq0}$
     such that:
\begin{align*}
\forall x,y \in \bA\quad \abs{x \cdot y} = \abs{x} + \abs{y} &&\text{and}&&\forall x\in\bA\quad x= \unit \iff \abs{x}=0.
\end{align*}
\item\label{item:16} The monoid $\bA$ is both left and right cancellative,
    meaning:
\begin{gather*}
\forall x,y,z \in \bA\quad (z \cdot x = z \cdot y \implies x = y) \text{ and } (x \cdot z = y \cdot z \implies x = y).
\end{gather*}
\item\label{item:17} The ordered set $(\bA,\leql)$ is a lower semi-lattice,
    \ie, any non-empty set has a greatest lower bound in~$\bA$.
 \item\label{item:18} There exists a finite Garside subset~$\SS$, \ie, a
     finite subset of $\bA$ which generates $\bA$ and is closed under
     existing $\veel$ and downward closed under~$\leqr$ (the condition that
     $\SS$ contains $\Sigma$ is dropped: it does not make sense in our
     context as we have not singled out a generating set $\Sigma$ in our
     assumptions).
 \item\label{item:19} The Charney graph $(\Ch,\to)$ is strongly connected,
     where $\Ch$ is the subset of $\SS$ and $\to$ is the relation on $\SS
     \times \SS$ defined by:
 \begin{align*}
   \Ch &= \begin{cases}
    \SS \setminus \{\Delta,\unit\}, & \text{if $(\SS,\leql)$ has a maximum $\Delta$} \\
    \SS \setminus \{\unit\}, & \text{otherwise}
  \end{cases} \\
   x \to y&\iff x = \bigveel\bigl\{z \in \SS \tq z \leql x \cdot y\bigr\}.
  \end{align*}
 \item\label{item:20} The integers in the set:
   \begin{gather*}
     \bigl\{\abs{z_1}+\ldots+\abs{z_k} \tq k\geq1,\quad z_1,\dots,z_k\in\SS,\quad
 z_1 \to z_2 \to \ldots \to z_k \to z_1\bigr\},
   \end{gather*}
 are setwise coprime, \ie, have $1$ as greatest common divisor.
 \item\label{item:21} If $(\SS,\leql)$ has a maximum, then the Charney
     graph $(\Ch,\to)$ has at least one vertex with out-degree two or more.
\end{enumerate}

We observe that all irreducible Artin-Tits monoids with at least two
generators satisfy the above axioms.

Property~\qref{item:15} states the conditions that must be met by the notion
of length of an element, and then Properties~\qref{item:16}
to~\qref{item:18} lead to the notion of Garside normal form and its variants,
such as the generalized Garside normal form.

Property~\qref{item:19} is then equivalent with Theorem~\ref{thr:1qqlknaa}.
Property~\qref{item:20} is used to obtain aperiodic, and therefore primitive
matrices. Property \qref{item:21} ensures that the $\Delta$ component, if it
exists, will have a small spectral radius. In the case of irreducible
Artin-Tits monoids, \qref{item:21}~holds for monoids with at least two
generators, and only for them.

The results obtained in the previous sections for irreducible Artin-Tits
monoids with at least two generators, and regarding the construction of
multiplicative probability measures at infinity, generalize to all monoids
satisfying Properties~\qref{item:15}
 to~\qref{item:21}.

\begin{theorem}\label{thr:axiomatization}
Let\/ $\bA$ be a monoid satisfying Properties~\qref{item:15}--\qref{item:21}.
Then Theorem~{\normalfont\ref{thr:2}}, Proposition~{\normalfont\ref{prop:7}},
Theorem~{\normalfont\ref{thr:3}}, Corollary~{\normalfont\ref{cor:4}},
Theorem~{\normalfont\ref{thr:7}} and Theorem~{\normalfont\ref{thr:8}} hold
for the monoid\/~$\bA$.
\end{theorem}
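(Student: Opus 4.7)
The plan is to show that each of the six cited results was established in the Artin-Tits setting by combining four structural ingredients only: (i)~a graded Möbius transform on a cancellative lower semi-lattice equipped with a length function and a finite Garside subset; (ii)~strong connectedness of the Charney graph; (iii)~primitivity of an associated non-negative matrix; and, when $(\SS,\leql)$ has a maximum~$\Delta$, (iv)~the strict inequality $\rho(A)<\rho(\Mtilde)$ between the spectral radii of the two blocks appearing in the decomposition~\eqref{eq:67} of~$M$. Properties \qref{item:15}--\qref{item:18} yield~(i), \qref{item:19} yields~(ii), \qref{item:19} combined with~\qref{item:20} yields~(iii), and \qref{item:21} yields~(iv); hence the original proofs can be transferred essentially verbatim.

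First, I would reconstruct the normal-form machinery. Under \qref{item:15}--\qref{item:18}, every non-unit element of~$\bA$ admits a unique greedy factorization against~$\SS$, so the generalized normal form, the completion~$\bAbar$, and the Garside and visual cylinders of Definition~\ref{def:1poq} are well defined. The graded Möbius transform of Definition~\ref{def:1qqpa} and the involutive identity of Theorem~\ref{thr:1mobiusyta} carry over unchanged, since their proofs use only cancellation and the lower semi-lattice structure. The characterization of non-degenerate multiplicative measures on~$\BA$ by Möbius valuations (Theorem~\ref{thr:1}) also transfers, as Lemmas~\ref{lem:3.5}, \ref{lem:4} and~\ref{lem:3} rely only on~(i) and~(ii).

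Second, I would reassemble the \CWG\ of a valuation~$\omega$ as in Definition~\ref{def:8} and its multiplicative extension in Section~\ref{sec:multiplicative-case}. Property~\qref{item:19} makes the restriction of~$M$ to~$\Ch$ irreducible, and together with~\qref{item:20} it becomes aperiodic, hence primitive. When $(\SS,\leql)$ has no maximum we land in Case~A of Section~\ref{sec:two-particular-cases}. When a maximum~$\Delta$ exists, the decomposition~\eqref{eq:67} applies: the $\Delta$-block has spectral radius $\omega(\Delta)^{1/\abs{\Delta}}$, and~\qref{item:21} forces the complementary block to have strictly larger spectral radius, placing us in Case~B. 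Proposition~\ref{prop:pokpala} then provides a genuine \CWG, and the remainder of Sections~\ref{sec:cwg-associated-an} and~\ref{sec:param-mult-meas} goes through unchanged, yielding the analogues of Theorem~\ref{thr:2}, Proposition~\ref{prop:7}, Theorem~\ref{thr:3} and Corollary~\ref{cor:4}. For Theorem~\ref{thr:7}, the sole Artin-Tits-specific input is Proposition~\ref{prop:4}; in the axiomatic setting it would be replaced by the observation that valuations on~$\bA$ correspond to positive characters of the abelianization of $\bA$, which by~\qref{item:18} is a finitely generated commutative monoid and embeds into a free abelian group of finite rank, yielding a finite-dimensional positive cone of valuations.

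The main obstacle will be the non-degeneracy statement in Theorem~\ref{thr:8}. The convergence toward a Gaussian law follows from Theorems~\ref{thr:5} and~\ref{thr:6} applied to the \CWG\ of~$\omega$ and requires no adaptation. The positivity of the asymptotic variance, however, was proved in the Artin-Tits case via the ad~hoc combinatorial claim~$(\dag)$, built on the family $\{\Delta_A\tqs A=\Sigma\setminus\{a\}\}$, which has no intrinsic counterpart in the axiomatic setting. The correct replacement should be a direct cocycle argument: by Theorem~\ref{thr:6}, degeneracy of the limit variance is equivalent to the function inducing~$F$ being a coboundary on the oriented graph $(\Ch,\to)$; strong connectedness~\qref{item:19} determines any such coboundary up to a single additive constant on the vertex values, and the branching guaranteed by~\qref{item:21}, or alternatively the cycle structure forced by~\qref{item:20} in the non-maximum case, then forces~$F$ to be proportional on~$\Ch$ to the length function, contradicting the hypothesis. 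Writing out this cocycle argument, together with a clean axiomatic reformulation of the ``not proportional to the length function'' hypothesis that replaces the counting of generators used in the original statement, is the main technical step that remains.
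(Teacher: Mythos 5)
The paper supplies no proof of this theorem; it is preceded only by the short paragraph assigning roles to the properties \qref{item:15}--\qref{item:21}, and the reader is expected to check that the earlier arguments transfer. Your assignment of roles (lengths and Garside structure to \qref{item:15}--\qref{item:18}, Charney connectedness to \qref{item:19}, primitivity via aperiodicity to \qref{item:19}+\qref{item:20}, the gap $\rho(A)<\rho(\Mtilde)$ to \qref{item:21}) agrees with the paper's discussion, and your observation that the Möbius transform machinery of Sections~\ref{sec:finite-meas-bound}--\ref{sec:mult-meas-bound} and the \CWG\ construction of Sections~\ref{sec:centr-limit-theor}--\ref{sec:appl-posit-braids} carry over with no changes is correct. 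Your replacement for Proposition~\ref{prop:4} via the group completion of the abelianization is a genuine contribution: the paper's description through $\Rbar$ is Artin-Tits-specific, and the fact that the parameter space of valuations is a finite-dimensional positive cone needs to be re-derived in the axiomatic setting; your proposal is the right idea, though it should be phrased through the group completion rather than the abelianization directly (which need not embed in a group).

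The weak point is your cocycle argument for the variance non-degeneracy in Theorem~\ref{thr:8}. Two corrections are needed. First, when $(\SS,\leql)$ has no maximum, the paper's claim $(\dag)$ \emph{does} transfer verbatim: taking $I=\{x\}$ with $x$ a maximal element of the finite poset $(\SS,\leql)$ uses only the Garside subset axioms, so no replacement is required there. Second, in the case where a maximum $\Delta$ exists, the combination of strong connectedness \qref{item:19} with the out-degree condition \qref{item:21} does not force the coboundary $g$ to be constant. Strong connectedness makes a coboundary representation unique up to a single additive constant, but says nothing about whether a function that is constant on each set $D(x)=\{y\in\Ch\tq x\to y\}$ must be globally constant; and \qref{item:21} provides only one vertex of out-degree $\geq 2$, which cannot propagate constancy across the whole graph. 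The content of $(\dag)$ in the spherical case is that the relation ``$\exists z,\ z\to x$ and $z\to y$'' is transitively total on $\Ch$, and this is a genuinely stronger structural property that your axioms do not obviously guarantee. Either one must add an axiom ensuring it (or restate the hypothesis of Theorem~\ref{thr:8} as the coboundary condition of Theorem~\ref{thr:6} directly), or one must prove it from \qref{item:15}--\qref{item:21}. You correctly flag this as the main technical step remaining, but the cocycle argument you sketch is not yet a proof of it.
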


In view of Theorem~\ref{thr:axiomatization}, two questions naturally arise:
\begin{itemize}
\item Assume given a monoid $\bA$
    satisfying~\qref{item:15}--\qref{item:21}. The central objects that we
    consider are the boundary at infinity of~$\bA$, the multiplicative
    measures and the uniform measure on the boundary. How much of these
    objects are intrinsic to~$\bA$? And on the contrary, how much depend on
    the specific length function and on the Garside subset that were
    chosen?
\item What are typical examples of monoids
    satisfying~\qref{item:15}--\qref{item:21} outside the family of
    Artin-Tits monoids?
\end{itemize}

We answer the first question below in Section~\ref{sec:bound-mult-lmeas},
showing that most of our objects of interest are intrinsic to the monoid. The
answer to the second question is the topic of
Section~\ref{sec:exampl-outs-artin}.

\subsection{The boundary at infinity and the multiplicative measures are intrinsic}
\label{sec:bound-mult-lmeas}

We have constructed in Section~\ref{sec:boundary-elements-an} a
compactification $\bAbar$ of an Artin-Tits monoid $\bA$ as the set of
infinite paths in the graph $(\SS,\to)$, where $\SS$ is the smallest finite
Garside subset of~$\bA$. Then $\bA$ identifies with the set of paths that
hit~$\unit$, whereas the \emph{boundary} is the set of paths that never
hit~$\unit$.

The same construction is carried over for an arbitrary monoid $\bA$ equipped
with a finite Garside subset~$\SS$, and this was implicitly understood in the
statement of Theorem~\ref{thr:axiomatization}.  We have thus an operational
description of the compactification and of the boundary at infinity relative
to the Garside subset~$\SS$, say $\bAbar_\SS$ and~$\BA_\SS$\,. It is not
obvious however to see that the two compact spaces thus obtained are
essentially independent of~$\SS$. We sum up an alternative construction of
the boundary at infinity, already used in~\cite{abbes15a}, and the fact that
it is equivalent to the previous construction in the following result, the
proof of which we omit.

\begin{proposition}
  \label{prop:3}
Let\/ $\bA$ be a monoid, and define the preorder $\leql$ on $\bA$ by putting
$x\leql y\iff(\exists z\in\bA\quad y=x\cdot z)$. Let
$\H=\bigl\{(x_i)_{i\geq0}\tq \forall i\geq0\quad x_i\in\bA\quad x_i\leql
x_{i+1}\bigr\}$. Equip $\H$ with the preordering relation defined, for
$x=(x_i)_{i\geq0}$ and $y=(y_i)_{i\geq0}$\,, by:
\begin{gather*}
  x\sqsubseteq y\iff\bigl(
\forall i\geq0\quad\exists j\geq0\quad x_i\leql y_j\bigr).
\end{gather*}
Let finally $(\bAbar,\leql)$ be the \emph{collapse} of~$(\H,\sqsubseteq)$.
That is to say, $\bAbar$~is the partially ordered set obtained as the
quotient of $\H$ by the equivalence
relation~$\sqsubseteq\cap(\sqsubseteq)^{-1}$.

Assume that $\bA$ satisfies Properties~\qref{item:15}, \qref{item:16}
and~\qref{item:18}.
\begin{enumerate}
\newcounter{mycount}%
\item\setcounter{mycount}{\theenumi} Then $(\bA,\leql)$ is a  partial order
    that identifies with its image in $\bAbar$ through the composed mapping
    $\bA\to\H\to\bAbar$, where the mapping $\bA\to\H$ sends an element $x$
    to the constant sequence $(x,x,\dots)$.
\end{enumerate}
One equips $\bAbar$ with the smallest topology containing as open sets all
sets of the form $\up x$ with $x\in\bA$ and all sets of the form
$\bAbar\;\setminus\up x$ with $x\in\bAbar$ (in Domain theory, this
corresponds to the \emph{Lawson
topology}~{\normalfont\cite[p.211]{gierz03}}). Finally, the \emph{boundary at
infinity} of\/ $\bA$ is  the topological space defined by
$\BA=\bAbar\setminus\bA$.
\begin{enumerate}
\setcounter{enumi}{\themycount}
\item There is a canonical mapping between $\bAbar_\SS$ and~$\bAbar$. This
    mapping makes  $(\bAbar_\SS,\leql)$ and $(\bAbar,\leql)$  isomorphic as
    partial orders, and the topological spaces homeomorphic. Its
    restriction to $\BA_\SS$ induces a homeomorphism from $\BA_\SS$
    to~$\BA$.
\end{enumerate}
\end{proposition}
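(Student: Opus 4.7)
The plan is to construct a canonical bijection $\phi:\bAbar_\SS\to\bAbar$ preserving both order and topology, and to verify that it identifies $\BA_\SS$ with~$\BA$.

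For part~(1), reflexivity and transitivity of $\leql$ on $\bA$ are immediate; antisymmetry will follow from Property~\qref{item:15}, since $x\leql y\leql x$ would force $x=x\cdot z\cdot z'$ and hence $|z|=|z'|=0$. The map $x\mapsto(x,x,\ldots)\in\H$ is then order-preserving by construction, and the same antisymmetry argument shows that it descends to an injection $\bA\hookrightarrow\bAbar$.

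For part~(2), I would define $\phi$ by sending a normal sequence $(y_i)_{i\geq1}\in\bAbar_\SS$ to the class of the chain $(x_i)_{i\geq0}$ with $x_0=\unit$ and $x_i=y_1\cdots y_i$. Injectivity should follow from Lemma~\ref{lem:1}, point~\ref{item:3}: if $\phi(y)=\phi(y')$, then for every $i$ one has $x_i\leql x_j'$ for some $j$, and that lemma forces $y_1\cdots y_i\leql y_1'\cdots y_i'$; symmetry together with length-based antisymmetry give equality, whence $y_i=y_i'$ by uniqueness of the normal form. For surjectivity, given a chain $(x_i)$ with normal forms $(y_1^{(i)},\ldots,y_{k_i}^{(i)})$, the same lemma implies that for each fixed $j$ the sequence of prefixes $\bigl(y_1^{(i)}\cdots y_j^{(i)}\bigr)_i$ is $\leql$-increasing inside the finite set $\SS^j$; it must stabilize, and uniqueness of the normal form then promotes this to coordinate-wise stabilization to some $y_j\in\SS$. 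Local normality (Lemma~\ref{lem:2-FC}, point~\ref{item:10}) ensures that $(y_j)_{j\geq1}$ is itself normal, and $\phi\bigl((y_j)\bigr)$ is tautologically $\sqsubseteq$-equivalent to $(x_i)$. Order preservation in both directions is immediate, since the definition of $\leql$ on $\bAbar_\SS$ (Definition~\ref{def:1poq}) and that of $\sqsubseteq$ on $\H$ are transcriptions of each other in terms of partial products.

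For the topology, the Garside cylinders $\CC_x$ form a clopen basis of the product topology on $\bAbar_\SS$. Identity~\eqref{eq:1papkama}, $\CC_x=\Up x\setminus\bigcup_{u\in\D(x)}\Up(x\cdot u)$, realizes each $\CC_x$ as an intersection of a Scott-open set with complements of Scott-open sets, and is therefore Lawson-open; conversely Proposition~\ref{prop:2qqpjaazlx} expresses each $\Up x$ as a finite disjoint union of Garside cylinders, so the two topologies coincide. Finally, by~\eqref{eq:21} a sequence $(y_j)$ hits $\unit$ if and only if it is eventually $\unit$, which happens if and only if the chain $(x_i)$ stabilizes, i.e., if and only if $\phi\bigl((y_j)\bigr)\in\bA$; this yields the desired restriction $\phi:\BA_\SS\to\BA$. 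The main obstacle will be the stabilization step in surjectivity, where the finiteness of~$\SS$ (Property~\qref{item:18}) together with cancellativity~\qref{item:16} are used to ensure that the normal-form prefixes of an arbitrary chain really do settle down coordinate by coordinate.
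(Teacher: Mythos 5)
The paper explicitly omits the proof of this proposition (``the proof of which we omit''), so there is nothing to compare against directly; your attempt must be assessed on its own merits. The overall strategy is certainly the intended one: define $\phi$ by sending a generalized normal sequence $(y_j)_{j\geq1}$ to the class of its prefix-product chain, prove injectivity via Lemma~\ref{lem:1} and length-based antisymmetry, prove surjectivity by stabilizing prefix products in the finite sets $\SS^j$, and then transport the order and the topology. Part~(1) and the order-isomorphism and surjectivity arguments are correct. One small point in the injectivity step: having obtained $y_1\cdots y_i=y_1'\cdots y_i'$ for all~$i$, the cleanest route to $y_i=y_i'$ is left cancellativity~\qref{item:16} together with induction on~$i$; invoking ``uniqueness of the normal form'' directly needs a word of care, since the finite sequences $(y_1,\dots,y_i)$ may carry trailing units and thus are generalized, not ordinary, normal forms.

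The topological argument, however, has a genuine gap. You show two things: each Garside cylinder $\CC_x$ is Lawson-open (via~\eqref{eq:1papkama}), and each $\up x$ with $x\in\bA$ is product-open (via Proposition~\ref{prop:2qqpjaazlx}). This gives one inclusion of topologies (product inside Lawson) and only half of the other. The Lawson topology, as defined in the statement, is generated by \emph{two} families of subbasic opens: the upsets $\up x$ for $x\in\bA$, and the complements $\bAbar\setminus\up x$ for $x\in\bAbar$. Your argument says nothing about the second family when $x$ is a boundary element, so the coincidence of the topologies is not yet established. What is missing is the observation that $\up x$ is product-closed for every $x\in\bAbar$: if $(x_j)_{j\geq1}$ is the generalized normal form of~$x$, then after transporting by~$\phi$ and applying Lemma~\ref{lem:2}, one has $\up x=\bigcap_{k\geq1}\bigl\{y\tq x_1\cdots x_k\leql y_1\cdots y_k\bigr\}$, a countable intersection of clopen cylinder conditions, hence closed. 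Adding this one step closes the argument; without it only one inclusion of topologies has been proved.
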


Multiplicative measures on the boundary are those measures $m$ on $\BA$
satisfying $m\bigl(\up(x\cdot y)\bigr)=m(\up x)\cdot m(\up y)$. We deduce the
following corollary.

\begin{corollary}
\label{cor:1}
  Let\/ $\bA$ be a monoid satisfying Properties~\qref{item:15} to~\qref{item:21}. Then the notions of boundary at infinity and of multiplicative measure are intrinsic to\/~$\bA$, and do not depend either on the specific length function nor on the specific finite Garside subset considered.
\end{corollary}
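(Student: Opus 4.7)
The plan is to deduce the corollary almost entirely from Proposition~\ref{prop:3}, which does the bulk of the conceptual work. First, I would emphasize that the construction of $\bAbar$ and $\BA$ given in that proposition depends solely on the monoid operation: one forms the preorder $\leql$ on~$\bA$, the set $\H$ of $\leql$-increasing sequences, its collapse $\bAbar$ under the preorder $\sqsubseteq$, and finally equips $\bAbar$ with the Lawson topology. Neither a length function nor any specific Garside subset is invoked in this definition; the Garside subset enters only through Property~\qref{item:18}, which is required in order to identify this intrinsic $\bAbar$ with the Garside-based construction $\bAbar_\SS$ for any finite Garside subset~$\SS$.

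Next, given any two finite Garside subsets $\SS$ and~$\SS'$ satisfying the axioms, Proposition~\ref{prop:3}(2) supplies canonical maps $\bAbar_\SS \to \bAbar$ and $\bAbar_{\SS'} \to \bAbar$ that are simultaneously homeomorphisms and order isomorphisms for the extended relation $\leql$. Composing one with the inverse of the other yields a canonical homeomorphism $\bAbar_\SS \to \bAbar_{\SS'}$ that restricts to a homeomorphism $\BA_\SS \to \BA_{\SS'}$ and preserves the extended order. I would then observe that for every $x\in\bA$, the visual cylinder $\up x=\{y\in\BA\tq x\leql y\}$ in $\BA_\SS$ is therefore mapped onto the visual cylinder with the same base~$x$ in $\BA_{\SS'}$; the preservation of visual cylinders is the only minor point that needs explicit checking, and it is immediate from the fact that the canonical identification is an order isomorphism for $\leql$, whose restriction to $\bA$ is the intrinsic left divisibility of the monoid.

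Finally, I would conclude by noting that the defining conditions of a multiplicative measure in Definition~\ref{def:2}---positivity $\nu(\up x)>0$ and multiplicativity $\nu\bigl(\up(x\cdot y)\bigr)=\nu(\up x)\cdot\nu(\up y)$ for all $x,y\in\bA$---involve only the monoid multiplication and the visual cylinders. Since the pushforward of a measure by a homeomorphism respects both, the canonical homeomorphism above induces a bijection between the multiplicative measures on $\BA_\SS$ and on~$\BA_{\SS'}$. The length function nowhere enters these definitions, so changing it has no effect on either the boundary or the multiplicative measures. There is no serious obstacle in this argument beyond the invocation of Proposition~\ref{prop:3}; the whole point is that that proposition was designed precisely to give an intrinsic description of the boundary, and the corollary then follows by a purely formal transport of structure.
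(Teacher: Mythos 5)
Your proposal is correct and takes essentially the same route as the paper: invoke Proposition~\ref{prop:3} to identify $\BA_\SS$ with the intrinsically defined boundary $\BA$ via a canonical order-isomorphic homeomorphism, then observe that multiplicative measures are defined purely in terms of visual cylinders and monoid multiplication. The paper's own ``proof'' is a single sentence (``Multiplicative measures on the boundary are those measures $m$ on $\BA$ satisfying $m(\up(x\cdot y))=m(\up x)\cdot m(\up y)$'') followed by the deduction; you have merely filled in the transport-of-structure details, correctly noting in passing that the length function never enters either the construction of $\BA_\SS$ or Definition~\ref{def:2}, so only the Garside-subset independence actually requires Proposition~\ref{prop:3}.
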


By contrast, the notion of uniform measure does depend on the specific length
function one considers. Nevertheless, the uniform measure associated to any
length function belongs to the class of multiplicative measures---and this
class is intrinsic to~$\bA$ according to Corollary~\ref{cor:1}.

\subsection{Examples outside the family of Artin-Tits monoids}
\label{sec:exampl-outs-artin}

We now mention two families of monoids matching the general working framework
introduced in Section~\ref{sec:more-general-working}, and yet outside the
family of Artin-Tits monoids.

\subsubsection{Dual braid monoids}

Braid monoids, which are among the foremost important Artin-Tits monoids, can
be seen as sub-monoids of \emph{braid groups}. The braid group with $n$
strands, for $n \geq 2$, is the group $B_n$ defined by the following
presentation:
\begin{gather*}
B_n = \bigl\langle \sigma_1,\ldots,\sigma_{n-1} \mid
\sigma_i \sigma_j = \sigma_j \sigma_i \text{ if } \abs{i-j} > 1,\
\sigma_i \sigma_j \sigma_i = \sigma_i \sigma_j \sigma_i \text{ if } i = j \pm 1\bigr\rangle\,.
\end{gather*}

The associated braid monoid is just the sub-monoid positively generated by
the family $\{\sigma_i \tq 1 \leq i \leq n\}$.

The \emph{dual braid monoid} is another sub-monoid of the braid group,
strictly greater than the braid monoid when $n \geq 3$. This monoid
was introduced in~\cite{birman1998new}. It is the sub-monoid
of $B_n$ generated by the family $\{\sigma_{i,j} \tq 1\leq i<j\leq n\}$\,,
where $\sigma_{i,j}$ is defined by:
\begin{align*}
\sigma_{i,j}&=\sigma_i\,,&&\text{for $1\leq i<n$ and $j=i+1$,}\\
\sigma_{i,j}&=\sigma_i \sigma_{i+1} \ldots \sigma_{j-1}
\sigma_{j-2}^{-1} \sigma_{j-3}^{-1} \ldots
\sigma_i^{-1}\,,&&\text{for $1\leq i< n-1$ and $i+2\leq j\leq n$}\,.
\end{align*}

Alternatively, the dual braid monoid is the monoid generated by the elements
$\sigma_{i,j}$ for $1 \leq i < j \leq n$, and subject to the relations:
\[\begin{cases}\sigma_{i,j} \cdot \sigma_{j,k} = \sigma_{j,k} \cdot \sigma_{i,k} = \sigma_{i,k} \cdot \sigma_{i,j} & \text{for } 1 \leq i < j < k \leq n; \\
\sigma_{i,j} \cdot \sigma_{k,\ell} = \sigma_{k,\ell} \cdot \sigma_{i,j} & \text{for } 1 \leq i < j < k < \ell \leq n; \\
\sigma_{i,j} \cdot \sigma_{k,\ell} = \sigma_{k,\ell} \cdot \sigma_{i,j} & \text{for } 1 \leq i < k < \ell < j \leq n.
\end{cases}\]

It is proved in~\cite{abbes17} that dual braid monoids satisfy Properties~\qref{item:15} to~\qref{item:21}, where the length of an element $x$ of the monoid is the number of generators $\sigma_{i,j}$ that appear in any word representing~$x$, and where $\SS$ is the smallest Garside subset of the monoid.

In fact, this construction can be generalised to all irreducible Artin-Tits monoids
of spherical type. Indeed, for any such monoid~$\bA$, Bessis
also developed in~\cite{bessis2003dual} a notion of dual monoid, which subsumes the notion of dual braid monoid
in case $\bA$ is the braid monoid~$B_n^+$. % In fact, once $\bA$ is fixed,
% such dual monoids are defined up to conjugation, hence they have the same
% combinatorial properties.
 In particular, Bessis proved that these dual monoids
satisfy Properties~\qref{item:15} to~\qref{item:18},~\qref{item:20} and~\qref{item:21},
with the same definitions of element length and of Garside set~$\SS$.

However, Property~\qref{item:19} was not investigated. Hence, we prove here
the following result.

\begin{proposition}\label{pro:dual-braid-prop-5}
Let\/ $\bA$ be an irreducible Artin-Tits monoid of spherical type,
with at least two generators.
The dual monoid associated with\/ $\bA$ satisfies Properties~\qref{item:15}
to~\qref{item:21}.
\end{proposition}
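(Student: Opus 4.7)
Since Bessis has verified Properties~\qref{item:15}--\qref{item:18}, \qref{item:20} and~\qref{item:21} in this setting, the only task left is to establish Property~\qref{item:19}, namely the strong connectedness of the Charney graph~$(\Ch,\to)$. The plan is to transport the argument of Proposition~\ref{pro:main} to the dual framework, substituting the Coxeter-graph combinatorics by the Garside-theoretic notion of right complement: for each $x\in\SS$, let $\partial x\in\SS$ denote the unique simple element satisfying $x\cdot\partial x=\Delta$.

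A direct calculation from the definition of~$\to$ yields the following normality criterion, which is the dual analog of the sufficient condition in Corollary~\ref{cor:4-FC}: for $x,y\in\SS$, one has $x\to y$ if and only if no atom (\emph{i.e.}\ no generator of the dual monoid) left-divides both $y$ and~$\partial x$. Two easy consequences follow. First, every atom $\sigma$ satisfies $\sigma\to\sigma$: indeed, the square of an atom is never simple in a dual monoid, so $\sigma\not\leql\partial\sigma$. Second, for every $a\in\Ch$, since $a\neq\unit$ one has $\partial a\neq\Delta$, hence some atom fails to left-divide~$\partial a$, yielding an edge $a\to\sigma$ in~$\Ch$. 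It follows that strong connectedness of $(\Ch,\to)$ will be established once one proves, for every atom $\sigma$ and every $b\in\Ch$, the existence of a normal path $\sigma\to\cdots\to b$ inside~$\Ch$.

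This last step is the crux and the main obstacle. The plan is to mimic the inductive scheme of Proposition~\ref{pro:main}: introduce a \emph{dual Coxeter graph} on the atom set~$\Sigma$, whose edges reflect pairs of atoms admitting a common simple upper bound, show it is connected using the irreducibility of~$\bA$ together with Bessis's identification of $(\SS,\leql)$ with the non-crossing partition lattice of the Coxeter group of~$\bA$, and then run an induction on the metric of this graph. Bessis's description of $\SS$ supplies the required combinatorial backbone: atoms correspond to reflections, $\Delta$ to a Coxeter element, and intermediate simple elements may be built as products of atoms along saturated chains, taking the role of the elements $\sigma\cdot\Delta_T$ of Lemma~\ref{lem:5-FC}. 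The chief difficulty is that the numerical parameter $\ell(a,b)$ that drove the induction in the Artin--Tits case has no direct counterpart in the dual setting; its role must be played by the rank function on the non-crossing partition lattice, \emph{i.e.}, the absolute (reflection) length in the Coxeter group. With this substitution in place, Property~\qref{item:21} and the hypothesis of at least two generators are invoked exactly as in the proof of Proposition~\ref{pro:main} to guarantee that the constructed paths remain in $\Ch$ rather than collapsing onto $\unit$ or~$\Delta$.
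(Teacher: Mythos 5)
Your reduction is sound: Bessis's results leave only Property~\qref{item:19} to check, the normality criterion $x\to y$ iff no atom left-divides both $y$ and $\partial x$ is correct (and is indeed the dual analog of Corollary~\ref{cor:4-FC}), the observations that atoms carry loops and that every $a\in\Ch$ has at least one out-edge to some atom are right, and the resulting reduction to ``for every atom $\sigma$ and every $b\in\Ch$, there is a normal path $\sigma\to\cdots\to b$'' is a valid strategy. However, the crux of the argument is precisely this last claim, and you do not prove it — you describe a program (``introduce a dual Coxeter graph\dots show it is connected\dots run an induction on the metric\dots''). None of these steps is carried out: the connectedness of your proposed dual Coxeter graph is asserted but not proved, the inductive quantities are never defined, and the analog of Lemma~\ref{lem:5-FC}, which was the engine of the induction in Proposition~\ref{pro:main}, is absent. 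The remark that reflection length ``must play the role of'' $\ell(a,b)$ flags the difficulty without resolving it.

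This is not merely a matter of missing routine details. The paper's proof of this proposition is explicitly case-by-case: it invokes the classification of finite Coxeter groups, defers type $A$ to earlier work, uses computer algebra (GAP/CHEVIE) for the exceptional types, treats the dihedral case $I_2(m)$ by a direct look at its small Garside set, and for the infinite families $B$ and $D$ develops the non-crossing partition model, proves the auxiliary facts (Lemma~\ref{lem:left-right-dual}, Corollary~\ref{cor:left-right-dual-2}) from Property~$(\dag)$, and then produces the required normal paths by explicit combinatorial constructions (Lemmas~\ref{lem:dual-braid-case-b} and~\ref{lem:dual-braid-case-d}). A uniform Garside-theoretic induction of the kind you sketch, valid across all spherical types without classification, would be a genuine improvement — but it is not obvious that it exists, and the burden is on you to supply it. As written, the proposal stops at exactly the point where the real work begins.
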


\begin{proof}
Based on the above discussion, it only remains to prove that the dual monoid
$\bB$ satisfies Property~\qref{item:19}.
It follows from the classification of finite Coxeter groups~\cite{coxeter1935complete}
that $\bA$ must fall into one of the following finitely many families.
Three families consist of so-called monoids of type~$A$, $B$ and~$D$,
one is the family  of dihedral monoids~$I_2(m)$ (with $m \geqslant 3$),
and other families are finite
(these are families of \emph{exceptional type}).

Monoids of type $A$ are braid monoids, which are already treated
in~\cite{abbes17}. Furthermore, using computer algebra (for instance the package CHEVIE of GAP~\cite{gap3}), it is easy to prove~\qref{item:19} in the case of families
of exceptional type. Hence, we focus on the three infinite families of monoids.

If $\bA$ is of type~$I_2(m)$, the monoid $\bB$ has the following presentation (this presentation is
the one given by~\cite{picantin2002explicit}, with reversed product order):
\begin{gather*}
\bB = \bigl\langle \sigma_1,\ldots,\sigma_m \mid
\sigma_i \sigma_j = \sigma_m \sigma_1 \text{ if } j = i+1 \bigr\rangle^+\,.
\end{gather*}
Then, its Garside set is $\SS = \{\unit,\sigma_1,\ldots,\sigma_m,\Delta\}$,
where $\Delta = \sigma_m \sigma_1$ is the only element of $\SS$ with length~$2$.
In particular, we have $\Ch = \{\sigma_1,\ldots,\sigma_m\}$, and 
$\sigma_i \to \sigma_j$ for all $j \neq i+1$, which proves~\qref{item:19} in this case.

In case $\bA$ is of type $B$ or~$D$, combinatorial descriptions
of $\bB$ are provided in~\cite{reiner1997non,brady2002k,picantin2002explicit,
bessis2003dual,athanasiadis2004noncrossing,armstrong2009generalized}.
They include descriptions
of the generating family~$\Sigma$, of the Garside set~$\SS$, and of the multiplication relations in~$\SS$.
These descriptions lead, in all dual braid monoids, 
to the fact that $(\SS,\leql)$ is a lattice, and to the following property:
\begin{intermediate}
    {\normalfont$(\dag)$}%
   For all $\sigma, \tau \in \Sigma$ such that $\sigma\cdot \tau \in \SS$,
   it holds that $\tau \leql \sigma \cdot \tau$ and 
   $\sigma \leqr \sigma \cdot \tau$.
  \end{intermediate}

Property~$(\dag)$ leads
to Lemma~\ref{lem:left-right-dual} and
Corollary~\ref{cor:left-right-dual-2}.
From there, and by following a proof that is
very similar to the one in~\cite{abbes17} for monoids of type~$A$,
we show Lemmas~\ref{lem:dual-braid-case-b}
and~\ref{lem:dual-braid-case-d}, thereby demonstrating
Proposition~\ref{pro:dual-braid-prop-5}.
\end{proof}

\begin{lemma}\label{lem:left-right-dual}
The left and right divisibility relations coincide on\/~$\bB$, \ie:
\begin{gather}\label{eq:left-right-dual}
\forall x,y \in \bB \quad x \leql y \iff x \leqr y.
\end{gather}
\end{lemma}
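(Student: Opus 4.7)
The plan is to prove the forward implication $x \leql y \Rightarrow x \leqr y$ by induction on word length, with the converse following by an entirely parallel argument (or, once the forward direction is known, directly from the observation that $\leql$ and $\leqr$ are then both captured by the condition $|y| - |x| = $ the length of the unique quotient, a quotient that can be extracted from either side by left or right cancellativity, Property~\qref{item:16}).

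First I would reduce to the atomic case, namely $x \in \Sigma$. Proceeding by induction on $|x|$, write $x = \sigma \cdot x_1$ with $\sigma \in \Sigma$; from $\sigma \leql x \leql y$, assuming the atomic case yields $y = y' \cdot \sigma$ for some $y' \in \bB$. Writing $y = x \cdot u = \sigma \cdot x_1 \cdot u$ and comparing with $y' \cdot \sigma$, left cancellation of $\sigma$ in $\sigma x_1 u = y' \sigma$ (after a use of the atomic case to bring $\sigma$ to the right of $x_1 u$) gives an element $y''$ of strictly smaller length such that $x_1 \leql y''$, allowing the induction hypothesis to conclude $x_1 \leqr y''$, hence $x = \sigma x_1 \leqr y$.

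For the atomic case itself, I induct on $|y|$. If $y = \sigma$ the conclusion is immediate. Otherwise write $y = \sigma \cdot \tau \cdot u$ with $\tau \in \Sigma$ and $u \in \bB$. If $\sigma \tau \in \SS$, property~$(\dag)$ provides an atom $\tau''$ with $\sigma\tau = \tau'' \cdot \sigma$, giving $y = \tau'' \cdot (\sigma u)$. Since $|\sigma u| < |y|$ and $\sigma \leql \sigma u$ trivially, the induction hypothesis yields $\sigma u = w \cdot \sigma$ for some $w \in \bB$, so that $y = \tau'' \cdot w \cdot \sigma$, proving $\sigma \leqr y$.

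The delicate subcase is when $\sigma \tau \notin \SS$, for property~$(\dag)$ does not directly apply; this is where I expect the main obstacle. The strategy is to show that, under the hypothesis $\sigma \leql y$ with $y \neq \sigma$, one can always produce an alternative decomposition $y = \sigma \cdot \tilde{\tau} \cdot \tilde{u}$ with $\tilde{\tau} \in \Sigma$ and $\sigma \cdot \tilde{\tau} \in \SS$, thereby reducing to the previous subcase. Such a rewriting relies on the lattice structure of $(\SS, \leql)$ together with the specific combinatorial description of $\SS$ in types~$B$ and~$D$ furnished by~\cite{reiner1997non,brady2002k,picantin2002explicit,bessis2003dual,athanasiadis2004noncrossing,armstrong2009generalized}: one considers the $\leql$-join in $\SS$ of $\sigma$ with successive simple factors of the normal form of $\tau \cdot u$, and uses the triangular relations of the dual presentation to reorder the expression of $y$. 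This step closely mirrors the corresponding argument carried out for type~$A$ in~\cite{abbes17}, and will be the technical heart of the proof.
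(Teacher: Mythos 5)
Your proposal takes a different route from the paper's, and the sketch you give for the delicate subcase cannot be filled in as described. The paper runs a \emph{single} induction on $\abs{y}$, peeling the \emph{last} atom off both $x$ and $y$: it writes $x = x_1 x_2$ and $y = x\, y_3\, y_4$ with $x_2, y_4 \in \Sigma$, then moves $x$ rightward through $y = x_1 x_2 y_3 y_4 = y'_3 x_1 x_2 y_4 = y'_3 x_1 y'_4 x_2 = y'_3 y''_4 x_1 x_2$; the first and third commutations come from the induction hypothesis (applied to $x\,y_3$ and to $x_1 y'_4$, both of length $<\abs{y}$), and the middle one is an atom-on-atom commutation, again available from the induction hypothesis whenever $\abs{y}>2$ and from $(\dag)$ only at the base $\abs{y}=2$. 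Your separate outer induction reducing to atomic $x$ is superfluous in this scheme, and peeling from the \emph{front} of the cofactor, as you do, is exactly what manufactures the subcase that you then cannot close.

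The obstruction to your proposed fix for the subcase $\sigma\tau\notin\SS$ is concrete. You want to rewrite $y = \sigma\cdot\tilde{\tau}\cdot\tilde{u}$ with $\tilde{\tau}\in\Sigma$ and $\sigma\tilde{\tau}\in\SS$; but when the first factor of the Garside normal form of $y$ is exactly $\sigma$, no such $\tilde{\tau}$ exists. That factor is $\bigveel\{z\in\SS\tq z\leql y\}$, the maximal simple left divisor of $y$, so no simple strictly $\leql$-larger than $\sigma$ can left-divide $y$; in particular no atom $\tilde{\tau}$ satisfies both $\sigma\tilde{\tau}\in\SS$ and $\sigma\tilde{\tau}\leql y$. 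This situation occurs precisely when the normal form of $y$ reads $(\sigma,z_2,\ldots)$ with $z_2\neq\unit$, i.e.\ whenever $\sigma\to z_2$, which is generic rather than degenerate. So the case you flag is not an artifact of an unlucky choice of $\tau$, and it cannot be dodged by choosing a cleverer $\tilde{\tau}$; the remedy is the paper's different factorization of $y$ from the right-hand end.
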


\begin{proof}
We proceed by induction on~$\abs{y}$.
The statement is immediate if $\abs{x} = 0$ or $\abs{x} = \abs{y}$,
hence we assume that $0 < \abs{x} < \abs{y}$.

If $x \leql y$, let us factor $x$ and $y$ as products of the form
$x = x_1 \cdot x_2$ and $y = x \cdot y_3 \cdot y_4$,
where $x_2$ and $y_4$ belong to~$\Sigma$.
Using the induction hypothesis and Property~$(\dag)$,
there exist elements $y'_3$, $y'_4$ and $y''_4$ of $\bB$
such that
\begin{gather*}
y = x_1 \cdot x_2 \cdot y_3 \cdot y_4 = y'_3 \cdot x_1 \cdot x_2 \cdot y_4
= y'_3 \cdot x_1 \cdot y'_4 \cdot x_2 = y'_3 \cdot y''_4 \cdot x_1 \cdot x_2,
\end{gather*}
which proves that $x \leqr y$.
We prove similarly that, if $x \leqr y$, then $x \leql y$.
\end{proof}

\begin{corollary}\label{cor:left-right-dual-2}
For all $x \in \SS$, let $L(x) = \{\sigma \in \Sigma \tq \sigma \leql x\}$.
Then, for all $x$ and $y$ in~$\SS$:
\begin{gather*}
\bigl(\,\forall \tau \in L(y) \quad \exists
\sigma \in L(x) \quad \sigma \to \tau\bigr)\implies (x\to y).
\end{gather*}
\end{corollary}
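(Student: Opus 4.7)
The natural approach is a proof by contradiction. Suppose $x \to y$ fails, so there exists some $z \in \SS$ with $x \lel z \leql x \cdot y$; writing $z = x \cdot u$ with $u$ non-unit and factoring off a generator from $u$, one obtains $\sigma \in \Sigma$ with $x \cdot \sigma \leql z \leql x \cdot y$. Left cancellativity (Property~\qref{item:16}) then forces $\sigma \leql y$, so $\sigma \in L(y)$. The hypothesis now yields some $\sigma' \in L(x)$ with $\sigma' \to \sigma$, and the whole game consists in extracting a contradiction from these data together with the inclusion $x \cdot \sigma \leql z \in \SS$.

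The key observation is that, for two generators, $\sigma' \to \sigma$ forces $\sigma' \cdot \sigma \notin \SS$: if $\sigma' \cdot \sigma$ were simple, it would lie in the set $\{\zeta \in \SS \tq \zeta \leql \sigma' \cdot \sigma\}$ whose $\veel$ is supposed to be $\sigma'$, yielding $\sigma' \cdot \sigma \leql \sigma'$ and hence $\sigma = \unit$, an impossibility. The plan is therefore to prove, in the opposite direction, that $\sigma' \cdot \sigma \in \SS$, which will produce the desired contradiction.

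This is where Lemma~\ref{lem:left-right-dual} does the heavy lifting. Since left and right divisibility coincide on~$\bB$, the relation $\sigma' \leql x$ provides a factorization $x = x'' \cdot \sigma'$ for some $x'' \in \bB$, so that $x \cdot \sigma = x'' \cdot (\sigma' \cdot \sigma)$ exhibits $\sigma' \cdot \sigma$ as a right divisor of~$x \cdot \sigma$. Similarly, $x \cdot \sigma \leql z \in \SS$ converts into $x \cdot \sigma \leqr z$ by the same lemma; the downward closure of $\SS$ under $\leqr$ (Property~\qref{item:18}) first gives $x \cdot \sigma \in \SS$, and a second application of this closure yields $\sigma' \cdot \sigma \in \SS$, contradicting the previous paragraph. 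The main subtlety in executing this plan is precisely the double invocation of left--right symmetry: Lemma~\ref{lem:left-right-dual} is what links the left-sided hypothesis $\sigma' \leql x$ to a right-sided factorization of $x \cdot \sigma$, and what lets us promote the left-divisibility $x \cdot \sigma \leql z$ into membership of $x \cdot \sigma$, and then of $\sigma' \cdot \sigma$, in the Garside subset~$\SS$.
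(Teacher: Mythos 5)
Your proof is correct and follows essentially the same route as the paper's: both argue by contradiction from the failure of $x \to y$, produce a generator $\sigma \in L(y)$ from the offending simple $z$, take $\sigma' \in L(x)$ with $\sigma' \to \sigma$ from the hypothesis, use Lemma~\ref{lem:left-right-dual} to rewrite $x$ with $\sigma'$ on the right, and conclude $\sigma' \cdot \sigma \in \SS$ (via right-downward closure of $\SS$), contradicting $\sigma' \to \sigma$. You spell out two points the paper leaves implicit — the explicit construction of $z$ and $\sigma$ from $\neg(x\to y)$, and the reason $\sigma' \to \sigma$ excludes $\sigma'\cdot\sigma \in \SS$ — but the core argument is identical.
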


\begin{proof}
Assume that the relation $x \to y$ does not hold.
This means that we can factor $y$ as a product
$y = \tau \cdot y_2 \cdot y_3$, with $\tau \in \Sigma$,
such that the element $z = x \cdot \tau \cdot y_2$ belongs to~$\SS$.
Then, let $\sigma \in L(x)$ be such that $\sigma \to \tau$.

By Lemma~\ref{lem:left-right-dual}, we can also factor $x$ as a product
$x = x_1 \cdot \sigma$. Hence, the element
$\sigma \cdot \tau$ is a factor of~$z$, and therefore
$\sigma \cdot \tau \leqr z$.
Since $\SS$ is downward closed under~$\leqr$,
this means that $\sigma \cdot \tau \in \SS$,
contradicting the fact that $\sigma \to \tau$.
\end{proof}

\begin{remark}
\label{rem:1}
The above corollary indicates how to infer a relation $x\to y$ for $x,y\in\SS$ based on the knowledge of the restriction~$\to\rest{\Sigma\times\Sigma}$\,. In turn, the explicit presentations given in \cite{picantin2002explicit} for dual monoids of monoids of type $B$ and $D$ have the following property: for any two generators $\sigma,\tau\in\Sigma$, the relation $\sigma\to\tau$ holds if and only if the product $\sigma\cdot\tau$ does not appear as a member of the presentation rules of the dual monoid.  In particular,  $\sigma \to \sigma$ holds for all $\sigma \in \Sigma$. 
\end{remark}

\begin{lemma}\label{lem:dual-braid-case-b}
Let\/ $\bA$ be an irreducible Artin-Tits monoid of type~$B$, with two generators or more. The dual monoid associated with $\bA$ satisfies Property~\qref{item:19}.
\end{lemma}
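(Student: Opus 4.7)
The plan is to adapt the strategy used in \cite{abbes17} for the type $A$ case (braid monoids) to the type $B$ setting, leveraging the fact that the analogue of Corollary~\ref{cor:left-right-dual-2} and Remark~\ref{rem:1} reduces strong connectedness of $(\Ch,\to)$ to combinatorial questions at the level of the generators. More precisely, for all $x,y \in \SS$, Corollary~\ref{cor:left-right-dual-2} tells us that $x \to y$ holds as soon as every $\tau \in L(y)$ admits some $\sigma \in L(x)$ with $\sigma \to \tau$; and by Remark~\ref{rem:1}, the restriction of $\to$ to $\Sigma\times\Sigma$ is read off directly from Picantin's explicit presentation of the dual monoid of type~$B_n$ from~\cite{picantin2002explicit}: $\sigma \to \tau$ precisely when $\sigma \cdot \tau$ does not appear on either side of a defining relation, and in particular $\sigma \to \sigma$ for every $\sigma \in \Sigma$.

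The first step is therefore to write down explicitly, from Picantin's presentation, the combinatorial model of the generator set $\Sigma$ (which can be encoded as a set of chords/arcs on a circle with a distinguished central vertex, corresponding to the Coxeter generators and their dual reflections in type~$B$) and, using that presentation, to identify for each pair $(\sigma,\tau)\in\Sigma\times\Sigma$ whether $\sigma \cdot \tau$ is a relator. The second step is to verify that the graph $(\Sigma, \to\rest{\Sigma\times\Sigma})$ is strongly connected; this is a finite combinatorial check that, in type~$B$, follows from the circular symmetry of the diagram. The idea is to exhibit, for every $\sigma \in \Sigma$, an explicit path in $(\Sigma,\to)$ linking $\sigma$ to any chosen basepoint generator $\sigma_0$, and conversely a path back; the loops $\sigma \to \sigma$ and the abundance of pairs with $\ell(\sigma,\tau) = \infty$ in the dual setting make this straightforward.

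The third step is the lifting from $\Sigma$ to~$\Ch$: for each non trivial, non Garside simple $x \in \Ch$, I will exhibit a generator $\sigma(x) \in \Sigma$ with $x \to \sigma(x)$, and a generator $\tau(x) \in \Sigma$ with $\tau(x) \to x$. The candidate for $\tau(x)$ is any generator $\tau$ satisfying $\tau \to \rho$ for every $\rho \in L(x)$, whose existence is to be read off the presentation. For $\sigma(x)$, one argues dually, using Lemma~\ref{lem:left-right-dual} to reinterpret left divisibility of $x \cdot \sigma$ as right divisibility when applying Corollary~\ref{cor:left-right-dual-2}. Concatenating $x \to \sigma(x) \to \cdots \to \tau(y) \to y$ via the strongly connected skeleton on~$\Sigma$ established in the second step yields a normal sequence witnessing $x \to^* y$ in $(\Ch,\to)$.

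The main obstacle I expect is the third step, namely the uniform construction of $\sigma(x)$ and $\tau(x)$ for every simple element $x \in \Ch$. Unlike $\Sigma$, which is directly described by the presentation, the set $\SS$ of simple elements in the dual monoid of type~$B$ is larger and its elements have richer left-sets $L(x)$; one must use the combinatorial description of simple elements in terms of non-crossing partitions of type~$B$ (as in \cite{bessis2003dual, picantin2002explicit}) to make sure that a suitable generator-target always exists. Once this case analysis is carried out, the conclusion of Property~\qref{item:19} follows immediately from Corollary~\ref{cor:left-right-dual-2} and the strong connectedness of the generator subgraph.
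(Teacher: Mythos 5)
Your high-level plan (use Corollary~\ref{cor:left-right-dual-2}, read off the relation $\to$ on $\Sigma$ from Picantin's presentation via Remark~\ref{rem:1}, and piece together paths in $(\Ch,\to)$) is the same general framework as the paper's. The ``exit'' half of your step~3 is fine: since $\sigma\to\sigma$ for every $\sigma\in\Sigma$, Corollary~\ref{cor:left-right-dual-2} applied with $L(\sigma)=\{\sigma\}$ immediately gives $x\to\eta$ for any $\eta\in L(x)$, so every $x\in\Ch$ does reach a generator.

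However, the ``entry'' half of step~3 is false, and this is a genuine gap, not just a missing case analysis. You claim that for every simple $z\in\Ch$ there is a \emph{single} generator $\tau$ with $\tau\to\rho$ for all $\rho\in L(z)$. This cannot be arranged as soon as $L(z)$ is large. Take $z$ to be a maximal proper divisor of $\Delta$, e.g.\ (in the non-crossing-partition model of Lemma~\ref{lem:dual-braid-case-b}) the partition $v_{n-1}=\bigl\{\{1,\ldots,n-1\},\{n+1,\ldots,2n-1\},\{n,2n\}\bigr\}$, for which $L(v_{n-1})=\{\sigma_{a,b}\tq 1\leq a<b\leq n-1\}\cup\{\sigma_{n,2n}\}$. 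Using the explicit criterion $\sigma_{i,j}\to\sigma_{u,v}\iff(u<i\leq v<j)\text{ or }(i\leq u<j\leq v)\text{ or }(i\leq u+n<j\leq v+n)$, one checks that for $n\geq 5$ the constraints coming from $\sigma_{1,2}$, $\sigma_{n,2n}$ and two more chords like $\sigma_{2,3}$, $\sigma_{n-2,n-1}$ are mutually incompatible: no single $\sigma_{i,j}$ crosses all of them. (For instance when $n=6$, the requirements force $i\in\{2\}$ or $j\in\{8\}$ from $\sigma_{1,2}$, $i\in\{5\}$ or $j\in\{11\}$ from $\sigma_{4,5}$, and $i\in\{3\}$ or $j\in\{9\}$ from $\sigma_{2,3}$, and these have no common solution.) Moreover, since $\SS$ is downward closed under $\leql$ in a Garside monoid, the condition ``$\tau\to\rho$ for all $\rho\in L(z)$'' is not merely sufficient but \emph{necessary} for $\tau\to z$ when $\tau\in\Sigma$; so it is not that your sufficient criterion is too weak---there genuinely is no generator $\tau$ with $\tau\to v_{n-1}$ once $n\geq 5$. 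Consequently, paths in $(\Ch,\to)$ cannot in general enter $z$ from the generator subgraph in one step, and routing the whole path through $(\Sigma,\to\rest{\Sigma\times\Sigma})$ breaks down.

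The paper avoids this problem by routing through \emph{large} simple elements rather than generators: it uses the maximal proper divisors $v_i$ of $\Delta$ as gateways. Any $z\in\Ch$ refines some $v_i$, and since $L(z)\subseteq L(v_i)$ and $\sigma\to\sigma$, Corollary~\ref{cor:left-right-dual-2} gives $v_i\to z$ automatically. The paper then shows that the $v_i$'s form a cycle $v_1\to v_2\to\cdots\to v_n\to w\to v_1$ in $\Ch$, and that from any $y\in\Ch$ one can reach $v_n$ through a short chain $y\to\sigma_{1,a}\to\sigma_{1,n+1}\to x\to v_n$ where $x$ is one more specifically chosen simple element (not a generator). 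This descent-from-$\Delta$ structure is the key extra idea your proposal is missing.
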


\begin{proof}
Thanks to the combinatorial descriptions mentioned above, we identify elements of $\SS$
with \emph{type B non-crossing partitions} of size~$n$,
where $n$ is the number of generators of~$\bA$.
These are the partitions
$\mathbf{T} = \{T^1,\ldots,T^m\}$ of $\bbZ/(2n)\bbZ$ such that,
for every set~$T^i$, the set $n + T^i$ is also in~$\mathbf{T}$, and
the sets $\{\exp(\mathbf{i}\;\! k \pi / n) \tq
k \in T^i\}$ have pairwise disjoint convex hulls in the complex plane. Note that $\mathbf{T}$ is thus globally invariant by central symmetry. Both relations $\leql$ and $\leqr$ then coincide with the
partition refinement relation.

Thus, we respectively identify the extremal elements $\unit$ and $\Delta$ of $\SS$ 
with the partitions
$\bigl\{\{1\},\{2\},\ldots,\{2n\}\bigr\}$ and $\bigl\{\{1,2,\ldots,2n\}\bigr\}$,
and $\Sigma$ consists of those partitions
\begin{gather*}
\sigma_{i,j} = \bigl\{\{i,j\},\{i + n,j + n\}\bigr\} \cup
\bigl\{\{k\} \tqs k \neq i,j,i+n,j+n\bigr\}\,,
\end{gather*}
with $1 \leq i \leq n$ and $i < j \leq n+i$. They are pictured in Figure~\ref{fig:typeB} for $n=3$.
The left divisors of a partition~$\mathbf{T}$,
\ie, the elements of~$L(\mathbf{T})$, are then those partitions
$\sigma_{i,j}$ that refine~$\mathbf{T}$.

\begin{figure}
  \centering
  \begin{tabular}{ccccc}
\input{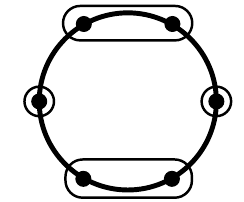_t}&
\includegraphics[scale=0.74]{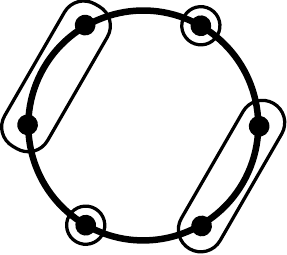}&
\includegraphics[scale=0.74]{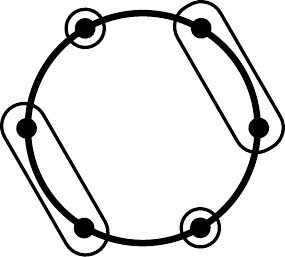}&
\includegraphics[scale=0.74]{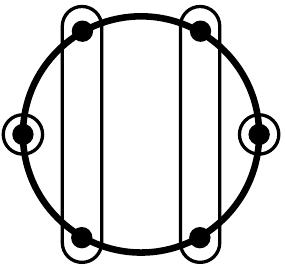}&
\includegraphics[scale=0.74]{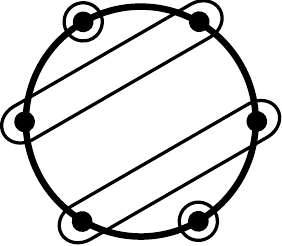}\\
$\sigma_{1,2}$&$\sigma_{2,3}$&$\sigma_{3,4}$&$\sigma_{2,4}$&$\sigma_{1,3}$\\[1em]
\includegraphics[scale=0.74]{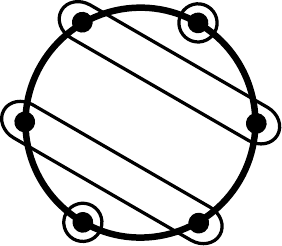}&
\includegraphics[scale=0.74]{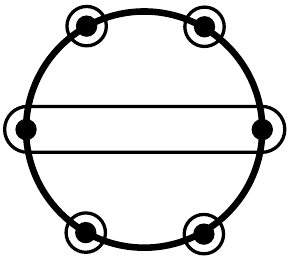}&
\includegraphics[scale=0.74]{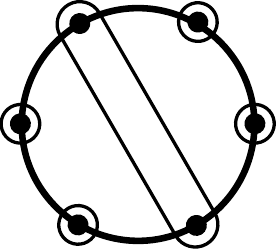}&
\includegraphics[scale=0.74]{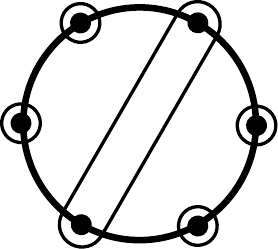}
\\
$\sigma_{3,5}$
&$\sigma_{3,6}$&$\sigma_{2,5}$&$\sigma_{1,4}$\\
\end{tabular}

  \caption{Generators of the dual monoid of type $B$ for $n=3$}
  \label{fig:typeB}
\end{figure}

Finally, keep in mind the following observation. Let $v$ be
a maximal proper factor of $\Delta$, \ie, 
a type $B$ non-crossing partition of which $\Delta$ is the immediate successor in the partition refinement order. After application of a bijection of the form $i\in\bbZ/(2n)\bbZ\mapsto i+j$, for some integer $j\in\{0,2n-1\}$, $v$~is of the form \begin{gather}
\label{eq:29}
v_i=\bigl\{\{1,\ldots,i\},\{n+1,\ldots,n+i\},\{i+1,\ldots,n,n+i+1,\ldots,2n\}\bigr\}\,,
\end{gather}
for some integer $i\in\{1,\dots,n\}$.

Now, consider two elements $y, z$ of $\SS \setminus \{\unit,\Delta\}$, and let
us prove that $y \to^\ast z$. Since
$z \lel \Delta$, and since the map $\sigma_{i,j} \mapsto \sigma_{i+1,j+1}$ induces
an automorphism of the monoid~$\bB$, corresponding to the rotation of the complex plane of angle~$\pi/n$, we assume without loss of generality
that $z$ is a refinement of some partition~$v_i$ from~\eqref{eq:29}, for an integer $i\in\{1,\ldots,n\}$. Corollary~\ref{cor:left-right-dual-2} implies in particular that $v_i\to z$ holds.

Based on Remark~\ref{rem:1}, one sees that:
\begin{gather*}
 \sigma_{i,j} \to \sigma_{u,v}\iff\bigl( (u < i \leq v < j)\text{ or }(i \leq u < j \leq v)\text{ or }
(i \leq u+n < j \leq v+n)\bigr)\,.
\end{gather*}
Using Corollary~\ref{cor:left-right-dual-2}, it implies: $v_j\to v_{j+1}$ when $1 \leq j \leq n-1$, and $v_n\to w\to v_1$ with $w=\bigl\{
\{1,n+2,\ldots,2n\},\{2,\ldots,n+1\}
\bigr\}$. It follows that holds: $v_n \to^\ast v_i \to z$.

Finally, let $\sigma_{a,b}$ be some generator in~$L(y)$. 
It comes that $y \to \sigma_{1,a} \to \sigma_{1,n+1}$ if $1 < a$, or
$y \to \sigma_{1,n+1}$ if $a = 1$. We then observe that
$\sigma_{1,n+1} \to x \to v_n$, where
$x = \bigl\{\{j,n+1-j\} \tqs 1 \leq j \leq n\bigr\}$,
and therefore that $y \to^\ast z$.
\end{proof}

\begin{lemma}\label{lem:dual-braid-case-d}
Let\/ $\bA$ be an irreducible Artin-Tits monoid of type~$D$, with two generators or more. The dual monoid associated with\/ $\bA$ satisfies Property~\qref{item:19}.
\end{lemma}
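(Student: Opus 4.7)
The plan is to follow the same scheme used in the proof of Lemma~\ref{lem:dual-braid-case-b}. First, I would rely on one of the combinatorial descriptions of the dual monoid of type~$D_n$ available in the literature (for instance the type~$D$ non-crossing partition models of \cite{athanasiadis2004noncrossing,bessis2003dual,armstrong2009generalized}) to describe explicitly the set~$\SS$, the generating family~$\Sigma$, and the divisibility order. Under this identification, $\Delta$ corresponds to the coarsest partition, the generators in~$\Sigma$ correspond to the minimal two-block partitions (together with the special ``zero block'' generators specific to type~$D$), and both~$\leql$ and~$\leqr$ coincide with partition refinement by Lemma~\ref{lem:left-right-dual}. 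The left divisors $L(x)$ of $x \in \SS$ are then exactly the generators that refine~$x$.

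Second, as in type~$B$, I would enumerate the maximal proper factors $v_1,\ldots,v_m$ of~$\Delta$ and observe that they decompose into a small number of orbits under the rotation symmetry of the non-crossing partition model together with the order-two diagram automorphism of~$D_n$. For each representative~$v_i$, Corollary~\ref{cor:left-right-dual-2} immediately yields $v_i \to z$ whenever every element of $L(z)$ refines~$v_i$, which covers all $z \leql \Delta$ whose left divisors are supported on the same ``side'' as~$v_i$.

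Third, using Remark~\ref{rem:1} to read off from the presentation which products $\sigma \cdot \tau$ with $\sigma,\tau \in \Sigma$ remain in~$\SS$, I would describe the restriction of~$\to$ to~$\Sigma \times \Sigma$, and then lift this to pairs in~$\Ch$ via Corollary~\ref{cor:left-right-dual-2}. The two key claims to establish would be: (a) a cycle of transitions in~$\Ch$ visits all the representatives~$v_i$, possibly through intermediate elements analogous to the element~$w$ and the elements $\sigma_{1,n+1}$ and~$x$ appearing in the type~$B$ proof; (b) for every $y \in \Ch$, some $\sigma \in L(y)$ admits a short transition path to one of the~$v_i$. Combining~(a) and~(b) exactly as in the type~$B$ case would yield $y \to^{\ast} z$ for every pair in~$\Ch$.

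The main obstacle will be the asymmetry caused by the zero block, which has no analogue in type~$B$. Type~$D$ generators split into geometrically distinct families---those involving the central pair and the purely peripheral ones---so the verification of transitions $\sigma \to \tau$ splits into several subcases, and the analogues of the ``hub'' elements $\sigma_{1,n+1}$ and the partition~$x$ used in type~$B$ have to be chosen carefully to interact correctly with both families. Exploiting both the cyclic rotation symmetry and the diagram involution of~$D_n$ should reduce the verifications to a short and essentially computational list, after which the global connectivity argument is exactly parallel to the type~$B$ one, with no genuinely new ingredient required.
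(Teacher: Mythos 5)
Your plan is essentially the paper's proof: the paper indeed models $\SS$ by type~$D$ non-crossing partitions with a central pair $\{\bullet,\bullet+n\}$, reads off the restriction of~$\to$ to $\Sigma\times\Sigma$ as in Remark~\ref{rem:1}, lifts it via Corollary~\ref{cor:left-right-dual-2}, uses the rotation automorphism to reduce $z$ to lie below one of the maximal proper factors $\tau_{2n}^{-1}\Delta$ or $\sigma_{n,n+i}^{-1}\Delta$, links those factors into a cycle, and routes every $y$ through a hub element $\sigma_{1,n}$ and an intermediate~$w$ (whose definition, as you anticipate, must split according to the parity of~$n$ precisely because of the central pair). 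The only thing your write-up leaves open is the explicit transition table and the explicit choices of the hub and the parity-dependent~$w$, which is exactly where the paper spends the remainder of its proof; there is no missing idea, just deferred bookkeeping.
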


\begin{proof}
Let $n+1$ be the number of generators of~$\bA$.
If $n \leq 2$, $\bA$~is also a braid monoid.
Hence, we assume that $n \geq 3$.

We identify elements of $\SS$
with \emph{type $D$ non-crossing partitions} of size~$n+1$.
Here, we associate every element $k$ of $\mathbb{Z}/(2n)\mathbb{Z}$ with the
complex point $p_k = \exp(\mathbf{i}\;\! k \pi / n)$.
We also consider a two-element set $\{\bullet,\bullet+n\}$,
with the convention that $\bullet + 2n = \bullet$,
and associate both $\bullet$ and $\bullet+n$
with the complex point $p_\bullet = p_{\bullet+n} = 0$.

Then, type $D$ non-crossing partitions of size $n$
are the partitions $\mathbf{T} = \{T^1,\ldots,T^m\}$ of
$\mathbb{Z}/(2n)\mathbb{Z} \cup \{\bullet,\bullet+n\}$
such that (i)~$\mathbf{T}$ does \emph{not} contain the set $\{\bullet,\bullet+n\}$,
(ii)~for every set~$T^i$, the set $n + T^i$ is also in~$\mathbf{T}$,
and (iii)~the sets $\{p_k \tq k \in T^i\}$ have pairwise disjoint convex
hulls in the complex plane, with the exception that they may share the point
$p_\bullet = p_{\bullet+n} = 0$.

Again, both relations $\leql$ and $\leqr$ coincide with the
partition refinement relation,
extremal elements $\unit$ and $\Delta$ of $\SS$
are identified with the respective partitions
\begin{gather*}
\bigl\{\{1\},\{2\},\ldots,\{2n\},\{\bullet\},\{\bullet+n\}\bigr\} \text{\quad and\quad}
\bigl\{\{1,2,\ldots,2n,\bullet,\bullet+n\}\bigr\},
\end{gather*}
and $\Sigma$ consists of those partitions
\begin{align*}
\sigma_{i,j} & = \bigl\{\{k\} \tqs k \neq i,j,i+n,j+n\bigr\} \cup
\bigl\{\{\bullet\},\{\bullet+n\},\{i,j\},\{i + n, j + n\}\bigr\} \\
\tau_\ell & = \bigl\{\{k\} \tqs k \neq \ell,\ell+n\bigr\} \cup
\bigl\{\{\ell,\bullet\},\{\ell + n,\bullet+n\}\bigr\}\,,
\end{align*}
with $1 \leq i \leq n$, $i < j < n+i$, and $1 \leq \ell \leq 2n$.

Moreover, proceeding as in Remark~\ref{rem:1}, one finds that:
\begin{gather*}
\begin{cases}
\sigma_{i,j} \to \tau_w &
\text{if } w \in \{i,\ldots,j-1\} \text{ or } w+n \in \{i,\ldots,j-1\}; \\
\sigma_{i,j} \to \sigma_{u,v} &
\text{if } u < i \leq v < j \text{, }
i \leq u < j \leq v \text{ or } i \leq u+n < j \leq v+n; \\
\tau_\ell \to \tau_w &
\text{if } w \in \{\ell,\ldots,\ell+n-1\}; \\
\tau_\ell \to \sigma_{u,v} &
\text{if } \{\ell,\ell+n\} \cap \{u+1,\ldots,v\} \neq \emptyset. 
\end{cases}
\end{gather*}

Now, consider two elements $y, z$ of $\SS \setminus \{\unit,\Delta\}$, and let
us prove that $y \to^\ast z$. Since
$z \lel \Delta$, and since the map $\sigma_{i,j} \mapsto \sigma_{i+1,j+1},\quad\tau_\ell\mapsto\tau_{\ell+1}$ induces
an automorphism of the monoid~$\bB$, we assume without loss of generality
that $z$ is refinement of a partition $v$ that is either equal to
$\tau_{2n}^{-1} \Delta =
\{\{1,\ldots,n,\bullet\},\{n+1,\ldots,2n,\bullet+n\}\}$ or to
\begin{align*}
\sigma_{n,n+i}^{-1} \Delta = \{\{1,\ldots,i\},\;\!& \{n+1,\ldots,n+i\}, \\
& \{i+1,\ldots,n,n+i+1,\ldots,2n,\bullet,\bullet+n\}\}
\end{align*}
for some integer $i \in \{1,\ldots,n-1\}$.

Using Corollary~\ref{cor:left-right-dual-2}, one checks that $\sigma_{n,n+j}^{-1} \Delta \to \sigma_{n,n+j+1}^{-1} \Delta$
when $1 \leq j \leq n-1$, and that $\sigma_{n,2n}^{-1} \Delta \to \tau_{2n}^{-1} \Delta
\to \tau_1^{-1} \Delta \to \sigma_{n,n+1}^{-1} \Delta$, where
\begin{gather*}
\tau_1^{-1} \Delta = \bigl\{\{2,\ldots,n+1,\bullet\},\{n+2,\ldots,1,\bullet+n\}\bigr\}.
\end{gather*}
It follows that
$\tau_{2n}^{-1} \Delta \to^\ast v \to z$.
Finally, let $\lambda$ be some generator in $L(y)$. 
It comes that
\begin{gather*}
\begin{cases}
y \to \sigma_{1,a} \to \sigma_{1,n} &
\text{if } \lambda = \sigma_{a,b} \text{ with } 2 \leq a; \\
y \to \sigma_{1,n} &
\text{if } \lambda = \sigma_{a,b} \text{ with } a = 1; \\
y \to \sigma_{1,\ell} \to \sigma_{1,n} &
\text{if } \lambda = \tau_\ell \text{ with } \ell \leq n; \\
y \to \sigma_{1,\ell+n} \to \sigma_{1,n} &
\text{if } \lambda = \tau_\ell \text{ with } n+1 \leq \ell.
\end{cases}
\end{gather*}
We then observe that
$\sigma_{1,n+1} \to w \to \tau_{2n}^{-1}\Delta$, where
\begin{gather*}
w=
\begin{cases}
 \bigl\{\{j,2n+1-j\} \tqs 1 \leq j \leq n \text{ and } j \neq m\bigr\} \cup
\bigl\{\{m,m+n,\bullet,\bullet+n\}\bigr\} \\
\hfill \text{if $n$ is odd and $m = (n+1)/2$;} \\[.5em]
 \bigl\{\{j,2n+1-j\} \tqs 1 \leq j \leq n\bigr\} \cup \bigl\{\{\bullet\},\{\bullet+n\}\bigr\}
\hfill \text{if $n$ is even}.
\end{cases}
\end{gather*}
It follows that $y \to^\ast z$.
\end{proof}

\subsubsection{Free products of Garside monoids}

Garside monoids are extensively studied
structures~\cite{dehornoy1999gaussian,dehornoy2002groupes,picantin2005garside,dehornoy2013foundations}.
Their definition is as follows~\cite{dehornoy2002groupes}. First recall that
an \emph{atom} of a monoid $\M$ is an element $x\in\M$, different from the
unit element~$\unit$, and such that $x=yz\implies(y=\unit\vee z=\unit)$. A
monoid is \emph{atomic} if it is generated by its atoms. \emph{A Garside
monoid is an atomic monoid~$\M$, left and right cancellative, such that
$(\M,\leql)$ and $(\M,\leqr)$ are two lattices, and such that $\M$ contains a
Garside element.} A \emph{Garside element} is an element $\Delta\in\M$ such
that $\{x\in\M\tq x\leql \Delta\}=\{x\in\M\tq x\leqr\Delta\}$, and such that
this set is finite and generates~$\M$.
By~\cite[Prop.~1.12]{dehornoy2002groupes}, this set is then necessarily a
Garside subset of~$\M$.

Garside monoids do not necessarily have a length function as
in~\qref{item:15}. Hence, in the following result, we have to assume its
existence as an additional assumption in order to fit with our previous
setting.

\begin{proposition}\label{pro:free-prod-1-7}
Let\/ $\bA_1$ and\/ $\bA_2$ be non-trivial Garside monoids satisfying
Property~\qref{item:15}. Then the monoid\/ $\bA_0$ defined as the free
product\/ $\bA_0 = \bA_1 * \bA_2$ satisfies Properties~\qref{item:15}
to~\qref{item:21}.
\end{proposition}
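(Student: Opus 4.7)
The plan is to construct an explicit finite Garside subset of $\bA_0 = \bA_1 * \bA_2$ and to verify Properties~\qref{item:15}--\qref{item:21} one by one, relying throughout on the existence and uniqueness of the \emph{free-product normal form}: every $x \in \bA_0$ writes uniquely as an alternating product $x = x_1 \cdots x_k$ with each $x_i$ a non-unit element of $\bA_1$ or $\bA_2$ and $x_i, x_{i+1}$ lying in distinct factors. I first set $|x| = \sum_i |x_i|_{\bA_{f(i)}}$, where $f(i) \in \{1,2\}$ denotes the factor containing $x_i$; this gives an additive length function vanishing exactly at $\unit$, yielding~\qref{item:15}, and \qref{item:16} follows at once from cancellativity of $\bA_1, \bA_2$ and uniqueness of the free-product normal form.

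For~\qref{item:17}, I would compute $x \wedgel y$ greedily by matching factors of the free-product normal forms of $x$ and $y$, performing a final $\wedgel$-computation in the appropriate $\bA_i$ (which exists because $\bA_i$ is itself a lower semi-lattice as a Garside monoid). For arbitrary non-empty subsets, I would use that the length-graded structure makes the set of left divisors of any fixed element finite, within which the common left divisors of the subset are closed under existing $\veel$, and therefore admit a maximum.

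For~\qref{item:18}, I take $\SS_0 = \SS_1 \cup \SS_2$, where $\SS_i$ is the smallest Garside subset of $\bA_i$. It is finite and generates $\bA_0$; if $a, b \in \SS_0$ have a common $\leql$-upper bound in $\bA_0$, they must lie in the same $\bA_i$ (otherwise the bound would need to start with both an $\bA_1$-factor and an $\bA_2$-factor), whence $a \veel b$ coincides with the one computed in $\bA_i$ and belongs to $\SS_i$; and if $a \in \SS_i$ and $b \leqr a$ in $\bA_0$, then $b \in \bA_i$ (right divisors of elements living in a single factor stay in that factor, by the free-product normal form), so $b \in \SS_i$ by the $\leqr$-downward closure of $\SS_i$.

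For~\qref{item:19}--\qref{item:21}, I note first that $\Delta_1$ and $\Delta_2$ are $\leql$-incomparable in $\bA_0$, so $\SS_0$ has no maximum, $\Ch = \SS_0 \setminus \{\unit\}$, and \qref{item:21} is vacuous. For~\qref{item:19}, the key observation is that for any $a \in \SS_1 \setminus \{\unit\}$ and $b \in \SS_2 \setminus \{\unit\}$, both $a \to b$ and $b \to a$ hold: the simple left divisors of $a \cdot b$ in $\SS_0$ are exactly those of $a$ in $\SS_1$ (no non-unit element of $\SS_2$ can divide a product starting with an $\bA_1$-factor), whose supremum is $a$. Any two vertices of $\Ch$ are thus joined by a path of length at most two, giving strong connectivity. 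For~\qref{item:20}, I exhibit two cycles of coprime lengths in $\Ch$: the loop $\Delta_1 \to \Delta_1$ of length $|\Delta_1|$ (valid because $\Delta_1$ is the $\SS_1$-maximum and $\Delta_1^2 \notin \SS_1$ on length grounds, so the supremum of simple left divisors of $\Delta_1^2$ is $\Delta_1$), and the cycle $\Delta_1 \to \beta \to \Delta_1$ of length $|\Delta_1|+1$ for any atom $\beta$ of $\bA_2$. The main difficulty is really just the careful combinatorial bookkeeping of divisibility and of joins in a free product, via the free-product normal form; once this is laid out, each of the seven properties follows with little further effort.
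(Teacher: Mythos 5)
Your proof follows the same overall line as the paper: the same inherited length function, the same candidate Garside subset $\SS_0 = \SS_1 \cup \SS_2$, the same observation that the Charney graph of $\bA_0$ contains the complete bipartite graph on the two parts $\SS_1\setminus\{\unit\}$ and $\SS_2\setminus\{\unit\}$, and the same conclusion that $\SS_0$ has no maximum. Your remark that \qref{item:21} is thereby \emph{vacuous} is actually a cleaner disposal of that property than the paper's, which instead notes that "both parts contain edges" without observing the vacuousness. The checks of \qref{item:15}--\qref{item:19} are sound, modulo the informal treatment of \qref{item:17} for infinite families (the paper gives the explicit two-case formula for the binary $\wedgel$ and the general case follows from finiteness of the set of left divisors of any single element).

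There is, however, a genuine gap in your argument for \qref{item:20}. You claim the cycle $\Delta_1 \to \beta \to \Delta_1$, with $\beta$ an atom of $\bA_2$, has total weight $\abs{\Delta_1}+1$. Its weight is in fact $\abs{\Delta_1}+\abs{\beta}$, and nothing in \qref{item:15} forces $\abs{\beta}=1$: a length function need only be additive and vanish precisely at $\unit$, and a non-homogeneous Garside monoid admits no length function with value $1$ on every atom. If some prime $p$ divides both $\abs{\Delta_1}$ and the lengths of all elements of $\SS_2$, both of your exhibited cycle lengths are divisible by $p$ and coprimality fails. The paper avoids this by first rescaling the length function on $\bA_0$ so that the lengths occurring in $\bA_0$ are setwise coprime; then, for each prime $p$, it picks an element $a \in \SS_1\cup\SS_2$ with $p\nmid\abs{a}$ and compares a cycle through the opposite part with the same cycle augmented by inserting $a$, so the two total weights differ by exactly $\abs{a}$. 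Your argument can be repaired in the same spirit by first rescaling, and then considering the whole family of cycles $\Delta_1\to\Delta_1$, $\Delta_2\to\Delta_2$, $\Delta_1\to\beta\to\Delta_1$ for \emph{all} $\beta\in\SS_2\setminus\{\unit\}$, and $\Delta_2\to\alpha\to\Delta_2$ for all $\alpha\in\SS_1\setminus\{\unit\}$: the greatest common divisor of these cycle weights divides every $\abs{z}$ with $z\in\SS_0$, hence equals $1$ after normalization.
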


\begin{proof}
  This is a consequence of the properties of Garside monoids recalled above, together with the following elementary lemma.
\end{proof}

\begin{lemma}
  \label{lem:5}
Let\/ $\bA_1$ and\/ $\bA_2$ be two non-trivial monoids satisfying
Properties~\qref{item:15}--\qref{item:18}. Then the free product monoid\/
$\bA_0 = \bA_1 * \bA_2$ satisfies Properties~\qref{item:15}
to~\qref{item:21}.
\end{lemma}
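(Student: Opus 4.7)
The plan is to derive Properties (P1)--(P7) for $\bA_0=\bA_1*\bA_2$ by systematic use of the alternating normal form of a free product: every $x\in\bA_0\setminus\{\unit\}$ admits a unique expression $x=x_1\cdot x_2\cdots x_n$ with each $x_i$ non-unit and consecutive $x_i$ lying in distinct factors $\bA_1$ and~$\bA_2$. Property (P1) follows at once by setting $\abs{x}=\sum_i\abs{x_i}$, and Property (P2) is the classical cancellativity of free products, inherited from the factors. For Property (P3), the meet $x\wedgel y$ is computed by scanning the alternating forms of $x$ and $y$ from the left, matching identical factors as long as possible and, at the first position of disagreement, either taking the meet inside the relevant $\bA_i$ (if the two disagreeing factors lie in the same factor) or returning $\unit$ (if they lie in different factors). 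Greatest lower bounds of arbitrary non-empty sets then reduce to finite meets, since common lower bounds have bounded length and the set of atoms of~$\bA_0$ is finite (all atoms being contained in the finite set $\SS_1\cup\SS_2$).

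For Property (P4), I would take $\SS_0=\SS_1\cup\SS_2$, after identification of the two units. This set is finite and generates~$\bA_0$, since each $\SS_i$ generates~$\bA_i$. Downward $\leqr$-closure of~$\SS_0$ follows because a right divisor of an element of $\SS_i\subseteq\bA_i$ remains in $\bA_i$ by the alternating form, hence in $\SS_i$ by (P4) for~$\bA_i$. The crucial observation for closure under existing $\veel$ is that two non-unit elements taken from distinct factors admit no common $\leql$-upper bound in~$\bA_0$: such an upper bound would have to begin, in its alternating form, with a left divisor from each of $\bA_1$ and $\bA_2$ simultaneously. Hence joins in $\SS_0$ reduce to joins inside a single~$\SS_i$, where they lie by (P4) for~$\bA_i$. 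The same observation shows that $(\SS_0,\leql)$ has no maximum, so $\Ch_0=\SS_0\setminus\{\unit\}$ and Property (P7) is vacuously satisfied.

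For Property (P5), the main computation is that all cross edges between $\Ch_1$ and $\Ch_2$ exist in both directions. For $x\in\Ch_1$ and $y\in\Ch_2$, the alternating form of $x\cdot y$ is $(x,y)$, and its left divisors lying in $\SS_0$ are exactly the left $\SS_1$-divisors of~$x$: no non-unit element of $\bA_2$ can left-divide an element beginning in $\bA_1$, and any element of the form $x\cdot z'$ with $z'\in\bA_2\setminus\{\unit\}$ has alternating form $(x,z')$ and hence lies in neither $\SS_1$ nor~$\SS_2$. The $\leql$-join of these divisors is thus~$x$, which yields $x\to y$; symmetrically $y\to x$. Strong connectedness of $\Ch_0$ then follows at once by routing paths between any two vertices through the opposite factor.

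The main obstacle is Property (P6), the coprimality of circuit lengths, which I would handle by a dichotomy on the length spectrum $L_1=\{\abs{z}\tq z\in\SS_1\setminus\{\unit\}\}$ (and symmetrically on~$L_2$). If $\SS_1$ contains some element of length $k\geq2$, then iterated right-factorization by atoms together with downward $\leqr$-closure of $\SS_1$ shows $\{1,\ldots,k\}\subseteq L_1$; picking any length-one atom $a_2\in\SS_2$, the cross circuits $z\to a_2\to z$ for $z\in\SS_1\setminus\{\unit\}$ realize all lengths in $\{2,3,\ldots,k+1\}$, whose gcd is~$1$. Otherwise $\SS_1\setminus\{\unit\}$ consists only of atoms; then two distinct atoms of $\bA_1$ can have no common upper bound, for otherwise their least upper bound would exist (as the greatest lower bound of the set of common upper bounds, using (P3) for~$\bA_1$), would lie in $\SS_1$ by (P4), and would have length $\geq 2$, contradicting the case assumption. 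Hence $\bA_1$ is a free monoid on its atoms, so for each atom $a$ one has $a^2\notin\SS_1$ and the left $\SS_0$-divisors of $a^2$ reduce to $\{\unit,a\}$, yielding the loop $a\to a$ in~$\Ch_0$---a circuit of length~$1$, which forces the gcd to equal~$1$. Since at least one of the two symmetric case analyses applies, the proof is complete.
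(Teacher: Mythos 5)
Your handling of (P1)--(P5) and (P7) is sound and follows the same route as the paper: the alternating normal form gives the length function and cancellativity, meets are computed by matching prefixes, $\SS_0=\SS_1\cup\SS_2$ is the finite Garside subset, the cross edges fill out the Charney graph so that it is strongly connected, and the absence of a maximum in $(\SS_0,\leql)$ makes (P7) vacuous. The paper states essentially the same observations. Your argument for (P6), however, is both different from the paper's and incorrect, for several independent reasons.

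First, (P6) is a property of the chosen length function, and the naive sum-of-lengths function on $\bA_0$ need not have coprime cycle lengths: if every non-unit element of $\SS_1$ and of $\SS_2$ has even length, every cycle length is even. The paper handles this by first rescaling (dividing by the gcd of $\{\abs{x}_0\tq x\in\bA_0\}$), so the task becomes: for every prime $p$, exhibit a cycle of length not divisible by $p$. Your argument skips the normalization entirely.

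Second, the claim that ``if $\SS_1$ contains some element of length $k\geq2$, then iterated right-factorization by atoms shows $\{1,\ldots,k\}\subseteq L_1$'' is false under (P1)--(P4) alone. These axioms do not provide a generating set of length-one elements, nor even that an element of length $\geq2$ factors nontrivially: take $\bA_1$ to be the free monoid on one generator $z$, with length function $\abs{z}=2$, and $\SS_1=\{\unit,z\}$. Then $\SS_1$ contains an element of length $2$ but no element of length $1$. In the same example $\SS_2$ could likewise be $\{\unit,z'\}$ with $\abs{z'}=3$; then neither factor has a length-one atom, and the ``length-one atom $a_2\in\SS_2$'' required by your first case does not exist. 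Thus the assertion that ``at least one of the two symmetric case analyses applies'' is not justified, and in this example neither applies.

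The paper avoids all of this: after rescaling, for each prime $p$ there must be some $a\in\SS_1\cup\SS_2$ (say in $\SS_1$, up to swapping) with $p\nmid\abs{a}$, because otherwise every element of $\bA_0$ would have length divisible by $p$. One then takes a cycle $x_1\to\dots\to x_k\to x_1$ in $\Ch_0$ with $x_1,x_k\in\SS_2$ — for instance a self-loop $y\to y$ at a $\leql$-maximal element $y$ of $\SS_2\setminus\{\unit\}$, which always exists since all $\SS_0$-left-divisors of $y^2$ lie below $y$ — and compares it with the cycle obtained by inserting $a$ between $x_k$ and $x_1$ via cross edges. The two total lengths differ by $\abs{a}$, so one of them is coprime to $p$. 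This works unconditionally, whereas your dichotomy relies on structural features of the factor monoids that (P1)--(P4) do not enforce.
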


\begin{proof}
Let $u=x_1y_1\dots x_py_p$\,, with $x_1,\dots,x_p\in\bA_1$ and
$y_1,\dots,y_p\in\bA_2$\,, denote a generic element of~$\bA_0$\,. The length
of $u$ is defined by $\abs
{u}_0=\abs{x_1}_1+\dots+\abs{x_p}_1+\abs{y_1}_2+\dots+\abs{y_p}_2$\,, and the
length function thus defined on $\bA_0$ satisfies~\qref{item:15}.  The free
product of cancellative monoids is itself cancellative, hence $\bA_0$
satisfies~\qref{item:16}. Let $u'=x'_1y'_1\dots x'_qy'_q\in\bA_0$\,. Put
$k=\max\{j\leq p,q\tq x_1y_1\dots x_jy_j=x'_1y'_1\dots x'_jy'_j\}$. If $k=p$
then $u\wedge u'=u$ and if $k=q$ then $u\wedge u'=u'$. Otherwise, it is
readily seen that:
\begin{gather*}
  u\wedge u'=
  \begin{cases}
    x_1 y_1\dots x_ky_k(x_{k+1}\wedge x'_{k+1}),&\text{if $x_{k+1}\neq x'_{k+1}$\,,}\\
    x_1 y_1\dots x_ky_kx_{k+1}(y_{k+1}\wedge y'_{k+1}),&\text{if $x_{k+1}=x'_{k+1}$\,.}
  \end{cases}
\end{gather*}
Hence $u\wedge u'$ exists in all cases, and $\bA_0$ satisfies~\ref{item:17}.

Identify the two monoids $\bA_1$ and $\bA_2$ with their images in $\bA_0$
through the canonical injections $\bA_1\to\bA_0$ and $\bA_2\to\bA_0$\,. Then
the (disjoint) union $\SS_1\cup\SS_2$ is a finite Garside subset
of~$\bA_0$\,, which satisfies thus~\qref{item:18}.

Consider the \emph{augmented} Charney graph $(\Ch'_1,\to)$ of $\bA_1$, where
$\Ch'_1 = \SS_1 \setminus \{\unit_{\bA_1}\}$, \ie, $\Ch'_1 = \Ch_1 \cup
\{\Delta_1\}$ if $\SS_1$ has a maximum~$\Delta_1$, and $\Ch'_1 = \Ch_1$
otherwise. We define the augmented Charney graph $(\Ch'_2,\to)$ of $\bA_2$
similarly. Then, the Charney graph $(\Ch_0,\to)$ of $\bA_0$ is the bipartite
complete graph with parts $(\Ch'_1,\to)$ and $(\Ch'_2,\to)$. Therefore $\bA_0$
satisfies necessarily~\qref{item:19}. And since the two monoids $\bA_1$ and
$\bA_2$ are assumed to be non trivial, both parts contain edges. Hence
$\bA_0$ satisfies~\qref{item:21}.

Up to rescaling the length function $\abs{\cdot}_0$ by a multiplicative
factor, we assume that the integers in the set $\{\abs{x}_0 \tq x \in
\bA_0\}$ are setwise coprime. We show below that $\bA_0$
satisfies~\qref{item:20} by proving that, for all prime numbers~$p$, there
exists a cycle in $(\Ch_0,\to)$ whose total length is not divisible by~$p$.

We pick an element  $a\in\SS_1\setminus\{\unit_1\}$ such that $p$ does not
divide~$\abs{a}_1$\,. Such an element exists; otherwise, since $\SS_1$
generates~$\bA_1$\,, it would contradict our assumption that the integers in
$\{\abs{x}_0 \tq x \in \bA_0\}$ are setwise coprime. We also pick a cycle
$x_1 \to x_2 \to \ldots \to x_k \to x_1$  in $(\Ch_0,\to)$, such that $x_1$
and $x_k$ belong to~$\SS_2$. Then, both $x_1 \to x_2 \to \ldots \to x_k \to
x_1$ and $x_1 \to x_2 \to \ldots \to x_k \to a \to x_1$ are cycles in the
Charney graph $(\Ch_0,\to)$. Their total lengths differ by~$\abs{a}_1$, hence
$p$ does not divide both of them. This completes the proof.
\end{proof}

%\printbibliography
\bibliographystyle{plain}
\bibliography{biblio.bib}

\end{document}